\documentclass{amsart} 
\newif\ifpersonal
\setcounter{tocdepth}{1} 
\usepackage{amsmath,amscd,amsthm,amssymb,mathrsfs,mathtools,bm} 
\usepackage{microtype,lmodern,textcomp} 
\usepackage{enumitem,comment,braket,xspace,tikz-cd} 
\usepackage[T1]{fontenc} 
\usepackage[centering,vscale=0.7,hscale=0.7]{geometry}
\usepackage[hidelinks]{hyperref}
\usepackage[capitalize]{cleveref}
\usepackage{graphicx}
\numberwithin{equation}{section}
\theoremstyle{plain}
\newtheorem{theorem}[equation]{Theorem}
\newtheorem{lemma}[equation]{Lemma}
\newtheorem*{definition*}{Definition} 
\newtheorem*{lemma*}{Lemma}

\newtheorem*{claim*}{Claim}

\newtheorem{proposition}[equation]{Proposition}
\newtheorem*{proposition*}{Proposition}
\newtheorem*{remark*}{Remark}
\newtheorem{corollary}[equation]{Corollary}
\theoremstyle{definition}
\newtheorem{definition}[equation]{Definition}
\newtheorem{definition-theorem}[equation]{Definition-Theorem}
\newtheorem{definition-lemma}[equation]{Definition-Lemma}

\newtheorem{notation}[equation]{Notation}

\newtheorem{remark}[equation]{Remark}
\numberwithin{equation}{section}

\ifpersonal
\newcommand{\personal}[1]{\textcolor[rgb]{0,0,1}{(Personal: #1)}}

\newcommand{\todo}[1]{\textcolor{red}{(Todo: #1)}}
\else
\newcommand{\personal}[1]{\ignorespaces}
\newcommand{\discussion}[1]{\ignorespaces}
\newcommand{\todo}[1]{\ignorespaces}
\fi

\providecommand{\abs}[1]{\lvert#1\rvert}

\newcommand{\bbA}{\mathbb A}

\newcommand{\bbC}{\mathbb C}

\newcommand{\bbN}{\mathbb N}

\newcommand{\bbQ}{\mathbb Q}
\newcommand{\bbR}{\mathbb R}

\newcommand{\bbZ}{\mathbb Z}

\newcommand{\cB}{\mathcal B}

\newcommand{\cE}{\mathcal E}
\newcommand{\cF}{\mathcal F}
\newcommand{\cH}{\mathcal H}
\newcommand{\cG}{\mathcal G}

\newcommand{\cL}{\mathcal L}
\newcommand{\cM}{\mathcal M}

\newcommand{\cO}{\mathcal O}

\newcommand{\cU}{\mathcal U}
\newcommand{\cV}{\mathcal V}

\makeatletter
\let\save@mathaccent\mathaccent
\newcommand*\if@single[3]{%
	\setbox0\hbox{${\mathaccent"0362{#1}}^H$}%
	\setbox2\hbox{${\mathaccent"0362{\kern0pt#1}}^H$}%
	\ifdim\ht0=\ht2 #3\else #2\fi
}
\newcommand*\rel@kern[1]{\kern#1\dimexpr\macc@kerna}
\newcommand*\widebar[1]{\@ifnextchar^{{\wide@bar{#1}{0}}}{\wide@bar{#1}{1}}}
\newcommand*\wide@bar[2]{\if@single{#1}{\wide@bar@{#1}{#2}{1}}{\wide@bar@{#1}{#2}{2}}}
\newcommand*\wide@bar@[3]{%
	\begingroup
	\def\mathaccent##1##2{%
		\let\mathaccent\save@mathaccent
		\if#32 \let\macc@nucleus\first@char \fi
		\setbox\z@\hbox{$\macc@style{\macc@nucleus}_{}$}%
		\setbox\tw@\hbox{$\macc@style{\macc@nucleus}{}_{}$}%
		\dimen@\wd\tw@
		\advance\dimen@-\wd\z@
		\divide\dimen@ 3
		\@tempdima\wd\tw@
		\advance\@tempdima-\scriptspace
		\divide\@tempdima 10
		\advance\dimen@-\@tempdima
		\ifdim\dimen@>\z@ \dimen@0pt\fi
		\rel@kern{0.6}\kern-\dimen@
		\if#31
		\overline{\rel@kern{-0.6}\kern\dimen@\macc@nucleus\rel@kern{0.4}\kern\dimen@}%
		\advance\dimen@0.4\dimexpr\macc@kerna
		\let\final@kern#2%
		\ifdim\dimen@<\z@ \let\final@kern1\fi
		\if\final@kern1 \kern-\dimen@\fi
		\else
		\overline{\rel@kern{-0.6}\kern\dimen@#1}%
		\fi
	}%
	\macc@depth\@ne
	\let\math@bgroup\@empty \let\math@egroup\macc@set@skewchar
	\mathsurround\z@ \frozen@everymath{\mathgroup\macc@group\relax}%
	\macc@set@skewchar\relax
	\let\mathaccentV\macc@nested@a
	\if#31
	\macc@nested@a\relax111{#1}%
	\else
	\def\gobble@till@marker##1\endmarker{}%
	\futurelet\first@char\gobble@till@marker#1\endmarker
	\ifcat\noexpand\first@char A\else
	\def\first@char{}%
	\fi
	\macc@nested@a\relax111{\first@char}%
	\fi
	\endgroup
}
\makeatother

\newcommand{\oD}{\widebar D}

\newcommand{\gr}{\operatorname{gr}}
%
%

\DeclareMathOperator{\DR}{DR}
\DeclareMathOperator{\RH}{RH}

\DeclareMathOperator{\FT}{FT}
\DeclareMathOperator{\Crit}{Crit}

\DeclareMathOperator{\im}{Im}
\DeclareMathOperator{\re}{Re}

\DeclareMathOperator{\cir}{\circ}

\newcommand{\ocH}{\overline{\cH}}


\newcommand{\an}{{\mathrm{an}}}
\newcommand{\formal}{{\mathrm{for}}}
\newcommand{\iso}{\mathrm{iso}}

\newcommand{\Conn}{\mathrm{Conn}}
\newcommand{\Stol}{\mathrm{Sto_<}}
\newcommand{\Sto}{\mathrm{Sto}}

\newcommand{\Constrz}{\mathrm{Constr_0}}
\newcommand{\Pervzero}{\mathrm{Perv}_0}
\newcommand{\Perv}{\mathrm{Perv}}

\newcommand{\SSh}{\mathrm{SSh}}

\newcommand{\hbarz}{\ocH_{z}}

\newcommand{\dbb}[1]{[\![#1]\!]}
\newcommand{\dbp}[1]{(\!(#1)\!)}

\DeclareMathOperator*{\colim}{colim}

\newcommand{\id}{\mathrm{id}}
\newcommand{\Id}{\mathrm{Id}}


\renewenvironment{abstract}{%
	\quotation
	\small
	\textbf{\textit{\abstractname.}} 
}{\endquotation}

\begin{document}

\title{Topological Laplace Transform and Decomposition of nc-Hodge Structures}
\author{Tony Yue Yu}
\address{Tony Yue YU, Department of Mathematics, M/C 253-37, Caltech, 1200 E.\ California Blvd., Pasadena, CA 91125, USA}
\email{yuyuetony@gmail.com}
\author{Shaowu Zhang}
\address{Shaowu Zhang, Department of Mathematics, M/C 253-23, Caltech, 1200 E.\ California Blvd., Pasadena, CA 91125, USA}
\email{szhang7@caltech.edu}
\date{May 29, 2024}

\maketitle
\begin{abstract}
We construct the topological Laplace transform functor from Stokes structures of exponential type to constructible sheaves on $\bbC$ with vanishing cohomology.
We show that it is compatible with the Fourier transform of $D$-modules, and induces an equivalence of categories.

We give two applications of the construction.

First, we study the Fourier transform of B-model nc-Hodge structures associated to Landau-Ginzburg models, and prove the compatibility between the $\bbQ$-structure and the Stokes structure from the connection.

Second, we relate the spectral decomposition of nc-Hodge structures to the vanishing cycle decomposition after Fourier transform via choices of Gabrielov paths.
This is motivated by the study of the atomic decomposition of A-model nc-Hodge structures associated to smooth projective varieties.
\end{abstract}

\tableofcontents
\section{Introduction}
The notion of nc-Hodge structure was introduced in \cite{KKP_Hodge_theoretic_aspect_of_mirror_symmetry} as a generalization of classical pure rational Hodge structure for non-commutative spaces.
It arises naturally in the study of mirror symmetry, enumerative geometry, as well as singularity theory.
Those nc-Hodge structures that come from geometry are expected to satisfy an additional condition called \emph{exponential type} (see \cite{Sabbah_kontsevich's_conjecture_on_an_algebraic_formula_for_vanishing_cycles_of_local_systems,Pomerleano_The_quantum_connection}).
An nc-Hodge structure of exponential type contains the so-called de Rham data and Betti data.
The de Rham part has the following two equivalent descriptions via Fourier transform:
\begin{enumerate}[wide]
    \item An algebraic vector bundle $H$ with a connection $\nabla$ on $\bbA^1\setminus 0$, having a singularity of exponential type at $0$ and a regular singularity at $\infty$.
    \item A regular holonomic $D$-module $M$ on $\bbA^1$ with regular singularities and vanishing de Rham cohomology.
\end{enumerate}

The corresponding descriptions of Betti data are:
\begin{enumerate}[wide, label=(\roman*)]
    \item A $\bbQ$-Stokes structure $(\cL, \cL_{\le})$ of exponential type.
    \item A constructible sheaf $\cF$ of $\bbQ$-vector spaces on $\bbC$ with vanishing cohomology.
\end{enumerate}

We construct a functor from (i) to (ii), and we show that it is compatible with Fourier transform and induces an equivalence of categories.
While functors from (ii) to (i) were previously considered in the literature (\cite{KKP_Hodge_theoretic_aspect_of_mirror_symmetry,Sabbah_introduction_to_Stokes_structure}), the forward direction was not known.

Our construction is motivated by the following two applications.

The first application concerns the B-model nc-Hodge structures associated to Landau-Ginzburg models.
The de Rham part is given as twisted de Rham cohomology with Gauss-Manin connection, and the Betti part is given by rapid decay cohomology.
We study its Fourier transform, and prove the compatibility between de Rham and Betti data, i.e.\ the filtration given by rapid decay cohomology fits the asymptotics of the solutions of the differential equations.
Furthermore, we compare this nc-Hodge structure with the alternative construction of the B-model nc-Hodge structure via gluing along Gabrielov paths in \cite[\S 3.2]{KKP_Hodge_theoretic_aspect_of_mirror_symmetry}. In particular, this implies that the gluing construction is independent of the Gabrielov paths chosen.

Second, we relate the spectral decomposition of nc-Hodge structures to the vanishing cycle decomposition after Fourier transform.

We show that for any non-anti-Stokes direction $\theta$, the asymptotic lift of the spectral decomposition along $\theta$ agrees with the vanishing cycle decomposition given by straight Gabrielov paths in the direction $\theta$.

Our study was motivated by the study of atomic decomposition of A-model nc-Hodge structures associated to smooth projective varieties.
Atomic decomposition is a non-archimedean version of spectral decomposition, which produces new birational invariants (see \cite{KKPY_Birational_invariants}).
It was originally conceived via the vanishing cycle decomposition assuming the convergence of nc-Hodge structures, and it was later reformulated in the non-archimedean setting bypassing the conjectural convergence property.

Below we give more details of our constructions and theorems.

\medskip
\paragraph*{\bfseries Topological Laplace transform}

Our goal is to give a geometric construction of the functor $\Phi$ in the following commutative diagram
\[\begin{tikzcd}[column sep=large]
  (H, \nabla) \rar[mapsto]{\Theta} \dar[mapsto] & M \dar[mapsto] \\
  (\cL, \cL_{\le}) \otimes_\bbQ \bbC \rar[mapsto]{\Phi\otimes\id_\bbC} & \cF\otimes_\bbQ \bbC \\
  (\cL, \cL_{\le}) \uar[mapsto] \rar[mapsto]{\Phi} & \cF \uar[mapsto]
\end{tikzcd}\]
where the upper vertical arrows are given by the Riemann-Hilbert correspondence, and the $\Theta$ is the composition of change of coordinate $z \mapsto z^{-1}$ and Fourier transform.

The inverse functor $\Psi \coloneqq \Phi^{-1}$ was studied in \cite[\S 2.3.2]{KKP_Hodge_theoretic_aspect_of_mirror_symmetry} and \cite[\S 7]{Sabbah_introduction_to_Stokes_structure}.
It is based on a classical result of Malgrange \cite[\S XI]{Malgrange91_equation_differentielle_a_coefficient_polynomiaux},
which states that for ordinary differential operators of exponential type, the Laplace transform of hypersolutions at the singularities of $\cF$ form the filtration of the Stokes structure $(\cL, \cL_{\le})$.

Our construction of the functor $\Phi$ consists of the following two steps:
\begin{enumerate}[wide]
  \item The category of Stokes structures $(\cL, \cL_\le)$ is equivalent to that of co-Stokes structures $(\cL, \cL_<)$.  
  Given a co-Stokes structure $(\cL, \cL_<)$ of exponential type, we build an $\bbR$-constructible subsheaf $\cG$ of the pullback of $\cL$ to $\bbC^*$.
  For any $\xi=\lambda e^{i\theta} \in \bbC$ with $\theta\in[0,2\pi)$ and $\lambda \in \mathbb{R}$, write\[S_\xi=\set{\lambda' e^{i\theta'}\in \bbC^*|\lambda'(\theta')=\exp(\re (\xi e^{-i\theta'}))}, \]
  see \cref{figure: S^1}. 
\begin{figure}
    \centering
    \includegraphics[width=0.35\textwidth]{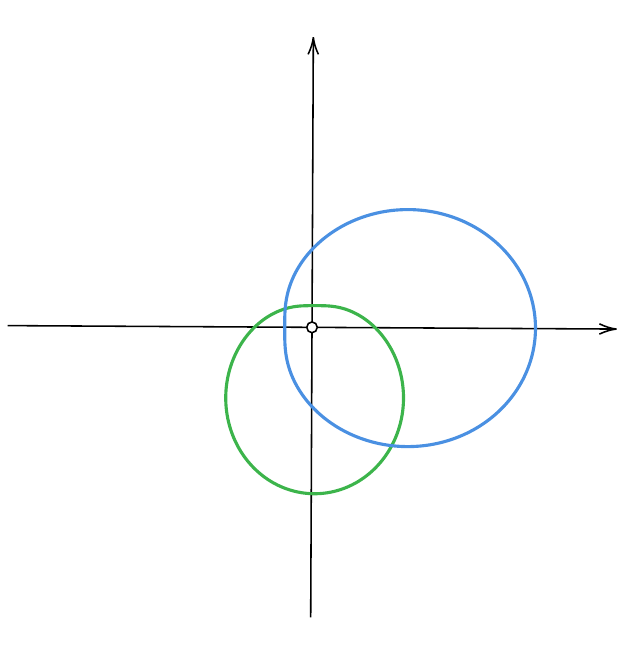}
    \caption{Examples of $S_{\xi}\subset\bbC^*$.}
    \label{figure: S^1}
\end{figure}
For any $z \in S_\xi$ with argument $\theta$, we have $\cG_z = \cL_{<\xi, \theta}$.
We show that $\cG$ retains all the information of the co-Stokes structure $(\cL, \cL_<)$ (see \cref{theorem: SFil=SSh}).
    
\item Next, we consider the diagram
\begin{equation*}
\begin{tikzcd}
      & \displaystyle\bigcup_{\xi \in \bbC} \{\xi\}\times S_{\xi} \subset \bbC \times \bbC^* \arrow[ld, "p_2"'] \arrow[rd, "q_2"] & \\
      \bbC & & \bbC^*
\end{tikzcd}
\end{equation*}
and show that the cohomology of $p_{2!}q_2^! \cG$ is concentrated in degree $0$ (see \cref{lemma: inverse Laplace only  has H^0}).
  We define $\cF = \cH^0(Rp_{2!}q_2^! \cG)$, and show that $\cF$ is a constructible sheaf with vanishing cohomology in \cref{Theorem: inverse Laplace transform is in constr}. In particular, we note that 
  \[\cF_\xi =H^1(S_\xi,\cG) = H^1(S^1,\cL_{<\xi}).\]
\end{enumerate}

\begin{theorem}[\cref{theorem: const0 and Stokes are equivalent 6.1}]\label{theorem: const0 and Stokes are equivalent 1.1}
    The functor $\Phi$ is an equivalence between the category $\Stol$ of co-Stokes structures of exponential type and the category $\Constrz$ of constructible sheaves on $\bbC$ with vanishing cohomology.
\end{theorem}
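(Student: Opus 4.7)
The plan is to show that $\Phi$ is quasi-inverse to the functor $\Psi\colon \Constrz \to \Stol$ constructed in \cite{KKP_Hodge_theoretic_aspect_of_mirror_symmetry} and \cite{Sabbah_introduction_to_Stokes_structure} via Malgrange's theorem on Laplace transforms of hypersolutions. Since $\Psi$ already supplies a functor in the backward direction, it suffices to exhibit natural isomorphisms $\Phi \circ \Psi \cong \id_{\Constrz}$ and $\Psi \circ \Phi \cong \id_{\Stol}$, from which the equivalence follows.

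For $\Phi \circ \Psi \cong \id_{\Constrz}$, fix $\cF \in \Constrz$ with singular set $\{c_1,\ldots,c_n\}$, and compute $\Phi(\Psi(\cF))_\xi = H^1(S^1, \Psi(\cF)_{<\xi})$ at each $\xi \in \bbC$. I would use a Mayer--Vietoris decomposition of $S^1$ into arcs delimited by the anti-Stokes directions of the exponents $\{c_1,\ldots,c_n\}$ relative to $\xi$. On each arc, Malgrange's description of $\Psi$ identifies the sections of $\Psi(\cF)_{<\xi}$ with the subspace of the generic stalk of $\cF$ consisting of cycles that decay along the dominant exponents in that direction; the jumps across the arcs correspond precisely to the local monodromy data of $\cF$. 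Summing the contributions via the Mayer--Vietoris sequence yields $\cF_\xi$, with the extra contribution from vanishing cycles appearing exactly when $\xi \in \{c_1,\ldots,c_n\}$.

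For $\Psi \circ \Phi \cong \id_{\Stol}$, starting from $(\cL,\cL_<)$ we produce $\cF=\Phi(\cL,\cL_<)$ and compare $\Psi(\cF)$ against the original co-Stokes structure. The key input is \cref{theorem: SFil=SSh}, which identifies $(\cL,\cL_<)$ with the $\bbR$-constructible sheaf $\cG$ on $\bbC^*$. Since $\Psi$ reconstructs the filtration at an exponent $\xi$ from the Laplace transform of the local cohomology of $\cF$ at $\xi$, a proper base change argument along the incidence variety $\bigcup_{\xi}\{\xi\}\times S_\xi$, combined with the fact that $\cG|_{S_\xi}$ is by definition $\cL_{<\xi}$ pulled back to $S_\xi \cong S^1$, should give the required natural isomorphism $\Psi(\cF)_{<\xi}\cong \cL_{<\xi}$ of filtered local systems, together with the recovery of $\cL$ from the generic part.

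The main obstacle will be the first identification: controlling $H^1(S^1,\Psi(\cF)_{<\xi})$ uniformly as $\xi$ varies and matching it to $\cF_\xi$ across the singular points. The curves $S_\xi$ deform with $\xi$ and their intersections with the anti-Stokes rays collide as $\xi \to c_i$, so the Stokes jumps of $\Psi(\cF)_{<\xi}$ degenerate in a delicate way; the core of the argument is a careful local analysis near each $c_i$ showing that the degeneration of the Mayer--Vietoris complex produces precisely the vanishing-cycle contribution of $\cF$ at $c_i$, and that these local identifications glue to a global natural transformation.
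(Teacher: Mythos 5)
The overall strategy — exhibit $\Phi$ (or rather $\Phi\circ\sigma$, as the paper writes it) as quasi-inverse to the existing functor $\Psi$ — matches the paper. But there is a genuine gap in how you propose to construct the two natural isomorphisms.

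Your sketch of $\Phi\circ\Psi\cong\id$ operates at the level of stalks: you compute $\dim H^1(S^1,\Psi(\cF)_{<\xi})=\dim\cF_\xi$ via a Mayer--Vietoris/\v{C}ech decomposition of $S^1$. This dimension count is indeed part of the paper's argument (cf.\ \cref{lemma: co stokes structure and decomposition H1}, \cref{h1 of Lz equals fz }, and \cref{lemma: small open cover are Leray cover}, which use a four-interval \v{C}ech cover). However, a stalkwise identification does not by itself produce a morphism of sheaves, let alone a natural transformation of functors, and you acknowledge this as ``the main obstacle'' without resolving it. The paper sidesteps this obstacle by \emph{first} constructing a canonical morphism and \emph{then} using the dimension count to show it is an isomorphism. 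Concretely, in \cref{theorem: it sigma lt=id} the paper assembles the open incidence space $U=\bigcup_z\cH_z\times\{z\}$ and the closed incidence space $V=\bigcup_\xi\{\xi\}\times S_\xi$ into $Z=U\cup V$ (\cref{gluing U V to Z}), rewrites $\Phi\circ\sigma\circ\Psi$ as $R^1p_*i_*i^*q^*R^0q_*R^0j_*j^*p^*\cF$, and then builds the comparison map through unit/counit adjunctions $\eta$ and $\theta$ (equation \eqref{equation: 6.10}), proving $\theta$ is an isomorphism via \cref{lemma: pF to jjpF}, and $\eta$ is an injection with matching dimensions (\cref{corollary: step(3) that dimension of psi are same}, \cref{proposition: step (4) psi is injectve}). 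Without some analogue of this adjunction construction, your stalkwise isomorphisms have no canonical reason to glue, so the proof is incomplete at exactly the step you flag.

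Similarly, for $\Psi\circ\Phi\cong\id$ your description (``proper base change along $\bigcup_\xi\{\xi\}\times S_\xi$, plus \cref{theorem: SFil=SSh}'') names the right ingredients but omits the actual construction of the comparison morphism. The paper needs a fairly delicate chain of adjunctions (the diagram \eqref{eq: adjunctions}), two injectivity lemmas (\cref{kk1 is injective}, \cref{kk2 is injective}), a lifting argument through a kernel (\cref{lemma: lifts to psi}), and then separate checks that the resulting map is an isomorphism at infinity (\cref{lemma: at ininfity is isomorphism}) and preserves the filtrations (\cref{lemma: map at infinity preserves filtrations}). One more divergence worth noting: you invoke Malgrange's description of $\Psi$ (Laplace transforms of hypersolutions), whereas the paper works with the purely topological description of $\Psi$ from \cref{Definition: Laplace transform} (sections of $\cF$ over closed half-planes); Malgrange's theorem enters the paper only for the compatibility with $D$-module Fourier transform in \cref{corollary: inverse Laplace transform is compatible}, not for the categorical equivalence itself. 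Using Malgrange here is not wrong, but it imports analytic content where the paper's argument stays topological.
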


We deduce from the above equivalence an interesting corollary that any Stokes structure of exponential type on a trivial local system is trivial (see \cref{cor: trivial Stokes structure}).

\medskip

\paragraph*{\bfseries Equivalent descriptions of B-model nc-Hodge structure}

Next we apply \cref{theorem: const0 and Stokes are equivalent 1.1} to the study of B-model nc-Hodge structure associated to a Landau-Ginzburg model, $(Y, f)$, where $Y$ is a smooth complex quasi-projective variety, and $f$ is a proper function on $Y$.
It consists of a $\bbZ/2$-graded algebraic vector bundle over $\bbA^1$,
\[ H\coloneqq \mathbb{H}^\bullet\big(Y, (\Omega_Y^{\bullet}[u], ud-df )\big),\]
together with a $\bbQ$-structure given by the local system on $S^1$
\[ \cL_\theta \coloneqq  H^\bullet\big(Y,f^{-1}(\infty_\theta);\bbQ\big), \]
where $\infty_\theta$ denotes a point near infinity with argument $\theta$, see \cref{definition: B model nc-Hodge}.
We prove the following.

\begin{proposition} [\cref{lemma: comparison of Betti data}, \cref{proposition: de Rham and Betti data compatible}] 
The Betti data of the B-model nc-Hodge structure is given by the Stokes structure
\[\cL_{\leq r, \theta} = H^\bullet\big(Y,f^{-1}(\cH_{r,\theta});\bbQ\big),\]
where $\cH_{r,\theta}$ is the half-plane perpendicular to $\theta$ and shifted by $r$ (see \cref{notation: halfplanes}).
Under the topological Laplace transform, it is equivalent to the constructible sheaf $\cF = \cH^{-1}(\Pi ({}^p{\cH^\bullet}  Rf_*\bbQ))$, where $\Pi$ is the projector from $\Perv$ to $\Pervzero$ (see \cite[\S 12]{Katz_Exponential_sums_and_differential_equations}).
Furthermore, the Betti data is compatible with the de Rham data, i.e.\ it gives the Stokes filtration of the Gauss-Manin connection on twisted de Rham cohomology.
\end{proposition}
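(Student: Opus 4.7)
The plan is to establish three separate assertions: (a) the groups $H^\bullet(Y, f^{-1}(\cH_{r,\theta}); \bbQ)$ assemble into a Stokes filtration on the local system $\cL_\theta$; (b) under the topological Laplace transform of \cref{theorem: const0 and Stokes are equivalent 1.1}, this Stokes structure corresponds to $\cH^{-1}(\Pi({}^p\cH^\bullet Rf_*\bbQ))$; and (c) the resulting filtration matches the Stokes filtration at $u = 0$ of the Gauss-Manin connection on $H$.

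For (a), I would first verify that the underlying local system is recovered by letting $r$ grow: since $f$ is proper and $\cH_{r,\theta}$ contains a neighborhood of $\infty_\theta$ for $r \gg 0$, the natural map $\cL_{\le r,\theta} \to \cL_\theta$ is eventually an isomorphism. The inclusions $\cH_{r,\theta} \hookrightarrow \cH_{r',\theta}$ for $r \le r'$ then give an exhaustive filtration. Local constancy of $\cL_{\le r,\theta}$ away from anti-Stokes directions is a Morse-theoretic statement: the homotopy type of the pair $(Y, f^{-1}(\cH_{r,\theta}))$ is constant so long as $\partial \cH_{r,\theta}$ avoids the critical values $c_1, \dots, c_n$ of $f$. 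At anti-Stokes crossings (where $\partial \cH_{r,\theta}$ passes through some $c_i$), a local Picard-Lefschetz computation identifies the jump with the local vanishing cycles at the critical points over $c_i$, matching the Stokes jumps predicted by the exponential factors $e^{c_i z}$ and confirming the filtration is of exponential type with the correct irregular part.

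For (b), I would use the explicit formula $\cF_\xi = H^1(S^1, \cL_{<\xi})$ coming from the construction of $\Phi$, combined with the long exact sequence of pairs $(Y, f^{-1}(\cH_{\re(\xi e^{-i\theta}),\theta}))$ varying in $\theta \in S^1$. A Leray-type computation over $\xi$ then identifies this $H^1$ with the stalk of the vanishing cohomology of $Rf_*\bbQ$ at $\xi$, placed in the correct cohomological degree. The projector $\Pi$ appears naturally because constant summands of the pushforward correspond to trivial Stokes structures whose $H^1(S^1,-)$ contribution vanishes; perverse truncation enters through the identification of the relevant cohomological degree of the constructible complex $Rf_*\bbQ$.

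The main obstacle is (c), the compatibility of Betti and de Rham filtrations. Here I would draw on Hien's theorem on rapid-decay cohomology together with Sabbah's description of Stokes filtrations via asymptotic growth of flat sections. Concretely, the period integral of a cycle $\gamma$ supported in $f^{-1}(\cH_{r,\theta})$ is dominated by $\lvert e^{-r/u} \rvert$ (up to polynomial factors) as $u \to 0$ along direction $\theta$, showing that $\gamma$ corresponds to a flat section lying in the $\le r$ part of the Stokes filtration. The reverse inclusion is proved by a gradient flow argument: using the flow of $-\re(f e^{-i\theta})$, one represents any rapid-decay class by a cycle supported in the prescribed half-plane. The delicate point is verifying both inclusions on the nose rather than up to shifts of $r$, which requires careful estimates of the period pairing near each critical value and a precise matching of the bounds dictated by the exponential factors.
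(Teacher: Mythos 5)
Your proposal is essentially correct in outline, but it takes a genuinely different route from the paper for the key compatibility statement (your part (c)), and it reconstructs from scratch something the paper obtains for free.

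The paper never performs an asymptotic estimate of period integrals. Instead, the compatibility between de Rham and Betti data is derived \emph{formally} from having two independent identifications of the topological Laplace transform of the single constructible sheaf $\cF = \cH^{q-1}\Pi(Rf_*k_Y)$. On one side, \cref{lemma: GM and $D$-module Laplace transform} uses the compatibility of $\Psi$ with the $D$-module Fourier transform (quoted from Sabbah) together with the $t$-exactness of the de Rham functor and the identity $\FT \circ \Pi = j_*j^* \circ \FT$ to show that $\Psi(\cF \otimes \bbC)$ is the $\bbC$-co-Stokes structure of the Gauss--Manin connection. On the other side, \cref{lemma: comparison of Betti data} uses the sheaf-level formula $\cL_{k,\le r,\theta} = \cF(\cH_{r,\theta})$ (an immediate consequence of how $\Psi$ is built, cf.\ \cref{remark: stalk L at xi and theta}) and the deformation-retract arguments of \cref{lemma: vanishing cycle at ci} to identify this with $H^q(Y, f^{-1}(\cH_{r,\theta}); k)$. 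Since this works uniformly in $k$, the $\bbQ$-structure axiom is a tautology: $H^q(\cdots;\bbC) = H^q(\cdots;\bbQ) \otimes \bbC$.

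Your plan for (c) --- invoking Hien's theorem on rapid-decay cohomology plus gradient-flow arguments and careful period estimates near the critical values --- is a legitimate, more classical approach (it is roughly how Sabbah treats the cohomologically tame affine case), but it is precisely the heavy analytic machinery that the present construction is designed to circumvent. You yourself flag the delicacy of ``matching the bounds on the nose,'' and that is exactly where your approach would require real work. Note also that your part (a), where you directly verify the Stokes-filtration axioms for the relative cohomology groups by Morse theory and Picard--Lefschetz, is unnecessary in the paper's framework: once you identify these groups with $\Psi(\cF)$, the Stokes property follows from the general theory (\cref{Definition: Laplace transform}), with no need for a separate verification. Finally, your sketch for (b) is in the right spirit but should be replaced by the precise arguments of \cref{lemma: describe stalk} and \cref{lemma: vanishing cycle at ci}, which use the graph construction and the proper base change theorem rather than a loosely-specified ``Leray-type computation.''
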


\begin{proposition} [\cref{gluing is dual to GM}, \cref{cor:B-model nc-Hodge gluging description}]
If we decompose the above B-model nc-Hodge structure by the gluing theorem in \cite[Theorem 2.35]{KKP_Hodge_theoretic_aspect_of_mirror_symmetry}, we obtain the description in Section 3.2 of loc.\ cit.
\end{proposition}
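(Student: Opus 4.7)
The plan is to combine the preceding proposition, which provides an explicit formula for the constructible sheaf $\cF \in \Constrz$ associated to the B-model, with the topological Laplace transform of \cref{theorem: const0 and Stokes are equivalent 1.1} to match the two descriptions directly. The KKP gluing theorem \cite[Theorem 2.35]{KKP_Hodge_theoretic_aspect_of_mirror_symmetry} asserts that any Stokes structure of exponential type decomposes, via a choice of Gabrielov paths, as a direct sum of ``punctual'' contributions at each slope, and conversely that such decomposed data reconstructs the Stokes structure. Translated through \cref{theorem: const0 and Stokes are equivalent 1.1}, this becomes a statement about assembling $\cF$ from its vanishing cycles at the critical values $c_1,\dots,c_n$ of $f$. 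The construction in \cite[\S 3.2]{KKP_Hodge_theoretic_aspect_of_mirror_symmetry} produces the B-model nc-Hodge structure by choosing Gabrielov paths from the critical values to infinity and gluing vanishing cycles along them, so I need to show that the abstract decomposition recovers this concrete recipe.

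The first step is to identify the local summand at each critical value. Using $\cF = \cH^{-1}(\Pi({}^p\cH^\bullet Rf_*\bbQ))$ together with the properness of $f$ and the decomposition theorem, one identifies the skyscraper part of $\cF$ at $c_i$ with the perverse vanishing cycles $\varphi_{f-c_i}\bbQ_Y$ (up to an overall shift). This is exactly the local datum attached to $c_i$ in \cite[\S 3.2]{KKP_Hodge_theoretic_aspect_of_mirror_symmetry}.

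The second step is to match the gluing along each Gabrielov path $\gamma_i$ from $c_i$ to infinity. On the Betti side, the previous proposition identifies the Stokes filtration in a non-anti-Stokes direction $\theta$ with $H^\bullet(Y, f^{-1}(\cH_{r,\theta});\bbQ)$; for a straight path in direction $\theta$, the contribution of the critical value $c_i$ is computed by the relative cohomology $H^\bullet(Y, f^{-1}(c_i+\epsilon e^{i(\theta+\pi)});\bbQ)$, which is precisely the Lefschetz thimble space that \cite[\S 3.2]{KKP_Hodge_theoretic_aspect_of_mirror_symmetry} uses in its gluing. On the de Rham side, the compatibility between Stokes filtration and Gauss--Manin connection furnished by the previous proposition ensures that the parallel transport along $\gamma_i$ implementing the abstract gluing of \cite[Theorem 2.35]{KKP_Hodge_theoretic_aspect_of_mirror_symmetry} matches the path-integration identifications used in \S 3.2.

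The main obstacle will be bookkeeping: the conventions for Gabrielov paths, the orientations of Lefschetz thimbles, and the degree shifts between perverse and constructible sheaves all have to be tracked consistently on both sides, and one must verify that the canonical isomorphism between the Fourier transform $\Theta(H,\nabla)$ and $Rf_*\cO_Y$ intertwines the two gluing recipes. Once this compatibility is pinned down, the proposition follows, and the independence of the \S 3.2 construction from the choice of Gabrielov paths is an immediate consequence, since the intrinsically defined $\cF$ does not depend on any such choice.
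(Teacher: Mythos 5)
Your Betti-side argument tracks the paper's closely: the paper also reduces the corollary to matching the linear-algebraic (vanishing-cycle) data along Gabrielov paths, and that match is exactly \cref{lemma: vanishing cycle at ci}, which identifies $\phi_{c_i}(\cF)$ with the Lefschetz-thimble cohomology $H^q(f^{-1}(\Gamma_i),f^{-1}(c_0);k)$. You reach the same identification via the decomposition theorem and perverse vanishing cycles, which is a legitimate alternative, though slightly more machinery than the paper uses — the paper works directly with triples $f^{-1}(c_0)\subset f^{-1}(\Gamma_i)\subset Y$ and the Morse-theoretic deformation retraction.

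The genuine gap is on the de Rham side, which you relegate to "bookkeeping" and a compatibility claim. In the paper's \cref{gluing is dual to GM}, the de Rham comparison is not a formality: it needs two specific inputs. First, to identify the lattice $H_i$ with $\mathbb{H}^\bullet(U^\formal(Z_i),(\Omega^\bullet_X\dbb{u},ud-df))$ one needs a global-to-local isomorphism for twisted de Rham cohomology, i.e.\ that the completion of $H$ at $0$ (Fourier-dual to $Rf_*\cO_Y$) decomposes as a direct sum of contributions from formal neighborhoods of the connected components of $\Crit(f)$; the paper cites Kontsevich--Soibelman and \cite[Lemma 2.37]{KKP_Hodge_theoretic_aspect_of_mirror_symmetry} for this. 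Second, one needs the comparison isomorphism $\iso$ of \cref{definition: B model nc-Hodge} to split into the local isomorphisms $\iso_i$; this is Sabbah's theorem on the algebraic formula for vanishing cycles of local systems, not something that falls out of the Stokes-filtration/Gauss--Manin compatibility you invoke. Your sentence about "the canonical isomorphism between $\Theta(H,\nabla)$ and $Rf_*\cO_Y$" points in the right direction but conflates these two distinct steps; as written your proof would stall at identifying the $\bbC\dbb{u}$-lattices $H_i$, which is precisely where a substantive theorem is required rather than a bookkeeping check.
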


\medskip

\paragraph*{\bfseries Spectral decomposition versus vanishing cycle decomposition of nc-Hodge structures}

Decomposition of nc-Hodge structures can provide insight into semi-orthogonal decomposition of derived categories, which is difficult to study.
For example, atomic decomposition of A-model nc-Hodge structures gives rise to new birational invariants, previously conjectured from categorical considerations (see \cite{KKPY_Birational_invariants}).
This motivates the following comparison results.

An nc-Hodge structure of exponential type admits a formal decomposition according to the eigenvalues of the Euler vector field action, called spectral decomposition (see \cref{cor: spectral decomposition}).
We apply topological Laplace transform functor to the spectral decomposition of nc-Hodge structures, and we relate it to the vanishing cycle decomposition given by choices of Gabrielov paths.
As an intermediate step, we consider the decomposition given by the Stokes structure over a good open interval, which we explain more precisely in the following theorem.

\begin{proposition}[\cref{proposition: Stokes decomposition agrees with KKP}]
Let $(\cL, \cL_<)$ be a $\bbQ$-co-Stokes structure of exponential type, and $\cF$ its topological Laplace transform, which is a constructible sheaf of $\bbQ$-vector spaces on $\bbC$ with vanishing cohomology.
Let $c_1, \dots, c_n \in \bbC$ denote the singularities of $\cF$, or equivalently the exponents of $(\cL, \cL_<)$. Assume $\theta\in S^1$ is not an anti-Stokes direction of $(\cL, \cL_<)$, and $c_0 \in \bbC$ is a point with argument $\theta$ near infinity.
Then the following two decompositions of $\cF_{c_0}$ agree

\begin{enumerate}[wide]
  \item Stokes decomposition, induced by the trivialization of $(\cL, \cL_<)$ over $I_\theta$.
  \item Vanishing cycle decomposition, induced by straight Gabrielov paths $\gamma_i$ from $c_i$ to $c_0$,
  \begin{equation*}
    \cF_{c_0} =\bigoplus_{i = 1}^n  \cF_{c_0}/\psi_i(\cF_{c_i}),
  \end{equation*}
\end{enumerate}
where $\psi_i$ denotes the embedding of $\cF_{c_i}\longrightarrow \cF_{c_0}$ via $\gamma_i$.
\end{proposition}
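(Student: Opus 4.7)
The plan is to reduce the comparison to the single-exponent case via the asymptotic trivialization over $I_\theta$, and then match each Stokes summand with the corresponding Gabrielov-path quotient. Let $\cF^{(i)}\coloneqq \Phi(\gr_{c_i}\cL,\gr_{c_i}\cL_<)$ denote the topological Laplace transform of the $i$-th graded piece, viewed as a single-exponent co-Stokes structure on $S^1$. From $\cF^{(i)}_\xi=H^1(S^1,(\gr_{c_i}\cL)_{<\xi})$ and the vanishing $(\gr_{c_i}\cL)_{<c_i}=0$, one has $\cF^{(i)}_{c_i}=0$; thus $\cF^{(i)}$ is the $j_!$-extension along $\bbC\setminus\{c_i\}\hookrightarrow\bbC$ of a local system and has a unique singularity at $c_i$ with vanishing cohomology.

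The first step is to construct, using the Stokes trivialization over $I_\theta$, a canonical isomorphism
\[
\bigoplus_{i=1}^n \cF^{(i)}_{c_0}\;\xrightarrow{\ \sim\ }\;\cF_{c_0}.
\]
The morphism is assembled from $\cF_{c_0}=H^1(S^1,\cL_{<c_0})$ together with the local splitting $(\cL,\cL_<)|_{I_\theta}=\bigoplus_i(\gr_{c_i}\cL,\gr_{c_i}\cL_<)|_{I_\theta}$. For $|c_0|$ large with argument $\theta$, the subsheaf $\cL_{<c_0}\subset\cL$ is supported on an arc approaching the half-circle $(\theta-\pi/2,\theta+\pi/2)$, and the decomposition $\cL_{<c_0}=\bigoplus_i(\gr_{c_i}\cL)_{<c_0}$ holds on the intersection of this support with $I_\theta$. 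The portion of the support lying outside $I_\theta$ sits near the two anti-Stokes directions bounding $I_\theta$, and I would treat it by a Mayer--Vietoris argument covering $S^1$ by $I_\theta$ and two auxiliary non-anti-Stokes sectors, each carrying its own asymptotic splitting; the transition data between these splittings is controlled by the Stokes filtration and contributes nothing new to $H^1(S^1,\cL_{<c_0})$.

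The second step matches $\cF^{(i)}_{c_0}$ with $\cF_{c_0}/\psi_i(\cF_{c_i})$. For each $j\ne i$, the sheaf $\cF^{(j)}$ is a local system in a neighborhood of the straight ray $\gamma_i$, so the parallel transport $\cF^{(j)}_{c_i}\to\cF^{(j)}_{c_0}$ along $\gamma_i$ is an isomorphism; for $j=i$ it is the zero map since $\cF^{(i)}_{c_i}=0$. Applying the first step at a point along $\gamma_i$ displaced slightly from $c_i$ (in direction $\theta$, which remains non-anti-Stokes as seen from $c_i$) gives a compatible splitting $\cF_{c_i}\cong\bigoplus_{j\ne i}\cF^{(j)}_{c_i}$. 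These fit into a commutative diagram with the direct-sum parallel transport, so $\psi_i(\cF_{c_i})=\bigoplus_{j\ne i}\cF^{(j)}_{c_0}$ and the quotient $\cF_{c_0}/\psi_i(\cF_{c_i})$ is canonically $\cF^{(i)}_{c_0}$. Summing over $i$ yields the desired agreement of the two decompositions.

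The principal obstacle is the first step: promoting the Stokes splitting, which is canonical only on the sector $I_\theta$ between the two nearest anti-Stokes directions, to a genuine splitting of the global cohomology $H^1(S^1,\cL_{<c_0})$, whose supporting arc can overspill $I_\theta$. One must carefully analyze how the Stokes filtration behaves across anti-Stokes boundaries, using the rigidity afforded by the equivalence of \cref{theorem: const0 and Stokes are equivalent 1.1}. Once this is in place, the identification with Gabrielov paths follows from the straight-line geometry of $\gamma_i$ and the single-singularity structure of each $\cF^{(i)}$.
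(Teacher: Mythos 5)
Your overall strategy matches the paper's: split $\cF_{c_0}$ into contributions from the graded pieces using the Stokes trivialization over $I_\theta$, then match each summand with a vanishing cycle quotient along a straight Gabrielov path. The difference is that you route the argument through the topological Laplace transforms $\cF^{(i)}$ of the individual graded co-Stokes structures (noting that $\cF^{(i)}_{c_i}=0$ and that $\cF^{(i)}$ is a $j_!$-extension of a local system away from $c_i$), whereas the paper works directly with the subsheaves $W_i^0\coloneqq H^1(S^1,\beta_!\beta_{c_i<c_0,!}\beta_{c_i<c_0}^*\beta^*\gr_{c_i}\cL)\subset\cF_{c_0}$ and with the co-Stokes sheaf $\cG$ on $\bbC^*$.

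The obstacle you flag is real and you do not resolve it, which leaves a genuine gap. Concretely: $\cF_{c_0}=H^1(S^1,\cL_{<c_0})$ is a global invariant, but the decomposition $\cL_{<c_0}=\bigoplus_i(\gr_{c_i}\cL)_{<c_0}$ is only available over $I_\theta$, while the support of $\cL_{<c_0}$ can overspill $I_\theta$ when $c_0$ is at finite distance. Your suggestion to cover $S^1$ by $I_\theta$ and two further sectors and argue that the transition data ``contributes nothing new'' is not a proof --- the transitions across anti-Stokes directions are exactly where the Stokes data sit. The paper handles this cleanly in \cref{proposition: Stokes decomposition}: letting $\beta\colon I_\theta\hookrightarrow S^1$ and $\alpha$ its closed complement, the exact sequence $0\to\beta_!\beta^*\cL_{<\xi}\to\cL_{<\xi}\to\alpha_*\alpha^*\cL_{<\xi}\to 0$ together with $H^0(S^1,\alpha_*\alpha^*\cL_{<\xi})=H^1(S^1,\alpha_*\alpha^*\cL_{<\xi})=0$ shows that $H^1(S^1,\beta_!\beta^*\cL_{<\xi})\xrightarrow{\sim}H^1(S^1,\cL_{<\xi})$, i.e.\ the overspill contributes nothing, and then the unique trivialization over the good interval $I_\theta$ gives the direct sum decomposition.

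Your second step also quietly relies on the same compatibility you have not established: to apply the ``first step'' at a displaced point $d_i$ near $c_i$ and compare it with the decomposition at $c_0$, you must know that the Stokes decompositions at $d_i$ and $c_0$ correspond under parallel transport. The paper isolates precisely this in \cref{proposition: at theta  Fc contains Wi} (a computation on the co-Stokes sheaf $\cG$ over the tube $S_{\gamma_i}=\bigcup_{\xi\in\gamma_i}S_\xi$, using that the circles $S_\xi$, $\xi\in\gamma_i$, share two common intersection points at arguments $\theta\pm\pi/2$) and \cref{proposition: Stokes decomposition invariant near infinity} (invariance of the Stokes decomposition under translation near infinity). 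These are the two ingredients that replace your heuristic ``controlled by the Stokes filtration'' and ``compatible splitting'' claims. Your reduction $\psi_i(\cF_{c_i})=\bigoplus_{j\neq i}\cF^{(j)}_{c_0}$ and the final intersection argument $W_i^0=\bigcap_{j\neq i}\cF_{c_j}$ are correct once those inputs are in place, and match the last lines of the paper's proof.
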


\begin{theorem}[Theorem \ref{theorem: spectral decomposition and vanishing cycle decomposition}]
Let $(H, \nabla)$ be an nc-Hodge structure of exponential type.
The asymptotic lift of its spectral decomposition along a non-Stokes direction $\theta$ agrees with the vanishing cycle decomposition at a point near infinity with argument $\theta$.
\end{theorem}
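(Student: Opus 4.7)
The plan is to chain together two identifications and reduce the theorem to \cref{proposition: Stokes decomposition agrees with KKP}. First I would show that, under the Riemann--Hilbert correspondence for Stokes structures, the asymptotic lift of the spectral decomposition on the de Rham side corresponds to the canonical Stokes decomposition of the associated co-Stokes structure $(\cL, \cL_<)$ over a good open interval $I_\theta$ containing $\theta$. Second, by \cref{proposition: Stokes decomposition agrees with KKP}, this Stokes decomposition agrees with the vanishing cycle decomposition at a point near infinity with argument $\theta$ under the topological Laplace transform $\Phi$. Composing the two identifications gives the theorem.

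For the first step, I would invoke the Hukuhara--Turrittin--Sibuya theorem. Viewing the de Rham data $(H,\nabla)$ near $z=0$ after the coordinate change $z\mapsto z^{-1}$, the formal connection admits a decomposition $\hat H = \bigoplus_i \hat H_{c_i}$ indexed by the eigenvalues $c_i$ of the Euler field action; this is the spectral decomposition. Since $\theta$ is a non-Stokes direction, the formal decomposition is Borel summable along $\theta$ and lifts uniquely to an analytic decomposition on a small sector around $\theta$, which is precisely the asymptotic lift. Under Riemann--Hilbert, this sectorial decomposition of the connection is intertwined with the canonical splitting of the Stokes filtration of $(\cL, \cL_<)$ over $I_\theta$, because both decompositions are indexed by the same exponents $\{c_i\}$ and both are characterized by the prescribed exponential growth $e^{c_i/z}$ of their flat sections.

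The second step is exactly \cref{proposition: Stokes decomposition agrees with KKP} applied to the $\bbQ$-co-Stokes structure $(\cL, \cL_<)$ associated to $(H, \nabla)$: it identifies the Stokes decomposition over $I_\theta$, transported into $\cF_{c_0}$ via $\Phi$, with the vanishing cycle decomposition induced by straight Gabrielov paths from each $c_i$ to a point $c_0$ near infinity with argument $\theta$.

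The main obstacle is the Riemann--Hilbert compatibility in the first step: one must verify carefully that, under the passage from $(H,\nabla)$ to $(\cL, \cL_<)$, the analytic sectorial lift of the spectral decomposition is sent to the canonical splitting of the Stokes filtration, with matching indexing by exponents. Although this compatibility is largely folkloric in the theory of Stokes structures, making conventions line up and tracking the exponents on both sides is the technical heart of the argument. Once it is in place, the theorem follows immediately by combining it with the preceding proposition.
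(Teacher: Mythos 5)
Your proposal follows the same overall strategy as the paper's proof: identify the asymptotic lift of the spectral decomposition with the unique splitting of the co-Stokes structure over $I_\theta$, then invoke \cref{proposition: Stokes decomposition agrees with KKP}. However, two points in your first step need correction.

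First, the claim that the formal decomposition ``lifts uniquely to an analytic decomposition on a small sector around $\theta$'' is not correct: on a small sector, lifts always exist by Hukuhara--Turrittin--Sibuya, but they are far from unique. Uniqueness requires a good sector of angular width $\pi$, namely the sector from $\theta-\pi/2$ to $\theta+\pi/2$, and this sector is good precisely because $\theta$ is not an \emph{anti-Stokes} direction, so that $\theta\pm\pi/2$ are not Stokes directions (see \cref{definition: Stokes direction and small}). The paper invokes \cite[Lemma 8.3]{Hertling_Sevenheck_nilpotent_orbits_of_a_generalization_of_hodge_structures} for exactly this. If instead you use Borel summation along $\theta$, the same non-anti-Stokes hypothesis is what makes the Borel sum well defined and yields the same width-$\pi$ lift; in either case the relevant hypothesis is ``non-anti-Stokes,'' and a ``small'' sector does not suffice for uniqueness.

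Second, the Riemann--Hilbert compatibility you flag as ``folkloric'' is the substance of the first step, and the paper pins it down rather than deferring it: it cites \cite[Corollary 3.3]{Sanda_Shamoto_an_analogue_of_} to identify the spectral decomposition with the Hukuhara--Turrittin--Levelt decomposition, then uses the analytic sectorial lift together with \cref{proposition: Stokes filtration trivializes over good open intervals} to obtain the unique trivialization of $(\cL,\cL_<)$ over $I_\theta$. Finally, a short Čech computation identifies $\cF_\xi = H^1(S^1,\cL_{<\xi})$ with the stalk $(\beta^*\cL_{<\xi})_\theta$ compatibly with both decompositions, bridging the stalkwise splitting of the trivialized local system with the Stokes decomposition of \cref{proposition: Stokes decomposition}; this bridge is implicit in your appeal to \cref{proposition: Stokes decomposition agrees with KKP} but should be made explicit. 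With these repairs, your reduction is exactly the one the paper carries out.
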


\medskip

\paragraph*{\bfseries Related works}

We refer to \cite{KKP_Hodge_theoretic_aspect_of_mirror_symmetry, sabbah_non-commutative_hodge_structures} for the theory of nc-Hodge structures, and to \cite{Hertling_Frobenius_manifolds_and_moduli_spaces_for_singularities, Mochizuki_mixed_twistor_d_modules,F-bundle} for related concepts.

The topological Laplace transform on Stokes structures induced by Fourier transform on $D$-modules was studied in general by Malgrange \cite{Malgrange91_equation_differentielle_a_coefficient_polynomiaux} via a series of reduction steps, which does not lead to a direct geometric description.
The inverse functor $\Psi \coloneqq \Phi^{-1}$ is studied in Katzarkov-Kontsevich-Pantev \cite{KKP_Hodge_theoretic_aspect_of_mirror_symmetry}, Sabbah \cite[\S 7]{Sabbah_introduction_to_Stokes_structure}, and Mochizuki \cite{Mochizuki_Stokes_shells_and_fourier_transforms};
concrete calculations of Stokes matrices are given in D'Agnolo-Hien-Morando-Sabbah \cite{D'agnolo-Kashiwara-Riemann-Hilbert-holonomic} and Kapranov-Soibelman-Soukhanov \cite{Kapranov_Soibelman_Soukhanov_perverse_schobers_and_the_algebra_of_the_infrared}.

The B-model nc-Hodge structure was studied in \cite{Pham_la_descente_de_lefschetz_avec_} for the case of polynomial functions, and in \cite{sabbah_non-commutative_hodge_structures,sabbah_fourier_laplace_transform_of_a_variation_of_polarized_complex_hodge_structure} for the case of cohomologically tame functions on smooth affine algebraic varieties.

We refer to \cite{KKPY_Birational_invariants} for applications of the decomposition theorems in birational geometry.

\medskip
\paragraph*{\bfseries Acknowledgements}

Discussions with Ludmil Katzarkov, Maxim Kontsevich and Tony Pantev around the nc-Hodge theory provided invaluable motivations and perspectives for this study.
Special thanks to Claude Sabbah who generously answered our questions and pointed out helpful references in the literature to us.
We are grateful to Ruide Fu, Sheel Ganatra, Thorgal Hinault, Hiroshi Iritani, Peter Jossen, Shengxuan Liu, Matilde Marcolli, Yan Soibelman, Jean-Baptiste Teyssier, Zhaoting Wei, Yuchen Wu and Chi Zhang for inspiring discussions around the subject.
The authors were partially supported by NSF grants DMS-2302095 and DMS-2245099.

\section{Review of Stokes structures} \label{section: Stokes structures}

In this section, we recall the basic definitions of (co-)Stokes structures and perverse sheaves.

Stokes structures were introduced by Deligne and Malgrange in order to classify connections with an irregular singularity (see \cite{Malgrange1982_la_classification_des_connexions_irregulieres_a_une_variable,Malgrange_correspondence_singulartes_irreguliers}).
We refer to \cite{Sabbah_introduction_to_Stokes_structure} for a detailed account.
For convenience, we will mainly use a variant of Stokes structure called co-Stokes structure, which is equivalent.

\begin{notation}\label{notation: xi1 smaller than xi2}
For $\xi_1$ and $\xi_2\in \bbC$, we denote  
\begin{align*}
    \xi_1\leq_\theta  \xi_2 &\text{ if } \re(\xi_1e^{-i\theta})< \re(\xi_2e^{-i\theta}) \text{ or } \xi_1=\xi_2.\\ \xi_1<_\theta  \xi_2 &\text{ if } \re(\xi_1e^{-i\theta})< \re(\xi_2e^{-i\theta}).
\end{align*}
\end{notation}

Let $k$ be any field.

\begin{definition} 
  \label{def: Stokes structure}
  A \emph{$k$-Stokes structure of exponential type} $(\cL, \cL_{\leq })$ with exponents $c_1, \dots,c_n \in \bbC$ consists of a local system $\cL$ on $S^1$ of finite dimensional $k$-vector spaces and a family of subsheaves $\cL_{\leq \xi}$ indexed by $\xi\in \bbC$ satisfying the following conditions:
  \begin{enumerate}[wide]
      \item For any $\theta\in S^1$, $\xi_1\leq_\theta  \xi_2$ implies that $ \cL_{\leq \xi_1, \theta}\subset \cL_{\leq \xi_2, \theta}$.
      \item There exist local systems $\gr_{c_1}\cL, \dots, \gr_{c_n}\cL $ on $S^1$, such that if we denote
      \begin{align*}
      \gr\cL&\coloneqq\bigoplus_{i=1}^n\gr_{c_i}\cL, \\         (\gr \cL)_{\leq \xi}&\coloneqq\bigoplus_{i=1}^n j_{\xi!}j_\xi^*\gr_{c_i}\cL, \text{ where } j_\xi \colon \set{\theta\in S^1 | c_i\leq_\theta  \xi} \hookrightarrow S^1,
      \end{align*}      
      then locally on $S^1$, there exists an isomorphism of filtered local systems between $(\cL, \cL_{\leq})$ and $\big(\gr\cL,(\gr \cL)_{\leq}\big)$.
  \end{enumerate}
\end{definition}

\begin{definition} \label{definition: co-Stokes structure}
  A \emph{$k$-co-Stokes structure of exponential type} $(\cL, \cL_<)$ with exponents $c_1, \dots,c_n \in \bbC$  is defined as in \cref{def: Stokes structure} where every symbol $\leq$ is replaced by $<$.
\end{definition}

Let $\Sto_\leq$ (resp.\ $\Stol$) denote the category of Stokes (resp.\ co-Stokes) structures of exponential type.
The following lemma follows from Definition 1.34 and Remark 1.41 of \cite{Sabbah_introduction_to_Stokes_structure}.

\begin{lemma}\label{coStokes=Stokes}
    The category of co-Stokes structures of exponential type is equivalent to the category of Stokes structures of exponential type.
\end{lemma}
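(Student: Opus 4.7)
The plan is to construct quasi-inverse functors $F \colon \Sto_\leq \to \Stol$ and $G \colon \Stol \to \Sto_\leq$, each acting as the identity on the underlying local system $\cL$, on the set of exponents, and on the graded pieces $\gr_{c_i}\cL$. Since both filtrations jump only at the finitely many exponents $c_1, \dots, c_n$, all constructions can be analyzed locally on $S^1$ and stalkwise on the parameter $\xi$.

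The forward functor $F$ sends $(\cL, \cL_\leq)$ to $(\cL, \cL_<)$, where
\[
  \cL_{<\xi, \theta} := \sum_{\xi' <_\theta \xi} \cL_{\leq \xi', \theta}.
\]
The sum is effectively finite near each $\theta$, so it defines a subsheaf of $\cL$ on $S^1$. Property (1) of \cref{definition: co-Stokes structure} is immediate; property (2) follows because the analogous formula on the graded model sends $(\gr\cL, (\gr\cL)_\leq)$ to $(\gr\cL, (\gr\cL)_<)$, so any local graded trivialization for $(\cL, \cL_\leq)$ pushes forward to one for $(\cL, \cL_<)$ with the same graded pieces. The quasi-inverse $G$ is given by the symmetric ``step-up'' construction: for $(\cL, \cL_<)$ with graded pieces $\gr_{c_i}\cL$, choose a local graded trivialization $\phi \colon (\cL, \cL_<) \xrightarrow{\sim} (\gr\cL, (\gr\cL)_<)$ provided by axiom (2), and set
\[
  \cL_{\leq \xi, \theta} := \phi^{-1}\Big(\sum_{c_i \leq_\theta \xi} \gr_{c_i}\cL_\theta\Big).
\]

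Well-definedness of $G$ on overlaps reduces to the claim that any automorphism of $(\gr\cL, (\gr\cL)_<)$ on a connected open subset of $S^1$ also preserves the candidate $\leq$-filtration. At non-anti-Stokes directions $\theta$, the exponents are totally ordered by $<_\theta$, so such an automorphism is block lower-triangular with respect to the decomposition, and it trivially preserves $(\gr\cL)_\leq$ as well. At an anti-Stokes direction $\theta_0$ where distinct exponents $c_i, c_j$ satisfy $\re(c_i e^{-i\theta_0}) = \re(c_j e^{-i\theta_0})$, the automorphism at $\theta_0$ could a priori mix $\gr_{c_i}\cL_{\theta_0}$ with $\gr_{c_j}\cL_{\theta_0}$; however, continuity across $\theta_0$ combined with the opposite lower-triangular constraints on the two sides of the anti-Stokes direction forces the automorphism to be block-diagonal at $\theta_0$, so that the $\leq$-filtration is again preserved. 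Once well-definedness is established, verifying $F\circ G \cong \id$ and $G\circ F\cong \id$ is stalkwise routine: ``take strict predecessors'' and ``include the $\xi$-level graded piece'' are mutually inverse operations on filtrations of $\cL_\theta$ indexed by $(\bbC, \leq_\theta)$ with jumps supported on the finite set $\{c_1,\dots,c_n\}$. The main technical step is the continuity argument at anti-Stokes directions, which is what ensures $G$ glues to a global functor.
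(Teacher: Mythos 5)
Your argument is correct and genuinely more explicit than the paper's, which simply cites Definition 1.34 and Remark 1.41 of Sabbah's \emph{Introduction to Stokes Structures}. The forward formula $\cL_{<\xi,\theta} = \sum_{\xi' <_\theta \xi} \cL_{\leq\xi',\theta}$ is correct, and you correctly identify the crux of the reverse direction: there is no pointwise formula recovering $\cL_\leq$ from $\cL_<$ alone, and the candidate $\leq$-filtration built from a local graded trivialization is well defined precisely because the sheaf of filtered automorphisms of the graded model preserving $<$ coincides with that preserving $\leq$, which you establish via the block-entry continuity argument across the critical directions. This is essentially the content of Sabbah's Remark 1.41. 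Two corrections if you were to write this up for this paper. First, a terminological clash: directions $\theta_0$ with $\re(c_i e^{-i\theta_0}) = \re(c_j e^{-i\theta_0})$ are called \emph{Stokes directions} in \cref{definition: Stokes direction and small}, not anti-Stokes; the paper reserves the latter name for $\im(c_i e^{-i\theta_0}) = \im(c_j e^{-i\theta_0})$. Second, you address well-definedness of $G$ on objects but not functoriality; the same block-entry continuity argument, applied to the components $\lambda_{ab}\colon \gr_{c_b}\cL \to \gr_{c'_a}\cL'$ of a locally trivialized morphism (which need not be invertible and may relate structures with different exponent sets), shows that any morphism preserving $<$ also preserves $\leq$, so no new idea is required.
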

\begin{definition} \label{definition: Stokes direction and small}
Let $(\cL, \cL_<)$ be a co-Stokes structure of exponential type with exponents $\{c_1, \dots,c_n\}$. 
We call $\theta\in S^1$ a \emph{Stokes direction} if for some $c_i\neq c_j$, we have \[\re(c_i{e^{-i\theta}})=\re(c_je^{-i\theta}).\]
We call $\theta\in S^1$ an \emph{anti-Stokes direction} if for some $c_i\neq c_j$, we have \[\im(c_i{e^{-i\theta}})=\im(c_je^{-i\theta}).\]An open interval $I\subset S^1$ is called \emph{good} (resp.\ \emph{small}) if it contains exactly one (resp.\ at most one) Stokes direction for each pair $c_i\neq c_j$.
\end{definition}

The next proposition shows that co-Stokes structures trivialize over a small open interval and such a trivialization is unique over a good open interval.
\begin{proposition}\label{proposition: Stokes filtration trivializes over good open intervals}
Let $(\cL, \cL_<)$ be a Stokes structure of exponential type with exponents $\{c_1, \dots,c_n\}$.
\begin{enumerate}[wide]
    \item On any small open interval $I\subset S^1$, there exists an isomorphism $\cL|_I \xrightarrow{\ \sim \ } \bigoplus_{c_i} \gr_{c_i} \cL|_I$ compatible with filtration.
    \item \label{over good is unique} On any good open interval $I\subset S^1$, there exists a unique isomorphism $\cL|_I \xrightarrow{\ \sim \ } \bigoplus_{c_i} \gr_{c_i} \cL|_I$ compatible with filtration.
    \item On any good open interval $I\subset S^1$, the isomorphism of sheaves in $(\ref{over good is unique})$ induces a decomposition of local sections\begin{equation*}
        \cL(I)\xrightarrow{\ \sim \ } \bigoplus_{c_i} \gr_{c_i} \cL(I).
    \end{equation*} 
    The preimage of $\gr_{c_i} \cL(I)$ is $\cL_{\leq c_i}(I)$.  
    \item Let $\lambda\colon (\cL, \cL_{<}) \xrightarrow{\ \ \  }(\cL^{\prime}, \cL_{<}^{\prime})$ be a morphism of co-Stokes structures. Then, for any good open interval $I \subset S^1$, the morphism $\lambda|_I$ is graded with respect to the splittings in (\ref{over good is unique}).
    
\end{enumerate}
\end{proposition}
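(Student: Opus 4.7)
The plan is to establish part (2) first as a combinatorial uniqueness result, then use the same combinatorics to glue local trivializations in (1), and finally obtain (3) and (4) formally from (1) and (2).

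For (2), I would suppose $\phi_1, \phi_2$ are two Stokes-filtered isomorphisms $\cL|_I \xrightarrow{\sim} \bigoplus_i \gr_{c_i}\cL|_I$ and analyze $u = \phi_2 \circ \phi_1^{-1} \in \Aut(\gr\cL|_I)$, which preserves the filtration and, in the standard convention for Stokes structures, satisfies $\gr(u) = \id$. Decompose $u - \id$ into matrix blocks $u_{ij}\colon \gr_{c_j}\cL|_I \to \gr_{c_i}\cL|_I$ for $i \ne j$. Filtration compatibility at a point $\theta$ forces $u_{ij}(\theta) = 0$ whenever $c_j <_\theta c_i$ fails. Since $u_{ij}$ is a section of a local system on the simply connected interval $I$, vanishing on any nonempty open subset implies vanishing on all of $I$. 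On a good interval and for any pair $i \ne j$, $I$ contains a Stokes direction for $(c_i, c_j)$; on one side of this direction the relation $c_j <_\theta c_i$ fails, hence $u_{ij}$ vanishes there, and therefore throughout $I$. So $u = \id$.

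For (1), I would cover the small interval $I$ by sub-intervals $I_\alpha$ on which the local-triviality condition in Definition~\ref{def: Stokes structure}(2) gives Stokes-filtered trivializations $\phi_\alpha$. The transitions $u_{\alpha\beta} = \phi_\beta \circ \phi_\alpha^{-1}$ are filtration-preserving automorphisms of $\gr\cL$ with trivial graded, assembling into a non-abelian \v{C}ech $1$-cocycle valued in the sheaf $\cA$ of such automorphisms. Filter $\cA$ by the nilpotent degree of $u - \id$; the successive quotients decompose as sums over pairs $(i,j)$ of sheaves of the form $j_! j^* \Hom(\gr_{c_j}\cL, \gr_{c_i}\cL)$, where $j$ is the inclusion of the sub-interval of $I$ on which $c_j <_\theta c_i$ holds. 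Since $I$ is small, this sub-interval is itself an interval for each pair, so $H^1(I, -)$ of each successive quotient vanishes by a Mittag--Leffler argument for sheaves on intervals. Inducting up the nilpotent filtration trivialises the cocycle class $\{u_{\alpha\beta}\}$ and yields a global Stokes-filtered trivialisation of $\cL|_I$.

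Parts (3) and (4) are then formal consequences. For (3), apply $\Gamma(I, -)$ to the isomorphism from (2): the resulting decomposition of sections has the preimage of $\gr_{c_i}\cL(I)$ equal to $\cL_{<c_i}(I)$ (equivalently $\cL_{\le c_i}(I)$ in the Stokes version), since the isomorphism is filtration-compatible and the filtration jumps by $\gr_{c_i}\cL$ precisely at $\xi = c_i$. For (4), uniqueness in (2) provides canonical splittings of $\cL|_I$ and $\cL^{\prime}|_I$; writing $\lambda|_I$ as a matrix of blocks between these splittings and repeating the triangularity-plus-Stokes-line argument from (2) forces all off-diagonal blocks to vanish, so $\lambda|_I$ is diagonal. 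The main technical obstacle is the gluing step in (1): dealing with the non-abelian cohomology in the presence of several Stokes directions requires the nilpotent filtration of $\cA$ and careful control of the supports of the $j_! j^* \Hom$ sheaves, which is exactly where the small-interval hypothesis is used essentially.
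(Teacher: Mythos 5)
The paper does not prove this proposition; it delegates to Malgrange, to Corollary 3.15 of Sabbah's lecture notes, and to Proposition 2.2 and Remark 2.3 of Hertling--Sabbah. What you have written is essentially the standard argument those sources give, worked out in detail: analyse the discrepancy automorphism $u = \phi_2\phi_1^{-1}$ block by block, use that each $u_{ij}$ is a section of a local system on a simply connected interval and kill it where the filtration forces vanishing, and for existence glue via the nilpotent filtration on the group of graded-trivial filtered automorphisms and vanishing of $H^1$ on a small interval. So the route is the same as the literature the paper cites.

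There is a genuine error in your treatment of part (3). You assert that the preimage of $\gr_{c_i}\cL(I)$ equals $\cL_{<c_i}(I)$ and call this ``equivalently $\cL_{\le c_i}(I)$ in the Stokes version.'' These are not equivalent over a good interval. Under the unique splitting, $\cL_{\le c_i}|_I \cong \bigoplus_j j_{!}j^*\gr_{c_j}\cL|_I$, where for the summand $j=i$ the supporting open set $\{\theta\in I : c_i\le_\theta c_i\}$ is all of $I$, so $\Gamma(I,-)$ of that summand is $\gr_{c_i}\cL(I)$; for every $j\ne i$ the supporting set is a proper sub-interval (a good interval contains a Stokes direction for each pair), so its extension by zero has no global sections. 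Hence $\cL_{\le c_i}(I)\cong\gr_{c_i}\cL(I)$, as the proposition states. By contrast, in $\cL_{<c_i}|_I$ the summand $j=i$ is supported on $\{\theta : c_i<_\theta c_i\}=\emptyset$, so $\cL_{<c_i}(I)=0$. Your identification of the preimage with $\cL_{<c_i}(I)$ is therefore false, and the parenthetical claim of equivalence between the two is the source of the mistake.

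There is also a direction slip in (2). With your convention $u_{ij}\colon\gr_{c_j}\cL\to\gr_{c_i}\cL$, applying filtration-preservation to $\cL_{\le c_j}$ (which contains $\gr_{c_j}\cL_\theta$ for every $\theta$) gives $u(\gr_{c_j}\cL_\theta)\subset\bigoplus_{c_k\le_\theta c_j}\gr_{c_k}\cL_\theta$, hence $u_{ij}(\theta)=0$ whenever $c_i\le_\theta c_j$ fails --- not whenever $c_j<_\theta c_i$ fails, as you wrote. Since a good interval contains the Stokes direction for each pair, both conditions give a nonempty open region where $u_{ij}$ vanishes, so the conclusion of (2) survives; but the inequality as stated is reversed, and the same reversal propagates silently into (4).
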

\begin{proof} 
See \cite{Malgrange1982_la_classification_des_connexions_irregulieres_a_une_variable} and \cite[Corollary 3.15]{Sabbah_introduction_to_Stokes_structure} for (1) and (4). See \cite[Proposition 2.2 and Remark 2.3]{Hertling_Sabbah_examples_of_non-commutative_hodge_structures} for (2) and (3).
\end{proof}

The trivialization of co-Stokes structure over a small open interval allows us to compute sheaf cohomology. The following corollary will be helpful for our later construction.

\begin{corollary}\label{corollary: H1(small)=0}
    Let $(\cL, \cL_{<})$ be a co-Stokes structure of the exponential type. Let $I$ be a small open interval. Then we have \begin{equation*}
    H^1(I, \cL_{<\xi})=0.
    \end{equation*}
\end{corollary}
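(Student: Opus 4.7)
The plan is to trivialize the co-Stokes filtration on $I$ via \cref{proposition: Stokes filtration trivializes over good open intervals}(1) and then compute cohomology of each graded summand separately. The isomorphism $(\cL,\cL_<)|_I \xrightarrow{\ \sim\ } (\gr\cL,(\gr\cL)_<)|_I$ of filtered local systems yields
\begin{equation*}
    \cL_{<\xi}|_I \;\cong\; \bigoplus_{i=1}^n (\gr_{c_i}\cL)_{<\xi}|_I,
\end{equation*}
so, since sheaf cohomology commutes with finite direct sums, it suffices to prove $H^1\bigl(I,(\gr_{c_i}\cL)_{<\xi}|_I\bigr)=0$ for each exponent $c_i$.

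Unwinding \cref{definition: co-Stokes structure}, each summand $(\gr_{c_i}\cL)_{<\xi}$ is the extension by zero of the local system $\gr_{c_i}\cL$ from the open half-circle $U^{(i)}=\{\theta\in S^1\mid \re((c_i-\xi)e^{-i\theta})<0\}$ (empty when $c_i=\xi$). Setting $U_i=I\cap U^{(i)}$ and $Z_i=I\setminus U_i$, with inclusions $j\colon U_i\hookrightarrow I$ and $\iota\colon Z_i\hookrightarrow I$, the short exact sequence
\begin{equation*}
    0 \longrightarrow j_!(\gr_{c_i}\cL|_{U_i}) \longrightarrow \gr_{c_i}\cL|_I \longrightarrow \iota_*(\gr_{c_i}\cL|_{Z_i}) \longrightarrow 0,
\end{equation*}
together with the vanishing $H^{\geq 1}(I,\gr_{c_i}\cL|_I)=0$ (since $I$ is contractible and $\gr_{c_i}\cL$ is a local system), identifies $H^1(I,(\gr_{c_i}\cL)_{<\xi}|_I)$ with the cokernel of the restriction map $H^0(I,\gr_{c_i}\cL|_I)\to H^0(Z_i,\gr_{c_i}\cL|_{Z_i})$.

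The problem thus reduces to showing that $Z_i$ is empty or connected, and this geometric step is the main obstacle. The boundary of the half-circle $U^{(i)}$ in $S^1$ consists of two antipodal points, and the smallness of $I$ (shrinking $I$ further if needed) guarantees that $I$ contains at most one of them. Consequently $U_i$ is empty, equal to $I$, or a subinterval of $I$ meeting $\partial I$, so $Z_i$ is empty or a single connected subinterval of $I$. The restriction map then identifies either with the zero map or with the evaluation of a section of a local system on a connected set, both of which are surjective; this yields the required vanishing.
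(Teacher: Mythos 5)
Your overall strategy — trivialize $(\cL,\cL_<)|_I$ via \cref{proposition: Stokes filtration trivializes over good open intervals}(1), reduce to the graded summands $j_!(\gr_{c_i}\cL|_{U_i})$, and compute $H^1$ via the short exact sequence of the open/closed decomposition $U_i\sqcup Z_i$ of $I$ — is sound and is in fact the same computation the paper itself carries out later in \cref{lemma: small open cover are Leray cover}; the paper's own proof of this corollary, by contrast, is a bare citation of Sabbah. So the route is a legitimate and more self-contained one. The reduction to the surjectivity of $H^0(I,\gr_{c_i}\cL|_I)\to H^0(Z_i,\gr_{c_i}\cL|_{Z_i})$ is correct.

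The gap is in the final geometric step, and it is a real one. First, ``shrinking $I$ further if needed'' is not a permissible move here: the statement is about the fixed interval $I$, and $H^1$ vanishing on a proper subinterval $I'\subset I$ gives no information about $H^1(I,\cL_{<\xi})$ (there is no long exact sequence relating them in the direction you would need). Second, your direct claim that a small interval contains at most one of the two antipodal boundary points of $U^{(i)}$ is only immediate from \cref{definition: Stokes direction and small} when $\xi$ is itself one of the exponents $c_1,\dots,c_n$, since only then are those boundary points Stokes directions of the given co-Stokes structure. The corollary is stated for arbitrary $\xi\in\bbC$, and for such $\xi$ the antipodal pair $\partial U^{(i)}$ is unconstrained by smallness. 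If $I$ happens to have length greater than $\pi$ (which \cref{definition: Stokes direction and small} does not forbid), one can choose $\xi$ so that $U^{(i)}\subset I$, in which case $Z_i$ has two components and the cokernel is nonzero. To close the gap you must either supply a length bound (length of $I$ at most $\pi$, which immediately forbids $I$ from containing any antipodal pair), or reduce to the case where $\xi$ lies among the exponents, or justify separately why the problematic configuration cannot occur; none of these is done.
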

\begin{proof}
See, for example, \cite[Lemma 3.12]{Sabbah_introduction_to_Stokes_structure}. 
\end{proof}

\section{Co-Stokes sheaves}\label{section: assemble Stokes}

Since we will express topological Laplace transforms purely in terms of sheaf operations, we need to encode the data of a co-Stokes structure of exponential type into an $\bbR$-constructible sheaf on $\bbC^*$, which we call a \emph{co-Stokes sheaf} (see \cref{definition: co-Stokes structure}).

We construct a functor from co-Stokes structures of exponential type to co-Stokes sheaves in \cref{lemma: sigma functor}, and an inverse functor in \cref{lemma: tau functor}.
We show that they give an equivalence of categories (see \cref{theorem: SFil=SSh}).

\begin{notation}\label{notation: circles}        
For $\xi=\lambda e^{i\theta} \in \bbC$ with $\theta\in[0,2\pi)$ and $\lambda \in \mathbb{R}$, denote
\begin{align*}
    S_\xi&=\set{\lambda' e^{i\theta'}\in \bbC^*|\lambda'(\theta')=\exp(\re (\xi e^{-i\theta'}))},\\ 
    D_\xi&=\set{\lambda' e^{i\theta'}\in \bbC^*|\lambda'(\theta')<\exp(\re (\xi e^{-i\theta'}))},
\end{align*}
(see Fig.\ \ref{figure: S^1}) and denote a parametrization by
\begin{align*}
i_\xi\colon  S^1 &\longrightarrow S_\xi\subset \bbC^* \\
\theta &\longmapsto e^{\re(\xi e^{-i\theta})}e^{i\theta} .
\end{align*}
For $z = \lambda' e^{i\theta'}\in \bbC^*$, we denote ${S_\xi {<} z}$ if $z$ lies outside the closure of $D_\xi$, i.e.\ $ \lambda'>\exp^{\re(\xi e^{-i\theta'})}$.
\end{notation}

\begin{definition}\label{definition: co-Stokes sheaf}
    A \emph{co-Stokes sheaf $(\cL, \cG)$ with exponents $\{c_1, \dots,c_n\}$} consists of a local system $\cL$ on $\bbC^*$, and a subsheaf $\cG$ of $\cL$ such that each point $z\in\bbC^*$ has an open neighborhood $U$ satisfying the following two conditions:
    \begin{enumerate}[wide]
        \item The restriction $\cL|_{U}$ is a constant sheaf.
        \item There exists a decomposition \begin{equation*}
    \cL|_{U} = \bigoplus_{i=1}^n\gr_{c_i}\cL,
    \end{equation*} 
    where each $\gr_{c_i}\cL$ is a constant sheaf on $U$ and for all $z' \in U$
    \begin{equation*}
    \cG_{z'} = \bigoplus_{S_{c_i}{<}z'}\gr_{c_i}\cL .
    \end{equation*}
    \end{enumerate}
    A morphism of co-Stokes sheaves from $(\cL, \cG)$ to $(\cL', \cG')$ is a morphism of local systems from $\cL$ to $\cL'$ that restricts to a morphism of subsheaves from $\cG$ and $\cG'$.
    Let $\SSh$ denote the category of co-Stokes sheaves.
\end{definition}

\begin{notation}\label{notation: circle-infty}
Given a co-Stokes structure or a co-Stokes sheaf with exponents $\{c_1, \dots, c_n\}$, let $S_\infty\subset\bbC^*$ denote a circle centered at $0$ encircling all $S_{c_i}$ for $1\leq i\leq n$, and $i_\infty\colon S^1\longrightarrow S_\infty$ the homeomorphism given by rescaling.
\end{notation}

Let $\pi \colon \bbC^* \xrightarrow{\ \sim \ } S^1 \times \bbR^1 \xrightarrow{\ } S^1$ denote the composition, where the first arrow is given by $r e^{i \theta} \mapsto (\theta, \log r)$, and the second arrow is the projection.

\begin{lemma}\label{lemma: assemble Stokes filtratoin}
Given a co-Stokes structure of exponential type $(\cL, \cL_{<})$ on $S^1$, there exists a unique subsheaf $\cG$ of $\pi^*\cL$ on $\bbC^*$ such that for any $z\in \bbC^*$ with argument $\theta$, lying on a circle $S_\xi$ for some $\xi\in\bbC$, we have
\begin{equation*}
  \cG_z = \cL_{<\xi, \theta}.
\end{equation*}
\end{lemma}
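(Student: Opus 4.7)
The plan is to define $\cG$ locally using trivializations of the co-Stokes structure over small intervals, and to verify existence and uniqueness via the prescribed stalk formula.

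First I would check that the proposed stalk $\cG_z := \cL_{<\xi,\theta}$ (for $z \in S_\xi$ with $\arg z = \theta$) is well-defined, independent of the choice of $\xi$: by \cref{notation: xi1 smaller than xi2}, $\cL_{<\xi,\theta}$ depends only on $\re(\xi e^{-i\theta})$, and this quantity equals $\log|z|$ whenever $z \in S_\xi$ (by \cref{notation: circles}). Uniqueness of $\cG$ then follows from the standard fact that a subsheaf $\cG \subset \pi^*\cL$ is determined by its stalks as subobjects of the ambient stalks, since for any open $U \subset \bbC^*$ one has $\cG(U) = \{s \in \pi^*\cL(U) : s_z \in \cG_z \text{ for every } z \in U\}$, where the nontrivial inclusion uses that local sections of $\cG$ glue along an open cover to a global section of $\cG$.

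For existence I would cover $S^1$ by small open intervals $I_\alpha$ (in the sense of \cref{definition: Stokes direction and small}) and consider the open cover of $\bbC^*$ by $U_\alpha := \pi^{-1}(I_\alpha)$. By \cref{proposition: Stokes filtration trivializes over good open intervals}(1), I can fix for each $\alpha$ a filtered isomorphism $\cL|_{I_\alpha} \cong \bigoplus_{i=1}^n \gr_{c_i}\cL|_{I_\alpha}$, which pulls back to $\pi^*\cL|_{U_\alpha} \cong \bigoplus_i \pi^*\gr_{c_i}\cL|_{U_\alpha}$. Writing $V_{\alpha,i} := \{z \in U_\alpha : S_{c_i} < z\}$ and $j_{\alpha,i}\colon V_{\alpha,i} \hookrightarrow U_\alpha$, I would set
\[ \cG|_{U_\alpha} := \bigoplus_{i=1}^n j_{\alpha,i!}j_{\alpha,i}^*\,\pi^*\gr_{c_i}\cL|_{U_\alpha} \;\subset\; \pi^*\cL|_{U_\alpha}. \]
A direct computation of stalks shows $(\cG|_{U_\alpha})_z = \bigoplus_{c_i: S_{c_i}<z} \gr_{c_i}\cL_\theta$, and comparing with the filtration definition (using that $S_{c_i} < z$ iff $\re(c_i e^{-i\theta}) < \log|z| = \re(\xi e^{-i\theta})$) I get exactly $\cL_{<\xi,\theta}$.

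The main obstacle is that the local construction a priori depends on the choice of the small interval $I_\alpha$ and the trivialization, and one must check that the different $\cG|_{U_\alpha}$ agree on overlaps. I would resolve this by the uniqueness paragraph: since the stalks of $\cG|_{U_\alpha}$ and $\cG|_{U_\beta}$ at each $z \in U_\alpha \cap U_\beta$ coincide (both equalling the intrinsic subspace $\cL_{<\xi,\theta} \subset \cL_\theta$, which is independent of any chosen splitting by \cref{proposition: Stokes filtration trivializes over good open intervals}(3)), the two local subsheaves of $\pi^*\cL|_{U_\alpha \cap U_\beta}$ are equal. Hence they glue to a global subsheaf $\cG \subset \pi^*\cL$ with the required stalks.
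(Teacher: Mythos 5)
Your proof is correct, and it takes a genuinely different route from the paper. The paper, after the same well-definedness check, argues via the étale space of $\pi^*\cL$: it shows that the union of the prescribed subspaces $\cG_z$ is an \emph{open} subset of the étale space, using that each germ in $\cG_z$ decomposes locally under the co-Stokes grading as a sum of sections $l_{c_j}$ with $c_j <_\theta z$, and that this strict inequality is an open condition in $z$. Your approach instead builds $\cG$ explicitly as a direct sum of extensions-by-zero over each good open sector $U_\alpha = \pi^{-1}(I_\alpha)$ and then glues, invoking the standard fact that a subsheaf is determined by its stalks to get both uniqueness and the consistency of the local models on overlaps. Both arguments ultimately rest on the same two inputs — the local trivialization from \cref{proposition: Stokes filtration trivializes over good open intervals}(1) and the openness of the condition $S_{c_i} < z$ — but the paper's étale-space argument is shorter and avoids the explicit cover-and-glue bookkeeping, while yours has the advantage of making the local structure of $\cG$ (as $\bigoplus_i j_{\alpha,i!}j_{\alpha,i}^*\pi^*\gr_{c_i}\cL$) completely explicit, which is essentially the description the paper only records later in \cref{definition: co-Stokes sheaf} and uses again in \cref{lemma: tau functor} and \cref{cohomology of G over open sectors are 0}. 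One small remark: in your final paragraph you cite \cref{proposition: Stokes filtration trivializes over good open intervals}(3) for independence of the splitting, but what you actually need is weaker — only that the subspace $\cL_{<\xi,\theta}\subset\cL_\theta$ is part of the given filtration data and hence intrinsic, so the stalks of the two local models coincide regardless of which trivialization was chosen; this holds on any small interval, not just good ones.
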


\begin{proof}
We construct the subsheaf $\cG$ by specifying its stalks $\cG_z$ for all $z\in \bbC^*$. First observe that if $z$ lies on the intersection of two circles $S_{\xi_1}$ and $S_{\xi_2}$, then we have the relation $\re(\xi_1 e^{-\theta})=\re (\xi_2 e^{-i\theta})$. So the two subspaces $\cL_{<\xi_1, \theta}$ and $\cL_{<\xi_2, \theta}$ are the same. Hence, the definition of $\cG_z$ is independent of the choice of $\xi$.

It remains to show that the $\cG_z$ forms a subsheaf. For this, we consider the étale space $\mathscr{L}$ on $\bbC^*$ associated to the local system $\pi^*\cL$ on $\bbC^*$. We will prove that the collection of subspaces $\mathscr{G}_z$ of $\mathscr{L}_z$, defined by $\mathscr{G}_z=\cL_{<\xi, \theta}$, is an open set in the étale topology. 

By the local gradedness of the co-Stokes filtration, for each $\phi_z\in \mathscr{G}_z$ we have $\phi_z = \sum l_{c_j, \theta}$ for some elements $l_{c_j, \theta}\in \gr_{c_j}\cL_{\theta}$ and $c_j <_\theta  z$. Then there exists an open neighborhood $U$ of $z$, such that for all $ z'=r'e^{i\theta'}\in U$, we have  $ c_j<_{\theta'}z'$. Thus, the sum  $\phi_z = \sum l_{c_j, \theta}$ extends to a sum of local section  $\phi = \sum l_{c_j}$ over $U$, and $\set{\phi_{z'}| z'\in U}$ defines an open neighborhood of $\phi_z$ in $\mathscr{G}$. We conclude $\mathscr{G}$ is an open set of $\mathscr{L}$ and it corresponds to a subsheaf $\cG$.
\end{proof}

\Cref{lemma: assemble Stokes filtratoin} allows us to assemble a co-Stokes structure on $S^1$ into a co-Stokes sheaf on $\bbC^*$.

\begin{lemma}\label{lemma: sigma functor}
We have a functor from the category of co-Stokes structures of exponential type on $S^1$ to the category of co-Stokes sheaves  on $\bbC^*$,
\begin{align*}
    \sigma\colon  \Stol &\longrightarrow \SSh\\ (\cL, \cL_{<})&\longmapsto ( \pi^*\cL, \cG)
\end{align*}
where $\cG$ is given in \cref{lemma: assemble Stokes filtratoin}.
\end{lemma}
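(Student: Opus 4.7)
The plan is to verify the two things implicit in the statement: that $\sigma$ applied to an object produces a bona fide co-Stokes sheaf in the sense of \cref{definition: co-Stokes sheaf}, and that $\sigma$ applied to a morphism produces a morphism of co-Stokes sheaves. Well-definedness of the subsheaf $\cG$ as a subsheaf of $\pi^*\cL$ has already been handled in \cref{lemma: assemble Stokes filtratoin}, so the remaining work is to check the local model condition and the compatibility with morphisms.

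For the local model condition, I would fix $z \in \bbC^*$, write $z = \lambda_0 e^{i\theta_0}$, and choose a small open sector $U = \{\lambda' e^{i\theta'} : \theta' \in I,\ \lambda' \in J\}$ around $z$ where $I \subset S^1$ is a small open interval (in the sense of \cref{definition: Stokes direction and small}) containing $\theta_0$ and $J \subset \bbR_{>0}$ is a small open interval around $\lambda_0$. Since $U$ is contractible, $\pi^*\cL|_U$ is a constant sheaf, and by \cref{proposition: Stokes filtration trivializes over good open intervals}(1) the co-Stokes structure trivializes over $I$, so we obtain a decomposition $\pi^*\cL|_U = \bigoplus_{i=1}^n \pi^*\gr_{c_i}\cL|_U$ into constant subsheaves compatible with filtration. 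It remains to check the stalk description of $\cG$ on $U$: for $z' = \lambda' e^{i\theta'} \in U$, choose $\xi \in \bbC$ with $z' \in S_\xi$, i.e.\ $\log\lambda' = \re(\xi e^{-i\theta'})$. Then by construction $\cG_{z'} = \cL_{<\xi,\theta'}$, and under the local trivialization this equals $\bigoplus_{c_i <_{\theta'} \xi} \gr_{c_i}\cL$. Now $c_i <_{\theta'} \xi$ means $\re(c_i e^{-i\theta'}) < \re(\xi e^{-i\theta'}) = \log\lambda'$, which is exactly the condition $S_{c_i} < z'$ from \cref{notation: circles}. This identifies $\cG_{z'}$ with $\bigoplus_{S_{c_i} < z'} \gr_{c_i}\cL$, as required.

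For functoriality, I would take a morphism $\lambda \colon (\cL, \cL_<) \to (\cL', \cL'_<)$ of co-Stokes structures. By definition this is a morphism of local systems $\lambda \colon \cL \to \cL'$ such that for every $\theta \in S^1$ and every $\xi \in \bbC$ the stalk map sends $\cL_{<\xi,\theta}$ into $\cL'_{<\xi,\theta}$. Pulling back by $\pi$ gives a morphism $\pi^*\lambda \colon \pi^*\cL \to \pi^*\cL'$ of local systems on $\bbC^*$, and the stalk condition above says precisely that $\pi^*\lambda$ carries $\cG_z$ into $\cG'_z$ for every $z \in \bbC^*$, hence restricts to a morphism of subsheaves $\cG \to \cG'$. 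Compatibility with composition and identities is immediate from the functoriality of $\pi^*$.

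The main conceptual point — and the only place where something nontrivial happens — is reconciling the pointwise definition of $\cG$ in terms of the filtration indexed by $\xi \in \bbC$ with the ``curved'' local-model description in \cref{definition: co-Stokes sheaf}; this is resolved by the observation that the circles $S_\xi$ are precisely the level sets of the function $(\lambda',\theta') \mapsto \log\lambda' - \re(\xi e^{-i\theta'})$, so the filtration index coincides with the geometric ``$S_{c_i} < z'$'' condition used in \cref{definition: co-Stokes sheaf}. I do not expect any serious obstacle: the only care needed is to shrink $U$ so that $\pi(U) \subset I$ is small in the sense of \cref{definition: Stokes direction and small}, ensuring both constancy of $\pi^*\cL$ and the existence of a graded trivialization.
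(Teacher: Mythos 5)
Your proof is correct and follows the same route as the paper, but is more complete: the paper's own proof only verifies that morphisms of co-Stokes structures induce morphisms of co-Stokes sheaves (your third paragraph), leaving the object-level verification implicit. Your middle paragraph — using a small open sector $U$, invoking \cref{proposition: Stokes filtration trivializes over good open intervals}(1) for the graded trivialization, and then observing that $c_i <_{\theta'} \xi$ for $z' \in S_\xi$ translates exactly to $S_{c_i} < z'$ — is the right check that $(\pi^*\cL,\cG)$ satisfies \cref{definition: co-Stokes sheaf}, and is a worthwhile addition to what the paper records.
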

\begin{proof}
A morphism $\psi$ between two co-Stokes structures $(\cL, \cL_{<})$ and $(\cL', \cL'_{<})$ induces a morphism between local systems $\pi^*\cL$ and $\pi^*\cL'$ on $\bbC^*$. Since $\psi$ preserves the filtrations, it restricts to a morphism between subsheaves $\cG$ and $\cG'$.
So the functor is well-defined.
\end{proof}

Next we show how a co-Stokes sheaf on $\bbC^*$ can be disassembled into a co-Stokes structure on $S^1$.
\begin{notation}\label{notation: Iz Rz}
For $z\in \bbC^*$, denote $R_z \coloneqq \{\bbR_{\ge 1}\cdot z \}\subset \bbC^*$ and $I_z  \coloneqq \{\bbR_{> 0}\cdot z \}\subset \bbC^*$. Let $\pi_z\colon  S_{z}\times [0, \infty) \to S_{z}\times \{0\}$ denote the projection, and $j\colon  S_{z}\times [0, \infty) \hookrightarrow S_{z}\times \bbR = \bbC^*$ the inclusion.
\end{notation}

\begin{lemma}\label{lemma: tau functor}
We have a functor from the category of co-Stokes sheaves on $\bbC^*$ to the category of co-Stokes structures of exponential type on $S^1$,
\begin{align*}
\tau\colon  \SSh&\longrightarrow \Stol \\
(\cL, \cG)&\longmapsto (\cL', \cL'_{<\xi})\coloneqq(i^*_\infty\cL,i_{\xi}^*\cG),
\end{align*}
where $i_\infty$ and $i_\xi$ are defined in Notations \ref{notation: circle-infty} and \ref{notation: circles}.
\end{lemma}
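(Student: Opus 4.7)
The plan is to verify that $(i_\infty^*\cL,\, i_\xi^*\cG)$ satisfies the axioms of a co-Stokes structure of exponential type (Definition \ref{definition: co-Stokes structure}), and that the assignment is functorial.

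First I would regard each $i_\xi^*\cG$ as a subsheaf of $\cL' := i_\infty^*\cL$. Since $\cL$ is a local system on $\bbC^*$ and the maps $i_\xi, i_\infty \colon S^1 \to \bbC^*$ are homotopic via the radial homotopy $H(\theta,t) = \big((1-t)\exp(\re(\xi e^{-i\theta})) + t R_\infty\big)\, e^{i\theta}$ (with $R_\infty$ the radius of $S_\infty$), pulling back $\cL$ along $H$ yields a canonical isomorphism $i_\xi^*\cL \cong i_\infty^*\cL = \cL'$. Under this identification, $i_\xi^*\cG$ becomes a subsheaf $\cL'_{<\xi} \subset \cL'$.

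For the inclusion axiom, suppose $\xi_1 <_\theta \xi_2$. At each $\theta \in S^1$, Definition \ref{definition: co-Stokes sheaf} gives
\[
  \cG_{i_{\xi_j}(\theta)} \;=\; \bigoplus_{c_i <_\theta \xi_j} \gr_{c_i}\cL,
\]
since $S_{c_i} < i_{\xi_j}(\theta)$ is equivalent to $\re(c_i e^{-i\theta}) < \re(\xi_j e^{-i\theta})$. Since $|i_{\xi_1}(\theta)| < |i_{\xi_2}(\theta)|$, radial transport from $i_{\xi_1}(\theta)$ outward to $i_{\xi_2}(\theta)$ only enlarges the stalk of $\cG$ (each crossing of a circle $S_{c_i}$ adds a summand), yielding $\cL'_{<\xi_1,\theta} \subset \cL'_{<\xi_2,\theta}$.

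The main step is the local-gradedness axiom. For $\theta_0 \in S^1$, I would choose a small arc $V \ni \theta_0$ and consider the simply connected open sector $W = \{r e^{i\theta} : r > 0,\ \theta \in V\} \subset \bbC^*$, on which $\cL|_W$ is trivial. The local decompositions from Definition \ref{definition: co-Stokes sheaf} at points along the ray at $\theta_0$ patch into a single global decomposition $\cL|_W = \bigoplus_i \gr_{c_i}\cL|_W$: although individual local splittings are non-canonical, each summand $\gr_{c_i}\cL$ is intrinsically identified with the ``jump'' $\cG_{z_+}/\cG_{z_-}$ of $\cG$ across $S_{c_i}$, so locally defined summands on overlapping neighborhoods agree, and on the simply connected $W$ a compatible splitting is obtained by choosing one at a basepoint and propagating by the flat structure. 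Pulling back via $i_\infty|_V$ yields graded local systems $\gr_{c_i}\cL'|_V$ and an isomorphism of filtered local systems $(\cL'|_V, \cL'_{<\xi}|_V) \cong (\gr\cL'|_V, (\gr\cL')_{<\xi}|_V)$, with the filtration on the graded side determined by the same stalk formula above. Globalizing each $\gr_{c_i}\cL'$ across $S^1$ follows from the same intrinsic jump construction on a tubular neighborhood of $S_{c_i}$, pulled back through $i_{c_i}$. Functoriality of $\tau$ is immediate, since any morphism of co-Stokes sheaves commutes with pullback and preserves the subsheaf $\cG$.

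The main obstacle is the patching: Definition \ref{definition: co-Stokes sheaf} only provides non-canonical local splittings, so I would have to exploit the intrinsic characterization of each $\gr_{c_i}\cL$ as a jump of $\cG$ to ensure that the splittings can be chosen compatibly on the sector $W$ and globalized into local systems $\gr_{c_i}\cL'$ on $S^1$.
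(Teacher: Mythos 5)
Your overall architecture matches the paper's: pull back via $i_\infty$ and $i_\xi$, identify $i_\xi^*\cG$ inside $\cL' = i_\infty^*\cL$ by moving radially to $S_\infty$, check monotonicity, extend the local splittings of Definition~\ref{definition: co-Stokes sheaf} to a full radial sector to get local gradedness on $S^1$, and conclude functoriality by naturality. Your radial-homotopy identification of $i_\xi^*\cL$ with $\cL'$ is the same idea as the paper's, which phrases it sheaf-theoretically through sections over the ray $R_z$ (an adjunction diagram through $\pi_z$ and $j$, giving $\cG_z \xleftarrow{\sim} \cG(R_z) \xrightarrow{\sim} \cG_x = \cL_x$). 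Your explicit check of the inclusion axiom is a reasonable addition that the paper leaves implicit.

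The gap is in the patching step. You assert that ``each summand $\gr_{c_i}\cL$ is intrinsically identified with the jump $\cG_{z_+}/\cG_{z_-}$,'' and conclude that locally defined summands on overlapping neighborhoods must agree. But the jump is a \emph{quotient}, and a summand $\gr_{c_i}\cL$ is a \emph{lift}, i.e.\ a choice of splitting; the quotient does not determine the lift. Concretely, two local decompositions from Definition~\ref{definition: co-Stokes sheaf} on overlapping small neighborhoods can induce the same flag of stalks $\cG_{z'}$ (hence the same jumps) while giving genuinely different constant subsheaves $\gr_{c_i}\cL$; this is the same non-uniqueness as Proposition~\ref{proposition: Stokes filtration trivializes over good open intervals}(1) versus (2) for Stokes structures. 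So ``locally defined summands agree'' is unjustified, and with it the claim that a basepoint splitting propagated by flatness satisfies the constraint $\cG_{z'} = \bigoplus_{S_{c_i}<z'}\gr_{c_i}\cL$ at all $z' \in W$.

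The paper sidesteps this by enlarging $U$ to a \emph{good} open sector (Definition~\ref{definition: small and good sectors}), not just a small one. Over a good sector each $\gr_{c_i}\cL$ does become intrinsic as a \emph{subspace}: there is an angle in the interval at which $S_{c_i}$ is innermost, and there $\cG_{z_-} = 0$, so the jump is literally a stalk of $\cG$, which then extends by flatness over the whole sector. This is what lies behind the paper's ``by locally extending these sheaves $\gr_{c_i}\cL$, we can assume $U$ is a good open sector.'' If you restrict your jump picture to the angle where $\cG_{z_-}$ vanishes — equivalently, work over a good sector rather than a merely small arc — and then verify the filtration constraints against the local decompositions (or observe, as is enough for existence, that any splitting of the constant flag works on a small sector since $\{i : S_{c_i}<z'\}$ is an initial segment of a fixed total order there), the argument closes. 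As written, though, the intrinsic-jump step is not correct.
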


\begin{proof}
First, the inclusion $i^*_\xi\cG \hookrightarrow i^*_\infty\cL$ is given by the following commutative diagram. 
\begin{equation*}
\begin{tikzcd}
i_\xi^*\cG \arrow[d] & i_\xi^*\pi_z^*R^0\pi_{z*}j^*\cG \arrow[d] \arrow[l, "\sim"'] \arrow[r, equal] & i_\infty^*\pi_z^*R^0\pi_{z*}j^*\cG \arrow[d] \arrow[r] & i_\infty^*j^*\cG \arrow[d] \\
i_\xi^*\cL           & i_\xi^*\pi^*_zR^0\pi_{z*}j^*\cL \arrow[l, "\sim"] \arrow[r, equal]            & i_\infty^*\pi_z^*R^0\pi_{z*}j^*\cL \arrow[r]           & i_\infty^*j^*\cL          
\end{tikzcd}
\end{equation*}
Let $z=i_\xi(\theta)$ and $x \in S_\infty\cap R_z$. Then the inclusion $(i_\xi^*\cG)_\theta\hookrightarrow(i_\infty^*\cL)_\theta$ is the composition of $\cG_z\xleftarrow{\ \sim \ } \cG(R_z)\xrightarrow{\ \sim \ } \cG_x=\cL_x$.

Next, we prove that $(\cL', \cL'_{<})$ is a co-Stokes structure. Since $(\cL, \cG)$ is a co-Stokes sheaf, at each $z$, there exists an open neighborhood $U\subset \bbC^*$ and constant sheaves $\gr_{c_i}\cL$ such that for any $z'=r'e^{i\theta'}\in U$, we have 
\begin{equation*}
    \cL|_{U} = {\bigoplus}_{i=1}^n\gr_{c_i}\cL \quad\text{and}\quad\cG_{z'} = \bigoplus_{S_{c_i}<_{\theta'}z'}\gr_{c_i}\cL.
\end{equation*}
By locally extending these sheaves $\gr_{c_i}\cL$, we can assume that $U$ is a good open sector containing $z$ (see \cref{definition: small and good sectors}). Let $I\subset S^1$ be the open interval corresponding to $U$. Over $I\subset S^1$, we have a decomposition of $\cL'$ as: \begin{equation*}
    \cL'_I=(i_\infty^*\cL)_I = i_\infty^*(\cL|_{U}) = i_\infty^*\bigoplus_{i=1}^n \gr_{c_i}\cL.
\end{equation*}
For any $\theta'\in S^1$ near $\theta$, we have a decomposition\begin{equation*}
   (\cL_{<\xi}')_{\theta'}= (i^*_\xi\cG)_{\theta'} = \cG_{i_\xi(\theta')} = \bigoplus_{S_{c_i}{<}i_\xi(\theta')}\gr_{c_i}\cL=\bigoplus_{c_i<_{\theta'}\xi
}\gr_{c_i}\cL.
\end{equation*}
So $(\cL', \cL'_{<})$ is indeed a co-Stokes structure.

Now, we prove that any morphism  $f$ of co-Stokes sheaves from $(\cL_1, \cG_1)$ to $(\cL_2, \cG_2)$ defines a morphism of co-Stokes structures \[\tau(f)\colon  (i^*_\infty\cG_1,i_\xi^*\cG_1)\longrightarrow (i^*_\infty\cG_2,i_\xi^*\cG_2).\] 
A map of local systems $\cL_1\rightarrow \cL_2$ on $\bbC^*$ induces a map on local systems on $S^1$ by restriction 
\begin{equation*}
    i_\infty^*(f)\colon  i^*_\infty\cL_1=i^*_\infty\cG_1\longrightarrow i_\infty^*\cL_2=i_\infty^*\cG_2.
\end{equation*} 
For each filtration indexed by $z$,  we consider the following diagram
\begin{equation*}
    \begin{tikzcd}
i_\infty^*j^*\cG_1 =\cL_1\arrow[r]                       & i_\infty^*j^*\cG_2 =\cL_2                      \\
i_\infty^*\pi_z^* R^0\pi_{z*} j^*\cG_1 =\cL_{1,<z}\arrow[r] \arrow[u] & i_\infty^*\pi_z^* R^0\pi_{z*} j^*\cG_2=\cL_{2,<z} \arrow[u].
\end{tikzcd}
\end{equation*}
By the naturality of $\pi_z^*R^0\pi_{z*}\rightarrow \id$, this diagram commutes. Hence the map of the local system is compatible with filtration. This completes the proof.
\end{proof}

\begin{lemma}\label{Lemma: tau circ sigma =id}
There is a natural isomorphism $\tau \cir \sigma \xrightarrow{\ \sim \ } \id$.
\end{lemma}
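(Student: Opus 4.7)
The plan is to construct an explicit natural isomorphism between $(\tau \circ \sigma)(\cL, \cL_<) = (i_\infty^* \pi^* \cL,\, i_\xi^* \cG)$ and $(\cL, \cL_<)$, then verify compatibility of inclusions and functoriality in the co-Stokes structure.

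First, for the underlying local systems, I observe that $\pi \circ i_\infty\colon S^1 \to S^1$ is the identity: by \cref{notation: circle-infty}, $i_\infty$ sends $\theta$ to a point on the fixed-radius circle $S_\infty$ with argument $\theta$, and $\pi$ extracts the argument. Hence adjunction gives a canonical isomorphism of local systems $i_\infty^* \pi^* \cL \xrightarrow{\ \sim \ } \cL$ on $S^1$.

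Second, for each $\xi \in \bbC$, I claim $i_\xi^* \cG \cong \cL_{<\xi}$ as subsheaves of $\cL$. Stalkwise this is immediate: since $i_\xi(\theta) = e^{\re(\xi e^{-i\theta})}e^{i\theta}$ lies on $S_\xi$ with argument $\theta$, the construction in \cref{lemma: assemble Stokes filtratoin} gives $(i_\xi^*\cG)_\theta = \cG_{i_\xi(\theta)} = \cL_{<\xi,\theta}$. To upgrade this to a sheaf isomorphism, I use that $i_\xi\colon S^1 \to S_\xi$ is a homeomorphism, and that local sections of $\cG$ near $i_\xi(\theta)$ are produced in the proof of \cref{lemma: assemble Stokes filtratoin} precisely from local sections $\sum l_{c_j}$ of $\gr_{c_j}\cL$ with $c_j <_\theta \xi$, which are exactly the local sections of $\cL_{<\xi}$ near $\theta$ under the local gradedness of the co-Stokes filtration.

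Third, I check that these two identifications are compatible with the inclusions into $\cL$: namely, that the composition $i_\xi^* \cG \hookrightarrow i_\infty^* \pi^* \cL \xrightarrow{\sim} \cL$ from \cref{lemma: tau functor} agrees with the tautological $\cL_{<\xi} \hookrightarrow \cL$. Tracing the diagram in the proof of \cref{lemma: tau functor}, the inclusion is obtained by choosing a radial ray $R_z$ from $z=i_\xi(\theta)$ outward, extending a stalk section of $\cG$ to a local section on $R_z$, and then restricting to the intersection with $S_\infty$; under the isomorphism $\pi\circ i_\infty = \id$ this is the same as simply viewing $\cL_{<\xi,\theta} \subset \cL_\theta$. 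Since both arrows are inclusions of subsheaves of $\cL$ agreeing on all stalks, they coincide.

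Finally, naturality is automatic: a morphism $\psi\colon (\cL, \cL_<) \to (\cL', \cL'_<)$ induces $\sigma(\psi) = (\pi^*\psi, \pi^*\psi|_\cG)$, and $\tau\sigma(\psi)$ is just the further restriction via $i_\infty^*$ and $i_\xi^*$, so under the canonical identifications above it recovers $\psi$ itself. The only real content is the second step, where one must verify that the \'etale-space construction of $\cG$ in \cref{lemma: assemble Stokes filtratoin} restricts along $i_\xi$ to the original filtration; this is essentially built into the construction and poses no genuine obstacle.
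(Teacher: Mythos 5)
Your proof is correct and follows essentially the same route as the paper's: identify the underlying local systems via $\pi\circ i_\infty = \id$, then identify the filtrations stalkwise. The paper's proof is considerably terser (it stops at the stalkwise equality $\cL'_{<\xi,\theta} = \cL_{<\xi,\theta}$ and declares the subsheaves isomorphic), whereas you additionally verify the point the paper glosses over — that the inclusion $i_\xi^*\cG \hookrightarrow i_\infty^*\pi^*\cL$ constructed in \cref{lemma: tau functor} via parallel transport along the radial rays $R_z$ really does match the tautological inclusion $\cL_{<\xi}\hookrightarrow\cL$ under the identification $\pi\circ i_\xi = \pi\circ i_\infty = \id$. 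This compatibility is needed for the stalkwise equality to conclude an isomorphism of co-Stokes structures, so filling it in is a genuine improvement in rigor, not a detour.
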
 
\begin{proof}
We prove $(\cL', \cL_{<}')=(\tau\cir\sigma)(\cL, \cL_<)$ is isomorphic to $(\cL, \cL_<)$. 

Since $i_\infty$ is a section of $\pi$, we have by construction \[
 \cL'=i_\infty^*\pi^*\cL= (\pi\cir i_\infty)^*\cL=\cL. \]
 For each subsheaf $\cL_{<\xi}'$, we have by definition
 \[\cL_{<\xi', \theta}=i^*_\xi(\sigma(\cL, \cL_<))_\theta = \cL_{<\xi, \theta}.
\] This implies that the subsheaves $\cL'_{<\xi}$ and $\cL_{<\xi}$ are isomorphic as well.
\end{proof}
\begin{lemma}\label{Lemma: sigma circ tau =id}
There is a natural isomorphism $\sigma \cir \tau \xrightarrow{\ \sim \ } \id$.
\end{lemma}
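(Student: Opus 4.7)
The plan is to produce a natural isomorphism
\begin{equation*}
\eta_{(\cL,\cG)}\colon (\sigma\circ\tau)(\cL,\cG) \xrightarrow{\ \sim\ } (\cL,\cG)
\end{equation*}
in two steps: first match the ambient local systems on $\bbC^*$, then show the two subsheaves agree stalkwise and hence globally.

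For the first step I would produce a natural isomorphism $\pi^* i_\infty^*\cL \xrightarrow{\ \sim\ } \cL$ of local systems on $\bbC^*$. Since $\pi\colon \bbC^* \to S^1$ is a deformation retraction with section $i_\infty$, the pullback $\pi^*$ is an equivalence from local systems on $S^1$ to local systems on $\bbC^*$ with essentially-inverse $i_\infty^*$; the unit of this adjoint equivalence supplies the desired natural isomorphism. At a stalk $z$ with $\arg z = \theta$, the induced map $(\pi^* i_\infty^* \cL)_z = \cL_{i_\infty(\theta)} \xrightarrow{\ \sim\ } \cL_z$ is parallel transport in $\cL$ along the radial segment $R_z$.

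For the second step I would verify stalkwise equality of the subsheaves. For $z \in \bbC^*$, let $\xi \in \bbC$ be the unique parameter with $z \in S_\xi$, so that $z = i_\xi(\theta)$ where $\theta = \arg z$. Unpacking the definitions of $\tau$ and $\sigma$,
\begin{equation*}
  \cG'_z = (i_\xi^* \cG)_\theta = \cG_{i_\xi(\theta)} = \cG_z,
\end{equation*}
as subspaces of $\cL_z = \cL_{i_\xi(\theta)}$. In the construction of $\tau$, the subsheaf $i_\xi^*\cG$ is viewed as a subsheaf of $i_\infty^*\cL$ via the radial parallel-transport isomorphisms in the commutative diagram of \cref{lemma: tau functor}; composing those with Step~1's parallel transport from $i_\infty(\theta)$ back to $z = i_\xi(\theta)$ yields the identity on $\cL_z$, so the image of $\cG'_z$ in $\cL_z$ under $\eta_{(\cL,\cG)}$ is literally $\cG_z$.

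Since a subsheaf of $\cL$ is determined by its stalks as subspaces of the stalks of $\cL$, the stalkwise equality promotes to $\cG' = \cG$ as subsheaves of $\cL$, so $\eta_{(\cL,\cG)}$ is an isomorphism of co-Stokes sheaves. Naturality in $(\cL,\cG)$ is immediate from the naturality of Step~1 and the functoriality of $\pi^*, i_\infty^*, i_\xi^*$. The main obstacle is the bookkeeping in Step~2: carefully tracking the hidden parallel-transport identifications inside $\tau$ and $\sigma$ and confirming they compose with Step~1 to the identity on each radial ray, so that the subsheaf we recover is genuinely $\cG$ rather than a non-canonically equivalent variant.
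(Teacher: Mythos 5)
Your argument takes essentially the same route as the paper: establish the local system isomorphism via the $\pi^*$/$i_\infty^*$ adjunction (realized stalkwise as radial parallel transport), then check that unwinding the inclusion $\cG'_z \hookrightarrow \cL'_z$ hidden in $\tau$ and composing with that parallel transport gives back the inclusion $\cG_z \hookrightarrow \cL_z$. The paper makes the last step explicit with a small commutative diagram involving $\cL(I_z)$ and $\cL(R_z)$, but the content and the key observation (the two radial transports compose to the identity) are the same as yours.
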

\begin{proof}
Let $(\cL', \cG')\coloneqq (\sigma\cir \tau)(\cL, \cG)$, We construct an isomorphism between the two local systems $\cL$ and $\cL'$ by adjunction:
\[\cL'=\pi^*i_\infty^*\cL=\pi^*R^0\pi_* i_{\infty*}i^*_\infty\cL \xleftarrow{\ \sim \ } \pi^*R^0\pi_* \cL\xrightarrow{\ \sim \ } \cL.
\]
For each point $z\in \bbC^*$, denote $x \in S_\infty \cap R_z$.
Then the above isomorphism of local systems at $z$ is
\[
    \psi_z\colon   \cL'_z=\cL_x \xleftarrow{\ \sim \ } \cL(I_z)\xrightarrow{\ \sim \ } \cL_z.
\]
By \cref{lemma: tau functor}, the inclusion $\cG'\hookrightarrow \cL'$  at $z$ is given by the following restriction maps
\begin{equation*}
        \phi_z\colon  \cG_z'=\cG_z\longrightarrow \cL_z \xleftarrow{\ \sim \ } \cL(R_z)\xrightarrow{\ \sim \ } \cL_x=\cL'_z.
\end{equation*}
Then the image of $\cG'_z$ under $\psi_z$ is $(\psi_z\cir \phi_z)(\cG'_z)\subset \cL_z$, By considering the following commutative diagram,
\begin{equation*}
   \begin{tikzcd}
                         & \cL_z \arrow[d, equal]      & \cL(I_z) \arrow[l,"\sim"'] \arrow[r,"\sim"] \arrow[d,"\sim"'] & \cL_x \arrow[d, equal] \\ 
\cG'_z=\cG_z\arrow[r,hook] &\cL_z                     & \cL(R_z) \arrow[r,"\sim"'] \arrow[l,"\sim"]                    & \cL_x                               
\end{tikzcd}
\end{equation*}
we see that $(\psi_z\cir \phi_z)(\cG'_z)=\cG_z\subset \cL_z$.
We conclude that the isomorphism of local systems preserves the subsheaves, completing the proof.
\end{proof}

Combining Lemmas \ref{Lemma: tau circ sigma =id} and \ref{Lemma: sigma circ tau =id}, we obtain the following theorem:

\begin{theorem}\label{theorem: SFil=SSh}
    The functor $\sigma\colon \Stol\longrightarrow \SSh$ is an equivalence of category with $\tau$ as its quasi-inverse.
\end{theorem}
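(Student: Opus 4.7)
The plan is essentially to package the two preceding lemmas into the categorical statement. By definition, an equivalence of categories requires two functors $\sigma$ and $\tau$ together with natural isomorphisms $\tau \circ \sigma \xrightarrow{\sim} \id_{\Stol}$ and $\sigma \circ \tau \xrightarrow{\sim} \id_{\SSh}$. Both functors have already been constructed in \cref{lemma: sigma functor} and \cref{lemma: tau functor}, and the two required natural isomorphisms are exactly the content of \cref{Lemma: tau circ sigma =id} and \cref{Lemma: sigma circ tau =id}. So the proof reduces to citing these results and checking that the isomorphisms are indeed natural in the object.

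First, I would invoke \cref{Lemma: tau circ sigma =id} to obtain, for each co-Stokes structure $(\cL, \cL_<)$, a canonical identification between $(\tau\circ\sigma)(\cL, \cL_<)$ and $(\cL, \cL_<)$; the construction there shows the underlying local system is literally recovered via $i_\infty^*\pi^*\cL = \cL$, and each subsheaf $\cL_{<\xi}'$ equals $\cL_{<\xi}$ stalk-wise at every $\theta$. Naturality in $(\cL, \cL_<)$ follows from the functoriality of pullback along $\pi$ and along $i_\infty$, $i_\xi$. Second, I would invoke \cref{Lemma: sigma circ tau =id} for the other composition; the adjunction isomorphism $\pi^* R^0\pi_* \cL \xrightarrow{\sim} \cL$ used there is natural in $\cL$, and the diagram exhibited there comparing $\cG_z'$ and $\cG_z$ also commutes naturally in morphisms of co-Stokes sheaves.

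Combining these, $\sigma$ and $\tau$ are mutually quasi-inverse, hence $\sigma$ is an equivalence. The only step that requires any real content is the naturality of the two isomorphisms, but this is automatic from the fact that all the comparison maps in the proofs of \cref{Lemma: tau circ sigma =id} and \cref{Lemma: sigma circ tau =id} are built from natural transformations ($\pi^*\dashv \pi_*$ adjunction units/counits, and restriction to sections over $I_z$, $R_z$, $S_\infty$). I do not expect any genuine obstacle here; the substantive work was done in setting up $\sigma$ and $\tau$ correctly and in the stalk-level computations of the two preceding lemmas.
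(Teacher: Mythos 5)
Your proposal matches the paper's proof exactly: the paper simply combines \cref{Lemma: tau circ sigma =id} and \cref{Lemma: sigma circ tau =id} to conclude that $\sigma$ and $\tau$ are mutually quasi-inverse. Your added remarks on naturality are a reasonable unpacking of what those lemmas already assert, and the reasoning is sound.
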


\section{From constructible sheaves to co-Stokes structures}\label{section: From constructible sheaves to co-Stokes structures}

In \cref{subsection: laplace transform to co-Stokes structure}, we describe topological Laplace transform functor $\Psi$ from constructible sheaves $\cF$ on $\bbC$ with vanishing cohomology to co-Stokes structures $(\cL, \cL_<)$ of exponential type.
Composing with the equivalence in \cref{theorem: SFil=SSh}, we obtain a functor $\sigma \cir \Psi$ from constructible sheaves to co-Stokes sheaves.
In \cref{subsection: assemble Laplace transform}, we give a direct sheaf-theoretic construction of the composite functor, bypassing co-Stokes structures.

The functor $\Psi$ was studied in \cite[\S 2.3.2]{KKP_Hodge_theoretic_aspect_of_mirror_symmetry} and \cite[\S 7]{Sabbah_introduction_to_Stokes_structure}.
It is shown in \cite[Theorem 2.29]{KKP_Hodge_theoretic_aspect_of_mirror_symmetry} that the category of constructible sheaves on $\bbC$ with vanishing cohomology is equivalent to the category of perverse sheaves on $\bbC$ with vanishing cohomology, via the functors
\[\cH^{-1}\colon  \Pervzero(k) \longrightarrow  \Constrz(k)\quad \text{and} \quad [1]\colon  \Constrz(k) \longrightarrow \Pervzero(k) .\]
For the compatibility between $\Psi$ and $D$-module Fourier transform, we refer to \cite[\S XI]{Malgrange91_equation_differentielle_a_coefficient_polynomiaux} for the case of $\Pervzero$ and \cite[\S VII]{Sabbah_introduction_to_Stokes_structure} for the general case.

\subsection{Topological Laplace transform to co-Stokes structures} \label{subsection: laplace transform to co-Stokes structure}

We will often write $z=e^{\lambda}e^{i\theta}$ for points in $\bbC^*$, where co-Stokes sheaves are considered,
and write $\xi=\lambda e^{i\theta}$ for points in $\bbC$, where constructible sheaves are considered.    

\begin{notation}\label{notation: halfplanes}
For $\theta\in [0,2\pi)$, $\xi\in \bbC$ and $\lambda\in \mathbb{R}$, we denote 
\begin{align*}
    \cH_{\theta, \lambda}&\coloneqq
\set{x\in \bbC | \re ( xe^{-i\theta})>\lambda}\subset \bbC\\ l_z &\coloneqq \set{x\in \bbC | \re ( xe^{-i\theta})=\lambda}\subset \bbC.
\end{align*}
For $z=e^\lambda e^{i\theta}\in\bbC^*$, with $\theta \in [0,2\pi)$ and $\lambda\in\mathbb{R}$, we denote $\cH_z\coloneqq\cH_{\theta, \lambda}$ and call the point $\xi = re^{i\theta}$ the \emph{center} of $\ocH_z$.
\end{notation}
We remark that $\xi \in l_z$ if and only if $z\in S_\xi$ (see \cref{notation: circles} and \cref{remark: alternative description of unions of S1}). 

\begin{proposition}\label{Definition: Laplace transform}
For any $\xi \in \bbC$ and $\lambda >\re(\xi e^{-i\theta})$, consider the following diagram. 
\[\begin{tikzcd}
           & {\bigcup_{\theta\in S^1} \cH_{\theta, \lambda}\times \{\theta\}} \arrow[d, "j"] \arrow[ldd, "p_\lambda"', bend right] \arrow[rdd, "q_\lambda", bend left] &     \\
           & {\bigcup_{\theta\in S^1} \cH_{\theta, \re(\xi e^{-i\theta})}\times \{\theta\}} \arrow[rd, "q_\xi"] \arrow[ld, "p_\xi"']                                          &     \\
\bbC &                                                                                                                                                                 & S^1
\end{tikzcd}\]
We have a functor
\begin{align*}
    \Psi \colon  \Constrz &\longrightarrow  \Stol \\
    \cF &\longmapsto (\cL, \cL_<)
\end{align*}
with $\cL = \colim_{\lambda\rightarrow +\infty}  R^0q_{\lambda*}p_\lambda^*\cF$ and $\cL_{<\xi} = R^0q_{\xi*}p_\xi^*\cF$.
\end{proposition}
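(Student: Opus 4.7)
The plan is to verify in sequence: (a) $\cL$ is a local system on $S^1$; (b) each $\cL_{<\xi}$ is a subsheaf of $\cL$ satisfying the monotonicity of \cref{definition: co-Stokes structure}(1); and (c) the pair $(\cL,\cL_<)$ admits the local gradedness of \cref{definition: co-Stokes structure}(2) with exponents equal to the singular locus of $\cF$. Functoriality in $\cF$ is immediate from the functoriality of $p^*$ and $Rq_*$, so only the three structural properties require work.

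For (a), fix $\theta_0\in S^1$ and a small arc $J\ni\theta_0$. For $\lambda$ sufficiently large the open half-plane $\cH_{\theta,\lambda}$ is convex and disjoint from the finite singular locus $\{c_1,\dots,c_n\}$ of $\cF$ for every $\theta\in J$, so the stalk $(R^0 q_{\lambda*} p_\lambda^*\cF)_\theta = H^0(\cH_{\theta,\lambda},\cF)$ is canonically the generic stalk of $\cF$. The restriction maps for $\lambda'>\lambda\gg 0$ are isomorphisms on $J$, so $\cL=\colim_{\lambda\to+\infty}R^0q_{\lambda*}p_\lambda^*\cF$ stabilizes to a local system on $S^1$ whose monodromy recovers that of $\cF$ at infinity. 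For (b), $R^0 q_{\xi*}p_\xi^*\cF$ has stalk $H^0(\cH_{\theta,\re(\xi e^{-i\theta})},\cF)$ at $\theta$, and restriction to smaller half-planes induces the canonical map $\cL_{<\xi}\to\cL$. Its injectivity reduces to showing that $\cF(\cH)\to\cF(\cH')$ is injective whenever $\cH'\subset\cH$ is an open half-plane whose complement in $\cH$ meets the singular locus in finitely many points; this follows from the equivalence $[1]\colon\Constrz\xrightarrow{\ \sim\ }\Pervzero$ recalled before the proposition, since $\cF[1]$ being perverse forces $\cF$ to have no subsheaf with $0$-dimensional support. Monotonicity $\xi_1<_\theta \xi_2\Rightarrow \cL_{<\xi_1,\theta}\subseteq\cL_{<\xi_2,\theta}$ is then immediate from the inclusion $\cH_{\theta,\re(\xi_2 e^{-i\theta})}\subset \cH_{\theta,\re(\xi_1 e^{-i\theta})}$ and a diagram chase with the injections into $\cL$.

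The substantive step is (c). Choose a small arc $J\ni\theta_0$ on which the real parts $\re(c_ie^{-i\theta})$ are either pointwise distinct or identically equal, so that $J$ contains no Stokes direction. As $\lambda$ decreases past $\re(c_ie^{-i\theta})$, the half-plane $\cH_{\theta,\lambda}$ acquires the singular point $c_i$, and the resulting jump in $H^0(\cH_{\theta,\lambda},\cF)$ is computed by the excision long exact sequence on a small disk centered at $c_i$, yielding exactly the vanishing-cycle datum of $\cF$ at $c_i$. Declaring $\gr_{c_i}\cL$ on $J$ to be the local system built from this contribution, one assembles a filtered isomorphism $(\cL,\cL_<)|_J\cong(\gr\cL,(\gr\cL)_<)|_J$ in the sense of \cref{def: Stokes structure}(2).

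The main obstacle is step (c): producing these local splittings canonically on $J$ and verifying they vary coherently in $\xi$ as $\xi$ crosses the lines $l_z$. For this one can lean on the quiver description of $\Pervzero$ via vanishing cycles and the analogous constructions in \cite[\S 2.3.2]{KKP_Hodge_theoretic_aspect_of_mirror_symmetry} and \cite[\S 7]{Sabbah_introduction_to_Stokes_structure}, which supply exactly the data needed to identify $\Psi(\cF)$ with a co-Stokes structure of exponential type with exponents $\{c_1,\dots,c_n\}$.
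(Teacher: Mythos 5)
Your proposal follows essentially the same route as the paper's proof: identify the stalks $\cL_\theta$ and $\cL_{<\xi,\theta}$ with sections of $\cF$ over half-planes, reduce injectivity of $\cL_{<\xi}\hookrightarrow\cL$ to the no-punctual-torsion consequence of the perversity of $\cF[1]$, and obtain the local gradedness from the vanishing-cycle quiver presentation of $\Pervzero$. The only difference is expository: the paper fixes a generic $c_0$ near infinity and reads each $\cL_{<\xi,\theta'}$ directly off a single global vanishing-cycle decomposition $\cF_{c_0}\simeq\bigoplus_i \cF_{c_0}/\cF_{c_i}$, which makes the filtered isomorphism on a neighborhood of $\theta$ immediate, whereas your jump-by-jump description in step (c) leaves the assembly of the $\gr_{c_i}\cL$ into a coherent local splitting to the cited references.
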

\begin{proof}
Given a constructible sheaf $\cF$ with singularities $\{c_1, \dots,c_n\}$, when $\lambda$ is large enough such that for any $\theta \in [0,2\pi)$, the closed half-plane $\ocH_{\theta, \lambda}$ does not contain any $c_i$, we have $R^0q_{\lambda *}p_\lambda^*\cF\xrightarrow{\ \sim \ }\cL$. 
Now we fix such a $\lambda$.
Let us show that $\Psi(\cF)$ is a co-Stokes structure.
For any $\theta\in S^1$, we need to show that, locally around $\theta$ there exists a decomposition of $\cL$ which is compatible with the filtrations $\cL_<$. 
Let $c_0$ be a general point in $\cH_{\theta, \lambda}$. We have \[
\cL_\theta=\cF\big(\ocH_{\theta, \lambda}\big)
\xrightarrow{\ \sim \ }\cF_{c_0},
\]
For $\xi\in\bbC$ and $\theta'$ near $\theta$, we have the restriction map
\[
\cL_{<\xi, \theta'}=\cF\big(\ocH_{\theta', \re(\xi e^{-i\theta'})}\big)\hookrightarrow\cF_{c_0},
\]
which is an inclusion by \cite[p.\ 41]{KKP_Hodge_theoretic_aspect_of_mirror_symmetry}.
Consider a vanishing cycle decomposition of $\cF$ at $c_0$ (see \cref{def: vanishing cycle decomposition})
\[\cF_{c_0}\xrightarrow{\ \sim \ }\bigoplus_{i=1}^n\cF_{c_0}/\cF_{c_i}.\]
This induces a decomposition
\[
 \cL_{<\xi, \theta'} \xrightarrow{\ \sim \ }  \bigoplus_{c_i\notin \ocH_{\theta', \re(\xi e^{-i\theta'})}} \cF_{c_0}/ \cF_{c_i} = \bigoplus_{c_i <_{\theta'} \xi} \cF_{c_0}/ \cF_{c_i}.
\]
We conclude that $\Psi(\cF)$ is a co-Stokes structure of exponential type with exponents $\{c_1, \dots,c_n\}$.
\end{proof}

\begin{remark}\label{remark: stalk L at xi and theta}
For any closed subset $i \colon Z \hookrightarrow \bbC$, we denote $\cF(Z) \coloneqq i^*\cF(Z)$. Let $\cF$ be a constructible sheaf and $(\cL, \cL_<)\coloneqq\Psi(\cF)$. Then the stalk $\cL_{<\xi, \theta}$ is equal to $\cF
\big(\ocH_{\theta, \re(\xi e^{-i\theta})}\big)$. When $\xi$ is fixed and $\theta$ varies, different stalks $\cL_{<\xi, \theta}$ correspond to the spaces of sections of $\cF$ over different closed half-planes whose boundary contains $\xi$ (see \cref{fig: define stokes structure labeled by xi}).
\end{remark}

\begin{figure}[!ht]
	\centering
	\setlength{\unitlength}{0.4\textwidth}
	\begin{picture} (1,1)
		\put(0,0){\includegraphics[width=\unitlength]{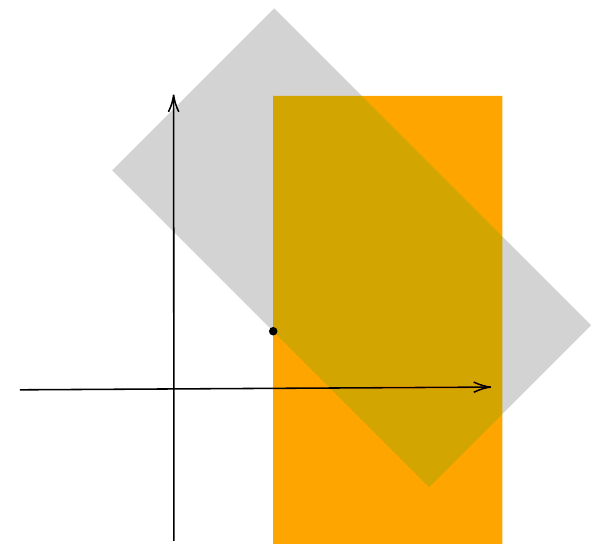}}
		\put(0.40,0.35){${\xi}$}
	\end{picture}
	\caption{Closed half-planes $\ocH_{\theta, \re(\xi e^{-i\theta})}$ for a fixed $\xi$ and varying $\theta$.}
	\label{fig: define stokes structure labeled by xi}
\end{figure}

\subsection{Topological Laplace transform to co-Stokes sheaves}\label{subsection: assemble Laplace transform}

Here we give a direct sheaf-theoretic description of the composite functor
\[\begin{tikzcd}[row sep = tiny]
    \Constrz \rar{\Psi} & \Stol \rar{\sigma} & \SSh\\ 
    \cF \rar[mapsto] & (\cL, \cL_<)  \rar[mapsto] & (\cL, \cG).
\end{tikzcd}\]
\begin{theorem}\label{theorem: describe sigma circ lt using pullback-pushforward}
For $z=e^\lambda e^{i\theta}\in\bbC^*$, $\theta \in [0,2\pi)$ and $\lambda\in\mathbb{R}$, write $\cH_z\coloneqq\cH_{\theta, \lambda}$. For $r \in \bbR$, consider the following diagram
\begin{equation*}
		\begin{tikzcd}
           & {\bigcup_{z\in\mathbb{C^*}}\cH_{z}\times\{z\}} \arrow[ld, "p_1"'] \arrow[rd, "q_1"] &              \\
\bbC &                                                                                                                                               & \bbC^*            \\ 
& {\bigcup_{z\in\mathbb{C^*}}\cH_{\theta,r}\times\{z\}} \arrow[lu, "{m_{r}}"'] \arrow[ru, "{n_{r}}"]
\end{tikzcd}
\end{equation*}
We have an isomorphism of co-Stokes sheaves
\[(\cL, \cG) \simeq \big(\colim_{r\rightarrow +\infty} R^0n_{r*}m_r^*\cF, \ R^0q_{1*}p_1^*\cF \big).\]

\end{theorem}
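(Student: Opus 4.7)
The plan is to verify the claimed isomorphism stalkwise on $\bbC^*$ for both the ambient local system and the subsheaf, and then upgrade the pointwise identifications to an isomorphism of sheaves using the local trivialization of a co-Stokes sheaf from \cref{definition: co-Stokes sheaf}. I will treat the two components separately and finally match the inclusions.

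For the ambient local system, fix $z = e^\lambda e^{i\theta} \in \bbC^*$. Proper base change along the trivial bundle $\bigcup_{z \in \bbC^*} \cH_{\theta, r} \times \{z\} \to \bbC^*$, with fiber $\cH_{\theta, r}$, gives $(R^0 n_{r*} m_r^* \cF)_z = \cF(\cH_{\theta, r})$. For $r > \max_i \re(c_i e^{-i\theta})$, this stabilizes to $\cL_\theta$ from the construction in \cref{Definition: Laplace transform}, so the colimit over $r$ equals $(\pi^* \cL)_z$. Since the stabilized value depends only on $\theta$, the resulting sheaf on $\bbC^*$ is locally constant along the $\lambda$-direction and matches $\pi^* \cL$ as a local system.

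For the subsheaf, an analogous base-change computation yields $(R^0 q_{1*} p_1^* \cF)_z = \cF(\cH_z) = \cF(\cH_{\theta, \lambda})$. Pick any $\xi \in l_z$, equivalently any $\xi$ with $z \in S_\xi$; then $\lambda = \re(\xi e^{-i\theta})$, and by the construction of $\Psi$ together with \cref{remark: stalk L at xi and theta} this stalk equals $\cL_{<\xi, \theta}$. On the other hand, \cref{lemma: assemble Stokes filtratoin} gives $\cG_z = \cL_{<\xi, \theta}$ for any such $\xi$, producing the desired pointwise identification. The natural inclusion $R^0 q_{1*} p_1^* \cF \hookrightarrow \colim_r R^0 n_{r*} m_r^* \cF$, induced by restriction along the inclusion of half-planes $\cH_{\theta, r} \hookrightarrow \cH_z$ valid for $r > \lambda$, then matches the inclusion $\cG \hookrightarrow \pi^* \cL$ built into $\sigma$ in \cref{lemma: sigma functor}, since both are restriction maps between sections of $\cF$ over nested half-planes with boundary lines through the appropriate $\xi$.

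The main obstacle is promoting these stalkwise identifications to an isomorphism of $\bbR$-constructible sheaves that is compatible with all local decompositions. I expect this to follow from the vanishing-cycle decomposition used in the proof of \cref{Definition: Laplace transform}: on a good open sector $U \subset \bbC^*$ in the sense of the proof of \cref{lemma: tau functor}, both $\cG|_U$ and $(R^0 q_{1*} p_1^* \cF)|_U$ split as $\bigoplus_{i=1}^n \gr_{c_i} \cL$ restricted to $\{z' \in U : S_{c_i} < z'\}$, with each summand identified with $\cF_{c_0}/\cF_{c_i}$ for a generic $c_0 \in U$. The stalkwise identifications respect this decomposition on each summand, so they promote to a sheaf isomorphism on $U$; these local isomorphisms glue consistently on overlaps by naturality of proper base change, yielding the claimed isomorphism of co-Stokes sheaves.
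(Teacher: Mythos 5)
Your high-level plan — compute stalks, construct the inclusion $R^0q_{1*}p_1^*\cF \hookrightarrow \colim_r R^0n_{r*}m_r^*\cF$, and match it with $\cG \hookrightarrow \pi^*\cL$ — mirrors the paper's. But two concrete issues keep the argument from closing.

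First, proper base change is invoked where it does not apply. Neither $n_r$ nor $q_1$ is proper (their fibers are open half-planes), so $(R^0 n_{r*} m_r^*\cF)_z$ and $(R^0 q_{1*} p_1^*\cF)_z$ are \emph{not} computed by restricting to the fiber. They are colimits over shrinking neighborhoods of $z$, and because the half-planes $\cH_{\arg(z'),r}$ tilt as $z'$ varies, this colimit picks up the boundary: the answer is $\cF(\ocH_{\theta,r})$ and $\cF(\ocH_z)$, sections over the \emph{closed} half-planes (exactly as recorded in \cref{remark: stalk L at xi and theta}, which you cite but then contradict by writing $\cF(\cH_z)$). The closed/open distinction is essential here since $\cG_z = \cL_{<\xi,\theta} = \cF(\ocH_z)$; the open version would not match. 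The paper sidesteps this entirely in \cref{Lemma: composition for local system} by deriving the isomorphism from the adjunctions $\id\to R^0a_*a^*$ and $\pi^*R^0\pi_*\to\id$ rather than by stalk computation.

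Second, the passage from stalkwise identification to a sheaf isomorphism is not actually supplied. A subsheaf of a given sheaf $\cL$ \emph{is} determined by its stalks (as subspaces of the stalks of $\cL$), so the real content is producing a genuine sheaf monomorphism $R^0q_{1*}p_1^*\cF \hookrightarrow \colim_r R^0n_{r*}m_r^*\cF$ whose image at each stalk is $\cG_z \subset \cL_z$. You describe the map only on stalks ("induced by restriction along $\cH_{\theta,r}\hookrightarrow\cH_z$"), and your closing appeal to "naturality of proper base change" for gluing has no referent since proper base change was never legitimately in play. The paper's \cref{Lemma: G' is a colimit} and \cref{Proposition: composition for co-Stokes sheaf G} do the work you are skipping: they realize $\cG' := R^0q_{1*}p_1^*\cF$ as $\colim_r R^0 t_{r*}s_r^*\cF$ over the auxiliary spaces $X_r$, produce the map into $\cL'$ as a colimit of adjunction units (each visibly a sheaf morphism and a monomorphism), and then conclude via \cref{lemma: assemble Stokes filtratoin} that $\cG'$ must equal $\cG$ because $\cG$ is the \emph{unique} subsheaf of $\pi^*\cL$ with those stalks. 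Your proposed alternative — splitting on good sectors via the vanishing-cycle decomposition and gluing — could in principle be made to work, but would need its own careful compatibility check on overlaps; the paper's route avoids that bookkeeping entirely.
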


The proof follows from \cref{Lemma: composition for local system} and \cref{Proposition: composition for co-Stokes sheaf G}.

\begin{lemma} \label{Lemma: composition for local system}
We have an isomorphism of local systems
\[\cL\simeq  \colim_{{r \rightarrow +\infty}} R^0n_{r*}m_r^* \cF.\]
\end{lemma}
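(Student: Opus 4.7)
\textbf{The plan} is to match stalks on both sides and exploit the fact that the construction $R^0 n_{r*} m_r^* \cF$ factors through the projection $\pi \colon \bbC^* \to S^1$. Recall that when applying $\sigma$ to $\Psi(\cF) = (\cL, \cL_<)$ (where $\cL$ lives on $S^1$), the local system in the co-Stokes sheaf is $\pi^*\cL$, so the target equality to establish is $\pi^*\cL \simeq \colim_{r \to +\infty} R^0 n_{r*} m_r^* \cF$.

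First I would observe that the commutative diagram defining $n_r, m_r$ over $\bbC^*$ is literally the pullback along $\pi$ of the diagram defining $q_r, p_r$ over $S^1$: the fiber of $n_r$ over $z = e^\lambda e^{i\theta}$ is $\cH_{\theta, r}$, which depends only on $\theta = \arg(z)$. Concretely, letting $\tilde\pi \colon \bigcup_{z}\cH_{\theta(z), r} \times \{z\} \to \bigcup_\theta \cH_{\theta, r} \times \{\theta\}$ be the obvious map, we have a cartesian square with $m_r = p_r \circ \tilde\pi$ and $\pi \circ n_r = q_r \circ \tilde\pi$, and $\tilde\pi$ is a trivial $\bbR$-bundle.

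Next I would compute the stalks of both sides at a point $z$ with argument $\theta$. As in the proof of Proposition \ref{Definition: Laplace transform}, once $r$ is large enough that every closed half-plane $\ocH_{\theta', r}$ (for all $\theta'$) avoids the finitely many singularities of $\cF$, the restriction $\cF|_{\cH_{\theta', r}}$ is a local system on a contractible set, and sections over small perturbations of $\cH_{\theta, r}$ stabilize to $\cF(\cH_{\theta, r})$. Thus $(R^0 n_{r*} m_r^* \cF)_z \simeq \cF(\cH_{\theta, r})$, which matches the stalk $(R^0 q_{r*} p_r^* \cF)_\theta$ from \cref{Definition: Laplace transform}, and both depend only on $\theta$. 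The natural base change morphism $\pi^* R^0 q_{r*} p_r^* \cF \to R^0 n_{r*} m_r^* \cF$ is therefore an isomorphism.

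Finally I would pass to the colimit as $r \to +\infty$. For $r$ sufficiently large both colimit systems stabilize (the transition maps being isomorphisms), so the isomorphism survives the colimit and yields $\pi^* \cL \simeq \colim_{r \to +\infty} R^0 n_{r*} m_r^* \cF$ as local systems on $\bbC^*$. The main subtlety to handle carefully is the stalk computation in the middle step: even though $n_r$ is not proper, for large $r$ the sheaf $m_r^* \cF$ is locally constant along the contractible fibers, so the pushforward behaves like a fiberwise $H^0$, and the neighborhood argument that identifies the stalk with $\cF(\cH_{\theta, r})$ comes down to showing that $\cF$ on a small ``thickening'' $\bigcup_{z' \in U} \cH_{\theta(z'), r}$ deformation-retracts (in the constructible sense) onto $\cH_{\theta, r}$ once $U$ is small and $r$ is large.
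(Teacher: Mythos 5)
Your proposal correctly identifies the key geometric input — the cartesian square through $\pi\colon\bbC^*\to S^1$ relating $(m_r,n_r)$ to $(p_r,q_r)$ via what you call $\tilde\pi$ (the paper's map $a$) — and the argument is sound, but it executes the comparison by a slightly different route than the paper. The paper runs a short chain of adjunction isomorphisms
\[
\pi^*R^0q_{r*}p_r^*\cF \xrightarrow{\ \sim\ } \pi^*R^0q_{r*}R^0a_*a^*p_r^*\cF = \pi^*R^0\pi_*R^0n_{r*}m_r^*\cF \xleftarrow{\ \sim\ } R^0n_{r*}m_r^*\cF,
\]
where the first arrow is an iso because $a$ is an open map with connected fibers (the observation recorded in \cref{q2 pullback's global section}), and the last because $R^0n_{r*}m_r^*\cF$ is constant along the $\pi$-fibers; no stalk computation is needed. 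You instead invoke the base change morphism $\pi^*R^0q_{r*}p_r^*\cF\to R^0n_{r*}m_r^*\cF$ and verify it on stalks, which is legitimate, but it is precisely at this step that you incur the ``main subtlety'' you flag at the end: identifying $(R^0n_{r*}m_r^*\cF)_z$ with $\cF(\ocH_{\theta,r})$ requires an argument about sections of $\cF$ over shrinking families of closed half-planes. The paper's adjunction formulation packages that content into the unit $\id\to R^0a_*a^*$ applied upstairs, where it reduces to the open-map-with-connected-fibers lemma and bypasses the delicate half-plane limit. Both proofs buy the same result; the paper's is shorter and avoids the constructibility subtlety, while yours is more explicitly geometric and perhaps easier to motivate. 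If you keep your route, you should either cite \cref{q2 pullback's global section} to dispatch the fiberwise $H^0$ step, or spell out the half-plane limit argument, since as written that step is acknowledged but not actually carried out.
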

\begin{proof}
Given $r\in\bbR$, consider the following diagram.
\[\begin{tikzcd}
     & {\bigcup_{z\in \bbC^*}\cH_{\theta,r}}\times \{z\} \arrow[ld, "m_r"'] \arrow[rrd, "n_r"] \arrow[rd, "b"'] \arrow[dd, "a"'] &     &                         \\
\bbC &                                                                                                               & S^1 & \bbC^* \arrow[l, "\pi"] \\
     & {\bigcup_{\theta\in S^1}\cH_{\theta,r}}\times \{\theta\} \arrow[ru, "q_r"'] \arrow[lu, "p_r"]                                  &     &                        
\end{tikzcd}\]
The isomorphism follows from the following adjunctions \[\pi^*R^0q_{r*}p_r^*\cF \xrightarrow{\ \sim \ } \pi^*R^0q_{r*}R^0a_*a^*p_{r}^*\cF= \pi^*R^0\pi_* R^0n_{r*}m_r^*\cF\xleftarrow{\ \sim \ } R^0n_{r*}m_r^*\cF.\]
\end{proof}

By \cref{lemma: assemble Stokes filtratoin} and \cref{remark: stalk L at xi and theta}, the sheaf $\cG$ is the unique subsheaf of $\cL$ with $\cG_z=\cF(\ocH_z)\hookrightarrow \cL_z$.
Let $\cG'\coloneqq R^0q_{1*}p_1^*\cF$.
Note that $\cG$ and $\cG'$ have same stalks $\cF(\ocH_{z})$.
To identify them, it suffices to construct a monomorphism $\cG' \hookrightarrow \cL$.
We will first describe $\cG'$ as a colimit in \cref{Lemma: G' is a colimit}.

Let $r>0$ and ${D_r} = \{z\in \bbC^*: \lvert z\rvert< r\}$.
Let \[ X_{r} \coloneqq \Big(\bigcup_{z\in \oD_{r}}\cH_{z}\times \{z\}\Big)\cup \Big(\bigcup_{z\in \bbC^*\setminus \oD_{r}}{\cH_{\theta,r}\times \{z\}}\Big) \subset \bbC\times \bbC^*, \]
and denote the two projections by $s_r$ and $t_r$.
Let $\cG_{r}' \coloneqq R^0t_{r*}s_{r}^*\cF$. Let \[X\coloneqq{\bigcup_{z\in\mathbb{C^*}}\cH_{z}\times\{z\}}.\]

\begin{lemma}\label{Lemma: G' is a colimit}
We have an isomorphism between $\cG'$ and $ \colim_{r\rightarrow +\infty}\cG'_{r}$,
\end{lemma}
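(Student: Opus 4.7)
My plan is to construct a natural comparison morphism $\colim_{r \to +\infty} \cG'_r \to \cG'$ and verify it is an isomorphism stalkwise.

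First, I would observe that the subsets $X_r \subset \bbC \times \bbC^*$ form a decreasing family containing $X$, with $X = \bigcap_{r > 0} X_r$. Concretely, $X \subset X_r$ for every $r$ (over $\oD_r$ the fibers of $X_r$ already equal those of $X$, while over $\bbC^* \setminus \oD_r$ the fiber of $X_r$ is a half-plane containing $\cH_w$), and similarly $X_{r'} \subset X_r$ whenever $r \le r'$. Let $i_r \colon X \hookrightarrow X_r$ and $i_{r,r'} \colon X_{r'} \hookrightarrow X_r$ denote the resulting inclusions; they satisfy $s_r \circ i_r = p_1$, $t_r \circ i_r = q_1$, and $i_r = i_{r'} \circ i_{r,r'}$. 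The adjunction units $s_r^* \cF \to i_{r*} p_1^* \cF$ and $s_r^* \cF \to i_{r,r'*} s_{r'}^* \cF$, pushed forward by $t_{r*}$, yield compatible restriction morphisms $\cG'_r \to \cG'$ and $\cG'_r \to \cG'_{r'}$. This exhibits $\{\cG'_r\}_{r > 0}$ as a filtered system with a cocone to $\cG'$, inducing the comparison map $\colim_r \cG'_r \to \cG'$.

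I would then compute stalks. For any fixed $z \in \bbC^*$ and any $r > |z|$, choose a small open neighborhood $U$ of $z$ with $\overline{U} \subset D_r$. By the definition of $X_r$, we have $t_r^{-1}(U) = \bigcup_{w \in U} \cH_w \times \{w\} = q_1^{-1}(U)$, and the sheaves $s_r^* \cF$ and $p_1^* \cF$ agree on this common subspace. Consequently the stalk morphism $(\cG'_r)_z \to \cG'_z$ induced by $i_r$ is the identity of $\cF(\ocH_z)$, hence an isomorphism for every $r > |z|$. Since filtered colimits commute with stalks, $(\colim_r \cG'_r)_z = \colim_r (\cG'_r)_z \xrightarrow{\ \sim\ } \cG'_z$, which gives the lemma.

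The main technical nuisance is just tracking the nested inclusions $X \hookrightarrow X_r$ and $X_{r'} \hookrightarrow X_r$ cleanly through the adjunctions, since $X_r$ is defined piecewise over $\oD_r$ and $\bbC^* \setminus \oD_r$. Once this bookkeeping is done, the stalk computation is essentially automatic, because $X_r$ literally coincides with $X$ in a neighborhood of any fixed $z$ once $r$ exceeds $|z|$.
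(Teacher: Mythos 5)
Your proof is correct and takes essentially the same route as the paper: build the comparison morphism $\colim_{r}\cG'_r \to \cG'$ from the open inclusions $X \hookrightarrow X_r$ and $X_{r'} \hookrightarrow X_r$ (for $r \le r'$) via the adjunction unit $\id \to R^0 j_* j^*$, then check stalkwise that it is an isomorphism by noting that for $r > |z|$ the spaces $X_r$ and $X$ coincide over a neighborhood of $z$, so the system $(\cG'_r)_z$ stabilizes. You also have the direction of the inclusions right (the family $\{X_r\}$ is decreasing with $X = \bigcap_r X_r$); the paper's proof states $X_{r_1} \hookrightarrow X_{r_2}$ for $r_1 < r_2$, which appears to have the roles of $r_1$ and $r_2$ reversed, though this does not affect the argument.
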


\begin{proof}
By definition, we have $\cH_z= \cH_{\theta,r}$ for $z\in \partial \oD_r$, and $\cH_{\theta,r}\subset \cH_z$ for $z\in \bbC^*\setminus \oD_r$.
So $X$ is an open subset of $X_r$.
Consider the following commutative diagram. 
\begin{equation*}
\begin{tikzcd}
\bbC &                                                         X_{r}      \arrow[l, "s_r"'] \arrow[r, "t_r"]                                                                                            & \bbC^*         \\
           &X \arrow[lu, "p_1"] \arrow[ru, "q_1"'] \arrow[u, hook, "j"]  &             
\end{tikzcd}
\end{equation*}
For any $\lambda\in\mathbb{R}$, there is a natural map $\cG'_r\rightarrow \cG'$ induced by the adjunction $\id\rightarrow R^0j_*j^*$. Furthermore, for any $0<r_1<r_2$, there is a natural inclusion of open sets $ X_{r_1}\hookrightarrow X_{r_2}$. This induces a natural map $\cG'_{r_2} \rightarrow\cG'_{r_1} $ by adjunction. Taking colimit as $r\rightarrow +\infty$ yields a sheaf morphism on $\bbC^*$ 
\[\psi\colon  \colim_{r\rightarrow +\infty}\cG'_{r}\longrightarrow \cG'.\]
Let us check that it induces an isomorphism on the stalks at each $z \in \bbC^*$.
Choose $r_z$ large enough so that $\oD_{r_z}$ contains $z$.
Note that taking stalk commutes with colimit, and the colimit $\colim_{r \to +\infty} G'_{r,z}$ stabilizes when $r > r_z$.
This completes the proof.
\end{proof}

\begin{proposition}\label{Proposition: composition for co-Stokes sheaf G}
There is a monomorphism $\cG' \hookrightarrow \cL'$, which induces an isomorphism between subsheaves $\cG$ and $\cG'$ of $\cL'$.   
\end{proposition}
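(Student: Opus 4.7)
The plan is to construct the morphism $\cG' \to \cL'$ at each finite level $r > 0$ via an open inclusion of sheaves, pass to the filtered colimit over $r$, and identify the resulting subsheaf with $\cG$ by a stalkwise computation.

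First, I would set $Y_r \coloneqq \bigcup_{z\in\bbC^*} \cH_{\arg z, r}\times\{z\}$ and observe that it is an open subset of $X_r$. Indeed, its defining condition $\re(x e^{-i\arg z}) > r$ is continuous in $(x,z)$; it coincides with the fiber condition of $X_r$ for $z\notin\oD_r$, and it is implied by the fiber condition when $z\in\oD_r$ (then $\lambda < r$, so $\cH_{\arg z,r}\subset\cH_z$). Applying the unit of adjunction for the open inclusion $k_r\colon Y_r \hookrightarrow X_r$ gives $s_r^*\cF \to k_{r*}m_r^*\cF$, and then $R^0 t_{r*}$ together with the identity $t_r\circ k_r = n_r$ yields
\[\cG'_r = R^0 t_{r*} s_r^*\cF \longrightarrow R^0 n_{r*} m_r^*\cF.\]
Taking the filtered colimit as $r\to +\infty$ and invoking \cref{Lemma: G' is a colimit} together with the definition of $\cL'$ produces the desired map $\cG' \to \cL'$.

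Next, I would verify injectivity on stalks. For $z\in\bbC^*$ of argument $\theta$, the stalk $\cG'_z$ stabilizes to $\cF(\ocH_z)$ once $r$ is large enough that $z\in\oD_r$, while $\cL'_z = \colim_r \cF(\ocH_{\theta,r}) = \cF_{c_0}$ for $c_0$ a general point near infinity in direction $\theta$. The induced stalk map is the natural restriction $\cF(\ocH_z)\to\cF_{c_0}$, which is injective by \cite[p.\ 41]{KKP_Hodge_theoretic_aspect_of_mirror_symmetry} since $\cF$ is a constructible sheaf with vanishing cohomology.

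Finally, I would identify $\cG'$ with the subsheaf $\cG\subset\cL'$ from \cref{lemma: sigma functor}. The remark following \cref{notation: halfplanes} gives $\xi\in l_z \Leftrightarrow z\in S_\xi$, so combining this with \cref{remark: stalk L at xi and theta} shows that $\cG_z = \cL_{<\xi,\theta} = \cF(\ocH_z)$, embedded into $\cL'_z = \cF_{c_0}$ by the same restriction map just described. Hence $\cG_z$ and the image of $\cG'_z$ coincide as subsets of $\cL'_z$ for every $z$, which forces the equality of the two subsheaves inside the local system $\cL'$. The main obstacle I anticipate is this last identification: one must carefully trace $\cG$ through the étale-space construction of \cref{lemma: assemble Stokes filtratoin} and the functor $\Psi$ of \cref{Definition: Laplace transform} to recognize both stalkwise pictures as the same restriction of sections of $\cF$, after which the sheaf-level equality is automatic.
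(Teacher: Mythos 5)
Your proposal takes essentially the same approach as the paper: both construct the map $\cG' \to \cL'$ at each finite level $r$ via the adjunction unit for the open inclusion $\bigcup_{z}\cH_{\arg z, r}\times\{z\} \hookrightarrow X_r$, using $m_r = s_r\circ j_r$ and $n_r = t_r\circ j_r$, then pass to the filtered colimit and conclude via the stalk identification $\cG'_z = \cF(\ocH_z) = \cG_z$. Your write-up is somewhat more explicit than the paper's about why each $\phi_r$ is injective (the stalkwise restriction $\cF(\ocH_z)\to\cF_{c_0}$ and the KKP reference) and about tracing the final identification $\cG'=\cG$ through \cref{lemma: assemble Stokes filtratoin}, but the argument is the same.
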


\begin{proof}
Consider the following commutative diagram.
\begin{equation*}
\begin{tikzcd}
           & {\bigcup_{z\in\mathbb{C^*}}\cH_{\theta,r}\times\{z\}} \arrow[ld, "m_{r}"'] \arrow[rd, "n_{r}"] \arrow[dd, "j_{r}"]                                    &              \\
\bbC &                                                                                                                                                        & \bbC^* \\
           & X_{r} \arrow[lu, "s_{r}"] \arrow[ru, "t_{r}"']                                                                                                               &              \\
           &X= {\bigcup_{z\in\mathbb{C^*}}\cH_{\theta, \lambda_z}\times\{z\}} \arrow[luu, "p_1", bend left] \arrow[ruu, "q_1"', bend right] \arrow[u,"j"']  &             
\end{tikzcd}
\end{equation*}
By Lemmas \ref{Lemma: composition for local system} and \ref{Lemma: G' is a colimit},  we have \begin{align*}
     \cL' &= \colim_{r \rightarrow +\infty} R^0n_{r*}m_r^* \cF \\\cG' &=   \underset{r\rightarrow +\infty}{\colim} R^0t_{r*}s_r^*\cF.
\end{align*}
To construct a morphism $\phi$ between the two colimits
\[\phi\colon  \colim_{r\rightarrow +\infty}R^0t_{r*}s_r^*\cF\longrightarrow \colim_{r\rightarrow +\infty}R^0n_{r*}m_{r}^* \cF, \]
it suffices to construct a compatible family of maps $\phi_{r}\colon  R^0t_{r*}s_r^*\cF\rightarrow R^0n_{r*}m_{r}^* \cF$ for each $r>0$. Since $m_{r}=s_{r}\cir j_{r}$ and $n_{r}=t_{r}\cir j_{r}$, we have a natural morphism $ \phi_{r}\colon  R^0t_{r*}s_r^*\cF\rightarrow R^0n_{r*}m_{r}^* \cF$ induced by the adjunction $\id\rightarrow R^0j_{r*}j_r^*$. Furthermore, since each $\phi_r$ is a monomorphism, the colimit $\phi$ is also a monomorphism. So $\cG'$ is a subsheaf of $\cL'$ and it has the same stalks as $\cG$. Therefore we conclude the proof.
\end{proof}

\section{From co-Stokes structures to constructible sheaves}\label{section: From co-Stokes structures to constructible sheaves}
In this section, we will construct a topological Laplace transform functor $\Phi \colon \SSh \to \Constrz$ from co-Stokes sheaves to constructible sheaves with vanishing cohomology.

\begin{definition}\label{definition: inverse topological Laplace transform}
  Given a co-Stokes sheaf $(\cL, \cG)$, we define its \textsl{topological Laplace transform} $\Phi(\cL, \cG)$ as
  \begin{equation*}
    \Phi(\cL, \cG) \coloneqq Rp_{2!}q_2^!\cG,
  \end{equation*}
  where $ p_2, q_{2}$ are natural projections in the following diagram.
  \begin{equation*}
     \begin{tikzcd}
           & {\bigcup_{z\in \bbC^*}\cH_z\times\{z\}} \arrow[ld, "p_{1}"'] \arrow[rd, "q_{1}"] &              \\
\bbC &                                                                                                            & \bbC^* \\
           & \bigcup_{\xi \in \bbC}\{\xi\}\times S_{\xi} \arrow[lu, "p_2"] \arrow[ru, "q_2"']                  &          
  \end{tikzcd}
  \end{equation*}
\end{definition}

We will show that $\Phi(\cL, \cG)$ has only degree $0$ cohomology sheaf (\cref{lemma: inverse Laplace only  has H^0}), it is a constructible sheaf (\cref{Lemma: inverse Laplace is constructible sheaf}) and has vanishing cohomology on $\bbC$ (\cref{Theorem: inverse Laplace transform is in constr}).

\begin{remark}\label{remark: alternative description of unions of S1}
The subspace $\bigcup_{\xi \in \bbC}\{\xi\}\times S_{\xi}\subset \bbC\times\bbC^*$ can be viewed from two perspectives.
\begin{enumerate}[wide]
    \item \label{remark: C times S1} It is homeomorphic to $\bbC\times S^1$ via the map 
\begin{align*}
     h\colon  \bbC\times S^1&\longrightarrow \bigcup_{\xi \in \bbC}\{\xi \}\times S_{\xi}\\ 
     (\xi, \theta)&\longmapsto (\xi,e^{\re(\xi e^{-i\theta})}e^{i\theta}).
\end{align*}
The composition $p_2\cir h$ is equal to the projection $\bbC\times S^1\longrightarrow \bbC$. 
    \item \label{remark: alternative description of S1 (2)} It is also homeomorphic to $\mathbb{R}\times \bbC^*$ via the map
    \begin{align*}
        h'\colon  \mathbb{R}\times \bbC^*&\longrightarrow \bigcup_{\xi \in \bbC}\{\xi \}\times S_{\xi}\\
        (a,z=e^re^{i\theta})&\longmapsto  (re^{i\theta}+ae^{i(\theta+\pi/2)},z) .
    \end{align*}
    The composition $q_2\cir h'$ is the projection $\bbR\times \bbC^*\longrightarrow \bbC^*$.
\end{enumerate}
\end{remark}
\begin{remark}\label{q2 pullback's global section}
One can prove that if $f:X\rightarrow Y$ is a continuous open map with connected fibers, and if $X$ is connected, then for any sheaf $\cG$ on $Y$, we have $f^*\cG(X)=\cG(Y)$. In particular, this holds for $q_2$ by \cref{remark: alternative description of unions of S1}(\ref{remark: alternative description of S1 (2)}).

\end{remark}

In order to prove that $\Phi(\cL, \cG)$ has only degree 0 cohomology sheaf, the following lemma about co-Stokes structure is crucial.

\begin{lemma}\label{lemma: only S1 survives on circle}
Let $(\cL,\cL_<)$ be a co-Stokes structure.
We have $H^i(S^1,\cL_{<\xi}) = 0$ for all $i\neq 1$.
\end{lemma}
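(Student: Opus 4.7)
The plan is to handle the vanishing for $i\geq 2$ at once (from $\dim S^1=1$, so $H^i(S^1,\cF)=0$ for every sheaf $\cF$), and then to devote the rest of the argument to showing $H^0(S^1,\cL_{<\xi})=0$ by reducing to a local statement on each element of a sufficiently fine cover.

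The decisive preliminary step is to treat $\xi$ as if it were one of the exponents. If $\xi\notin\{c_1,\dots,c_n\}$, I would enlarge the set of exponents to $\{c_1,\dots,c_n,\xi\}$ by declaring $\gr_\xi\cL\coloneqq 0$. Because the underlying local system, all filtration subsheaves, and the local graded isomorphisms are unchanged, $(\cL,\cL_<)$ is still a co-Stokes structure in the sense of \cref{definition: co-Stokes structure}. The only effect is to adjoin the new pairs $(c_j,\xi)$ to the list of pairs in \cref{definition: Stokes direction and small}, which enlarges the set of Stokes directions and therefore refines (shrinks) the notion of good arc.

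With this understood, I would cover $S^1$ by good open arcs $\{I_\alpha\}$ in the enlarged sense. On each such $I_\alpha$, \cref{proposition: Stokes filtration trivializes over good open intervals}(2) supplies a canonical splitting compatible with the filtration, restricting to
\[\cL_{<\xi}\big|_{I_\alpha}\ \simeq\ \bigoplus_{j=1}^{n} k_!\bigl(\gr_{c_j}\cL\big|_{I_\alpha\cap U_j}\bigr),\qquad U_j\coloneqq\{\theta\in S^1:c_j<_\theta\xi\},\]
where $k\colon I_\alpha\cap U_j\hookrightarrow I_\alpha$ denotes the open inclusion. For every $c_j\neq\xi$, the set $U_j$ is a proper open semi-circle whose two boundary points in $S^1$ are exactly the Stokes directions of the pair $(c_j,\xi)$; enlarged goodness of $I_\alpha$ forces it to contain exactly one of them as an interior point, so $I_\alpha\cap U_j$ is a proper connected open subarc of $I_\alpha$ whose closure inside $I_\alpha$ contains this interior boundary point. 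Any global section of $k_!(\gr_{c_j}\cL|_{I_\alpha\cap U_j})$ over $I_\alpha$ is then a flat section of a local system on this connected subarc that must vanish in a neighborhood of that interior boundary point, and hence vanishes identically. The summand with $c_j=\xi$ is already zero. Thus $H^0(I_\alpha,\cL_{<\xi}|_{I_\alpha})=0$ for every $\alpha$, which forces $H^0(S^1,\cL_{<\xi})=0$.

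The step I expect to require the most care, and which is the main obstacle if one tries to avoid this device, is dealing with the possibility that $\bigcup_j U_j=S^1$, i.e.\ that the stalks of $\cL_{<\xi}$ are nonzero at every $\theta\in S^1$ — this happens, for example, whenever $\xi$ lies strictly in the convex hull of $\{c_1,\dots,c_n\}$. In that situation, small or even good arcs in the original sense can lie entirely inside some $U_j$, and the direct local computation fails to give $H^0=0$. The augmentation trick bypasses this by forcing every good arc to cross the boundary of every nonempty $U_j$. Checking that the augmented structure is still a co-Stokes structure and that augmented good arcs can be arranged to cover $S^1$ are both formal: the first because adjoining a zero graded piece changes none of the defining data, and the second because the total number of Stokes directions after augmentation is still finite.
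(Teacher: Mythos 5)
Your argument is correct, and it takes a genuinely different route from the paper: for $i\geq 2$ the dimension bound coincides with what the paper cites, but for $i=0$ the paper simply invokes \cite[Corollary 3.16]{Sabbah_introduction_to_Stokes_structure} (and Malgrange), whereas you give a self-contained local-to-global computation. The key device you introduce — enlarging the exponent set to $\{c_1,\dots,c_n,\xi\}$ with $\gr_\xi\cL\coloneqq 0$ — is clean and does exactly what you say: it is harmless (the filtration and graded pieces are unchanged, so the co-Stokes axioms still hold) and it forces every good arc of the augmented structure to meet each open semicircle $U_j=\{\theta:c_j<_\theta\xi\}$ in a proper connected subarc with one endpoint interior to $I_\alpha$, so every summand $k_{j!}\bigl(\gr_{c_j}\cL|_{I_\alpha\cap U_j}\bigr)$ in the splitting of \cref{proposition: Stokes filtration trivializes over good open intervals}(2) has no nonzero sections over $I_\alpha$. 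Two small points are worth tightening if you write this up: the inclusion $k$ depends on $j$ (write $k_j$), and the existence of a cover of $S^1$ by augmented-good arcs deserves a sentence — e.g.\ for any non-Stokes $\theta_0$ and sufficiently small $\epsilon>0$ the arc $(\theta_0-\pi-\epsilon,\,\theta_0+\epsilon)$ contains exactly one member of each antipodal pair of Stokes directions, and as $\theta_0$ ranges over non-Stokes directions these arcs cover $S^1$. What your approach buys over the paper's citation is a transparent, purely sheaf-theoretic proof internal to \cref{section: Stokes structures}; the cost is a paragraph of extra bookkeeping that the citation avoids.
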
 

\begin{proof}
By \cite[Proposition 3.2.2]{Sheaves_on_Manifolds}, we have  $H^i(S^1, \cL_{<\xi})=0$ for $i\geq 2$.
Furthermore, $H^0(S^1, \cL_{<\xi})=0$ by \cite[Corollary 3.16]{Sabbah_introduction_to_Stokes_structure} (see also \cite{Malgrange1982_la_classification_des_connexions_irregulieres_a_une_variable}).
\end{proof}

\begin{lemma}\label{lemma: inverse Laplace only  has H^0}
The complex $ \Phi(\cL, \cG)=Rp_{2!}q_2^!\cG$  has only degree 0 cohomology sheaf and $\cH^0(Rp_{2!}q_2^!\cG)=R^1p_{2*}q_2^*\cG$.
\end{lemma}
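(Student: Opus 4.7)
The plan is to reduce the computation to a statement about $p_{2*}$ applied to $q_2^*\cG$, using the two descriptions in \cref{remark: alternative description of unions of S1}, and then compute stalks via proper base change, where \cref{lemma: only S1 survives on circle} kicks in.

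First I would exploit \cref{remark: alternative description of unions of S1}(\ref{remark: alternative description of S1 (2)}): the map $q_2$ is, via the homeomorphism $h'$, identified with the trivial projection $\bbR\times \bbC^* \to \bbC^*$. In particular $q_2$ is an oriented topological $\bbR$-bundle, so the relative dualizing complex is $\bbZ[1]$ and therefore $q_2^! \cG \simeq q_2^* \cG\, [1]$ canonically. Next, I use \cref{remark: alternative description of unions of S1}(\ref{remark: C times S1}): via $h$, the map $p_2$ becomes the projection $\bbC\times S^1 \to \bbC$, which is proper (since $S^1$ is compact). Hence $Rp_{2!} = Rp_{2*}$, and combining gives
\begin{equation*}
\Phi(\cL, \cG) \;=\; Rp_{2!}q_2^!\cG \;\simeq\; Rp_{2*}\,q_2^*\cG\,[1].
\end{equation*}

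Next I compute the sheaves $R^i p_{2*}\,q_2^*\cG$. Since $p_2$ is a proper fiber bundle with fibre $S_\xi \cong S^1$, proper base change yields
\begin{equation*}
(R^i p_{2*}\,q_2^*\cG)_\xi \;\simeq\; H^i\bigl(S_\xi,\, q_2^*\cG|_{\{\xi\}\times S_\xi}\bigr) \;=\; H^i(S_\xi,\, \cG|_{S_\xi}),
\end{equation*}
where in the last step I use that $q_2$ restricted to the fibre $\{\xi\}\times S_\xi$ is the inclusion $S_\xi \hookrightarrow \bbC^*$. Now, by construction of the functor $\tau$ in \cref{lemma: tau functor}, pulling back $\cG$ along the parametrization $i_\xi\colon S^1 \xrightarrow{\sim} S_\xi$ recovers exactly the subsheaf $\cL_{<\xi}$ of the associated co-Stokes structure $\tau(\cL,\cG)$. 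Thus
\begin{equation*}
H^i(S_\xi,\, \cG|_{S_\xi}) \;\simeq\; H^i(S^1,\, \cL_{<\xi}),
\end{equation*}
and \cref{lemma: only S1 survives on circle} forces this to vanish for $i\neq 1$.

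Consequently $R^i p_{2*}\,q_2^*\cG = 0$ for $i\neq 1$, so $Rp_{2*}\,q_2^*\cG$ is concentrated in degree $1$ and equal to $R^1p_{2*}\,q_2^*\cG\,[-1]$. Shifting by $[1]$ gives
\begin{equation*}
\Phi(\cL,\cG) \;\simeq\; R^1p_{2*}\,q_2^*\cG,
\end{equation*}
viewed as a complex concentrated in degree $0$, which is exactly the claim. The only delicate point, and the step worth being careful with, is the identification $q_2^! \simeq q_2^*[1]$: one must verify that the $\bbR$-bundle structure from \cref{remark: alternative description of unions of S1}(\ref{remark: alternative description of S1 (2)}) is genuinely trivial (and hence canonically oriented), so that no twist by an orientation sheaf appears. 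Everything else is a direct application of base change together with the vanishing result of \cref{lemma: only S1 survives on circle}.
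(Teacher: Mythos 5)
Your proof is correct and takes essentially the same route as the paper's: identify $q_2^! \simeq q_2^*[1]$ via the trivial $\bbR$-bundle structure from \cref{remark: alternative description of unions of S1}(\ref{remark: alternative description of S1 (2)}), use properness of $p_2$ to replace $Rp_{2!}$ by $Rp_{2*}$, compute stalks by proper base change, and conclude via \cref{lemma: only S1 survives on circle}. The paper inserts an extra remark (citing \cref{q2 pullback's global section} and injective-equals-flasque) to justify $\cH^{i+1}(Rp_{2*}q_2^*\cG)=R^{i+1}p_{2*}q_2^*\cG$ — a step you rightly treat as definitional — and you are slightly more explicit about the absence of an orientation twist and the role of $i_\xi$; neither difference is substantive.
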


\begin{proof}
By \cref{remark: alternative description of unions of S1}(\ref{remark: alternative description of S1 (2)}), the composition $q_2\cir h$ is a topological fibration of relative dimension $1$. It follows from \cite[Theorem 3.2.17]{Dimca_Sheaves_in_Topology} that $q_2^!=q_2^*[1]$. Since $p_2$ is proper, we have $Rp_{2!}=Rp_{2*}$. Then $\cH^i(Rp_{2!}q_2^!\cG) = \cH^{i+1}(Rp_{2*}q_{2}^*\cG)$. Since a sheaf of vector spaces is injective if and only if it is flasque (see \cite[lemma 1.13]{Borel_sheaf_theoretic_intersection_cohomology}), \cref{q2 pullback's global section} implies that $\cH^{i+1}(Rp_{2*}q_{2}^*\cG) = R^{i+1}p_{2*}q_{2}^*\cG$. Since $p_2$ is proper, by the proper base change theorem, at any $\xi\in\bbC$, we have 
\[(R^ip_{2*}q_2^*\cG)_\xi \simeq H^i(S_\xi, \cG|_{S_\xi}) \simeq H^i(S^1, \cL_{<\xi}),\]
where $\sigma(\cL,\cL_<)=(\cL,\cG)$.
By \cref{lemma: only S1 survives on circle}, for $i\neq 1$, we have $H^i(S^1, \cL_{<\xi})=0$.
We conclude that  $\Phi(\cL, \cG)$ has only degree 0 cohomology sheaf whose stalk at $\xi$ is $H^1(S_\xi,\cG)$.
\end{proof}
We compute the following cohomology for later use.

\begin{lemma}\label{lemma: calculation of cohomology}
Let $D = \{ \xi \in \bbC : \abs{\xi < 1}\}$ denote the open unit disk, and $\oD$ the closed unit disk.
Consider the inclusions $i\colon D \cap \{\re(\xi)>0\} \hookrightarrow D$, $j\colon \oD \cap \{\re(\xi)>0\} \hookrightarrow \oD$, and $k\colon D \hookrightarrow \oD$.
The following hold.
\begin{enumerate}[wide]
    \item $H^*(D,i_!\bbC)=0$.
    \item $H^*(\oD,j_!\bbC)=0$.
    \item $H^2(\oD,k_!\bbC)=\bbC$ and $H^q(\oD,k_!\bbC)=0$ for $q\neq 2$. 
\end{enumerate}
\end{lemma}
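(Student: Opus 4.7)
The plan is to handle all three parts uniformly via short exact sequences of the form
\[0 \longrightarrow i_!\bbC_U \longrightarrow \bbC_X \longrightarrow i_{Z*}\bbC_Z \longrightarrow 0,\]
associated to the decomposition of $X$ into an open subset $U$ and its closed complement $Z$, followed by the long exact sequence in cohomology. In each case, $X$ is contractible (so $H^*(X,\bbC)$ is concentrated in degree $0$), so everything reduces to understanding the cohomology of the closed complement $Z$ and the restriction map $H^0(X,\bbC) \to H^0(Z,\bbC)$.

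For (1), I take $U = D \cap \{\re(\xi) > 0\}$ and $Z = D \cap \{\re(\xi) \leq 0\}$. Both $D$ and $Z$ are contractible (each is convex), so $H^0$ of both is $\bbC$ and higher cohomology vanishes. The restriction map $H^0(D,\bbC) \to H^0(Z,\bbC)$ is the identification of the constant function $1$ with its restriction, hence an isomorphism; the long exact sequence then forces $H^*(D, i_!\bbC) = 0$.

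For (2), I argue exactly as in (1), taking $U = \oD \cap \{\re(\xi) > 0\}$ and $Z = \oD \cap \{\re(\xi) \leq 0\}$. Both $\oD$ and $Z$ are again contractible, the restriction map is an isomorphism in degree $0$, and $H^*(\oD, j_!\bbC) = 0$ follows.

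For (3), I use instead the exact sequence
\[0 \longrightarrow k_!\bbC_D \longrightarrow \bbC_{\oD} \longrightarrow \bbC_{S^1} \longrightarrow 0,\]
where $S^1 = \partial \oD$ is the closed complement of $D$. The long exact sequence reads
\[\cdots \to H^q(\oD, k_!\bbC) \to H^q(\oD,\bbC) \to H^q(S^1,\bbC) \to H^{q+1}(\oD, k_!\bbC) \to \cdots\]
Since $\oD$ is contractible and $H^*(S^1,\bbC) = \bbC$ in degrees $0$ and $1$, and the degree $0$ restriction is an isomorphism, the long exact sequence immediately gives $H^q(\oD, k_!\bbC) = 0$ for $q \neq 2$ and $H^2(\oD, k_!\bbC) \cong H^1(S^1,\bbC) = \bbC$. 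No real obstacle arises; the only care needed is to correctly identify the closed complement in each part and to check contractibility, which is elementary.
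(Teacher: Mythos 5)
Your proposal is correct and takes exactly the same approach as the paper: in each case you use the short exact sequence of sheaves associated to the open/closed decomposition of the ambient space, and deduce the result from the long exact sequence together with contractibility of the pieces (or, in part (3), the known cohomology of $S^1$). The paper's proof is only a one-line sketch of this same argument, so your write-up is simply a more detailed version of it.
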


\begin{proof}
For (1), denote $l\colon  D\setminus U\hookrightarrow D$. It follows from the exact sequence of sheaves \[0\longrightarrow i_!i^*\bbC\longrightarrow \bbC\longrightarrow l_*l^*\bbC\longrightarrow 0.\]
Same for (2) and (3).
\end{proof}

\begin{lemma}\label{lemma: small open cover are Leray cover}
Let $(\cL,\cL_<)$ be a co-Stokes structure with exponents $\{c_1,\dots, c_n\}$. Let $\cU = \{U_1, \dots, U_m\}$ be an open cover of $S^1$ consisting of small open intervals. There is an isomorphism between the Čech cohomology group and sheaf cohomology group
\[H^q(\cU,\cL_{<\xi}) = H^q(S^1,\cL_{<\xi}).
\]
\end{lemma}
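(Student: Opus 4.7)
The plan is to recognize this as an instance of Leray's acyclic cover theorem: if every nonempty finite intersection of elements of $\cU$ is $\cL_{<\xi}$-acyclic (i.e.\ has vanishing positive-degree sheaf cohomology), then the Čech-to-sheaf spectral sequence degenerates and yields the desired isomorphism $H^q(\cU,\cL_{<\xi}) \cong H^q(S^1,\cL_{<\xi})$.

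First, I would observe that any nonempty finite intersection $U_{i_1} \cap \cdots \cap U_{i_k}$ of small open arcs in $S^1$ decomposes as a finite disjoint union of open arcs, each contained in some $U_{i_j}$. Since the property of being small (containing at most one Stokes direction for each pair $c_i \neq c_j$, see \cref{definition: Stokes direction and small}) is clearly inherited by subintervals, every connected component of such an intersection is itself a small open arc.

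Next, I would verify acyclicity on each such component $I$. In degree $q=1$, \cref{corollary: H1(small)=0} gives $H^1(I,\cL_{<\xi}) = 0$ directly. For $q\geq 2$, the space $I$ has cohomological dimension at most one, so $H^q(I,\cL_{<\xi})=0$ by the standard bound \cite[Proposition 3.2.2]{Sheaves_on_Manifolds} (exactly as invoked in the proof of \cref{lemma: only S1 survives on circle}). Since sheaf cohomology commutes with disjoint unions, the full intersection is $\cL_{<\xi}$-acyclic, and Leray's theorem applies.

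There is no serious obstacle here: the real topological content is already packaged in \cref{corollary: H1(small)=0}, and the rest is formal. The only mild subtlety is the combinatorics of arc intersections on $S^1$, which can split into several components when the arcs wrap around; this causes no trouble because each component remains a small open arc to which the vanishing result applies.
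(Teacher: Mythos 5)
Your proof is correct and takes essentially the same route as the paper: verify acyclicity of $\cL_{<\xi}$ on all finite intersections and invoke Leray's theorem. The only difference is how acyclicity is obtained: the paper re-derives it from scratch by trivializing $\cL_{<\xi}|_I$ via \cref{proposition: Stokes filtration trivializes over good open intervals} and chasing a short exact sequence, whereas you reuse \cref{corollary: H1(small)=0} for degree $1$ and the cohomological dimension bound for degrees $\ge 2$, which is arguably cleaner since it avoids repeating work already done. One small note: your caution about intersections of arcs being disconnected is unnecessary when $n\ge 2$, since any pair $c_i\neq c_j$ produces antipodal Stokes directions and hence forces small intervals to have length at most $\pi$, so two such arcs always meet in a single arc; the paper tacitly uses this and asserts that $I$ is a small open interval, while your handling is safe in all cases (including the degenerate $n\le 1$ case) at no extra cost.
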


\begin{proof}
Let $I$ be any finite intersections of $U_i$'s. Then $I$ is a small open interval. 
By \cref{proposition: Stokes filtration trivializes over good open intervals}, there exists an isomorphism \[\cL|_I \xrightarrow{\ \sim \ } \bigoplus_{c_i} \gr_{c_i} \cL|_I,\] which is compatible with filtration. 
Denote $j_i\colon I_i=\{\theta\in I|c_i<_\theta \xi\}\hookrightarrow I$ and $k_i$ its complement. Then we have \[{\cL_{<\xi}}|_I \xrightarrow{\ \sim \ }\bigoplus_{c_i} j_{i!}j_i^*(\gr_{c_i}\cL|_I).\] 
If $I_i=I$, then $j_i=\Id$ and $H^p(I,\gr_{c_i}\cL|_I)=0$ for $p\geq 1.$ If $I_i\neq I$, one can prove that $H^p(I,j_{i!}j_i^*(\gr_{c_i}\cL|_I))=0$ for $p \geq 1$, using the exact sequence of sheaves 
\[0\longrightarrow j_{i!}j_i^*(\gr_{c_i}\cL|_I)\longrightarrow(\gr_{c_i}\cL|_I) \longrightarrow k_{i*} k_i^*(\gr_{c_i}\cL|_I)\longrightarrow 0.\]
Then for any $\xi \in \bbC$, $\cL_{<\xi}$ is acyclic on every finite intersection of elements of $\cU$. So, we have an isomorphism \[H^q(\cU,\cL_{<\xi}) = H^q(S^1,\cL_{<\xi}).
\]
\end{proof}

\begin{definition}\label{definition: small and good sectors}
Let $(\cL, \cG)$ be a co-Stokes sheaf with exponents $\{c_1, \dots,c_n\}$.
An open sector $U\subset \bbC^*$ is called \emph{good} (resp.\ \emph{small}) if its projection to $S^1$ is a good (resp.\ \emph{small}) open interval (see \cref{definition: Stokes direction and small}).
\end{definition}

\begin{lemma}\label{cohomology of G over open sectors are 0}
Let $(\cL, \cG)$ be a co-Stokes sheaf with exponents $\{c_1, \dots,c_n\}$. If $U$ is a small open sector, then $H^*(U, \cG)=0$. 
\end{lemma}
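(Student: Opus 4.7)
The plan is to trivialize $(\cL,\cG)$ globally over $U$ using the decomposition of the co-Stokes structure over small open intervals, and then compute the cohomology summand by summand via a short exact sequence on a contractible sector.

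Let $I=\pi(U)\subset S^1$ be the underlying small open interval, so that $U=\pi^{-1}(I)$. By \cref{proposition: Stokes filtration trivializes over good open intervals}(1) and the equivalence of categories in \cref{theorem: SFil=SSh}, I would deduce a decomposition of co-Stokes sheaves on $U$,
\[
  (\cL|_U,\cG|_U)\simeq\bigoplus_{i=1}^{n}\big(\pi^*\gr_{c_i}\cL|_I,\ (j_i)_{!}(j_i)^*\pi^*\gr_{c_i}\cL|_I\big),
\]
where $V_i=\{z\in U:S_{c_i}<z\}$ and $j_i\colon V_i\hookrightarrow U$ is the open inclusion. Since $U\simeq I\times\bbR$ is contractible, each $\pi^*\gr_{c_i}\cL|_U$ is a constant sheaf of rank $d_i=\dim_k\gr_{c_i}\cL$, so the task reduces to proving $H^*(U,(j_i)_{!}k_{V_i})=0$ for every $i$.

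Next, I would work in the coordinates $(\theta,\lambda)\in I\times\bbR$ on $U$ via $re^{i\theta}\mapsto(\theta,\log r)$. In these coordinates $V_i$ is the open region strictly above the graph of $\lambda_0(\theta)=\re(c_ie^{-i\theta})$, and its closed complement $W_i=U\setminus V_i$ is the region on or below this graph. The translation $\lambda\mapsto\lambda-\lambda_0(\theta)$ then exhibits homeomorphisms $V_i\simeq I\times\bbR_{>0}$ and $W_i\simeq I\times\bbR_{\le 0}$, so $U$, $V_i$, and $W_i$ are all contractible. The short exact sequence
\[
  0\longrightarrow (j_i)_{!}k_{V_i}\longrightarrow k_U\longrightarrow (\iota_i)_{*}k_{W_i}\longrightarrow 0,
\]
where $\iota_i\colon W_i\hookrightarrow U$ is the closed inclusion, induces a long exact cohomology sequence in which both $H^{*}(U,k)$ and $H^{*}(W_i,k)$ are concentrated in degree $0$, and the restriction $H^0(U,k)\to H^0(W_i,k)$ is the identity on $k$. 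A diagram chase then forces $H^{p}(U,(j_i)_{!}k_{V_i})=0$ for all $p$, completing the argument.

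The step requiring the most care is the global direct-sum splitting of $\cG|_U$: the filtered trivialization of $\cL|_I$ over a small interval is not unique---uniqueness holds only over good intervals, by \cref{proposition: Stokes filtration trivializes over good open intervals}(2)---but mere existence suffices to produce the decomposition above, via the functoriality of the equivalence in \cref{theorem: SFil=SSh}. After this point, the proof is a standard contractibility computation, with no further geometric subtleties.
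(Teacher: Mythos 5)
Your proof is correct and follows essentially the same route as the paper's: decompose $\cG|_U$ into a direct sum of extensions by zero $(j_i)_!$ of constant sheaves supported on $\{z\in U : S_{c_i}<z\}$, then observe that each such summand has vanishing cohomology because $U$ and the closed complement $W_i$ are both contractible and the restriction on $H^0$ is an isomorphism (the paper cites its \cref{lemma: calculation of cohomology}(1) for this last step, which is the disk/half-disk version of the same computation). One small point in your favor: the paper asserts the global decomposition of $\cG|_U$ directly from \cref{definition: co-Stokes sheaf}, which a priori only gives a local decomposition near each point, whereas you justify the global splitting over the full sector via \cref{proposition: Stokes filtration trivializes over good open intervals}(1) and the equivalence $\sigma$ of \cref{theorem: SFil=SSh}, which is the more honest deduction.
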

\begin{proof}
For each $c_i$, let $D_{c_i}$ be as in \cref{notation: circles}, and denote by $j_i\colon (\bbC^* \setminus \oD_{c_i}) \cap U \hookrightarrow U$ the inclusion.
By \cref{definition: co-Stokes sheaf}, we have a decomposition
\begin{equation}\label{eq: decomposition of G over good sector}
    \cG|_U = \bigoplus_{i=1}^n j_{i!} \cV_i,
\end{equation}
where $\cV_i$ is a constant sheaf over $(\bbC^*\setminus\oD_{c_i}) \cap U$.
We conclude the proof by \cref{lemma: calculation of cohomology}(1).
\end{proof}

\begin{lemma}\label{compute F over open set}
Let $U\subset \bbC$ be an open subset, then $\cF(U)=H^1\big(\bigcup_{\xi\in U}S_\xi,\cG\big)$.
\end{lemma}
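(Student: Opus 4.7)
The plan is to reduce $\cF(U)$ to a cohomology group on the preimage $p_2^{-1}(U)$ via the adjunction between $Rp_{2*}$ and restriction to open subsets, and then to identify this preimage with the union of circles $\bigcup_{\xi\in U}S_\xi$ appearing in the statement.

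First I would invoke \cref{lemma: inverse Laplace only  has H^0}, which asserts that $\Phi(\cL,\cG)=Rp_{2!}q_2^!\cG$ is concentrated in degree $0$ with $\cH^0=\cF$. Combined with the identifications $q_2^!=q_2^*[1]$ (since $q_2$ is a topological fibration of real relative dimension one, as observed in \cref{remark: alternative description of unions of S1}(\ref{remark: alternative description of S1 (2)})) and $Rp_{2!}=Rp_{2*}$ (since $p_2$ is proper), this gives a derived-category isomorphism $Rp_{2*}q_2^*\cG\simeq \cF[-1]$ on $\bbC$.

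Next I would apply $R\Gamma(U,-)$ to the above isomorphism. The standard identity $R\Gamma(U,Rp_{2*}K)=R\Gamma(p_2^{-1}(U),K)$ then yields
\[
\cF(U)\ =\ H^0(U,\cF)\ =\ H^1\bigl(U,Rp_{2*}q_2^*\cG\bigr)\ =\ H^1\bigl(p_2^{-1}(U),q_2^*\cG\bigr).
\]

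Finally, $p_2^{-1}(U)=\bigcup_{\xi\in U}\{\xi\}\times S_\xi$ is fibered over $U$ with fiber $S_\xi\subset\bbC^*$ over $\xi$, and each slice $\{\xi\}\times S_\xi$ is carried isomorphically onto the circle $S_\xi$ by $q_2$, so $q_2^*\cG$ restricts to $\cG|_{S_\xi}$ on each slice. With the notation $\bigcup_{\xi\in U}S_\xi$ understood as this fiberwise union $p_2^{-1}(U)$ equipped with $q_2^*\cG$—the same convention used in the diagram of \cref{definition: inverse topological Laplace transform}—we arrive at the stated identity. I do not expect a serious obstacle: the argument is a direct unwinding of the preceding lemma, and the only thing to be careful about is bookkeeping of the degree shifts when passing between $Rp_{2*}q_2^*\cG$ and $\cF[-1]$.
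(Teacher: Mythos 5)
Your derived-category computation of $\cF(U)=H^1\bigl(p_2^{-1}(U),\,q_2^*\cG\bigr)$ is correct and in fact cleaner than the paper's route (you avoid the explicit Grothendieck spectral sequence argument and the check that $q_2'^*$ of an injective resolution is again injective, by staying at the derived level throughout). However, your final paragraph contains a genuine gap. You declare that $\bigcup_{\xi\in U}S_\xi$ with coefficients $\cG$ ``is by convention'' the fiber product $p_2^{-1}(U)=\bigcup_{\xi\in U}\{\xi\}\times S_\xi$ with coefficients $q_2^*\cG$. That is not what the lemma asserts, and the paper is careful about the distinction: the diagram in \cref{definition: inverse topological Laplace transform} uses $\bigcup_{\xi}\{\xi\}\times S_\xi\subset\bbC\times\bbC^*$, while the lemma statement deliberately drops the $\{\xi\}\times$ factor and speaks of $\bigcup_{\xi\in U}S_\xi$ as a subset of $\bbC^*$, equipped with the co-Stokes sheaf $\cG$ living on $\bbC^*$ (compare the stalk computation in \cref{lemma: inverse Laplace only  has H^0}, where $H^1(S_\xi,\cG|_{S_\xi})$ means cohomology on $S_\xi\subset\bbC^*$). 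These two cohomology groups are related by the map $q_2'\colon\bigcup_{\xi\in U}\{\xi\}\times S_\xi\to\bigcup_{\xi\in U}S_\xi$, which is generally not a homeomorphism: for $U\neq\bbC$ it can be many-to-one, with fibers over $z\in\bbC^*$ equal to the intersection of $U$ with a real line in $\bbC$.

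So a step is missing: one must show that $q_2'$ induces an isomorphism
\[
H^1\Bigl(\bigcup_{\xi\in U}S_\xi,\ \cG\Bigr)\ \xrightarrow{\ \sim\ }\ H^1\Bigl(\bigcup_{\xi\in U}\{\xi\}\times S_\xi,\ q_2'^*\cG\Bigr).
\]
The paper carries this out by noting (via the parametrization $h'$ of \cref{remark: alternative description of unions of S1}(2)) that $q_2'$ is a homotopy equivalence onto its image and then invoking \cite[Corollary 2.7.7(ii)]{Sheaves_on_Manifolds} together with a Grothendieck spectral sequence. In the applications (small disks, thin tubes around segments) the relevant $U$ is convex, so the fibers of $q_2'$ are intersections of $U$ with lines and hence contractible intervals, which makes this work; but it is an actual argument, not a rewrite of notation, and your proposal stops short of it.
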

\begin{proof}
Since $p_2$ is proper and $q_2$ is a topological fibration of relative dimension 1, the restriction of $\cF$ to $U$ is
\begin{equation}\label{eq: computation 1}
    j^*Rp_{2!}q_2^!\cG \simeq Rp'_{2!}j^*q_2^!\cG =Rp'_{2*}j'^*q_2^*\cG[1]= \cH^1(Rp_{2*}'q_{2}'^*\cG),
\end{equation}
where $p_2',q_2'$ are the projections as in the following diagram.
\begin{equation*}
     \begin{tikzcd}
           U & & \bbC^* \\
           & \underset{\xi \in U}{\bigcup}\{\xi\}\times S_{\xi} \arrow[lu, "p_2'"] \arrow[ru, "q_2'"']                  &          
\end{tikzcd}
\end{equation*}
Let $I^\bullet$ be an injective resolution of $\cG$. Similar to the proof of \cref{lemma: inverse Laplace only  has H^0}, we can show that $q_2'^*I^\bullet$, as the restriction of $q_2^*\cG$ to an open set, is an injective resolution of $q_2'^*\cG$. So we have
\[\cH^1(Rp_{2*}'q_{2}'^*\cG)= R^1p_{2*}'q_{2}'^*\cG .\]
Let $\pi: U\rightarrow *$ be the constant map, then it follows from \eqref{eq: computation 1} that $\cF(U) = R^0\pi_* (R^1p_{2*}'q_2'^*\cG)$. 
To compute this, we use the following exact sequence from the Grothendieck spectral sequence,
\[0\rightarrow R^1\pi_*(R^0p_{2*}'q_2'^*\cG)\rightarrow R^1(\pi_*p_{2*}')(q_2'^*\cG)\rightarrow R^0\pi_*(R^1p_{2*}'q_2'^*\cG)\rightarrow R^2\pi_*(R^0p_{2*}'q_2'^*\cG)\rightarrow.\]
Since $R^0p_{2*}'q_2'^*\cG=0$, we have an isomorphism $R^1(\pi_*p_{2*}')(q_2'^*\cG)\xrightarrow{\sim} R^0\pi_*(R^1p_{2*}'q_2'^*\cG)$, which implies that 
\[\cF(U) = H^1\bigg(\bigcup_{\xi\in U}\{\xi\}\times S_\xi,q_2'^*\cG\bigg).\] 
By
\cref{remark: alternative description of unions of S1}(2), we have a homotopy equivalence between $\bigcup_{\xi \in U}\{\xi \}\times S_\xi$ and $\bigcup_{\xi\in U}S_\xi$. Using \cite[Corollary 2.7.7(ii)]{Sheaves_on_Manifolds} and the Grothendieck spectral sequence, we have \[H^1\bigg(\bigcup_{\xi\in U}\{\xi\}\times S_\xi,q_2'^*\cG\bigg)= H^1\bigg(\bigcup_{\xi\in U} S_\xi,\cG\bigg).\] This concludes the proof. \end{proof}

\begin{lemma}\label{Lemma: inverse Laplace is constructible sheaf}
The topological Laplace transform $\Phi(\cL, \cG)$ is a constructible sheaf on $\bbC$ with singularities $\{c_1, \dots,c_n\}$.
\end{lemma}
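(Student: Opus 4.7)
The plan is to verify three assertions about $\cF\coloneqq\Phi(\cL,\cG)$: that it is a sheaf (not merely a complex), that its stalks are finite-dimensional, and that it is locally constant on $\bbC\setminus\{c_1,\dots,c_n\}$. The first is immediate from \cref{lemma: inverse Laplace only  has H^0}, which also gives the stalk formula $\cF_\xi\simeq H^1(S^1,\cL_{<\xi})$; finite-dimensionality then follows because $\cL$ has finite rank and $S^1$ is compact. It therefore remains to establish local constancy on the complement of $\{c_1,\dots,c_n\}$.

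To this end, fix $\xi_0\in\bbC\setminus\{c_1,\dots,c_n\}$ and choose an open disk $U$ around $\xi_0$ whose closure avoids all the $c_i$. The goal is to show that the restriction map $\cF(U)\to\cF_\xi$ is an isomorphism for every $\xi\in U$. By \cref{compute F over open set}, this map identifies with $H^1(V_U,\cG)\to H^1(S_\xi,\cG)$ induced by $S_\xi\hookrightarrow V_U\coloneqq\bigcup_{\xi'\in U}S_{\xi'}$. The plan is to compute both sides via Čech cohomology: start with a good open cover $\{I_k\}$ of $S^1$ adapted to the exponents $\{c_1,\dots,c_n\}$ (\cref{proposition: Stokes filtration trivializes over good open intervals}), and let $W_k\subset V_U$ be the open sector lying above $I_k$. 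Both $\{W_k\}$ and $\{W_k\cap S_\xi\}$ are $\cG$-acyclic covers of $V_U$ and $S_\xi$ respectively, by \cref{cohomology of G over open sectors are 0} and \cref{corollary: H1(small)=0}, so they compute the two cohomologies.

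The heart of the argument is the claim that every restriction of sections
\[\cG(W_{k_0}\cap\dots\cap W_{k_p})\longrightarrow \cG(W_{k_0}\cap\dots\cap W_{k_p}\cap S_\xi)\]
is an isomorphism. Writing $W$ for the intersection on the left and applying the local decomposition \eqref{eq: decomposition of G over good sector}, this reduces summand by summand to the case of $j_{i!}\cV_i$. After shrinking $U$ if necessary---possible by continuity and the finiteness of both $\{c_i\}$ and $\{I_k\}$---one arranges that for each $i$, the two angular directions at which $S_{\xi'}$ meets $S_{c_i}$ lie in a common member of $\{I_k\}$ for all $\xi'\in U$, and that the ``inside/outside $\oD_{c_i}$'' status of $W$ does not change as $\xi'$ varies in $U$. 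In the entirely inside or entirely outside case, both sides of the restriction vanish or both equal the value of $\cV_i$; in the crossing case both sides vanish, since no nonzero section of the constant sheaf $\cV_i$ extends by zero across the trace of $\partial\oD_{c_i}$ in a connected open set. This combinatorial compatibility is the principal technical obstacle and is precisely where the hypothesis that the closure of $U$ avoids $\{c_1,\dots,c_n\}$ is used. Granting it, the induced map of Čech complexes is a quasi-isomorphism, so $\cF(U)\simeq\cF_\xi$ and $\cF|_{\bbC\setminus\{c_1,\dots,c_n\}}$ is locally constant, completing the proof.
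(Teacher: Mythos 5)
Your argument is correct in outline but takes a genuinely different route from the paper. The paper covers $\bbC^*$ by just two good open sectors $U_1,U_2$, observes that $\cG|_{U_i}$ is a direct sum of extensions by zero of constant sheaves as in \eqref{eq: decomposition of G over good sector}, and then concludes that the restriction $H^1(\cup_{\xi\in V_2}S_\xi,\cG)\to H^1(\cup_{\xi\in V_1}S_\xi,\cG)$ is an isomorphism for $V_1\subset V_2\subset D$ by homotopy invariance of singular cohomology and the open/closed long exact sequence; the point is that as long as $D$ avoids the $c_i$, the regions $\cup_V S_\xi\cap U_i$ and their intersections with $\bbC^*\setminus\oD_{c_j}$ deformation retract compatibly as $V$ shrinks, so all the terms in the Mayer--Vietoris sequence are unchanged. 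Your approach replaces this topological shortcut with an explicit termwise comparison of Čech complexes over a finer cover $\{W_k\}$ adapted to the Stokes geometry. That buys you a more hands-on proof, at the cost of the combinatorial synchronization you flag as the ``principal technical obstacle.'' Two caveats worth recording. First, \cref{cohomology of G over open sectors are 0} concerns small open sectors in the sense of \cref{definition: small and good sectors}, i.e.\ unbounded angular regions, for which $H^*(U,\cG)$ vanishes \emph{in all degrees} because every $S_{c_i}$ genuinely cuts through $U$; your $W_k$ and their intersections are bounded sub-regions of $V_U$, and a $W_k$ lying entirely outside some $\oD_{c_i}$ has $H^0\neq 0$. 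For the Čech-to-derived comparison you only need vanishing in positive degrees, and that does hold (the proof of the lemma adapts, since the intersections $W_{k_0}\cap\cdots\cap W_{k_p}$ and their complements relative to $\oD_{c_i}$ are still contractible), but citing the lemma as stated is slightly off. Second, the shrinking condition ``the two angular directions \dots lie in a common member of $\{I_k\}$'' cannot be meant literally, since those two directions are separated by $\pi$ and so cannot both lie in a single good interval; what you need (and presumably intend) is that for each $c_i$ and for each of the two intersection angles separately, the range of $\arg(\xi'-c_i)\pm\pi/2$ over $\xi'\in U$ is contained in a single $I_k$ chosen generically, so that the ``inside/outside/crossing'' status of every $W_{k_0}\cap\cdots\cap W_{k_p}\cap S_{\xi'}$ is independent of $\xi'\in U$. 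With that clarification the termwise isomorphism of Čech complexes follows as you describe.
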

\begin{proof}
Let $c_0\in \bbC \setminus \{c_1, \dots, c_n\}$ and $j\colon D \hookrightarrow \bbC\setminus\{c_1, \dots,c_n\}$ be a small open disk centered at $c_0$. Let us show that $\cF|_D$ is a constant sheaf. As in the proof of \cref{compute F over open set}, the restriction of $\Phi(\cL,\cG)$ to $D$ is $R^1p_{2*}'q_2'^*\cG$,
where $p_2'$ and $q_2'$ are the projections as in the following diagram.
\begin{equation*}
     \begin{tikzcd}
           \bbC &                                                                                                            & \bbC^* \\
           & \underset{\xi \in D}{\bigcup}\{\xi\}\times S_{\xi} \arrow[lu, "p_2'"] \arrow[ru, "q_2'"']                  &          
\end{tikzcd}
\end{equation*}
This is the sheafication of the presheaf sending $V\subset D$ to $H^1(\cup_{\xi\in V}S_{\xi}, \cG)$. 
To show that it is a constant sheaf, it suffices to show that for inclusions $V_1\subset V_2\subset D$, the restriction map \[r\colon H^1(\cup_{\xi\in V_1}S_{\xi}, \cG)\longrightarrow H^1(\cup_{\xi\in V_2}S_{\xi}, \cG)\] is an isomorphism.
Let us take a Čech cover of $\bbC^*$ consisting of two good open sectors $U_1$ and $U_2$.
Then the restrictions of $\cG$ to $U_1$ and $U_2$ are isomorphic to direct sums of extensions by zeros of constant sheaves as in \eqref{eq: decomposition of G over good sector}.
Since $D$ is disjoint from $\{c_1, \dots, c_n\}$, we conclude that $r$ is an isomorphism by the homotopy invariance of singular cohomology and the long exact sequence associated to an open embedding and its closed complement.
\end{proof}

Having proved that $\Phi(\cL, \cB)$ is constructible, next we show that it has vanishing cohomology.

\begin{lemma}\label{RGamma=0 condition}
Let $\cF$ be a constructible sheaf on $\bbC$ with singularities $\{c_1, \dots,c_n\}$, and let $c_0$ be a nonsingular point. 
Then $H^*(\bbC, \cF)=0$ if and only if the following two conditions hold.
    \begin{enumerate}[wide]
     \item The map $\cF\longrightarrow R^0j_*j^*\cF$ is injective, where  $j\colon  \bbC \setminus \{c_1, \dots,c_n\}\hookrightarrow \bbC$.
     \item $\dim \cF_{c_0} = \sum_{1\leq i\leq n} (\dim \cF_{c_0}-\dim \cF_{c_i})$.
\end{enumerate}
\end{lemma}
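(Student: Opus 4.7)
Let $j\colon U=\bbC\setminus\{c_1,\dots,c_n\}\hookrightarrow\bbC$ and $i\colon\{c_1,\dots,c_n\}\hookrightarrow\bbC$ denote the open and closed inclusions, and let $\cL\coloneqq j^*\cF$ be the local system on $U$ of rank $r=\dim\cF_{c_0}$, with local monodromies $T_1,\dots,T_n$. The whole argument will run through the two canonical short exact sequences
\[
  0\longrightarrow j_!\cL\longrightarrow\cF\longrightarrow i_*i^*\cF\longrightarrow 0 \qquad(\ast)
\]
(which always holds), and
\[
  0\longrightarrow\cF\longrightarrow R^0j_*\cL\longrightarrow\cQ\longrightarrow 0 \qquad(\ast\ast)
\]
(which holds precisely when condition (1) is satisfied, with $\cQ$ a skyscraper on $\{c_1,\dots,c_n\}$). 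I will also use that $H^p(\bbC,\cF)=0$ for $p\geq 2$, which follows from Artin vanishing on the affine curve $\bbC$ (alternatively, this is a direct consequence of $\bbC$ being a non-compact $2$-manifold).

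For the necessity of (1), the kernel of $\cF\to R^0j_*\cL$ is a subsheaf $\cK$ supported on $\{c_1,\dots,c_n\}$, hence a direct sum of skyscrapers. A nonzero such $\cK$ would give $H^0(\bbC,\cK)\neq 0$ and, through the injection $\cK\hookrightarrow\cF$, produce a nonzero class in $H^0(\bbC,\cF)$. For the necessity of (2), the Euler characteristic $\chi(\bbC,\cF)=0$ must hold; using $(\ast)$, it suffices to show $\chi(\bbC,j_!\cL)=r(1-n)$. This in turn follows by combining the standard exact sequence $0\to j_!\cL\to R^0j_*\cL\to\bigoplus_i\iota_{c_i*}\cL^{T_i}\to 0$ with the Leray spectral sequence for $j$ (using that $R^1j_*\cL$ is the skyscraper with stalks $\cL_{T_i}$ and that $\dim\cL^{T_i}=\dim\cL_{T_i}$). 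A short calculation then converts $\chi(\bbC,\cF)=0$ into the numerical identity (2).

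For the sufficiency, I would assume (1) and (2) and combine them as follows. By $(\ast\ast)$ and the long exact sequence of cohomology, together with $H^2(\bbC,\cF)=0$ and the Leray computation of $H^*(\bbC,R^0j_*\cL)$, the vanishing of $H^*(\bbC,\cF)$ reduces to showing that $H^0(\bbC,\cF)=0$; the vanishing of $H^1(\bbC,\cF)$ will then follow automatically from (2), since $\chi(\bbC,\cF)=0$ and the other groups vanish.

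The main obstacle is establishing $H^0(\bbC,\cF)=0$. Using $(\ast\ast)$ one sees that $H^0(\bbC,\cF)$ embeds into $H^0(\bbC,R^0j_*\cL)=\cL^{\langle T_1,\dots,T_n\rangle}$, consisting of those global invariant sections whose germ at each $c_i$ lies in the subspace $\cF_{c_i}\subset\cL^{T_i}$. I expect the dimension constraint (2) to force this intersection to be trivial, completing the argument by matching the kernel and cokernel of the connecting map $\bigoplus_i\cF_{c_i}\to H^1(\bbC,j_!\cL)$ from the long exact sequence of $(\ast)$; here the key auxiliary fact (provable from the Leray spectral sequence and surjectivity of the restriction $H^1(U,\cL)\to\bigoplus_i\cL_{T_i}$) is that $\dim H^1(\bbC,j_!\cL)=r(n-1)$, which matches $\sum_i\dim\cF_{c_i}$ under (2).
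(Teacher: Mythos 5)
Your argument correctly handles the necessity direction: the observation that a nonzero kernel of $\cF\to R^0j_*j^*\cF$ is a skyscraper injecting into $H^0(\bbC,\cF)$ is sound, and your Euler-characteristic computation $\chi(\bbC,j_!\cL)=r(1-n)$ (via Leray for $j$ and $\dim\cL^{T_i}=\dim\cL_{T_i}$) is correct and does yield condition (2). Your reduction of the sufficiency direction to showing $H^0(\bbC,\cF)=0$, using $(\ast\ast)$, $\chi(\bbC,\cF)=0$, and Artin vanishing for $H^{\geq 2}$, is also fine. Note that the paper gives no self-contained argument at all here, deferring entirely to the proof of \cite[Theorem 2.29]{KKP_Hodge_theoretic_aspect_of_mirror_symmetry}, so your attempt at a direct sheaf-theoretic argument is a genuinely different route.

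However, the step you yourself flag as the main obstacle is a real gap, and it cannot be closed by the dimension count (2). You reduce to showing that the connecting map $\delta\colon\bigoplus_i\cF_{c_i}\to H^1(\bbC,j_!\cL)$, a map between two spaces of equal dimension $r(n-1)$, is injective --- equivalently, that $\bigcap_i\cF_{c_i}=0$ after transporting the $\cF_{c_i}\subset\cL^{T_i}$ into a common fiber. Conditions (1) and (2) alone do not force this. Concretely: take $n=2$, a rank-$2$ local system $\cL$ on $\bbC\setminus\{c_1,c_2\}$ with distinct unipotent local monodromies $T_1,T_2$ fixing a common vector $e_1$, and set $\cF_{c_1}=\cF_{c_2}=\langle e_1\rangle=\cL^{T_i}$, so that $\cF=R^0j_*\cL$. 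Then (1) holds (the adjunction is an isomorphism), (2) holds ($r=2$ and the codimensions sum to $1+1=2$), yet $H^0(\bbC,\cF)=\cL^{\pi_1}=\langle e_1\rangle\neq 0$. What is actually at work in \cite[Theorem 2.29]{KKP_Hodge_theoretic_aspect_of_mirror_symmetry} is the stronger statement that the Gabrielov-path map $\cF_{c_0}\to\bigoplus_i\cF_{c_0}/\cF_{c_i}$ is an \emph{isomorphism}, which is strictly more than the numerical identity (2); your plan would need to take that as an input rather than expect it to follow from the bookkeeping.
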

\begin{proof}
    We refer to the proof of \cite[Theorem 2.29]{KKP_Hodge_theoretic_aspect_of_mirror_symmetry}.
\end{proof}

Now we verify the conditions (1) and (2) above for $\cF=\Phi(\cL, \cG)$ in \cref{proposition: RGamma=0 condition (2)} and \cref{lemma: the adjunction map is injective} 

\begin{proposition}\label{proposition: RGamma=0 condition (2)}
Given a co-Stokes structure $(\cL, \cL_{<})$ with exponents $\{c_1, \dots,c_n\}$ and $c_0\in \bbC\setminus\{c_1, \dots,c_n\}$, we have 
\begin{align*} 
   \dim H^1(S^1, \cL_{<c_0})-\dim H^1(S^1, \cL_{<c_i}) &= \dim\gr_{c_i}\cL\\
    \sum_{1\leq i\leq n} \big(\dim H^1(S^1, \cL_{<c_0})-\dim H^1(S^1, \cL_{<c_i})\big) &= \dim H^1(S^1, \cL_{<c_0}).
    \end{align*}    
\end{proposition}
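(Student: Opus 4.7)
The plan is to reduce both identities to a single Euler characteristic computation for the sheaves $\cL_{<\xi}$ on $S^1$. Since \cref{lemma: only S1 survives on circle} gives $H^0(S^1, \cL_{<\xi}) = 0$ and $S^1$ has cohomological dimension $1$, we have $\dim H^1(S^1, \cL_{<\xi}) = -\chi(S^1, \cL_{<\xi})$, so it suffices to compute these Euler characteristics.

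The first step is to compare $\cL_{<\xi}$ with the associated graded
\[
(\gr\cL)_{<\xi} = \bigoplus_{i=1}^n (j_i^\xi)_!(j_i^\xi)^* \gr_{c_i}\cL,
\]
where $j_i^\xi \colon U_i^\xi \coloneqq \{\theta \in S^1 : c_i <_\theta \xi\} \hookrightarrow S^1$. I would choose a cover $\cU$ of $S^1$ by small open intervals; by \cref{lemma: small open cover are Leray cover} the \v{C}ech cohomology with respect to $\cU$ computes the sheaf cohomology of both $\cL_{<\xi}$ and $(\gr\cL)_{<\xi}$. On every non-empty intersection $V$ of elements of $\cU$, \cref{proposition: Stokes filtration trivializes over good open intervals}(1) supplies a filtered isomorphism $\cL|_V \xrightarrow{\ \sim \ } (\gr\cL)|_V$, inducing an isomorphism of finite-dimensional vector spaces $\cL_{<\xi}(V) \xrightarrow{\ \sim \ } (\gr\cL)_{<\xi}(V)$. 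These local isomorphisms typically do not commute with the \v{C}ech differentials, but they match the dimensions of the \v{C}ech cochain spaces in every degree, and the Euler characteristic of a complex of finite-dimensional vector spaces depends only on those dimensions. Hence
\[
\chi(S^1, \cL_{<\xi}) = \chi(S^1, (\gr\cL)_{<\xi}) = \sum_{i=1}^n \dim(\gr_{c_i}\cL) \cdot \chi_c(U_i^\xi),
\]
where the last equality uses the direct-sum decomposition of the split case together with $\chi(S^1, j_!F) = \chi_c(U,F)$.

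The second step is to evaluate $\chi_c(U_i^\xi)$ for the two relevant values of $\xi$. When $c_i \neq \xi$, writing $c_i - \xi = \rho e^{i\phi}$ with $\rho > 0$ turns the defining inequality $\re((c_i - \xi)e^{-i\theta}) < 0$ into $\cos(\phi - \theta) < 0$, so $U_i^\xi$ is an open half-circle and $\chi_c(U_i^\xi) = -1$. When $c_i = \xi$, the strict inequality fails identically, giving $U_i^\xi = \emptyset$ and $\chi_c = 0$. Applied to $\xi = c_0 \notin \{c_1, \dots, c_n\}$ this yields $\chi(S^1, \cL_{<c_0}) = -\sum_j \dim(\gr_{c_j}\cL)$, while applied to $\xi = c_i$ it yields $\chi(S^1, \cL_{<c_i}) = -\sum_{j \neq i} \dim(\gr_{c_j}\cL)$. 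Subtraction produces the first identity, and summing it over $i$ together with the value of $\dim H^1(S^1, \cL_{<c_0})$ just derived produces the second.

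The only delicate point is the \v{C}ech comparison between $\cL_{<\xi}$ and its associated graded: the filtered trivializations on small intervals from \cref{proposition: Stokes filtration trivializes over good open intervals}(1) are not canonical and need not patch into a global splitting of the Stokes filtration. The computation still works because Euler characteristics are insensitive to the chain-complex differentials and only see the dimensions of the cochain spaces in each degree.
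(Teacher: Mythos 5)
Your proof is correct, and it takes a genuinely different route from the paper's. The paper proves the first identity by invoking \cref{Lemma: inverse Laplace is constructible sheaf} to say $\dim H^1(S^1,\cL_{<\xi})$ is locally constant on the nonsingular locus, then moving $c_0$ to a nonsingular point near $c_i$ and running an explicit \v{C}ech computation with a specific four-interval cover $\{U_1,\dots,U_4\}$ tailored to the pair $(c_0,c_i)$; the second identity is obtained by a separate analogous computation with $c_0$ near infinity. You instead express each $\dim H^1(S^1,\cL_{<\xi})$ as $-\chi(S^1,\cL_{<\xi})$ using the vanishing of $H^0$ and the cohomological dimension of $S^1$, reduce $\chi(S^1,\cL_{<\xi})$ to the Euler characteristic of the associated graded by matching \v{C}ech cochain dimensions (the local splittings need not patch, but Euler characteristics are insensitive to that), and then compute the graded case explicitly via $\chi_c$ of half-circles. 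The payoff of your approach is a single closed formula $\chi(S^1, \cL_{<\xi}) = -\sum_{c_j \neq \xi}\dim\gr_{c_j}\cL$ that yields both identities at once, and it does not rely on \cref{Lemma: inverse Laplace is constructible sheaf} or on positioning $c_0$ near $c_i$; the common ingredients with the paper are \cref{lemma: small open cover are Leray cover}, \cref{lemma: only S1 survives on circle}, and \cref{proposition: Stokes filtration trivializes over good open intervals}, while the extra observation you add is the dimension-matching Euler characteristic trick.
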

\begin{proof}
By \cref{Lemma: inverse Laplace is constructible sheaf}, for each $\xi \in \bbC\setminus\{c_1,\dots,c_n\}$, the dimension of $H^1(S^1,\cL_{<\xi})$ is constant.
So for a singular point $c_i, $ we may assume that $c_0$ is a nonsingular point near it. Let $\theta_1$ and $\theta_2$ be the directions where $S_{c_i}$ intersects $S_{c_0}$. Let $\epsilon$ be small enough such that for all $\theta \in (\theta_1-\epsilon,\theta_1+\epsilon)\setminus\theta_1$ the circle $S^1_{c_j}$ intersects neither $S^1_{c_i}$ nor $S^1_{c_0}$.
Let $U_1 \coloneqq (\theta_1-\epsilon,\theta_1+\epsilon)$, $U_2\coloneqq (\theta_1,\theta_1+\pi)$, $U_3\coloneqq (\theta_1+\pi-\epsilon,\theta_1+\pi+\epsilon)$, and $U_4\coloneqq (\theta_1+\pi,\theta_1+2\pi)$.
All of them are small open intervals as in \cref{definition: Stokes direction and small}.
Consider the open cover $\cU=\{U_1,U_2,U_3,U_4\}$ of $S^1$.
Then the first equality follows from \cref{lemma: small open cover are Leray cover} and the Čech cohomology of $\cU$. 

For the second equality, we choose $c_0$ to be a point near infinity. By a similar Čech cohomology computation, one can show that \[\dim H^1(S^1, \cL_{<c_0})=\sum_{1\leq i\leq n} \dim \gr_{c_i}\cL.\] Then second equality follows from the first equality. 
\end{proof}

\begin{lemma}\label{cohomology of G over C^* vanish}
     Let $(\cL, \cG)$ a co-Stokes sheaf with exponents $\{c_1, \dots,c_n\}$. We have \begin{equation*}
         H^*(\bbC^*, \cG)=0.
     \end{equation*}
\end{lemma}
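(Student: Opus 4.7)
My approach is to compute $H^*(\bbC^*, \cG)$ by exhibiting an acyclic open cover of $\bbC^*$ consisting of small open sectors, for which \cref{cohomology of G over open sectors are 0} already does most of the work. First, using the compactness of $S^1$ and \cref{definition: Stokes direction and small}, I would pick a finite open cover $\{U_1,\dots,U_m\}$ of $S^1$ by small open intervals and set $V_i \coloneqq \pi^{-1}(U_i)\subset \bbC^*$, where $\pi\colon \bbC^*\to S^1$ denotes the standard projection. Each $V_i$ is a small open sector, and because $V_{i_0}\cap\dots\cap V_{i_k}=\pi^{-1}(U_{i_0}\cap\dots\cap U_{i_k})$, every nonempty finite intersection is again a small open sector.

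Next I would invoke \cref{cohomology of G over open sectors are 0}, which gives $H^q(V,\cG)=0$ for all $q\ge 0$ and every small open sector $V$; crucially, this includes the degree-zero statement $H^0(V,\cG)=0$, coming from the decomposition $\cG|_V=\bigoplus_i j_{i!}\cV_i$ and the fact that on a connected sector each $j_{i!}\cV_i$ is extension by zero from a proper open subset. Since $\cG$ is thus acyclic on every element of $\cV \coloneqq \{V_i\}$ and on every finite intersection, the Leray comparison theorem for acyclic covers yields $H^*(\bbC^*,\cG)\simeq\check H^*(\cV,\cG)$. But the vanishing of $H^0$ on each intersection makes the entire Čech complex $\check C^\bullet(\cV,\cG)=\prod H^0(V_{i_0}\cap\dots\cap V_{i_k},\cG)$ identically zero, so $\check H^*(\cV,\cG)=0$.

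I do not anticipate a serious obstacle here, as the argument is essentially parallel to \cref{lemma: small open cover are Leray cover}, just carried out on $\bbC^*$ rather than $S^1$. The only point requiring a bit of care is ensuring that the decomposition \eqref{eq: decomposition of G over good sector} really applies to each intersection $V_{i_0}\cap\dots\cap V_{i_k}$; this is automatic since every such nonempty intersection is itself a small open sector extending radially from $0$ to $\infty$, so for each exponent $c_j$ both $D_{c_j}$ and its complement meet the sector nontrivially, placing us exactly in the setting of \cref{lemma: calculation of cohomology}(1).
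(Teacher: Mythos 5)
Your proposal is essentially the paper's own proof: both take a cover of $\bbC^*$ by small open sectors (obtained from a cover of $S^1$ by small open intervals), note that finite intersections remain small open sectors, invoke \cref{cohomology of G over open sectors are 0} to get acyclicity and in particular vanishing of $H^0$ on every intersection, and conclude via the Leray/\v{C}ech comparison that the whole \v{C}ech complex is zero. The only difference is that you spell out a few routine verifications (pullback of intervals, applicability of the decomposition) that the paper leaves implicit.
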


\begin{proof}
Let $\cU=\{U_1, \dots,U_n\}$ be an open cover of $\bbC^*$ consisting of small open sectors. Since any finite intersection of small open sectors is still a small open sector, by \cref{cohomology of G over open sectors are 0}, for any $m$, \[H^0(U_1\cap\dots\cap U_m, \cG|_{U_1\cap\dots \cap U_m})=0.\]
This implies that $H^k(\cU,\cG)=0$. 
Also by \cref{cohomology of G over open sectors are 0}, for any $k$ 
\[H^k(\bbC^*,\cG)=H^k(\cU,\cG)=0.\] This concludes the proof.
\end{proof}

\begin{lemma}\label{lemma: the adjunction map is injective}
 The canonical map $\cF\longrightarrow R^0j_*j^*\cF$ is injective.
\end{lemma}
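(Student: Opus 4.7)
The plan is to derive the injectivity as a consequence of the stronger vanishing $R\Gamma(\bbC,\cF)=0$, which I would prove directly from \cref{cohomology of G over C^* vanish} by exploiting the two fibrations in the diagram defining $\Phi(\cL,\cG)$, and then invoke \cref{RGamma=0 condition}.

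First I would use \cref{remark: alternative description of unions of S1}(2) to identify $Y\coloneqq\bigcup_{\xi\in\bbC}\{\xi\}\times S_\xi$ with $\bbR\times\bbC^*$ so that $q_2$ becomes the projection to $\bbC^*$, a trivial $\bbR$-bundle with contractible fiber. The Leray spectral sequence for $q_2$ collapses to give $R\Gamma(Y,q_2^*\cG)\simeq R\Gamma(\bbC^*,\cG)$, which vanishes by \cref{cohomology of G over C^* vanish}. Next, since $p_2$ is proper with compact $S^1$-fibers and $q_2^!=q_2^*[1]$, one has $Rp_{2!}q_2^!\cG=Rp_{2*}(q_2^*\cG)[1]$; combined with \cref{lemma: inverse Laplace only  has H^0}, this shows that $Rp_{2*}q_2^*\cG$ is concentrated in degree $1$ with value $\cF$. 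The Leray spectral sequence for $p_2$ therefore degenerates, yielding $H^{p+1}(Y,q_2^*\cG)=H^p(\bbC,\cF)$ for all $p$, whence $R\Gamma(\bbC,\cF)=0$.

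Finally, applying \cref{RGamma=0 condition} in the direction that vanishing of $R\Gamma(\bbC,\cF)$ implies both (1) and (2) produces the injectivity of $\cF\to R^0j_*j^*\cF$. This approach also gives an alternative proof of \cref{proposition: RGamma=0 condition (2)}. I do not foresee a serious obstacle: the only point to check carefully is the degeneration of the Leray spectral sequence for $q_2$, which follows immediately from the contractibility of the fiber $\bbR$, together with the degree shift in $q_2^!=q_2^*[1]$ coming from $q_2$ being a topological fibration of relative dimension $1$.
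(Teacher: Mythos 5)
Your argument is correct, and it takes a genuinely different route from the paper's. The paper proves the injectivity head‑on: it reduces the statement to $\cH^0(i_!i^!\cF)=0$ via the fiber sequence $i_!i^!\cF\to\cF\to Rj_*j^*\cF$, identifies $\cH^0(i_!i^!\cF)$ with $i_*\mathbb H^0(S_{c_i},\tilde\imath^!\cG)$ by proper base change, and then shows this vanishes using only the degree‑zero part $H^0(\bbC^*,\cG)=0$ of \cref{cohomology of G over C^* vanish}. The injectivity is then fed forward, together with the separate dimension count of \cref{proposition: RGamma=0 condition (2)}, into \cref{RGamma=0 condition} to obtain $R\Gamma(\cF)=0$ in \cref{Theorem: inverse Laplace transform is in constr}. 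You invert this logical order: you compute $R\Gamma(\bbC,\cF)$ directly from $R\Gamma(\bbC^*,\cG)=0$ by chaining the two Leray spectral sequences for $p_2$ and $q_2$ — the first degenerates because $Rp_{2*}q_2^*\cG$ sits in a single degree by \cref{lemma: inverse Laplace only  has H^0}, the second because the trivial $\bbR$‑bundle $q_2$ satisfies $Rq_{2*}q_2^*\cG\simeq\cG$ by homotopy invariance (the same fact the paper invokes in \cref{compute F over open set} via \cite[Corollary 2.7.7(ii)]{Sheaves_on_Manifolds}, so this is not an extra hypothesis) — and then you extract the injectivity from the reverse implication of \cref{RGamma=0 condition}. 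Your route is shorter and has the pleasant side effect of rendering the Čech computation in \cref{proposition: RGamma=0 condition (2)} unnecessary for \cref{Theorem: inverse Laplace transform is in constr}: both conditions of \cref{RGamma=0 condition} become corollaries of the direct Leray argument. What you lose is a stand‑alone, sheaf‑theoretic identification of what $\cH^0(i_!i^!\cF)$ actually is, which the paper's proof supplies and which clarifies exactly where degree‑zero vanishing of $\cG$ on $\bbC^*$ enters; what you gain is a cleaner global picture in which the vanishing of $R\Gamma(\cF)$ is transparently equal to that of $R\Gamma(\bbC^*,\cG)$ shifted by one.
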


\begin{proof}
The fiber sequence
\begin{equation*}
i_!i^!\cF\longrightarrow\cF\longrightarrow Rj_*j^*\cF
\end{equation*} 
induces a long exact sequence on cohomology sheaves \begin{equation*}
0 \longrightarrow \cH^0(i_!i^!\cF)\longrightarrow \cF\longrightarrow R^0j_*j^*\cF\longrightarrow \cH^1(i_!i^!\cF)\longrightarrow 0.
\end{equation*}
To prove that $\cF\longrightarrow R^0j_*j^*\cF $ is injective, it suffices to prove that $\cH^0(i_!i^!\cF)=0$. Consider the following diagram, where $p_2$ is proper. 

\begin{equation*}
   \begin{tikzcd}
\{c_i\}\times S_{c_i}  \arrow[d, "{p_2}'"'] \arrow[r, "{i}'"'] & \bigcup_{\xi \in \bbC}\{\xi\}\times S_{\xi}  \arrow[d, "p_2"] \\
\{c_i\} \arrow[r, "i"]                                                         & \bbC                                                                     
\end{tikzcd}
\end{equation*}

Let $\tilde{i}=q_2\cir i'\colon  \{c_i\}\times S_{c_i}\longrightarrow \bbC^*$. Since $Rp_{2!}=Rp_{2*}$, by \cite[Theorem 3.2.13]{Dimca_Sheaves_in_Topology} we have
\begin{equation*}
i_!i^!\cF = i_*i^!Rp_{2!}q_2^!\cG \simeq i_*R{p}_{2!}'{i}'^!q_2^!\cG   =i_*R{p}_{2*}'\tilde{i}^!\cG,  
\end{equation*}
Thus 
\begin{equation*}
   \cH^0(i_! i^!\cF)= \cH^0(i_*R{p}_{2*}'\tilde{i}^! \cG)=i_* \cH^0(Rp_{2*}'\tilde{i}^!\cG) = i_*\mathbb{H}^0(S_{c_i}, \tilde{i}^!\cG)
\end{equation*} 
where $\mathbb{H}^0(S_{c_i}, \tilde{i}^!\cG)$ denote the 0-th hypercohomology group.
We have the following exact sequence of cohomology groups (see \cite[Page 45]{Dimca_Sheaves_in_Topology})
\begin{equation*}
0\longrightarrow \mathbb{H}^0(S_{c_i}, \tilde{i}^!\cG) \longrightarrow H^0(\bbC^*, \cG) \longrightarrow H^0(\bbC^* \setminus S_{c_i}, \cG)
\end{equation*}
By \cref{cohomology of G over C^* vanish}, we have $H^0(\bbC^*, \cG)=0$.
Therefore, $\mathbb{H}^0(S_{c_i}, \tilde{i}^!\cG)=0$, then $\cH^0(i_! i^!\cF)=0$, completing the proof.
\end{proof}

\begin{theorem}\label{Theorem: inverse Laplace transform is in constr}
Let  $(\cL, \cG)$ be a co-Stokes sheaf with exponents $\{c_1, \dots,c_n\}$, and let $\cF= \Phi(\cL, \cG) $. We have \begin{equation*}
 R\Gamma(\cF)=0.
\end{equation*}
\end{theorem}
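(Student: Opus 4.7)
The plan is to invoke the criterion in \cref{RGamma=0 condition}, which states that for a constructible sheaf $\cF$ on $\bbC$ with singularities $\{c_1,\dots,c_n\}$ and any nonsingular point $c_0$, one has $R\Gamma(\bbC,\cF) = 0$ if and only if (i) the adjunction $\cF \to R^0 j_* j^* \cF$ is injective and (ii) $\dim \cF_{c_0} = \sum_{i=1}^n (\dim \cF_{c_0} - \dim \cF_{c_i})$. Both conditions have already been prepared in the preceding lemmas, so the theorem will follow as a direct combination.

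First, I would note that $\cF = \Phi(\cL,\cG)$ is indeed a constructible sheaf with singularities contained in $\{c_1,\dots,c_n\}$, which is the content of \cref{Lemma: inverse Laplace is constructible sheaf}, so the criterion applies. Condition (i) is exactly \cref{lemma: the adjunction map is injective}, and requires no further work here.

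For condition (ii), I would use the stalk description established in \cref{lemma: inverse Laplace only  has H^0}: under the equivalence $\sigma$ relating $(\cL,\cL_<)$ to $(\cL,\cG)$, we have
\[
\cF_\xi \;\simeq\; H^1(S_\xi, \cG|_{S_\xi}) \;\simeq\; H^1(S^1, \cL_{<\xi})
\]
for every $\xi \in \bbC$. Substituting this into the required dimension equality, condition (ii) becomes precisely
\[
\dim H^1(S^1, \cL_{<c_0}) \;=\; \sum_{i=1}^n \bigl(\dim H^1(S^1, \cL_{<c_0}) - \dim H^1(S^1, \cL_{<c_i})\bigr),
\]
which is the second identity of \cref{proposition: RGamma=0 condition (2)}.

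There is no real obstacle left at this stage, since all of the genuine work, namely the vanishing of $H^0$ and $H^{\ge 2}$ on $S^1$ (via \cref{lemma: only S1 survives on circle}), the \v{C}ech computation trivializing $\cL_{<\xi}$ on small open intervals, and the delicate fiber sequence argument for injectivity, has already been carried out in the preceding lemmas. The only thing to be careful about is to state the logical structure cleanly: invoke \cref{Lemma: inverse Laplace is constructible sheaf} so that \cref{RGamma=0 condition} applies, translate stalks of $\cF$ to $H^1(S^1,\cL_{<\bullet})$ via \cref{lemma: inverse Laplace only  has H^0}, and then cite \cref{lemma: the adjunction map is injective} and \cref{proposition: RGamma=0 condition (2)} for the two hypotheses respectively.
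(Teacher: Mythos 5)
Your proof is correct and follows exactly the same route as the paper: the paper's proof is a one-line citation of \cref{RGamma=0 condition}, \cref{proposition: RGamma=0 condition (2)}, and \cref{lemma: the adjunction map is injective}. You have simply spelled out the intermediate steps (invoking \cref{Lemma: inverse Laplace is constructible sheaf} and translating stalks via \cref{lemma: inverse Laplace only  has H^0}) that the paper leaves implicit.
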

\begin{proof}
This follows from \cref{RGamma=0 condition}, \cref{proposition: RGamma=0 condition (2)}, and \cref{lemma: the adjunction map is injective}.
\end{proof}

\section{The topological Laplace transform is an equivalence}

In this section, we prove that topological Laplace transform functors we constructed in Sections \ref{section: From constructible sheaves to co-Stokes structures} and \ref{section: From co-Stokes structures to constructible sheaves} 

\[\begin{tikzcd}
    & \SSh \arrow[rd, "\Phi"] & \\
    \Stol \arrow[ru, "\sigma"] & & \Constrz \arrow[ll, "\Psi"]
 \end{tikzcd}\]
give an equivalence of categories.

\begin{theorem} \label{theorem: const0 and Stokes are equivalent 6.1}
    We have an equivalence of categories 
    \begin{equation*}
        \Phi\cir \sigma \colon  \Stol \mathrel{\substack{\xrightarrow{\hspace{2em}} \\[-0.6ex] \xleftarrow{\hspace{2em}}}}  \Constrz  \colon \Psi.
    \end{equation*}
\end{theorem}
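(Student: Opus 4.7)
Since $\sigma: \Stol \to \SSh$ is an equivalence by \cref{theorem: SFil=SSh}, the theorem reduces to showing that $\Phi$ and $\sigma \circ \Psi$ are quasi-inverse equivalences between $\SSh$ and $\Constrz$. The plan is to construct natural isomorphisms $\eta: \id_{\Constrz} \xrightarrow{\ \sim \ } \Phi \circ \sigma \circ \Psi$ and $\epsilon: \sigma \circ \Psi \circ \Phi \xrightarrow{\ \sim \ } \id_{\SSh}$, then verify each is a stalkwise isomorphism.

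For $\eta$, given $\cF \in \Constrz$, the composite $(\sigma \circ \Psi)(\cF)$ is by \cref{theorem: describe sigma circ lt using pullback-pushforward} the co-Stokes sheaf $(\cL, \cG)$ with $\cG_z = \cF(\overline{\cH}_z)$; by \cref{lemma: inverse Laplace only  has H^0} and \cref{compute F over open set}, the sections of $\Phi(\cL, \cG)$ over an open $U \subset \bbC$ are $H^1\bigl(\bigcup_{\xi \in U} S_\xi, \cG\bigr)$. I would define $\eta_\cF(U): \cF(U) \to H^1\bigl(\bigcup_{\xi \in U} S_\xi, \cG\bigr)$ via a Čech cover of $\bbC^*$ by two good open sectors: a section of $\cF$ restricts through the map $\cF(\overline{\cH}_z) = \cG_z$ to sections of $\cG$ over each sector, whose difference on the overlap is the Čech $1$-cocycle. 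To show $\eta_\cF$ is an isomorphism, I would check on stalks: at $\xi \in \bbC$ the target is $H^1(S^1, \cL_{<\xi})$, which has dimension $\sum_i \dim(\cF_\xi / \cF_{c_i}) = \dim \cF_\xi$ by \cref{proposition: RGamma=0 condition (2)}. Upgrading dimensional equality to an identification requires tracing through the Čech cover of $S^1$ by four small intervals from \cref{lemma: small open cover are Leray cover}, explicitly matching the cocycle generators with the vanishing cycle summands $\cF_\xi / \cF_{c_i}$.

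For $\epsilon$, given $(\cL, \cG) \in \SSh$ with $\cF \coloneqq \Phi(\cL, \cG)$, the local system part of $(\sigma \circ \Psi)(\cF)$ is identified with $\cL$ via the colimit description of \cref{Lemma: composition for local system} applied at a circle near infinity, where \cref{compute F over open set} and the decomposition \eqref{eq: decomposition of G over good sector} of $\cG$ over good sectors combine to recover $\cL$. For the filtration part, the stalk at $z$ of the subsheaf of $(\sigma \circ \Psi)(\cF)$ is $\cF(\overline{\cH}_z) = H^1\bigl(\bigcup_{\xi \in \overline{\cH}_z} S_\xi, \cG\bigr)$, and I would compute this directly by exploiting that the union of circles $S_\xi$ for $\xi$ on the boundary line $l_z$ is exactly $S_z$ itself, while for $\xi$ in the interior of $\overline{\cH}_z$ the circles $S_\xi$ sweep out a region whose decomposition pulled back from \eqref{eq: decomposition of G over good sector} contributes $H^1$ precisely for those $c_i$ with $S_{c_i} < z$, yielding $\cG_z$.

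The main obstacle will be upgrading these pointwise identifications to natural isomorphisms of sheaves, compatible with all morphisms. The dimension bookkeeping and the essential geometric inputs are all established in Sections 3--5; what remains is to make canonical choices of Čech representatives and verify functoriality. I expect the second direction to be the more delicate step, since reconstructing the filtration $\cG$ from $H^1$ of unions of circles requires a careful geometric retraction argument in families over $\SSh$, whereas the first direction follows a more standard template of matching cocycles to vanishing cycles.
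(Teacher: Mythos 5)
Your high-level plan---reduce via $\sigma$ to an equivalence $\SSh \leftrightarrow \Constrz$, then build two natural transformations and check them stalkwise---matches the paper's architecture, and the dimension bookkeeping you cite (\cref{proposition: RGamma=0 condition (2)}, \cref{lemma: small open cover are Leray cover}) is indeed the right ammunition for the stalk count. However, both directions as you describe them have genuine gaps.

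For $\epsilon\colon (\sigma\circ\Psi)\circ\Phi \Rightarrow \id$, your geometric claim that ``the union of circles $S_\xi$ for $\xi$ on the boundary line $l_z$ is exactly $S_z$ itself'' is false. Write $z = e^\lambda e^{i\theta}$; for $\theta'\neq\theta,\theta+\pi$ and $\xi$ ranging over $l_z$, the quantity $\re(\xi e^{-i\theta'})$ sweeps all of $\bbR$, so $\bigcup_{\xi\in l_z}S_\xi$ contains the entire ray in every direction $\theta'$ except $\theta$ and $\theta+\pi$; it is nearly all of $\bbC^*$, not a single circle. Moreover, $\cF(\ocH_z)$ is a section over a \emph{closed} set, whereas \cref{compute F over open set} is stated only for open $U$; you would need a separate argument (e.g.\ a limit over open neighborhoods of $\ocH_z$, or a proper-base-change computation) before you can even write down the cohomology group you want to compute. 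The paper avoids both issues by never trying to compute $\cF(\ocH_z)$ geometrically: it builds a map $\cG'\to\cG$ as a chain of adjunction morphisms through the space $\bigcup_{z}\{\xi_z\}\times\{z\}$ (a section of $q_2$), checks injectivity of each piece (\cref{kk1 is injective}, \cref{kk2 is injective}), lifts through a kernel (\cref{lemma: lifts to psi}), and only at the end verifies an isomorphism at infinity and filtration-compatibility.

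For $\eta\colon \id \Rightarrow (\Phi\circ\sigma)\circ\Psi$, your Čech-cocycle construction is not well-defined as written: a section of $\cF$ over $U\subset\bbC$ does not ``restrict'' to $\cF(\ocH_z)$, since $\ocH_z$ is not a subset of $U$; and you do not specify which cover of $\bigcup_{\xi\in U}S_\xi$ you are resolving with, nor why the resulting cocycle is independent of choices, nor why the assignment is a sheaf morphism that is natural in $\cF$. The paper sidesteps all of this by gluing the open correspondence $U = \bigcup_z \cH_z\times\{z\}$ and the closed correspondence $V = \bigcup_\xi\{\xi\}\times S_\xi$ into a single closed subset $Z\subset\bbC\times\bbC^*$ (\cref{gluing U V to Z}); the natural transformation is then literally a composite of unit/counit maps of adjunctions through $Z$, with the projection formula (\cref{p2p2F=F}) giving the base case $\cF\xrightarrow{\sim}R^1(p\circ i)_*(p\circ i)^*\cF$. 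This is the key structural idea you are missing: by phrasing everything as adjunction morphisms you get naturality for free, whereas a hand-built cocycle map would need a separate functoriality argument. Your acknowledgement at the end that ``what remains is to make canonical choices of Čech representatives and verify functoriality'' is not a detail to be filled in later---it is where the actual content of the proof lies, and the paper's use of the auxiliary space $Z$ is precisely the device that discharges it.
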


We will construct two natural isomorphisms, $(\Phi \cir \sigma) \cir \Psi\xrightarrow{\ \sim \ } \id$ in \cref{theorem: it sigma lt=id} and $\Psi \cir (\Phi \cir  \sigma)\xrightarrow{\ \sim \ } \id$ in \cref{theorem: lt it sigma = id}.

Consider the following diagram 
\begin{equation*}
    \begin{tikzcd}
\bbC & \bigcup_{z\in \bbC^*}\{\xi\}\times\{z\} \arrow[l, "p_3"'] \arrow[r, "q_3"] & \bbC^*,
\end{tikzcd}
\end{equation*}
where $p_3$ and $q_3$ are the two projections, and $\xi$ is the center of $\hbarz$ (see \cref{notation: halfplanes}). For $\xi \neq 0$, its preimage $(q_3\cir p_3^{-1})(\xi)$ consists of exactly two points, which we denote as $z_1$ and $z_2$. For $\xi=0$, its preimage $(q_3\cir p_3^{-1})(0)$ is the unit circle $S_{\xi=0}$.

Our construction of the natural isomorphism $\Psi \cir (\Phi \cir  \sigma)\xrightarrow{\ \sim \ } \id$ is motivated by the observation that, given $\cF\in \Constrz$ and $\xi\in \bbC$, we have
\[    \dim \ocH_{z_1} +\dim\ocH_{z_2} = \dim \cF_\xi,
\]
over a dense open subset of $\bbC$ (see Lemmas \ref{Lemma: F at a point = F at two closed half-planes} and \ref{lemma: co stokes structure and decomposition H1}). 
\begin{definition}
Let $\cF\in \Constrz$ with singularities $c_1, \dots,c_n$. 
We call $\xi \in\bbC$ a \emph{Stokes point}, if $\xi=0$, or $c_i$, or for some $c_i\neq \xi$, the vector $\xi - c_i$ is orthogonal to $\xi$.
\end{definition}

\begin{lemma}
\label{Lemma: F at a point = F at two closed half-planes}
Let $\cF \in \Constrz$ with singularities $\{c_1,\dots,c_n\}$.
If $\xi\neq 0$ is not a Stokes point, then
\begin{equation*}
        \dim\cF_\xi  = \dim \cF\big(\ocH_{z_1}\big)+\dim \cF\big(\ocH_{z_2}\big)
    \end{equation*} 
where $(q_3\cir p_3^{-1})(\xi)=\{z_1,z_2\}$.
\end{lemma}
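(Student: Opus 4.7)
The plan is to apply Mayer--Vietoris for the closed cover of $\bbC$ by two half-planes, combined with the vanishing cohomology hypothesis on $\cF$ and a local constancy argument along the common boundary line.

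First I would make the geometry explicit. Writing $\xi = \lambda_0 e^{i\theta_0}$ with $\lambda_0 > 0$, solving $\lambda e^{i\theta} = \xi$ for $(\theta, \lambda) \in [0, 2\pi) \times \bbR$ gives the two preimage points $z_1 = e^{\lambda_0} e^{i\theta_0}$ and $z_2 = e^{-\lambda_0} e^{i(\theta_0 + \pi)}$. A direct computation from \cref{notation: halfplanes} then shows
\begin{equation*}
\ocH_{z_1} = \{x \in \bbC : \re(x e^{-i\theta_0}) \geq \lambda_0\}, \qquad \ocH_{z_2} = \{x \in \bbC : \re(x e^{-i\theta_0}) \leq \lambda_0\},
\end{equation*}
so $\ocH_{z_1} \cup \ocH_{z_2} = \bbC$ and $\ocH_{z_1} \cap \ocH_{z_2} = l_\xi$.

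Second, I would invoke the closed-cover short exact sequence
\begin{equation*}
0 \longrightarrow \cF \longrightarrow i_{1*}i_1^*\cF \oplus i_{2*}i_2^*\cF \longrightarrow i_{12*}i_{12}^*\cF \longrightarrow 0,
\end{equation*}
where $i_1, i_2, i_{12}$ denote the inclusions of $\ocH_{z_1}, \ocH_{z_2}, l_\xi$ into $\bbC$; exactness is a routine stalkwise check (split by which of $\ocH_{z_1} \setminus l_\xi$, $\ocH_{z_2} \setminus l_\xi$, or $l_\xi$ the point belongs to). Passing to the associated Mayer--Vietoris long exact sequence and using $R\Gamma(\bbC, \cF) = 0$ yields an isomorphism
\begin{equation*}
\cF(\ocH_{z_1}) \oplus \cF(\ocH_{z_2}) \xrightarrow{\ \sim \ } \cF(l_\xi)
\end{equation*}
in degree zero.

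Third, I would identify $\cF(l_\xi)$ with $\cF_\xi$. Unwinding the definition of $l_\xi$, a point $c \in \bbC$ lies on $l_\xi$ if and only if $\xi - c$ is orthogonal to $\xi$; the hypothesis that $\xi$ is not a Stokes point therefore forces $l_\xi$ to avoid every singularity $c_i$ of $\cF$. Hence $\cF|_{l_\xi}$ is locally constant on the simply connected line $l_\xi \cong \bbR$, so it is constant and $\cF(l_\xi) = \cF_\xi$. Taking dimensions in the isomorphism of the previous step concludes the proof. The only point requiring any care is the exactness of the closed-cover sequence, and I do not anticipate any real obstacle beyond routine bookkeeping.
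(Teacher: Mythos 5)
Your proof is correct, and it takes a genuinely different route from the paper. The paper's argument is combinatorial: it fixes a nonsingular basepoint $c_0$, uses Gabrielov paths to identify each $\cF_{c_i}$ as a subspace of $\cF_{c_0}$, invokes the vanishing-cycle decomposition $\dim\cF_{c_0}=\sum_i\dim\cF_{c_0}/\cF_{c_i}$, and then uses the (separately established, see the proof of \cref{Definition: Laplace transform}) description $\cF(\ocH)\simeq\bigoplus_{c_i\notin\ocH}\cF_{c_0}/\cF_{c_i}$ of sections over a closed half-plane to split this sum according to which of $\ocH_{z_1},\ocH_{z_2}$ each $c_i$ lies in. Your argument instead runs the closed-cover Mayer--Vietoris sequence for $\bbC=\ocH_{z_1}\cup\ocH_{z_2}$ with intersection $l_{z_1}=l_{z_2}$, uses the vanishing of $R\Gamma(\bbC,\cF)$ directly to obtain $\cF(\ocH_{z_1})\oplus\cF(\ocH_{z_2})\xrightarrow{\,\sim\,}\cF(l_{z_1})$, and then observes that the non-Stokes-point hypothesis guarantees $l_{z_1}$ avoids all singularities, so $\cF|_{l_{z_1}}$ is a constant sheaf on a contractible line and $\cF(l_{z_1})=\cF_\xi$. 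Your route is self-contained and avoids the Gabrielov-path machinery and the half-plane section formula; what it does not give (and what the paper's version does carry around) is the explicit identification of each summand $\cF(\ocH_{z_i})$ with a direct sum of vanishing-cycle spaces, which is used elsewhere in the section (e.g.\ in \cref{h1 of Lz equals fz }). For the dimension identity alone, your Mayer--Vietoris argument is the cleaner one.

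Two minor cosmetic points: the paper's notation for the common boundary line is $l_{z}$ (\cref{notation: halfplanes}), not $l_\xi$; and when you say ``$\xi-c$ orthogonal to $\xi$'' matches the Stokes-point definition you should note explicitly that $\xi\neq 0$ and $\xi\neq c_i$ (both excluded by hypothesis) are needed for that translation to cover all singularities, as you essentially do implicitly. Neither affects correctness.
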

\begin{proof}
Let $c_0$ be a nonsingular point. We choose Gabrielov paths from $c_i$'s to $c_0$, which identify each $\cF_{c_i}$ as a subspace of $\cF_{c_0}$ (see \cite[Theorem 2.29]{KKP_Hodge_theoretic_aspect_of_mirror_symmetry}). 
Then one can show that \[\dim \cF_{\xi}=\dim \cF_{c_0}=\sum_{i=1}^n \dim \cF_{c_0}/\cF_{c_i}.\] 
Since $\xi$ is not a Stokes point, each singular point $c_i$ lies on either $\ocH_{z_1}$ or $\ocH_{z_2}$. Assume the singularities $\{c_{i_1},\dots,c_{i_p}\}\subset \ocH_{z_1}$ and $\{c_{j_1},\dots,c_{j_q}\}\subset \ocH_{z_2}$ with $p+q=n$. Then the lemma follows from the following equalities
\[\dim (\ocH_{z_1}) = \sum_{k=1}^q \dim \cF_{c_0}/\cF_{c_{j_k}} \quad \text{and} \quad\dim (\ocH_{z_2}) = \sum_{k=1}^p \dim \cF_{c_0}/\cF_{c_{i_k}} .\]
\end{proof}
Similarly, for all $\xi \in \bbC$, let $z_1, z_2\in\bbC^*$ such that $\ocH_{z_i}$ is centered at $\xi$ and $\ocH_{z_1}\cap \ocH_{z_2}=l_{z_1}=l_{z_2}$. Then we have \[\dim \cF(\ocH_{z_1})+\dim \cF(\ocH_{z_2})=\dim \cF(l_{z_1}).\]

\begin{definition}\label{definition: stokes point of sheaf sheaf}
Let $(\cL, \cG)$ be a co-Stokes sheaf with exponents $\{c_1, \dots,c_n\}$. 
We call a point $z\in \bbC^*$ a \emph{Stokes point}, if $z$ lies on some $S_{c_i}$.
\end{definition}
\begin{remark}\label{stokes point of sheaves and co-stokes structure}
Let $(\cL, \cG)$ be a co-Stokes sheaf with exponents $\{c_1, \dots, c_n\}$. 
Let $\cF\in \Constrz$ with singularities $\{c_1, \dots, c_n\}$.
Let $z\in \bbC^*$ and $\xi$ be the center of $\hbarz$. 
If $z=e^\lambda e^{i\theta}$ lies on $S_{c_i}$, then $e^\lambda = e^{\re(c_i e^{-i\theta})}$ by \cref{notation: circles}. 
Consequently $c_i$ lies on the boundary of $\ocH_z$ by \cref{notation: halfplanes}.
Thus
\begin{enumerate}[wide]
    \item the point $z$ is a Stokes point of $(\cL, \cG)$ if and only if $\xi$ is a Stokes point of $\Phi(\mathcal{L, \cG})$,
    \item the point $\xi$ is a Stokes point of $\cF$ if and only if $z$ is a Stokes point of $\Psi(\cF)$.
\end{enumerate}
\end{remark}
Similar to \cref{Lemma: F at a point = F at two closed half-planes} for constructible sheaves, we have the following equalities for co-Stokes structures and co-Stokes sheaves.
\begin{lemma}\label{lemma: co stokes structure and decomposition H1}
Let $(\cL,\cL_<)$ be a co-Stokes structure with exponents $\{c_1,\dots,c_n\}$. 
For any $\xi\in\bbC$, let $\theta$ be a direction where none of $S_{c_i}$, with $c_i\neq \xi$, intersects $S_\xi$. 
Then, we have \[        \dim H^1(S^1, \cL_{<\xi}) = \dim \cL_{<\xi, \theta}+\dim \cL_{<\xi, \theta+\pi}.
\]

\end{lemma}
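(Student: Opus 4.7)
The plan is to prove both sides of the desired equality are equal to the common quantity $\sum_{c_i \neq \xi} \dim \gr_{c_i}\cL$, using \cref{proposition: RGamma=0 condition (2)} to handle the right-hand side and the local gradedness of the co-Stokes structure to handle the left-hand side.

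First I would compute the left-hand side. The hypothesis that no $S_{c_i}$ with $c_i \neq \xi$ intersects $S_\xi$ at $\theta$ translates to $\re((c_i - \xi)e^{-i\theta}) \neq 0$ for all $c_i \neq \xi$, which is exactly the condition that $\theta$ is not a Stokes direction for any pair $(c_i, \xi)$. By \cref{proposition: Stokes filtration trivializes over good open intervals}(1), choosing a small open interval $I$ around $\theta$ that contains no Stokes direction for any pair $(c_i, \xi)$, we get a filtered trivialization and hence
\[\dim \cL_{<\xi, \theta} = \sum_{c_i <_\theta \xi} \dim \gr_{c_i}\cL,\]
and similarly for $\theta + \pi$. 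Since $c_i <_{\theta+\pi} \xi$ is equivalent to $\re((c_i-\xi)e^{-i\theta}) > 0$ while $c_i <_\theta \xi$ means the opposite strict inequality, and the hypothesis excludes equality, for each $c_i \neq \xi$ exactly one of the two relations holds. Summing gives
\[\dim \cL_{<\xi,\theta} + \dim \cL_{<\xi,\theta+\pi} = \sum_{c_i \neq \xi} \dim \gr_{c_i}\cL.\]

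Second I would compute the right-hand side. Picking any nonsingular point $c_0 \in \bbC \setminus \{c_1,\dots,c_n\}$, \cref{proposition: RGamma=0 condition (2)} yields both $\dim H^1(S^1,\cL_{<c_0}) - \dim H^1(S^1,\cL_{<c_i}) = \dim \gr_{c_i}\cL$ and the summed identity, which combine algebraically to give $\dim H^1(S^1,\cL_{<c_0}) = \sum_i \dim \gr_{c_i}\cL$ and, subtracting the first equation from this, $\dim H^1(S^1,\cL_{<c_j}) = \sum_{i\neq j}\dim \gr_{c_i}\cL$ for each singular $c_j$. So in either case ($\xi$ nonsingular or $\xi = c_j$), one has
\[\dim H^1(S^1, \cL_{<\xi}) = \sum_{c_i \neq \xi} \dim \gr_{c_i}\cL,\]
matching the left-hand side and completing the proof.

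There is no serious obstacle here; the only subtlety is checking that \cref{proposition: RGamma=0 condition (2)}, stated for a nonsingular reference point $c_0$, yields the desired formula uniformly for all $\xi$ via the subtraction step above, and verifying that the pairing argument for the dimension count really partitions $\{c_i : c_i \neq \xi\}$ into the two subsets contributing to $\theta$ and $\theta + \pi$.
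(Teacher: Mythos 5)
Your proof is correct but takes a genuinely different route from the paper. The paper proves the lemma directly by a Čech computation on a four-element cover of $S^1$ (small intervals around $\theta$ and $\theta+\pi$ on which $\cL_{<\xi}$ restricts to a constant sheaf, together with the two half-arcs $(\theta,\theta+\pi)$ and $(\theta+\pi,\theta+2\pi)$, on which it has no global sections). You instead compute each side independently and show both equal $\sum_{c_i\neq\xi}\dim\gr_{c_i}\cL$: the left side via local gradedness plus the pairing observation that the hypothesis $\re((c_i-\xi)e^{-i\theta})\neq 0$ forces exactly one of $c_i<_\theta\xi$ or $c_i<_{\theta+\pi}\xi$ for each $c_i\neq\xi$; the right side by algebraically manipulating the two identities of \cref{proposition: RGamma=0 condition (2)} to extract $\dim H^1(S^1,\cL_{<c_0})=\sum_i\dim\gr_{c_i}\cL$ and then, by subtraction, $\dim H^1(S^1,\cL_{<c_j})=\sum_{i\neq j}\dim\gr_{c_i}\cL$ for singular $c_j$. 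Your route recycles the Čech work already done in the proof of \cref{proposition: RGamma=0 condition (2)} rather than re-running a near-identical computation, so it is more modular, and it makes the common value $\sum_{c_i\neq\xi}\dim\gr_{c_i}\cL$ explicit where the paper leaves it implicit; both arguments ultimately rest on the same local trivialization of the co-Stokes filtration over small intervals.
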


\begin{proof}
Let $U_1\coloneqq (\theta-\epsilon, \theta+\epsilon)$ be a small open interval such that at any $\theta'\in U_1$, none of $S_{c_i}$ with $\xi\neq c_i$ intersects $S_\xi$. 
By \cref{proposition: Stokes filtration trivializes over good open intervals}, $\cL_{<\xi}|_{U_1}$ is a constant sheaf and $H^0(U_1,\cL_{<\xi}|_{U_1})=\cL_{<\xi, \theta}$.
Let $U_3 \coloneqq (\theta+\pi-\epsilon,\theta+\pi+\epsilon)$. Similarly, $\cL_{<\xi}|_{U_3}$ is also a constant sheaf and $H^0(U_3,\cL_{<\xi}|_{U_3})=\cL_{<\xi,\theta+\pi}$.
Let $U_2\coloneqq (\theta,\theta+\pi)$ and $U_4\coloneqq (\theta+\pi,\theta+2\pi)$. Then $U_2$ and $U_4$ are small open intervals on which $S_\xi$ intersects each $S_{c_i}$, with $c_i\neq \xi$, exactly once. 
Then for $i=3$ or $4$, $H^0(U_i,\cL_{<\xi}|_{U_i})=0$. Using the Čech complex of $\{U_1,U_2,U_3,U_4\}$, we obtain 
\begin{equation*}
                \dim H^1(S^1, \cL_{<\xi}) = \dim \cL_{<\xi, \theta}+\dim \cL_{<\xi, \theta+\pi}.
\end{equation*}
\end{proof}

\begin{lemma}\label{lemma: sheaf cohomology of Lz and points}
Let $(\cL, \cG)$ be a co-Stokes sheaf with exponents $\{c_1, \dots,c_n\}$. Assume $z=e^re^{i\theta}\in \bbC^*$ is not a Stokes point of $(\cL,\cG)$.  Let $\xi$ be the center of $\ocH_z$ and $z'= e^{-r} e^{i(\theta+\pi)}$. 
Then, we have \begin{equation*}
        \dim H^1(S_\xi, \cG|_{S_{\xi}} ) = \dim \cG_{z}+\dim \cG_{z'}. 
\end{equation*}
\end{lemma}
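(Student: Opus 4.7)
The plan is to deduce this from Lemma \ref{lemma: co stokes structure and decomposition H1} by transferring the statement from the co-Stokes sheaf $(\cL, \cG)$ on $\bbC^*$ to the associated co-Stokes structure on $S^1$ via the equivalence $\tau$ of Section \ref{section: assemble Stokes}. Concretely, let $(\cL^\circ, \cL^\circ_<) \coloneqq \tau(\cL, \cG)$. By the construction of $\tau$ in Lemma \ref{lemma: tau functor}, we have $\cL^\circ_{<\xi} = i_\xi^* \cG$, where $i_\xi\colon S^1 \xrightarrow{\ \sim\ } S_\xi$ is the homeomorphism of Notation \ref{notation: circles}. Since $i_\xi$ is a homeomorphism, we can freely identify
\[
  H^1(S_\xi, \cG|_{S_\xi}) = H^1(S^1, \cL^\circ_{<\xi}).
\]

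Next, I would identify the two stalks on the right-hand side of Lemma \ref{lemma: co stokes structure and decomposition H1} with $\cG_z$ and $\cG_{z'}$. By the definition of $i_\xi$, together with the hypothesis that $\xi$ is the center of $\ocH_z$ (so that $r = \re(\xi e^{-i\theta})$), a direct computation gives $i_\xi(\theta) = z$ and $i_\xi(\theta+\pi) = e^{-r}e^{i(\theta+\pi)} = z'$. Consequently
\[
  \cL^\circ_{<\xi,\theta} = \cG_z, \qquad \cL^\circ_{<\xi,\theta+\pi} = \cG_{z'}.
\]

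The last ingredient is to check that the hypothesis of Lemma \ref{lemma: co stokes structure and decomposition H1} is met, namely that $\theta$ is a direction where no $S_{c_i}$ with $c_i \neq \xi$ meets $S_\xi$. This is precisely the translation of the assumption that $z$ is not a Stokes point of $(\cL, \cG)$: indeed, by the computation recalled in Remark \ref{stokes point of sheaves and co-stokes structure}, a point $z = e^\lambda e^{i\theta} \in S_\xi$ lies on some $S_{c_i}$ exactly when $c_i$ and $\xi$ both lie on the line $l_z$, and this forces $S_{c_i}$ and $S_\xi$ to intersect at the angular direction $\theta$. Hence the hypothesis holds, and Lemma \ref{lemma: co stokes structure and decomposition H1} together with the two identifications above yields
\[
  \dim H^1(S_\xi, \cG|_{S_\xi}) = \dim \cG_z + \dim \cG_{z'}.
\]

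There is no real obstacle here; the only point requiring care is the bookkeeping of conventions — verifying that "Stokes point of $(\cL, \cG)$" on the sheaf side corresponds to the exclusion of intersection directions on the co-Stokes structure side, and that $z'$ is indeed $i_\xi(\theta+\pi)$ rather than some other antipodal point. Once these identifications are in hand, the statement is an immediate corollary of Lemma \ref{lemma: co stokes structure and decomposition H1}.
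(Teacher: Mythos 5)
Your proof is correct and follows essentially the same route as the paper's: both reduce the statement to \cref{lemma: co stokes structure and decomposition H1} by translating the ``not a Stokes point'' hypothesis on the sheaf side into the ``$S_{c_i}$ does not meet $S_\xi$ at direction $\theta$'' hypothesis on the co-Stokes structure side (the paper invokes \cref{stokes point of sheaves and co-stokes structure} for this, you argue it directly). Your version is more explicit about the bookkeeping---verifying $i_\xi(\theta)=z$, $i_\xi(\theta+\pi)=z'$, and passing through $\tau$---which the paper leaves implicit, but the content is the same.
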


\begin{proof}
Since $z$ is not a Stokes point of $(\cL,\cG)$, by \cref{stokes point of sheaves and co-stokes structure}, $\xi$ is not a Stokes point of $\Phi(\cL,\cG)$, which implies that $S_\xi$ does not intersect any $S_{c_i}$ at $z$. This allows us to apply \cref{lemma: co stokes structure and decomposition H1} to conclude our proof.
\end{proof}

\subsection{Natural isomorphism $(\Phi\cir \sigma) \cir \Psi\xrightarrow{\ \sim \ } \id$}
In this section, we prove the following \cref{theorem: it sigma lt=id}. As a corollary, $\Phi$ is compatible with Fourier transform of $D$-modules (see \cref{corollary: inverse Laplace transform is compatible}).

\begin{theorem}\label{theorem: it sigma lt=id}
Consider the following diagram. 
\begin{equation*}
\begin{tikzcd}
           & U= {\bigcup_{z\in \bbC^*}\cH_{z}\times \{z\}} \arrow[rd, "q_{1}"] \arrow[ld, "p_{1}"'] &              \\
\bbC &                                                                                             & \mathbb{C^*} \\
           & V=\bigcup_{\xi \in \bbC}\{\xi \}\times S_{\xi}  \arrow[lu, "p_2"] \arrow[ru, "q_2"']                 &             
\end{tikzcd}
\end{equation*}
Given $\cF\in\Constrz$, there is a natural isomorphism \begin{equation*}
    (\Phi\cir \sigma) \cir \Psi(\cF)\xrightarrow{\ \sim \ }\cF.
\end{equation*}
\end{theorem}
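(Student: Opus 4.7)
\emph{Proof sketch.}
The plan is to construct a natural transformation $\alpha_\cF\colon (\Phi\circ\sigma\circ\Psi)(\cF)\to\cF$ from restriction of sections to boundary points, and then verify it is a stalkwise isomorphism via a short exact sequence on each circle $S_\xi$ whose pieces are tractable through the local decompositions of $\cG$ on good arcs.

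\textbf{Step 1 (construction).}
For $\cF\in\Constrz$, set $(\cL,\cG)\coloneqq(\sigma\circ\Psi)(\cF)$; by \cref{theorem: describe sigma circ lt using pullback-pushforward} one has $\cG_z=\cF(\ocH_z)$. For every $(\xi,z)\in V$, $\xi$ lies on the boundary line $l_z\subset\ocH_z$, so restriction of sections defines a map $\cF(\ocH_z)\to\cF_\xi$; as $(\xi,z)$ varies these assemble into a morphism of sheaves $\eta\colon q_2^*\cG\to p_2^*\cF$ on $V$. Both $p_2$ and $q_2$ are orientable topological fibrations of relative real dimension $1$ (\cref{remark: alternative description of unions of S1}), so $q_2^!\simeq q_2^*[1]$ and $p_2^!\simeq p_2^*[1]$; hence $\eta$ shifts to $q_2^!\cG\to p_2^!\cF$, and applying $Rp_{2!}$ together with the counit $Rp_{2!}p_2^!\Rightarrow\id$ produces $\alpha_\cF$.

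\textbf{Step 2 (stalk formula).}
By \cref{lemma: inverse Laplace only  has H^0}, $\alpha_\cF$ is a morphism of ordinary sheaves whose stalk at $\xi\in\bbC$ is
\[
(\alpha_\cF)_\xi\colon H^1(S_\xi,\cG|_{S_\xi})\longrightarrow H^1(S_\xi,(\cF_\xi)_{S_\xi})=\cF_\xi,
\]
induced by the sheaf inclusion $\iota_\xi\colon \cG|_{S_\xi}\hookrightarrow (\cF_\xi)_{S_\xi}$ whose stalk at $z$ is the restriction $\cF(\ocH_z)\hookrightarrow\cF_\xi$, injective by \cite[p.\,41]{KKP_Hodge_theoretic_aspect_of_mirror_symmetry}.

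\textbf{Step 3 (iso at generic points).}
For $\xi\in\bbC\setminus\{c_1,\dots,c_n\}$ not a Stokes point, put $\cQ_\xi\coloneqq\coker\iota_\xi$. The vanishing cycle decomposition from the proof of \cref{Definition: Laplace transform} gives $(\cQ_\xi)_z\cong\bigoplus_{c_i\in\ocH_z}\cF_{c_0}/\cF_{c_i}$, and on every good arc $A\subset S_\xi$ one has a local decomposition
\[
\cQ_\xi|_A \cong \bigoplus_i i_{i*}\bigl((\gr_{c_i}\cL)|_{A\cap I_i^+}\bigr), \qquad I_i^+\coloneqq\{\theta\in S^1:c_i\not<_\theta \xi\},
\]
paralleling the decomposition of $\cG|_{S_\xi}$ in \cref{cohomology of G over open sectors are 0}. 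Using the four small-arc Čech cover from the proof of \cref{proposition: RGamma=0 condition (2)}, and the fact that each $I_i^+$ is a closed half-arc (contractible), one computes
\[
H^0(S_\xi,\cQ_\xi)\cong\cF_\xi, \qquad H^1(S_\xi,\cQ_\xi)=0,
\]
with the map $\cF_\xi\to H^0(\cQ_\xi)$ in the long exact sequence of $0\to\cG|_{S_\xi}\to(\cF_\xi)_{S_\xi}\to\cQ_\xi\to 0$ realized as the Gabrielov decomposition, hence an isomorphism. Combined with $H^0(S_\xi,\cG|_{S_\xi})=0$ from \cref{lemma: only S1 survives on circle}, the long exact sequence forces $(\alpha_\cF)_\xi$ to be an isomorphism.

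\textbf{Step 4 (remaining points).}
Both sides of $\alpha_\cF$ are constructible sheaves on $\bbC$ with singularities in $\{c_1,\dots,c_n\}$ (\cref{Theorem: inverse Laplace transform is in constr}), so Step 3 upgrades to an iso on all of $\bbC\setminus\{c_1,\dots,c_n\}$ by constructibility. For $\xi=c_j$, the analogous Čech argument on $S_{c_j}$ applies with the exponent $c_j$ absent from $\cG|_{S_{c_j}}$ by strictness, and the Gabrielov embedding $\cF_{c_j}\hookrightarrow\cF_{c_0}$ together with the dimension identity from the proof of \cref{proposition: RGamma=0 condition (2)} makes $(\alpha_\cF)_{c_j}$ an isomorphism.

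The main difficulty is Step 3, where the compatibility between the local sheaf-theoretic decompositions of $\cQ_\xi$ across different good arcs and the global Gabrielov decomposition of $\cF_\xi$ must be verified, so as to identify the natural map of Step 1 with the isomorphism coming out of the Čech computation. Everything else — construction via counit, stalk formula, and the long-exact-sequence collapse — is formal once the matching is established.
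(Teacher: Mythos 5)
Your proposal is correct in outline and takes a genuinely different route at the crucial stalkwise step, so it is worth comparing the two carefully.

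The paper constructs the natural transformation by passing through the auxiliary closed subspace $Z=U\cup V\subset \bbC\times\bbC^*$ and a chain of adjunctions, isolating the isomorphism $p^*\cF\xrightarrow{\sim}R^0j_*j^*p^*\cF$ (\cref{lemma: pF to jjpF}) and the projection-formula identification $\cF\xrightarrow{\sim}R^1p_{2*}p_2^*\cF$ (\cref{p2p2F=F}). Then, to see the resulting map is a stalkwise isomorphism, it proves the two sides have equal dimensions (\cref{h1 of Lz equals fz }, \cref{corollary: step(3) that dimension of psi are same}) and separately that the map is injective (\cref{proposition: step (4) psi is injectve}), each via Čech computations on small covers. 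You bypass $Z$ and build $\eta\colon q_2^*\cG\to p_2^*\cF$ directly from restrictions $\cF(\ocH_z)\to\cF_\xi$, then replace the ``dimension + injectivity'' pair with a single long-exact-sequence argument for the cokernel $\cQ_\xi=\coker(\cG|_{S_\xi}\hookrightarrow\underline{\cF_\xi})$. These really do agree: the paper's $\eta\circ\theta^{-1}$, unwound, is your restriction map, and the injectivity of $\cL_{<\xi}\to\underline{\cF_\xi}$ that the paper exploits in \cref{proposition: step (4) psi is injectve} is exactly your short exact sequence.

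A few points need shoring up. First, in Step 1 you assert without proof that the stalkwise restriction maps $\cF(\ocH_z)\to\cF_\xi$ assemble into a sheaf morphism $q_2^*\cG\to p_2^*\cF$ on $V$; this is true but requires an argument. The paper gets it for free from the adjunction $q^*R^0q_*\Rightarrow\id$ together with \cref{lemma: pF to jjpF}; you should either reuse that or construct the morphism as $i^*$ applied to a map of sheaves on $Z$. Second, your Step 3 is over-engineered: once you know $H^0(S_\xi,\cG|_{S_\xi})=0$ (\cref{lemma: only S1 survives on circle}), the map $\cF_\xi\to H^0(S_\xi,\cQ_\xi)$ in the long exact sequence is automatically injective, so you only need $\dim H^0(\cQ_\xi)=\dim\cF_\xi$ and $H^1(\cQ_\xi)=0$, and there is no need to match the Čech isomorphism with a Gabrielov decomposition. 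But beware that the Čech computation of $H^\bullet(\cQ_\xi)$ still requires understanding the restriction maps $H^0(U_k,\cQ_\xi)\to H^0(U_k\cap U_l,\cQ_\xi)$, not just the dimensions of the pieces, so the ``compatibility across arcs'' you flag is not entirely dismissible; it is roughly the same Čech computation the paper carries out on $\cG|_{S_\xi}$ directly. Third, Step 4's treatment of $\xi=c_j$ is too vague (``by strictness''); the paper's \cref{h1 of Lz equals fz } and \cref{proposition: step (4) psi is injectve} apply uniformly to all $\xi$ including singular points, which is cleaner than a separate singular case.

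Overall: the cokernel $\cQ_\xi$ is a legitimate reorganization that collapses the paper's Steps (3) and (4) into one long-exact-sequence argument, and the direct construction of $\eta$ avoids the auxiliary space $Z$ at the cost of needing to justify the sheaf-morphism claim. Neither route is decisively simpler; both bottom out in the same Čech computations on $S_\xi$.
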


\begin{corollary} \label{corollary: inverse Laplace transform is compatible}
The topological Laplace transform $\Phi$ is compatible with Fourier transform of $D$-modules via Riemann-Hilbert correspondence, i.e.\ the following diagram commutes.
\begin{equation*}
    \begin{tikzcd}
        \Conn \arrow[d,"\RH", "\sim"'] \arrow[rr, "\Theta"]& & \operatorname{Hol_{rs}^0}(\bbA^1) \arrow[d, "\RH", "\sim"']\\ \Stol \arrow[rr,"(\Phi\cir \sigma)\otimes \bbC"]\otimes_\bbQ\bbC & & \Constrz\otimes_\bbQ \bbC 
    \end{tikzcd}
\end{equation*}
Here, 
\begin{enumerate}[wide]
    \item $\mathrm{Hol_{rs}^0}$ is the category of regular holonomic algebraic $D$-modules $M$ on $\bbA^1$ with vanishing de Rham cohomology.
    \item $\Conn$ is the category of pairs $(\cM, \nabla)$ where $\cM$ is a finite dimensional vector space over $\bbC\{u\}[u^{-1}]$ and $\nabla$ is an irregular meromorphic connection of exponential type. 
    \item $\RH$ denotes the (irregular) Riemann-Hilbert correspondence (see \cite{Malgrange1982_la_classification_des_connexions_irregulieres_a_une_variable} and \cite[\S II]{Sabbah_isomonodromic_deformations_and_frobenius_manifolds}).  
    \item $\Theta$ denotes the composition of Fourier transform, the pullback by $j:\bbA^1\setminus 0\hookrightarrow \bbA$, and the pushfoward by $i\colon u\mapsto 1/u$,
\[
\Theta \coloneqq  \FT\cir j_*\cir i_*.
\]
\end{enumerate}
\end{corollary}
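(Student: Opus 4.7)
The plan is to derive this corollary formally from Theorem~\ref{theorem: it sigma lt=id} combined with the classical compatibility of the inverse functor $\Psi$ with the Fourier transform of $D$-modules. The latter is due to Malgrange \cite[\S XI]{Malgrange91_equation_differentielle_a_coefficient_polynomiaux} in the case of $\Pervzero$ and Sabbah \cite[\S VII]{Sabbah_introduction_to_Stokes_structure} in general; it furnishes a canonical natural isomorphism
\[
\RH \cir \Theta^{-1} \simeq (\Psi \otimes \bbC) \cir \RH,
\]
together with the fact that $\Psi \otimes \bbC$ is itself an equivalence of categories.

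The first step is to upgrade the one-sided natural isomorphism $(\Phi \cir \sigma) \cir \Psi \xrightarrow{\sim} \id$ of Theorem~\ref{theorem: it sigma lt=id} into a two-sided equivalence. Since $\Psi \otimes \bbC$ is already known to be an equivalence by the classical result, any left quasi-inverse is automatically a two-sided quasi-inverse; alternatively one may invoke the companion Theorem~\ref{theorem: lt it sigma = id} directly. Either way, $(\Phi \cir \sigma) \otimes \bbC$ becomes canonically naturally isomorphic to a chosen quasi-inverse of $\Psi \otimes \bbC$.

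Substituting $(\Phi \cir \sigma) \otimes \bbC$ for $(\Psi \otimes \bbC)^{-1}$ in the Malgrange--Sabbah isomorphism then yields the desired commutativity of the square in the statement. Concretely, I would paste the isomorphism $\RH \cir \Theta \simeq (\Psi \otimes \bbC)^{-1} \cir \RH$, obtained by inverting the horizontal arrows in the classical square, with the isomorphism $(\Psi \otimes \bbC)^{-1} \xrightarrow{\sim} (\Phi \cir \sigma) \otimes \bbC$ produced above.

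I expect the main substantive step to be the bookkeeping verification that the Riemann--Hilbert and Fourier-transform conventions adopted in the Malgrange--Sabbah formulation agree with those used here, in particular that the composition $\Theta = \FT \cir j_* \cir i_*$ --- which folds in the change of coordinate $u \mapsto 1/u$ --- corresponds under RH to the topological Laplace transform in the direction along which the functor $\Phi$ was built in Sections~\ref{section: From constructible sheaves to co-Stokes structures} and~\ref{section: From co-Stokes structures to constructible sheaves}. No new geometric input should be required; the argument is purely a pasting of $2$-cells once the conventions are aligned.
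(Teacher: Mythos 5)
Your proposal matches the paper's proof: start from the Malgrange--Sabbah compatibility $\RH \circ \Theta^{-1} \simeq (\Psi \otimes \bbC) \circ \RH$ from \cite[\S 7]{Sabbah_introduction_to_Stokes_structure}, then invert the horizontal arrows using \cref{theorem: it sigma lt=id}. Your added remark that a left quasi-inverse of an equivalence is automatically two-sided is correct but not needed, since the paper establishes \cref{theorem: lt it sigma = id} independently.
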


\begin{proof}
It follows from \cite[\S 7]{Sabbah_introduction_to_Stokes_structure}  that the following diagram commutes.
\begin{equation*}
    \begin{tikzcd}
        \Conn \arrow[d,"\RH", "\sim"'] & & \operatorname{Hol_{rs}^0}(\bbA^1) \arrow[d, "\RH", "\sim"'] \arrow[ll, "\Theta^{-1}"']\\ \Stol \otimes_\bbQ\bbC & & \Constrz\otimes_\bbQ \bbC \arrow[ll,"\Psi\otimes \bbC"']
    \end{tikzcd}
\end{equation*}
Now the corollary follows from \cref{theorem: it sigma lt=id}.
\end{proof}

\begin{proposition}\label{gluing U V to Z}
Consider the open subset $U$ and the closed subset $V$ in $\bbC\times \bbC^*$ defined as
\begin{equation*}
    U \coloneqq {\bigcup_{z\in \bbC^*}\cH_{z}\times \{z\}}\quad \text{and} \quad V \coloneqq \bigcup_{\xi \in \bbC}\{\xi \}\times S_{\xi}.
\end{equation*}
Then $Z \coloneqq U\cup V$ is a closed subset $Z$ in $\bbC\times \bbC^*$.
Its interior is $U$, and its boundary is $V$.
\end{proposition}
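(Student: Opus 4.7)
The idea is to realize $Z$ as a closed sublevel set of a single continuous real-valued function on $\bbC\times\bbC^*$, so that all three claims (closedness, description of the interior, description of the boundary) follow at once from the dichotomy between strict and non-strict inequality.

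First, I would introduce the continuous function
\begin{equation*}
    f \colon \bbC\times\bbC^* \longrightarrow \bbR, \qquad f(x,z) \coloneqq \re\bigl(x\cdot \overline{z}/\lvert z\rvert\bigr) - \log\lvert z\rvert.
\end{equation*}
Writing $z = e^{\lambda}e^{i\theta}$, this is just $\re(xe^{-i\theta}) - \lambda$, so by \cref{notation: halfplanes} the open half-plane $\cH_z$ is exactly $\{x : f(x,z)>0\}$ and its boundary line $l_z$ is $\{x : f(x,z)=0\}$. Using the identity $\xi\in l_z \Leftrightarrow z\in S_\xi$ noted after \cref{notation: halfplanes}, one immediately reads off
\begin{equation*}
    U = f^{-1}\bigl((0,\infty)\bigr), \qquad V = f^{-1}(0), \qquad Z = U\cup V = f^{-1}\bigl([0,\infty)\bigr).
\end{equation*}
Continuity of $f$ then yields that $Z$ is closed in $\bbC\times\bbC^*$, while $U$ is open.

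Next, since $U$ is open and contained in $Z$, we have $U\subseteq \mathrm{int}(Z)$. For the reverse inclusion, pick any point $(\xi,z)\in V$; then $f(\xi,z)=0$. Writing $z=e^\lambda e^{i\theta}$, the partial derivative of $x\mapsto f(x,z)$ in the direction $-e^{i\theta}$ is $-1\neq 0$, so arbitrarily close to $(\xi,z)$ one finds points with $f<0$, i.e.\ points not in $Z$. Hence $(\xi,z)\notin\mathrm{int}(Z)$, which gives $\mathrm{int}(Z)\subseteq U$ and therefore $\mathrm{int}(Z)=U$. The boundary is then $\partial Z = Z\setminus \mathrm{int}(Z) = V$.

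There is no real obstacle here; the only thing to be careful about is packaging the defining inequalities from \cref{notation: halfplanes} and \cref{notation: circles} into a single continuous function so that the three statements become a one-line consequence of the open/closed condition $f>0$ versus $f\geq 0$.
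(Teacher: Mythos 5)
Your proof is correct and follows essentially the same idea as the paper's (rewriting $U$, $V$, $Z$ as the strict, equality, and non-strict loci of the single inequality $\re(\xi e^{-i\theta})\gtrless\lambda$). Packaging this as the continuous function $f$ and invoking the sublevel-set dichotomy is a clean way to carry out the topological step, which the paper's own proof only gestures at.
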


\begin{proof}
Let $z=e^\lambda e^{i\theta}\in \bbC^*$ for $\lambda\in\bbR$ and $\theta\in[0,2\pi)$. A point $(\xi, z)$ belongs to $  \ocH_{\theta, \lambda}\times \{z\} $ if and only if $\re(\xi e^{-i\theta})\geq \lambda$.
Let $D_\xi$ and $S_\xi$ be as in \cref{notation: circles}.
Then a point $(\xi, z) \in
\{\xi\}\times \oD_{\xi}  $ if and only if $\re(\xi e^{-i\theta})\geq \lambda$.
So we can rewrite $Z$ as 
\begin{equation*}
    Z= \bigcup_{\xi \in \bbC}\{\xi\}\times \oD_{\xi}^* = {\bigcup_{z\in \bbC^*}\ocH_{\theta, \lambda}\times \{z\}}. 
\end{equation*}
This concludes the proof.
\end{proof}

We denote the closed embedding and open embedding by
\begin{align*}
    i\colon  &V=\bigcup_{\xi \in \bbC}\{\xi\}\times S_{\xi} \hookrightarrow Z\\j\colon  &U= {\bigcup_{z\in \bbC^*}\cH_z\times\{z\}} \hookrightarrow Z .
\end{align*}
Recall from \cref{remark: alternative description of unions of S1}, we have $V \simeq \bbC \times S^1$.
Then we have the following commutative diagram with $p, q$ natural projections to the first and second coordinates.
\begin{equation*}
\begin{tikzcd}
           & U \arrow[d, "j"] \arrow[rd, "q_{1}"] \arrow[ld, "p_{1}"'] &              \\
\bbC & Z \arrow[l, "p"'] \arrow[r, "q"]                                                                                                & \mathbb{C^*} \\
           & V \simeq \bbC\times S^1 \arrow[u, "i"'] \arrow[lu, "p_2"] \arrow[ru, "q_2"']                 &             
\end{tikzcd}
\end{equation*}
Let $\cF$ be a constructible sheaf on $\bbC$ with $R\Gamma(\cF)=0$, then \begin{equation*}
    (\Phi \cir \sigma\cir \Psi)(\cF) = Rp_{2!}q_2^!R^0q_{1*}p_{1}^*\cF. 
\end{equation*}
Let us unravel \cref{theorem: it sigma lt=id} using the alternative space $Z$, i.e.\ we want to show that 
\begin{equation*}
  Rp_{2!}q_2^!R^0q_{1*}p_1^*\cF = R^1p_{2*}q_2^*R^0q_{1*}p_1^*\cF =  R^1p_*i_*i^*q^*R^0q_*R^0j_{*}j^*p^*\cF\xrightarrow{\ \sim \ } \cF.
\end{equation*}
We will consider the following adjunction maps 
\begin{equation}\label{equation: 6.10}
R^1p_*i_*i^*q^*R^0q_*R^0j_{*}j^*p^*\cF\xrightarrow{\ \eta \ } R^1p_*i_*i^*R^0j_{*}j^*p^*\cF\xleftarrow{\ \theta \ } R^1(p\cir i)_*(p\cir i)^*\cF.
\end{equation} 
Our approach is structured as follows:
\begin{enumerate}[wide]
    \item We use the projection formula to show $ R^1(p\cir i)_*(p\cir i)^*\cF\xleftarrow{\ \sim \ }\cF$ in \cref{p2p2F=F}.
    \item We show $\theta$ in \eqref{equation: 6.10} is an isomorphism in \cref{lemma: theta is isomorphism}. Then we define \[R^1p_*i_*i^*q^*R^0q_*R^0j_{*}j^*p^*\cF\xrightarrow{\ \eta \ } R^1p_*i_*i^*R^0j_{*}j^*p^*\cF\xrightarrow{\ \theta^{-1} \ } R^1(p\cir i)_*(p\cir i)^*\cF.\]
    \item We show that the domain and target of $\eta$ have the same dimension in \cref{corollary: step(3) that dimension of psi are same}.
    \item We show $\eta $ is an injection, thus an isomorphism in \cref{proposition: step (4) psi is injectve}.
\end{enumerate}
Let us prove Step (1) that $\cF\xrightarrow{\ \sim \ }R^1(p\cir i)_*(p\cir i)^*\cF$.

\begin{lemma}\label{p2p2F=F}
Let $p_2$ denote the projection $\bbC\times S^1\to \bbC$ and let $\cF$ be a sheaf on $\bbC$. 
Then, for $i=0,1$, we have a natural isomorphism
\[\cF\xrightarrow{\ \sim \ } R^ip_{2*}p_2^*\cF.\]
\end{lemma}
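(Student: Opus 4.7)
The strategy is to reduce the computation of $Rp_{2*}p_2^*\cF$ to a Künneth-type splitting using the projection formula and the triviality of the $S^1$-fibration $p_2$.

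First, I would invoke the projection formula to write
\[
Rp_{2*}p_2^*\cF \ \simeq\ \cF \otimes^L_k Rp_{2*}\underline{k}_{\bbC\times S^1},
\]
where $k$ denotes the coefficient ring. Then I would compute $Rp_{2*}\underline{k}_{\bbC\times S^1}$ directly: by proper base change (here $p_2$ is proper since $S^1$ is compact), the stalk at $x\in\bbC$ is $H^*(S^1,k) \simeq k \oplus k[-1]$. Because $p_2$ is a \emph{trivial} $S^1$-bundle, these local computations glue into a global quasi-isomorphism
\[
Rp_{2*}\underline{k}_{\bbC\times S^1} \ \simeq\ \underline{k}_{\bbC} \oplus \underline{k}_{\bbC}[-1],
\]
where the first summand comes from the unit of adjunction $\underline{k}_{\bbC}\to p_{2*}p_2^*\underline{k}_{\bbC}$ and the second summand is produced by integration along the oriented fiber $S^1$ (i.e., pairing with the fundamental class, which is a global section of $R^1p_{2*}\underline{k}_{\bbC\times S^1}$).

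Combining these two steps gives $Rp_{2*}p_2^*\cF \simeq \cF \oplus \cF[-1]$, and taking cohomology sheaves in degrees $0$ and $1$ yields the claimed isomorphisms. The map for $i=0$ is the unit of adjunction $\cF\to R^0p_{2*}p_2^*\cF$; the map for $i=1$ is obtained by tensoring with the canonical orientation class of $S^1$ (viewed as a global section of $R^1p_{2*}\underline{k}_{\bbC\times S^1}$).

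The main point requiring care is the $i=1$ case, since the isomorphism $R^1p_{2*}\underline{k}\simeq \underline{k}_{\bbC}$ is not canonical without the choice of orientation of $S^1$; once a standard orientation is fixed (which is implicit throughout the paper), the map becomes natural in $\cF$. If one prefers to avoid the projection formula, an equivalent route is to verify directly that the two candidate maps $\cF\to R^ip_{2*}p_2^*\cF$ induce isomorphisms on stalks via proper base change, exploiting that $H^0(S^1,k_x)=H^1(S^1,k_x)=k_x$ are canonically $\cF_x$ (for $i=1$, again via the orientation class).
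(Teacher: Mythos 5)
Your proposal is correct and follows essentially the same strategy as the paper: both use the projection formula (in the form $Rp_{2*}p_2^*\cF\simeq \cF\otimes Rp_{2*}\underline{k}$) together with the identification of $R^1p_{2*}\underline{k}$ with $\underline{k}$ via properness of $p_2$ and the fiber cohomology $H^*(S^1,k)$. The one minor difference is that you go further and assert a splitting of the full complex $Rp_{2*}\underline{k}\simeq\underline{k}\oplus\underline{k}[-1]$, whereas the paper only reads off the two cohomology sheaves, which is all that is needed; your extra claim is true (the unit provides a splitting of the triangle), but it is not required, and your remark about fixing an orientation of $S^1$ to get a \emph{canonical} isomorphism in degree $1$ is a point the paper passes over silently.
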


\begin{proof}
For $i=0$, the result follows from the adjunction $\id \to R^0p_{2*}p_2^*$. For $i=1$, since $p_2$ is proper, by the projection formula (see \cite[Proposition 2.6.6]{KS_Sheaves_on_Manifolds}), we obtain an isomorphism
\begin{equation*}
   (Rp_{2*}\underline{k})\otimes^L \cF\xrightarrow{\ \sim \ }Rp_{2*}(\underline{k}\otimes^L p_2^*\cF).
\end{equation*}
Since they are sheaves of $k$-vector spaces, they are flat. So, the derived product $\otimes^L$ is equal to the usual tensor product.
Then we have
\begin{equation*}
    R^1p_{2*}p_2^*\cF = \cH^1(Rp_{2*} p_2^*\cF)=\cH^1\big(Rp_{2*}(\underline{k}\otimes_{\underline{k}} p_2^*\cF)\big).
\end{equation*} 
By the projection formula, we have 
\[ (Rp_{2*}\underline{k})\otimes_{\underline{k}} \cF  \xrightarrow{\ \sim \ } Rp_{2*}(\underline{k}\otimes_{\underline{k}} p_2^*\cF).\]
This implies that \[\cH^1\big( (Rp_{2*}\underline{k})\otimes_{\underline{k}} \cF\big)\xrightarrow{\ \sim \ } \cH^1\big(Rp_{2*}(\underline{k}\otimes_{\underline{k}} p_2^*\cF)\big).\]
Since 
\[\cH^1\big((Rp_{2*}\underline{k})\otimes_{\underline{k}} \cF\big) = R^1p_{2*}\underline{k}\otimes_{\underline{k}} \cF, \]
and $R^1p_{2*}\underline{k}= \underline{k}$, we have $   \cF\xrightarrow{\ \sim \ }   R^1p_{2*}p_2^*\cF.$
\end{proof}

Next we prove Step (2).
We start with the following \cref{lemma: pF to jjpF}.

\begin{lemma}\label{lemma: pF to jjpF}
Consider the following diagram, where $p_1,p$ are projections to the first coordinate, the canonical morphism $p^*\cF \rightarrow R^0j_*j^*p^*\cF$ is an isomorphism.
\begin{equation*}
\begin{tikzcd}
           & U= {\bigcup_{z\in \bbC^*}\cH_{z}\times \{z\}} \arrow[d, "j"]  \arrow[ld, "p_{1}"'] &              \\
\bbC & Z \arrow[l, "p"']                                                                                              \end{tikzcd}
\end{equation*}
\end{lemma}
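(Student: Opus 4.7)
The plan is to verify the morphism is an isomorphism stalkwise. On the open subset $U$ this is tautological, since $(Rj_*j^*p^*\cF)|_U = p^*\cF|_U$. So the work concentrates on an arbitrary boundary point $(\xi_0,z_0)\in V$, where by definition $z_0\in S_{\xi_0}$.

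My strategy is to introduce local coordinates near $(\xi_0,z_0)$ that trivialize the pair $(Z,U)$ as a product. Writing $z=e^{\lambda}e^{i\theta}$ and setting $u \coloneqq \lambda-\re(\xi e^{-i\theta})$, the change of variables $(\xi,\lambda,\theta)\mapsto(\xi,\theta,u)$ is a local diffeomorphism of $\bbC\times\bbC^*$ near $(\xi_0,z_0)$ (a branch of $\theta$ being fixed). Under it, $Z$ is cut out by $\{u\le 0\}$, the boundary $V$ by $\{u=0\}$, and $U$ by $\{u<0\}$, while $p$ becomes the projection onto the $\xi$-factor. Hence a cofinal system of neighborhoods of $(\xi_0,z_0)$ in $Z$ is given by products $W=W_1\times I\times(-\epsilon,0]$, with $W_1\subset\bbC$ a small disk around $\xi_0$, $I\subset S^1$ a small open arc around $\theta_0$, and $\epsilon>0$ small.

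For such a $W$, the intersection $W\cap U=W_1\times I\times(-\epsilon,0)$ and the restricted projection $\tilde p\colon W\cap U\to W_1$ is the trivial fibration with fiber $I\times(-\epsilon,0)$, which is nonempty and connected. Therefore, for any sheaf $\cF$ on $\bbC$, one has $\tilde p_*\,\underline k=\underline k$, and by the projection formula $\tilde p_*\tilde p^*(\cF|_{W_1})=\cF|_{W_1}$, giving $\Gamma(W\cap U,p^*\cF)=\cF(W_1)$. Passing to the colimit as $W_1$ shrinks to $\{\xi_0\}$ yields
\[
(R^0 j_*j^*p^*\cF)_{(\xi_0,z_0)} = \colim_{W_1\ni\xi_0} \cF(W_1) = \cF_{\xi_0} = (p^*\cF)_{(\xi_0,z_0)},
\]
and one checks that the canonical adjunction morphism is precisely this identification.

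The only nontrivial ingredient is the local coordinate change producing the product model; after that, every step is a routine computation with pullback and pushforward of sheaves. The minor pitfall is that the coordinate $\theta$ is only defined modulo $2\pi$, but since the entire argument is local at $(\xi_0,z_0)$, choosing a fixed branch of $\theta$ near $\theta_0$ resolves this without further complication.
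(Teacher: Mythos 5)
Your proof is correct, and it takes a cleaner route than the paper. The paper argues in two stages: it first establishes injectivity of $p^*\cF\to R^0j_*j^*p^*\cF$ by showing that $\cH^0_V(p^*\cF)=0$ (a section of $p^*\cF$ supported on the boundary $V$ would also have support on the interior of a slice $\{\xi\}\times\oD_{\xi}$, where $p^*\cF$ is constant), and then compares stalkwise dimensions at boundary points. Your single-step direct computation of the stalk renders the separate injectivity argument unnecessary. What your approach buys is an explicit cofinal system of neighborhoods, obtained from the coordinate change $(\xi,\lambda,\theta)\mapsto(\xi,\theta,u)$ with $u=\lambda-\re(\xi e^{-i\theta})$, which puts $(Z,U,V)$ locally into the product form $(\{u\le 0\},\{u<0\},\{u=0\})$; the computation of $\Gamma(W\cap U,p^*\cF)$ is then transparent. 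The paper's stalk computation is morally the same, but it works with the neighborhoods $p^{-1}(W)$, which do not form a cofinal system at $(\xi_0,z_0)$ (they contain the entire slice $\{\xi_0\}\times\oD_{\xi_0}$); your choice of product neighborhoods closes that gap.

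One small inaccuracy: you justify $R^0\tilde p_*\tilde p^*(\cF|_{W_1})\cong\cF|_{W_1}$ by invoking ``the projection formula.'' Since $\tilde p$ is not proper (its fibers $I\times(-\epsilon,0)$ are noncompact), the sheaf-theoretic projection formula for $Rf_*$ does not directly apply here. The correct justification is the elementary fact, recorded as \cref{q2 pullback's global section} in the paper, that for a continuous open surjection $f\colon X\to Y$ with $X$ connected and connected fibers, the natural map $\cG(Y)\to f^*\cG(X)$ is an isomorphism. Your product neighborhood satisfies exactly these hypotheses, so the conclusion $\Gamma(W\cap U,p^*\cF)=\cF(W_1)$ stands once the reference is corrected.
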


\begin{proof}
To show that the morphism is injective, we consider the following short exact sequence of sheaves on $Z$, associated to the open embedding $j\colon  U\hookrightarrow Z$ and the closed embedding $i\colon  V\hookrightarrow Z$
\begin{equation*}
   0\longrightarrow \cH^0_V(p^*\cF)\longrightarrow p^*\cF\longrightarrow R^0j_*j^*p^*\cF,
\end{equation*}    
where $\cH^0_V(p^*\cF)$ is the sheafication of the presheaf sending an open set $W\subset Z$ to sections $s\in p^*\cF(W)$ supported on the boundary $V\subset Z$ . Note that the restriction of $p^*\cF$ to a slice $\xi\times \oD_{\xi}^*$ is the constant sheaf $\underline{\cF_\xi}$. If a section $s$ has support at a boundary point $(\xi,z)\in Z$, it also has support on the interior $\xi \times D_{\xi}^1$. Therefore, $\cH^0_V(p^*\cF)$ is zero, thus $p^*\cF\hookrightarrow R^0j_*j^*p^*\cF$ is injective.

Next, we show the domain and target of $p^*\cF\rightarrow R^0j_*j^*p^*\cF$ have the same stalkwise dimensions, which implies that it is an isomorphism by injectivity. This is always true on $U$ and we only need to consider the stalks on $V$. For any $(\xi, z)\in V$, we have \begin{equation*}
    (R^0j_*j^*p^*\cF)_{(\xi,z)} = \lim_{(\xi,z) \in U \text{ open in } Z } \cF\big( (p\cir j\cir j^{-1})(U)\big).
 \end{equation*}
For any open neighborhood $W\subset \bbC$ of $\xi$, we denote by $U = p^{-1}(W)$ the open neighborhood of $(\xi,z)$ in $Z$. Then $(p\cir j\cir j^{-1})(U)\subset W$ and we have an isomorphism 
\begin{equation*}
   \lim_{(\xi,z) \in U \text{ open in } Z } \cF\big((p\cir j\cir j^{-1})(U)\big) \xrightarrow{\ \sim \ } \cF_\xi.
\end{equation*} 
On the other hand, we have $p^*\cF_{(\xi,z)} = \cF_\xi$, thus $p^*\cF\rightarrow R^0j_*j^*p^*\cF$ is an isomorphism.
\end{proof}

\Cref{lemma: pF to jjpF} implies immediately the following.

\begin{lemma}\label{lemma: theta is isomorphism}
The second map in \eqref{equation: 6.10}  \[R^1p_*i_*i^*R^0j_{*}j^*p^*\cF\xleftarrow{\ \theta \ } R^1(p\cir i)_*(p\cir i)^*\cF\]
is an isomorphism.
\end{lemma}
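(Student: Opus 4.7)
The plan is to realize $\theta$ as the image of the canonical morphism $p^*\cF \to R^0j_*j^*p^*\cF$ under a suitable functor, and then invoke the preceding lemma.

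First I would rewrite the source of $\theta$. Since $i\colon V \hookrightarrow Z$ is a closed embedding, $i_*$ is exact, so $Ri_* = i_*$ and we have
\begin{equation*}
  R^1(p\cir i)_*(p\cir i)^*\cF \;=\; R^1 p_*\bigl(i_* i^* p^*\cF\bigr).
\end{equation*}
The target of $\theta$ is $R^1 p_*\bigl(i_* i^* R^0 j_* j^* p^*\cF\bigr)$. Both sides are therefore obtained by applying the functor $R^1 p_* \cir i_* \cir i^*$ to the two sheaves $p^*\cF$ and $R^0j_* j^* p^*\cF$, and $\theta$ is precisely the morphism induced by the canonical adjunction map
\begin{equation*}
  \alpha\colon  p^*\cF \longrightarrow R^0 j_* j^* p^*\cF.
\end{equation*}

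The main point is then that $\alpha$ is an isomorphism of sheaves on $Z$, which is exactly the content of \cref{lemma: pF to jjpF}. Applying the functor $i^*$, which is exact, produces an isomorphism $i^* p^*\cF \xrightarrow{\ \sim \ } i^* R^0 j_* j^* p^*\cF$ of sheaves on $V$; applying $i_*$ (exact for the closed embedding $i$) and then $R^1 p_*$ preserves the isomorphism. Hence $\theta = R^1 p_*(i_* i^* \alpha)$ is an isomorphism.

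There is essentially no obstacle beyond being careful with the functor bookkeeping: one must justify that $i_*$ and $i^*$ are exact (standard for closed/open embeddings), so that applying them to the sheaf-level isomorphism $\alpha$ does not require any derived-functor subtleties, and that $R^1(p\cir i)_* = R^1 p_* \cir i_*$ (which follows from the Grothendieck spectral sequence collapsing once $Ri_* = i_*$). With these remarks, the proof is a one-line reduction to \cref{lemma: pF to jjpF}.
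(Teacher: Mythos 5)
Your proof is correct and matches the paper's approach exactly: the paper states that \cref{lemma: theta is isomorphism} follows immediately from \cref{lemma: pF to jjpF}, and you have simply spelled out the functorial bookkeeping (exactness of $i_*$ and $i^*$, the Grothendieck spectral sequence collapse, and that $\theta$ is the image of the adjunction $p^*\cF \to R^0j_*j^*p^*\cF$ under $R^1p_* \cir i_* \cir i^*$) that the paper leaves implicit.
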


Now we move on to Step (3). 
\begin{proposition}\label{h1 of Lz equals fz }
   Let $\cF\in \Constrz$ with singularities $\{c_1,\dots,c_n\}$. Denote by $\Psi(\cF) =(\cL, \cL_{<})$ its topological Laplace transform. Let $\xi$ be a point on $\bbC$. Then, we have
    \begin{equation*}
\dim H^1(S^1, \cL_{<\xi})= \dim\cF_\xi.
    \end{equation*}
\end{proposition}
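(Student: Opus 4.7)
The plan is to establish the equality $\dim H^1(S^1, \cL_{<\xi}) = \dim\cF_\xi$ first at a generic point $\xi$, then to propagate it to all nonsingular $\xi$ via the constructibility of both sides, and finally to reduce the singular case $\xi = c_i$ to \cref{proposition: RGamma=0 condition (2)}.

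For the generic step, I would fix a nonsingular $\xi = re^{i\alpha} \neq 0$ that is not a Stokes point of $\cF$, meaning that no $c_j \neq \xi$ lies on the line through $\xi$ perpendicular to the ray from $0$ to $\xi$. Setting $\theta = \alpha$, the two points whose associated closed half-plane has center $\xi$ are $z_1 = e^r e^{i\alpha}$ and $z_2 = e^{-r}e^{i(\alpha+\pi)}$. The non-Stokes assumption is precisely that no circle $S_{c_j}$ with $c_j \neq \xi$ meets $S_\xi$ at $z_1$ or $z_2$, which is the hypothesis needed for \cref{lemma: co stokes structure and decomposition H1}. Applying that lemma together with \cref{remark: stalk L at xi and theta}, which identifies $\cL_{<\xi,\theta}$ with $\cF(\overline{\cH}_{z_1})$ and $\cL_{<\xi,\theta+\pi}$ with $\cF(\overline{\cH}_{z_2})$, gives
\[
\dim H^1(S^1, \cL_{<\xi}) = \dim \cF(\overline{\cH}_{z_1}) + \dim \cF(\overline{\cH}_{z_2}) = \dim \cF_\xi,
\]
where the last equality is \cref{Lemma: F at a point = F at two closed half-planes}.

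To extend to all nonsingular $\xi$, I would invoke constructibility. By \cref{Lemma: inverse Laplace is constructible sheaf}, the sheaf $\Phi(\cL,\cG)$ is constructible with singularities at most at $\{c_1,\dots,c_n\}$, so its stalk dimension $\dim H^1(S^1,\cL_{<\xi})$ is locally constant on $\bbC \setminus \{c_1,\dots,c_n\}$; the same is true for $\dim \cF_\xi$ by hypothesis. The Stokes points of $\cF$ form a finite union of circles, hence a nowhere-dense closed subset of $\bbC$, so the generic points used in the first step are dense in $\bbC \setminus \{c_1,\dots,c_n\}$, and local constancy propagates the equality to the whole open set (including $\xi = 0$ if $0$ is nonsingular).

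Finally, for $\xi = c_i$, I would fix any nonsingular $c_0$ and use \cref{proposition: RGamma=0 condition (2)}:
\[
\dim H^1(S^1,\cL_{<c_0}) - \dim H^1(S^1,\cL_{<c_i}) = \dim \gr_{c_i}\cL.
\]
The local decomposition $\cL_{<\xi,\theta'} \simeq \bigoplus_{c_j <_{\theta'} \xi} \cF_{c_0}/\cF_{c_j}$ constructed in the proof of \cref{Definition: Laplace transform} identifies $\gr_{c_i}\cL$ as a local system of rank $\dim\cF_{c_0} - \dim\cF_{c_i}$. Combined with the previous step applied to $c_0$, this yields $\dim H^1(S^1,\cL_{<c_i}) = \dim\cF_{c_i}$. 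The main obstacle is the generic step: the point is to verify that the non-Stokes condition on $\cF$ is exactly what both \cref{lemma: co stokes structure and decomposition H1} and \cref{Lemma: F at a point = F at two closed half-planes} require for the same choice of $\theta$, so that the two ``slicing'' formulas on the Betti and Laplace sides can be matched term by term. Once this geometric matching is pinned down, the remaining steps are bookkeeping via constructibility and the known rank of $\gr_{c_i}\cL$.
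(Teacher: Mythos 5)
Your proof is correct, but it is organized quite differently from the paper's. The paper handles all $\xi$ (singular or not, Stokes or not) in a single pass: for any fixed $\xi$, it picks an auxiliary direction $\theta$ (not necessarily $\arg\xi$) such that the line $l = \partial\ocH_{\theta,\re(\xi e^{-i\theta})}$ through $\xi$ avoids every $c_j \neq \xi$; this is possible because only finitely many directions are forbidden. It then applies \cref{lemma: co stokes structure and decomposition H1} to get $\dim H^1(S^1,\cL_{<\xi}) = \dim\cL_{<\xi,\theta} + \dim\cL_{<\xi,\theta+\pi}$, identifies these stalks via \cref{remark: stalk L at xi and theta} with $\cF(\ocH_{\theta,\cdot})$ and $\cF(\ocH_{\theta+\pi,\cdot})$, and finishes by an argument \emph{analogous} to (but slightly more general than) \cref{Lemma: F at a point = F at two closed half-planes}: since the common boundary line $l$ passes through $\xi$ but no other singularity, the vanishing-cycle decomposition at a nonsingular $c_0$ shows the two section spaces sum to $\dim\cF_\xi$, even when $\xi$ is one of the $c_i$.

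Your route instead restricts to the case where \cref{Lemma: F at a point = F at two closed half-planes} applies \emph{literally}: you take $\theta = \arg\xi$, so that the half-planes are exactly $\ocH_{z_1}, \ocH_{z_2}$ with $\{z_1,z_2\} = (q_3\circ p_3^{-1})(\xi)$, which requires $\xi$ to be a non-Stokes, nonsingular point. You then propagate to all nonsingular $\xi$ by the local constancy of stalk dimensions on $\bbC\setminus\{c_1,\dots,c_n\}$ (which holds for both sides by \cref{Lemma: inverse Laplace is constructible sheaf} and the hypothesis on $\cF$), together with density of the non-Stokes locus; and you handle $\xi = c_i$ separately by combining \cref{proposition: RGamma=0 condition (2)} with the fact that $\dim\gr_{c_i}\cL = \dim\cF_{c_0} - \dim\cF_{c_i}$, which follows from the decomposition constructed in the proof of \cref{Definition: Laplace transform}. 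Your verification that the ``Stokes point'' condition for $\cF$ (namely $\xi - c_j \perp \xi$ for some $c_j\neq\xi$) is exactly the condition that some $S_{c_j}$ passes through $z_1$ or $z_2$ is the key matching and it is correct. What your approach buys is that every step cites an exact prior statement rather than arguing ``similarly to''; what it costs is the two extra bootstrapping steps. Both are valid; the paper's is shorter, yours is more mechanically checkable.
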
 

\begin{proof}
Let $\theta$ be a direction where none of $S_{c_i}$, with $c_i\neq \xi$, intersects $S_\xi$. Then by \cref{lemma: co stokes structure and decomposition H1}, \[        \dim H^1(S^1, \cL_{<\xi}) = \dim \cL_{<\xi, \theta}+\dim \cL_{<\xi, \theta+\pi}.
\]
By \cref{remark: stalk L at xi and theta}, we have
\[\cL_{<\xi, \theta}=\cF(\ocH_{\theta,\re(\xi e^{-i\theta})})\quad \text{and}\quad \cL_{<\xi, \theta+\pi}=\cF(\ocH_{\theta+\pi,\re(\xi e^{-i(\theta+\pi)})}).\]
On the boundaries of $\ocH_{\theta,\re(\xi e^{-i\theta})}$ and $\ocH_{\theta+\pi,\re(\xi e^{-i(\theta+\pi)})}$, there are no singular points, with the possible exception of $\xi$. Similar to the proof of \cref{Lemma: F at a point = F at two closed half-planes}, we show that 
\[\dim \cF(\ocH_{\theta,\re(\xi e^{-i\theta})})+\dim\cF(\ocH_{\theta+\pi,\re(\xi e^{-i(\theta+\pi)})})=\dim \cF_\xi,\]
completing the proof.
\end{proof}

\begin{corollary}\label{corollary: step(3) that dimension of psi are same} We have the following equality  
\begin{equation*}
\dim(R^1p_{2*}q_2^*R^0q_{1*}p_{1}^*\cF)_\xi  =\dim \cF_\xi.
\end{equation*}
\end{corollary}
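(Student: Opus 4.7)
The plan is to chain together the identifications already established in the paper so as to reduce the corollary directly to \cref{h1 of Lz equals fz}. Specifically, by \cref{theorem: describe sigma circ lt using pullback-pushforward}, the co-Stokes sheaf $(\cL, \cG)$ associated to $\sigma \cir \Psi(\cF)$ has the form $\cG = R^0q_{1*}p_1^*\cF$. Therefore the sheaf appearing on the left-hand side is precisely
\[
  R^1p_{2*}\,q_2^*\,\cG = \cH^0\big(Rp_{2!}q_2^!\cG\big) = \cH^0\big(\Phi(\cL,\cG)\big),
\]
where the first equality uses the computation of $\Phi(\cL,\cG)$ carried out in \cref{lemma: inverse Laplace only  has H^0} (so in particular, only the degree-$0$ cohomology sheaf is nonzero, and it is given by $R^1p_{2*}q_2^*\cG$).

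Next I would invoke proper base change for the proper morphism $p_2$ at the point $\xi \in \bbC$ to identify
\[
  (R^1p_{2*}q_2^*\cG)_\xi \;\simeq\; H^1\big(S_\xi,\, \cG|_{S_\xi}\big),
\]
exactly as done in the proof of \cref{lemma: inverse Laplace only  has H^0}. By the construction of $\sigma$ (see \cref{lemma: assemble Stokes filtratoin} and the definition of $i_\xi$ in \cref{notation: circles}), the pullback $i_\xi^*\cG$ agrees with $\cL_{<\xi}$ on $S^1$; since $i_\xi \colon S^1 \xrightarrow{\ \sim \ } S_\xi$ is a homeomorphism, this gives
\[
  H^1\big(S_\xi,\, \cG|_{S_\xi}\big) \;\simeq\; H^1\big(S^1,\, \cL_{<\xi}\big).
\]

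Finally, \cref{h1 of Lz equals fz} provides the key dimension equality
\[
  \dim H^1(S^1,\cL_{<\xi}) = \dim \cF_\xi,
\]
which concludes the argument. No step here is really an obstacle: the entire work has been done in \cref{lemma: inverse Laplace only  has H^0}, in the construction of $\sigma$, and in \cref{h1 of Lz equals fz}; the present corollary is just a packaging statement that re-expresses the stalk of the double composition $\Phi \cir \sigma \cir \Psi$ in terms of $\cF$, preparing the ground for showing that the natural map $\eta$ in \eqref{equation: 6.10} is an isomorphism by dimension-counting.
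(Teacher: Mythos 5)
Your proposal is correct and follows essentially the same route as the paper: apply proper base change along the proper map $p_2$ to identify the stalk of $R^1p_{2*}q_2^*R^0q_{1*}p_1^*\cF$ at $\xi$ with $H^1(S_\xi, \cG|_{S_\xi}) \simeq H^1(S^1, \cL_{<\xi})$, and then invoke \cref{h1 of Lz equals fz } for the dimension equality. The extra detour through $\cH^0(Rp_{2!}q_2^!\cG)$ via \cref{lemma: inverse Laplace only  has H^0} is harmless bookkeeping but not needed — the paper goes straight from proper base change to the stalk computation.
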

\begin{proof}
    Since $p_2$ is proper, by the proper base change theorem \begin{equation*} (R^1p_{2*}q_2^*R^0q_{1*}p_{1}^*\cF)_\xi  = H^1(\{\xi \}\times S_{\xi},R^0q_{1*}p_1^*\cF|_{\{\xi \}\times S_{\xi}}) = H^1(S^1, \cL_{<\xi}).
    \end{equation*}
    By \cref{h1 of Lz equals fz }, we have \begin{equation*}
        \operatorname{dim
        }H^1(S^1, \cL_{<\xi}) = \dim\cF_\xi.
    \end{equation*}
    This concludes the proof.
\end{proof}

Finally, we move on to Step (4).

\begin{lemma}\label{fiber is constant sheaf}
The restriction of $i_*i^*R^0j_{*}j^*p^*\cF$ to $\{\xi\}\times S_\xi$ is the constant sheaf with fiber $\cF_\xi$.
\end{lemma}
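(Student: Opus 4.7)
The plan is to reduce to Lemma \ref{lemma: pF to jjpF} (which already gives $p^*\cF \xrightarrow{\sim} R^0j_{*}j^*p^*\cF$ on $Z$) and then observe that the restriction of $p^*\cF$ to $\{\xi\}\times S_\xi$ is simply pulled back from the single point $\xi\in\bbC$.

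Concretely, I would factor the inclusion $k\colon \{\xi\}\times S_\xi \hookrightarrow Z$ as $k = i\circ i_\xi$, where $i_\xi\colon \{\xi\}\times S_\xi \hookrightarrow V$ is the inclusion of a slice. Since $i\colon V\hookrightarrow Z$ is a closed embedding, the functor $i^*\circ i_*$ is naturally isomorphic to the identity on sheaves on $V$. Combining this with Lemma \ref{lemma: pF to jjpF}, I obtain
\begin{equation*}
k^*\bigl(i_*i^*R^0j_{*}j^*p^*\cF\bigr) \;\simeq\; i_\xi^*\, i^* i_* i^* p^*\cF \;\simeq\; i_\xi^* i^* p^*\cF \;=\; (p\circ i\circ i_\xi)^*\cF.
\end{equation*}

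The composite $p\circ i\circ i_\xi\colon \{\xi\}\times S_\xi \to \bbC$ is the constant map with value $\xi$, and $\{\xi\}\times S_\xi\cong S^1$ is connected. Hence the pullback of $\cF$ along this constant map is the constant sheaf with fiber the stalk $\cF_\xi$, as claimed. There is no serious obstacle here; all the real work has already been done in Lemma \ref{lemma: pF to jjpF}, and the present statement is essentially a direct consequence combined with the standard identity $i^*i_* = \id$ for closed embeddings.
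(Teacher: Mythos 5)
Your proof is correct and follows essentially the same route as the paper's: both reduce to Lemma \ref{lemma: pF to jjpF} to replace $R^0j_*j^*p^*\cF$ by $p^*\cF$, and then observe that $p$ collapses $\{\xi\}\times S_\xi$ to the single point $\xi$, so the restriction is the constant sheaf $\underline{\cF_\xi}$. Your version merely makes explicit the factorization $k = i \circ i_\xi$ and the identity $i^*i_*\simeq\id$ for the closed embedding $i$, which the paper leaves implicit.
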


\begin{proof}
By \cref{lemma: pF to jjpF}, we have $i^*p^*\cF\xrightarrow{\ \sim \ }i^*R^0j_{*}j^*p^*\cF$.
Since $\{\xi \}\times S_\xi \subset V$, it is just the pullback of $\cF$ to $\{\xi\}\times S_\xi$  by the projection to $\{\xi\}$, which is the constant sheaf $\underline{\cF_\xi}$.
\end{proof}

\begin{proposition}\label{proposition: step (4) psi is injectve} 
The map $\eta$ is injective.
\end{proposition}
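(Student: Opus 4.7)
The plan is to work stalkwise at $\xi\in\bbC$ via proper base change and then compute by Čech cohomology. By proper base change along $p_2$ (which is proper thanks to the homeomorphism in \cref{remark: alternative description of unions of S1}(\ref{remark: C times S1})), injectivity of $\eta$ reduces to injectivity of each stalk $\eta_\xi$ at $\xi\in\bbC$. The restriction of $i^*q^*R^0q_*R^0j_*j^*p^*\cF$ to $\{\xi\}\times S_\xi$ is identified with $\cL_{<\xi}$ (writing $(\cL,\cL_<)=\Psi(\cF)$, via the construction of $\sigma\cir\Psi$), while the restriction of $i^*R^0j_*j^*p^*\cF$ is the constant sheaf $\underline{\cF_\xi}$ by \cref{fiber is constant sheaf}. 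Thus $\eta_\xi$ becomes the map $H^1(S^1,\cL_{<\xi})\to H^1(S^1,\underline{\cF_\xi})$ induced by an inclusion of sheaves on $S^1$ whose stalk at $\theta$ is the restriction $\cL_{<\xi,\theta}=\cF(\ocH_{\theta,\re(\xi e^{-i\theta})})\hookrightarrow\cF_\xi$; this is injective because $\ocH_z$ minus the finite singular set is path-connected and the generization maps of $\cF$ are injective (cf.\ \cref{lemma: the adjunction map is injective}).

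I would then pick a direction $\theta_0\in S^1$ that is neither a Stokes nor an anti-Stokes direction of $\Psi(\cF)$, and such that the line through $\xi$ perpendicular to $\theta_0$ avoids all the $c_i$ (both conditions hold for generic $\theta_0$). Using the four-element cover $\{U_1,U_2,U_3,U_4\}$ from \cref{lemma: co stokes structure and decomposition H1}, each $U_i$ and each non-empty pairwise intersection is a small open interval on which $\cL_{<\xi}$ and $\underline{\cF_\xi}$ are acyclic (\cref{corollary: H1(small)=0}), so Čech computes sheaf cohomology. Using \cref{proposition: Stokes filtration trivializes over good open intervals} to identify local sections as $\cL_{<\xi,\theta_0}$ and $\cL_{<\xi,\theta_0+\pi}$, a direct calculation of the Čech differential shows that if a class $[b]\in\ker\eta_\xi$ is represented by $b=(b_{12},b_{14},b_{23},b_{34})$, then $b$ is a coboundary in the $\underline{\cF_\xi}$-Čech complex, which forces $b_{12}-b_{14}=-(b_{23}+b_{34})$ to lie in $\cL_{<\xi,\theta_0}\cap\cL_{<\xi,\theta_0+\pi}\subset\cF_\xi$.

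The main step is then to show $\cL_{<\xi,\theta_0}\cap\cL_{<\xi,\theta_0+\pi}=0$ via Mayer-Vietoris. Let $\ocH_{z_1}$ and $\ocH_{z_2}$ be the two opposite closed half-planes through $\xi$ perpendicular to $\theta_0$, with common boundary the line $l_{z_1}$. The Mayer-Vietoris triangle for the closed cover $\bbC=\ocH_{z_1}\cup\ocH_{z_2}$, combined with $R\Gamma(\bbC,\cF)=0$, yields an isomorphism $\cF(\ocH_{z_1})\oplus\cF(\ocH_{z_2})\xrightarrow{\sim}\cF(l_{z_1})$. Since $l_{z_1}$ avoids all singularities, $\cF|_{l_{z_1}}$ is a local system on a line and $\cF(l_{z_1})\xrightarrow{\sim}\cF_\xi$ via stalk at $\xi$, yielding the direct sum decomposition $\cF_\xi=\cL_{<\xi,\theta_0}\oplus\cL_{<\xi,\theta_0+\pi}$. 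Hence the intersection vanishes, forcing $b_{12}=b_{14}$ and $b_{23}=-b_{34}$, which exhibits $b$ as a coboundary in the $\cL_{<\xi}$-Čech complex, so $[b]=0$. The principal obstacle is this Mayer-Vietoris step establishing the decomposition; the remaining Čech bookkeeping is routine.
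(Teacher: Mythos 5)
Your proposal is correct and follows the same overall plan as the paper's proof: reduce to stalks via proper base change, identify $\eta_\xi$ with the map $H^1(S^1,\cL_{<\xi})\to H^1(S^1,\underline{\cF_\xi})$ induced by the injection of sheaves on $S^1$, and then compute with the four-element Čech cover $\{U_1,U_2,U_3,U_4\}$ (which is exactly the cover the paper uses). The difference is at the final, decisive step, which the paper leaves terse (``By studying the induced maps on Čech cohomology groups, we show that $f$ is injective'') while you supply actual details. The Čech bookkeeping you carry out is accurate: the cocycle condition and vanishing in $H^1(S^1,\underline{\cF_\xi})$ force $b_{12}-b_{14}=-(b_{23}+b_{34})\in\cL_{<\xi,\theta_0}\cap\cL_{<\xi,\theta_0+\pi}$, and the vanishing of that intersection makes $b$ a $\cL_{<\xi}$-coboundary. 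Where you genuinely diverge from the paper is in establishing that intersection is zero: you argue via the closed Mayer--Vietoris triangle for $\bbC=\ocH_{z_1}\cup\ocH_{z_2}$ together with $R\Gamma(\bbC,\cF)=0$, giving directly the splitting $\cF_\xi=\cF(\ocH_{z_1})\oplus\cF(\ocH_{z_2})$; the paper instead obtains the analogous decomposition implicitly through the vanishing-cycle decomposition of $\cF_{c_0}$ and dimension counting (\cref{Lemma: F at a point = F at two closed half-planes}, \cref{h1 of Lz equals fz }). Your Mayer--Vietoris route is cleaner and more self-contained; it avoids invoking Gabrielov paths. One minor imprecision: for singular $\xi=c_i$ the line $l_{z_1}$ cannot avoid \emph{all} the $c_j$ (it passes through $\xi$ itself); the condition should be that $l_{z_1}$ avoids all $c_j\neq\xi$, and then the identification $\cF(l_{z_1})\xrightarrow{\sim}\cF_\xi$ still holds because the generization maps of $\cF$ from $c_i$ are injective. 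With that small fix the argument is complete and correct.
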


\begin{proof}
Since $p_2$ is proper, by the proper base change theorem, we have
\begin{equation*} (R^1p_{2*}q_2^*R^0q_{1*}p_{1}^*\cF)_\xi  = H^1\big(\{\xi \}\times S_{\xi},R^0q_{1*}p_1^*\cF|_{\{\xi \}\times S_{\xi}}\big) = H^1\big(\{\xi\}\times S_\xi, \cL_{<\xi}\big).
\end{equation*}
By \cref{fiber is constant sheaf}, we have
\begin{equation*}
(R^1p_{2*}i^*R^0j_{*}j^*p^*\cF)_\xi = H^1\big(\{\xi\}\times S_\xi^1, i^*R^0j_*j^*p^*\cF|_{\{\xi\} \times S_\xi}\big) = H^1\big(\{\xi\}\times S_\xi, \underline{\cF_\xi}\big).
\end{equation*}

Next we show that $\eta_\xi$ is induced by a sheaf morphism $\cL_{<\xi} \longrightarrow \underline{\cF_\xi}$. Since taking stalk is an exact functor and by \cref{lemma: pF to jjpF}, the morphism
\[(R^1p_{2*}i^*q^*R^0q_*R^0j_{*}j^*p^*\cF)_\xi \longrightarrow (R^1p_{2*}i^*R^0j_{*}j^*p^*\cF)_\xi\]
is induced by taking $\cH^1$ of
\begin{equation*}
    k^*Rp_{2*}i^*q^*R^0q_*p^*\cF\longrightarrow k^*Rp_{2*}i^*p^*\cF,
\end{equation*}
where $k\colon  \xi\hookrightarrow \bbC$. We consider the following commutative diagram.

\begin{equation*}
\begin{tikzcd}
\{\xi\}\times S_\xi \arrow[d, "\tilde{p}_2"] \arrow[r, "\tilde{k}"] &  \bigcup_{\xi \in \bbC} \big(\{\xi \}\times S_{\xi}\big) \arrow[d, "p_2"] \\
\xi \arrow[r, "k"]                                                       & \bbC                                                                      
\end{tikzcd}
\end{equation*}
Since $p_2 $ is proper, by the proper base change theorem,
\begin{equation*}
k^*Rp_{2*}i^*q^*R^0q_*p^*\cF  =R\tilde{p}_{2*}\tilde{k}^*i^*q^*R^0q_*p^*\cF,
\end{equation*}
and similarly we have \begin{equation*}
    k^*Rp_{2*}i^*p^*\cF = R\widetilde{p}_{2*}\tilde{k}^*i^*p^*\cF.
\end{equation*}
By \cref{lemma: pF to jjpF},\[q^*R^0q_*p^*\cF|_{\{\xi\}\times S_\xi}\xrightarrow{\ \sim \ }R^0q_{1*}p_1^*\cF|_{\{\xi\}\times S_\xi}=\cL_{<\xi}.\] The canonical morphism  $\tilde{k}^*i^*q^*R^0q_{*}p^*\cF \longrightarrow \tilde{k}^*i^*p^*\cF
$ is the injection of sheaves
\begin{equation*}
q^*R^0q_*p^*\cF|_{\{\xi\}\times S_\xi} = \cL_{<\xi}\longrightarrow p^*\cF|_{\{\xi\}\times S_\xi} = \underline{\cF_\xi}.
\end{equation*}
Applying $R\tilde{p}_{2*}$, this induces a map on sheaf cohomology
\begin{equation*}
        f\colon H^1\big(\{\xi\} \times S_\xi, \cL_{<\xi}\big)\longrightarrow H^1\big(\{\xi\}\times S_\xi, \underline{\cF_\xi}\big).
\end{equation*}

Finally we show the injectivity of this map.
Let $\theta$ be a non-Stokes direction. Choose $\epsilon$ small enough, such that at any $\theta'\in U_1 \coloneqq (\theta-\epsilon,\theta+\epsilon)$, none of $S_{c_i}$ with $c_i\neq \xi$ intersects $S_\xi$. Let $U_2\coloneqq(\theta,\theta+\pi)$, $U_3\coloneqq(\theta+\pi-\epsilon,\theta+\pi+\epsilon)$ and $U_4\coloneqq(\theta+\pi,\theta+2\pi)$. Then $\cU$ is an open cover of $S^1$ consisting of small open intervals. By \cref{lemma: small open cover are Leray cover}, Čech cohomology groups are isomorphic to sheaf cohomology groups. By studying the induced maps on Čech cohomology groups, we show that $f$ is injective.
By the computation of dimensions, this map is also an isomorphism.
\end{proof}

\subsection{Natural isomorphism $ \Psi \cir (\Phi\cir \sigma)  \xrightarrow{\ \sim \ } \id$ }
\begin{theorem}\label{theorem: lt it sigma = id}
Given a co-Stokes structure of exponential type $(\cL, \cL_{<})$, there exists a natural isomorphism
\begin{equation*}
(\Psi \cir \Phi\cir \sigma) (\cL, \cL_{<}) \xrightarrow{\ \sim \ } (\cL, \cL_{<}).
\end{equation*}
\end{theorem}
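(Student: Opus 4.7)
The plan is to construct the natural isomorphism $(\cL, \cL_<) \xrightarrow{\ \sim \ } (\cL', \cL'_<)$ by direct computation, paralleling \cref{theorem: it sigma lt=id}. Set $\cF \coloneqq (\Phi \cir \sigma)(\cL, \cL_<)$ and $(\cL', \cL'_<) \coloneqq \Psi(\cF)$. By \cref{Definition: Laplace transform}, $\cL'_\theta = \cF(\ocH_{\theta, \lambda})$ for $\lambda \gg 0$ and $\cL'_{<\xi, \theta} = \cF(\ocH_{\theta, \re(\xi e^{-i\theta})})$, while by \cref{lemma: inverse Laplace only  has H^0} we have the intrinsic identification $\cF_\xi \simeq H^1(S^1, \cL_{<\xi})$ for every $\xi \in \bbC$, reducing the problem to a sheaf-cohomology computation on $S^1$.

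First I would identify the underlying local system. Fix $\theta$ and pick $c_0$ with $\arg c_0 = \theta$ and $|c_0|$ large enough that $\ocH_{\theta, \lambda}$ contains no exponents. Then $\cF|_{\ocH_{\theta, \lambda}}$ is a constant sheaf, whence $\cL'_\theta \xrightarrow{\ \sim \ } \cF_{c_0} \simeq H^1(S^1, \cL_{<c_0})$. For $|c_0|$ sufficiently large, $\cL_{<c_0, \theta'} = \cL_{\theta'}$ whenever $\theta' \in (\theta - \pi/2, \theta + \pi/2)$, while $\cL_{<c_0, \theta'} = 0$ whenever $\theta' \in (\theta + \pi/2, \theta + 3\pi/2)$. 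A Čech computation using a four-arc cover of $S^1$ concentrated around $\theta \pm \pi/2$ (as in the proofs of \cref{lemma: small open cover are Leray cover} and \cref{proposition: RGamma=0 condition (2)}) then collapses to yield a canonical isomorphism $H^1(S^1, \cL_{<c_0}) \xrightarrow{\ \sim \ } \cL_\theta$.

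For the filtration, I would combine this with the explicit vanishing cycle description in the proof of \cref{Definition: Laplace transform}: a straight-path vanishing cycle decomposition at $c_0$ gives $\cL'_{<\xi, \theta} \xrightarrow{\ \sim \ } \bigoplus_{c_i <_\theta \xi} \cF_{c_0}/\cF_{c_i}$, while for $\theta$ in a good open interval, \cref{proposition: Stokes filtration trivializes over good open intervals} provides the splitting $\cL_{<\xi, \theta} \xrightarrow{\ \sim \ } \bigoplus_{c_i <_\theta \xi} \gr_{c_i}\cL_\theta$. Matching these decompositions (which share the same index set $\{c_i <_\theta \xi\}$) reduces to identifying each graded piece $\gr_{c_i}\cL_\theta \xrightarrow{\ \sim \ } \cF_{c_0}/\cF_{c_i}$ compatibly with the local system isomorphism of the previous paragraph. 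This will be the hard part: it is obtained by applying the same Čech argument to the short exact sequence $0 \to \cL_{<c_i} \to \cL_{<c_0} \to \cL_{<c_0}/\cL_{<c_i} \to 0$ on $S^1$, together with \cref{lemma: inverse Laplace only  has H^0} at both $c_0$ and $c_i$, and requires choosing $\theta$ to be neither a Stokes nor an anti-Stokes direction relative to the exponents so that the four-arc Čech cover works uniformly for each index $i$. Naturality in $(\cL, \cL_<)$ is then immediate from the functoriality of every construction above, completing the desired equivalence.
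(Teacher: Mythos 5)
Your approach diverges substantially from the paper's. The paper does \emph{not} try to identify stalks directly. Instead it constructs a global morphism of sheaves on $\bbC^*$, namely a map $\Lambda\colon \cG' \to \cG$ between the co-Stokes sheaf $\cG = \sigma(\cL,\cL_<)$ and $\cG' = (\sigma\cir\Psi\cir\Phi\cir\sigma)(\cL,\cL_<)$, built from a chain of adjunction units/counits through the intermediate correspondence $p_3, q_3$ (Lemmas~\ref{kk1 is injective}--\ref{lemma: lifts to psi}); it then restricts $\Lambda$ to the circle $S_\infty$ and checks this restriction is an isomorphism (Lemma~\ref{lemma: at ininfity is isomorphism}) preserving filtrations (Lemma~\ref{lemma: map at infinity preserves filtrations}) by a simple ray-restriction argument. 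The advantage of working with a genuine sheaf morphism on $\bbC^*$ is that compatibility with the local system structure and with every sub-filtration $\cL_{<\xi}$ is automatic once you check it pointwise. Your plan instead builds isomorphisms of stalks $\cL'_\theta \to \cL_\theta$ and filtered pieces one $\theta$ at a time and then asserts at the end that everything assembles. That assembly step is precisely where the content of the theorem lives: a family of vector-space isomorphisms indexed by $\theta$ does \emph{not} by itself give a morphism of local systems on $S^1$ (you must exhibit compatibility with the restriction maps, or equivalently with monodromy), and you never address this. The paper avoids this pitfall by never dropping to stalks until the morphism is already constructed.

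There is also a concrete error in the hard step. You propose to apply a Čech argument to ``the short exact sequence $0 \to \cL_{<c_i} \to \cL_{<c_0} \to \cL_{<c_0}/\cL_{<c_i} \to 0$ on $S^1$.'' But $\cL_{<c_i}$ is not a subsheaf of $\cL_{<c_0}$ on all of $S^1$: the containment $\cL_{<c_i,\theta'} \subset \cL_{<c_0,\theta'}$ holds if and only if $c_i \le_{\theta'} c_0$, which fails on half the circle (since $c_0$ lies near infinity in direction $\theta$, the containment only holds for $\theta'$ in roughly $(\theta-\pi/2,\theta+\pi/2)$). So this sequence of sheaves does not exist, and the Čech computation you envision for it is not available. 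To identify the graded pieces $\gr_{c_i}\cL_\theta$ with $\cF_{c_0}/\cF_{c_i}$ compatibly with the Stokes splitting you would need the sort of parallel-transport analysis carried out much later in the paper (Propositions~\ref{proposition: at theta  Fc contains Wi} through~\ref{proposition: Stokes decomposition agrees with KKP}), which is genuinely nontrivial and which the paper develops only \emph{after} the equivalence is established. So as it stands your outline has a missing construction (the sheaf morphism), a false intermediate step (the short exact sequence), and a ``hard part'' that you acknowledge but do not supply.
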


For the proof, we will construct a map of sheaves on $\bbC^*$
\[\cG'\coloneqq (\sigma\cir \Psi \cir \Phi\cir \sigma) (\cL, \cL_{<}) \longrightarrow \cG\coloneqq\sigma (\cL, \cL_{<}),    
\]
and show that the induced map at infinity preserves the filtrations $\cL_<$ (see \cref{lemma: map at infinity preserves filtrations}). Consider the following commutative diagram,
\begin{equation*}
\begin{tikzcd}
\bbC & Z \arrow[l, "p"'] \arrow[r, "q"]                        & \bbC^* \\
           & V \arrow[u, "i"] \arrow[ru, "q_2"] \arrow[lu, "p_2" ']        &             \\ &  \bigcup_{z\in \bbC^*}\{\xi_z\}\times \{z\} \arrow[u,"k"] \arrow[ruu,"q_3"'] \arrow[luu,"p_3"] &
\end{tikzcd}
\end{equation*}
where $\xi_z$ denotes the center of the closed half-plane $\ocH_z$ (see \cref{notation: halfplanes}), and $q_3$ denotes the projection to $z$. Note that $q_3$ is a homeomorphism to $\bbC^*$.

Recall from \cref{Theorem: inverse Laplace transform is in constr} that $p_{2!}q_2^!\cG$ is a constructible sheaf $\cH^0(Rp_{2!}q_2^!\cG)$ placed in degree $0$. We now outline the construction of the map
\[
\cG'= R^0q_{*}p^* \cH^0(Rp_{2!}q_{2}^!\cG) \longrightarrow \cG.
\]
Consider the following adjunctions 
\begin{equation}\label{eq: adjunctions}
\begin{tikzcd}
R^0q_{*}p^*\cH^0(Rp_{2!}q_{2}^!\cG) \arrow[r, "i_*i^*"] & R^0q_{2*}p_2^*\cH^0(Rp_{2!}q_2^!\cG) \arrow[d,"k_*k^*" ]  & & \\ & 
R^0q_{3*}p_3^*\cH^0(Rp_{2!}q_2^!\cG)  & \arrow[l, "k_!k^!"'] R^0q_{3*}p_3^*\cH^0(Rp_{3*}q_3^*\cG)    \arrow[r] & \cG,
\end{tikzcd}
\end{equation}
where $q_3$ is a homeomorphism and the last map is given by \[R^0q_{3*}p_3^*\cH^0(Rp_{3*}q_3^*\cG) = R^0q_{3*}p_3^*R^0p_{3*}q_3^*\cG\rightarrow R^0q_{3*}q_3^*\cG = \cG.\]

We will show in \cref{kk1 is injective} that the adjunction $\id \rightarrow k_*k^*$ induces an injective map on $\cH^0$, and in \cref{kk2 is injective} that the adjunction $\id \leftarrow k_!k^!$ induces an injective map on $\cH^0$.
In \cref{lemma: lifts to psi}, we use the universal property of kernel to show that we have a lift $\psi$ 
\begin{equation*}
\begin{tikzcd}
& R^0q_{2*}p_2^*\cH^0(Rp_{2!}q_2^!\cG) \arrow[d,"k_*k^*" ]  \arrow[dr, "\psi", dashed] & \\ & R^0q_{3*}p_3^*\cH^0(Rp_{2!}q_2^!\cG)  & \arrow[l, "k_!k^!"'] R^0q_{3*}p_3^*\cH^0(Rp_{3*}q_3^*\cG).   
\end{tikzcd}
\end{equation*}
We will show in \cref{lemma: at ininfity is isomorphism} that the map on the local system at infinity is an isomorphism, and in \cref{lemma: map at infinity preserves filtrations} that this map preserves the filtrations $\cL_<$.

\begin{lemma}\label{kk1 is injective}
The map
\[r_1\colon R^0q_{2*}p_2^*\cH^0(Rp_{2!}q_2^!\cG)\longrightarrow R^0q_{3*}p_3^*\cH^0(Rp_{2!}q_2^!\cG),\]
induced by the adjunction $\id \rightarrow k_*k^*$, is injective on $\cH^0$.
\end{lemma}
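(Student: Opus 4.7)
The plan is to identify $\ker(r_1)$ via the open-closed decomposition of $V$ along $W \coloneqq \bigcup_{z \in \bbC^*}\{\xi_z\} \times \{z\}$, and then verify the vanishing of this kernel stalkwise. Writing $j\colon V \setminus W \hookrightarrow V$ for the open complement and abbreviating $\cF \coloneqq \cH^0(Rp_{2!}q_2^!\cG)$, the short exact sequence
\[ 0 \longrightarrow j_!j^*p_2^*\cF \longrightarrow p_2^*\cF \longrightarrow k_*k^*p_2^*\cF \longrightarrow 0 \]
gives, upon applying $R^0q_{2*}$ and using $Rk_* = k_*$ together with $q_3 = q_2 \circ k$, the identification $\ker(r_1) = R^0q_{2*}(j_!j^*p_2^*\cF)$. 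It thus suffices to show this pushforward vanishes.

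I would then compute the stalk at an arbitrary $z \in \bbC^*$, using the homeomorphism $h'$ from \cref{remark: alternative description of unions of S1}(2) to identify $V \simeq \bbR \times \bbC^*$ with $q_2 = \pr_2$ and $W = \{0\} \times \bbC^*$. The stalk equals the colimit over neighborhoods $U \ni z$ of sections of $p_2^*\cF$ on $(\bbR \setminus \{0\}) \times U$ whose extension by zero across $\{0\} \times U$ remains a section on $\bbR \times U$; equivalently, of sections $s$ vanishing in an open neighborhood of $\{0\} \times U$ inside $\bbR \times U$. For $U$ small and simply connected, the only singular arcs $\{c_i\} \times S_{c_i} \cap (\bbR \times U)$ come from those $i$ with $z \in S_{c_i}$, and each such arc has endpoints on $\bbR \times \partial U$ and lies generically in a single half-tube $\bbR_{>0} \times U$ or $\bbR_{<0} \times U$.

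The last step is to show any such $s$ is zero on each half-tube. By constructibility, $p_2^*\cF$ is locally constant on the complement of these singular arcs, and each half-tube minus its arcs remains simply connected (since the arcs are one-dimensional with endpoints on the boundary), so the sheaf is constant there. Since $\cF$ has vanishing cohomology on $\bbC$, the cospecialization maps $\cF_{c_i} \hookrightarrow \cF_{\xi_z}$ are injective by \cite[Theorem~2.29]{KKP_Hodge_theoretic_aspect_of_mirror_symmetry}, so global sections of $p_2^*\cF$ on each half-tube are determined by a single value in a certain intersection of such subspaces of $\cF_{\xi_z}$. The vanishing of $s$ near $\{0\} \times U$ forces this value to be zero, hence $s = 0$.

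The main obstacle is the stalk analysis at Stokes points $z$, where $p_2^*\cF$ fails to be locally constant on the tube and the half-tubes contain singular arcs; the constructibility of $\cF$ together with the connectedness of the half-tubes minus their arcs is precisely what allows local vanishing near $\{0\} \times U$ to propagate to global vanishing.
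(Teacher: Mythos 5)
Your strategy---isolating $\ker(r_1)$ via the open--closed sequence and showing $R^0q_{2*}(j_!j^*p_2^*\cF)=0$ stalkwise in the model $V\simeq\bbR\times\bbC^*$---differs in organization from the paper's short proof, which directly identifies the stalk of $r_1$ as a restriction map between sections of $\cF$ over closed subsets of $\bbC$. Your first reduction is correct: the open--closed exact sequence, the exactness of $k_*$, and the identities $q_2\circ k=q_3$, $p_2\circ k=p_3$ give exactly $\ker(r_1)=R^0q_{2*}(j_!j^*p_2^*\cF)$; and your description of its stalk at $z$ as germs of sections of $p_2^*\cF$ on $(\bbR\setminus\{0\})\times U$ that vanish on an open neighbourhood of $\{0\}\times U$ is accurate.

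The topological finish, however, needs two repairs. First, a half-tube $\bbR_{>0}\times U$ minus its singular arcs is \emph{not} simply connected: the half-tube is an open $3$-ball, each singular arc is a properly embedded arc escaping to $\bbR_{>0}\times\partial U$, and the complement therefore has $\pi_1\cong\bbZ$ for one arc and a free group for several. Accordingly $p_2^*\cF$ on that complement is a local system whose monodromy reflects that of $\cF$ around the $c_i$'s, not a constant sheaf. What your argument actually needs is the weaker statement that the complement is \emph{connected} (removing a subset of real codimension at least $2$ preserves connectedness), together with the observation that a section of a local system on a connected space which vanishes on a nonempty open subset vanishes identically; combining that with the injectivity of $\cF_{c_i}\hookrightarrow\cF_{\xi_z}$ then forces $s$ to vanish along the arcs as well. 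Second, the hedge ``lies generically in a single half-tube'' leaves out the stalks at those $z$ with $\xi_z=c_i$, where the $c_i$-arc meets $\{0\}\times U$ at $(0,z)$ and sends one branch into each half-tube. A sheaf must be shown to vanish at \emph{every} stalk, so this case cannot be set aside; the same connectedness argument applies there without change, since the open set near $\{0\}\times U$ on which $s$ vanishes still meets each half-tube in a nonempty open subset, but you should say so explicitly. With these two points fixed the proof is complete.
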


\begin{proof}
Recall from \cref{Theorem: inverse Laplace transform is in constr} that $p_{2!}q_2^!\cG = \cF\in \Constrz$. 
By \cref{remark: alternative description of unions of S1}(1), $q_2^{-1}(z)=\{l_z\}\times \{z\}$ (see \cref{notation: halfplanes}). 
Then the induced map on the stalks at $z$ is the restriction map
\[r_{1,z}\colon \cF(\ocH_z)\longrightarrow \cF(l_z),\]
which is injective.
\end{proof}

Consider the following diagram. 
\begin{equation*}
\begin{tikzcd}
           & V\setminus\bigcup_{z\in\bbC^*}\{\xi_z\}\times\{z\} \arrow[ld, "p_4"'] \arrow[rd, "q_4"] \arrow[d, "j"'] &              \\
\bbC & V \arrow[l, "p_2"'] \arrow[r, "q_2"]                                                               & \bbC^* \\
           & \bigcup_{z\in\bbC^*}\{\xi_z\}\times\{z\} \arrow[u, "k"] \arrow[ru, "q_3"'] \arrow[lu, "p_3"]   &     
\end{tikzcd}
\end{equation*}
Since $p_3$ is proper and $q_3$ is a homeomorphism, we have
\begin{equation*}
    Rp_{3*}q_3^*\cG = Rp_{3!}q_{3}^!\cG = Rp_{2!}Rk_!k^!q_2^!\cG.
\end{equation*}
The exact sequence of cohomology sheaves associated to the fiber sequence
\begin{equation}\label{eq: fiber sq 1}
    Rp_{3*}q_3^*\cG=Rp_{2!}Rk_!k^!q_2^!\cG\xrightarrow{\ r_2 \ }     Rp_{2!}q_2^!\cG\longrightarrow Rp_{2!}Rj_*j^*q_2^!\cG
\end{equation}
gives rise to a map $\cH^0(Rp_{3*}q_3^*\cG)\longrightarrow \cH^0(Rp_{2!}q_2^!\cG).$
\begin{lemma}\label{kk2 is injective}
The map \begin{equation}\label{eq: adjunction 1}
    R^0q_{3*}p_3^* \cH^0(Rp_{3*}q_3^*\cG)\longrightarrow R^0q_{3*}p_3^*\cH^0(Rp_{2!}q_2^!\cG),
\end{equation}
induced by adjunction $k_!k^!\rightarrow \id$, is injective.
\end{lemma}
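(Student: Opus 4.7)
The plan is to reduce this injectivity to the vanishing of a certain sheaf on $\bbC$. Since $q_3$ is a homeomorphism and $p_3^*$ is exact, the functor $R^0q_{3*}p_3^*$ preserves injections, so it suffices to show that the induced map $\cH^0(Rp_{3*}q_3^*\cG)\longrightarrow \cH^0(Rp_{2!}q_2^!\cG)=\cF$ is injective. Taking the long exact sequence associated to the fiber sequence \eqref{eq: fiber sq 1}, this injectivity follows once $\cH^{-1}(Rp_{2!}Rj_*j^*q_2^!\cG)=0$. Using $q_2^!\simeq q_2^*[1]$ from \cref{remark: alternative description of unions of S1}(\ref{remark: alternative description of S1 (2)}), $Rp_{2!}=Rp_{2*}$ (as $p_2$ is proper), and the degree-zero concentration of $Rj_*j^*q_2^*\cG$ (the open embedding $j$ has fibers with trivial local topology, as in the proof of \cref{lemma: inverse Laplace only  has H^0}), the desired vanishing becomes
\[\mathcal{F}:=R^0p_{2*}\bigl(R^0j_*j^*q_2^*\cG\bigr)=0.\]

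I would establish $\mathcal{F}=0$ in two complementary passes. For the global sections, one unpacks
\[\mathcal{F}(\bbC)=\bigl(R^0j_*j^*q_2^*\cG\bigr)(V)=\bigl(q_2^*\cG\bigr)(W);\]
under the identification of \cref{remark: alternative description of unions of S1}(\ref{remark: alternative description of S1 (2)}), $W=(\bbR\setminus\{0\})\times\bbC^*$ has two connected components $W_\pm$, on each of which $(q_2^*\cG)(W_\pm)=\cG(\bbC^*)=0$ by \cref{cohomology of G over C^* vanish}. Hence $\mathcal{F}(\bbC)=0$.

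For the stalks, by proper base change $\mathcal{F}_\xi=H^0(S^1,\mathcal{E}_\xi)$ with $\mathcal{E}_\xi:=i_\xi'^*R^0j_*j^*q_2^*\cG$, where $i_\xi'\colon S^1\to V$ is the slice. Using $k^!\simeq k^*[-1]$ for the codimension-one closed embedding $k$ (with trivial orientation), we get the short exact sequence $0\to q_2^*\cG\to R^0j_*j^*q_2^*\cG\to k_*q_3^*\cG\to 0$ on $V$. Pulling back by $i_\xi'$ (exact) yields
\[0\longrightarrow\cL_{<\xi}\longrightarrow\mathcal{E}_\xi\longrightarrow K_\xi\longrightarrow 0\]
on $S^1$, where $K_\xi$ is a skyscraper at $\theta_1=\arg\xi$ and $\theta_2=\arg\xi+\pi$ with stalks $\cL_{<\xi,\theta_j}$ (the case $\xi=0$ is handled directly because $\ell^{-1}(0)=S^1$ and $H^0(S^1,\cL_{<0})=0$). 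Using $H^0(S^1,\cL_{<\xi})=0$ from \cref{lemma: only S1 survives on circle}, the long exact sequence identifies
\[\mathcal{F}_\xi=\ker\bigl(\alpha\colon\cL_{<\xi,\theta_1}\oplus\cL_{<\xi,\theta_2}\longrightarrow H^1(S^1,\cL_{<\xi})\bigr).\]
For $\xi$ at which $\theta_1$ is not a direction where some $S_{c_i}$ with $c_i\neq\xi$ meets $S_\xi$, the explicit \v{C}ech computation of $H^1(S^1,\cL_{<\xi})$ in the proof of \cref{lemma: co stokes structure and decomposition H1} furnishes an inverse to $\alpha$, so $\alpha$ is injective and $\mathcal{F}_\xi=0$ on a dense open subset of $\bbC$.

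The final step is to upgrade these two vanishings into $\mathcal{F}=0$ identically. Since the maps $p_2,q_2,j,k$ are all semi-algebraic, $\mathcal{F}$ is $\bbR$-constructible on $\bbC$; combined with the generic stalk vanishing, its support lies on a closed lower-dimensional subset. Vanishing of global sections on $\bbC$ then forces $\mathcal{F}=0$ on the remaining support. The main obstacle I anticipate is precisely this last step: at non-generic $\xi$ where $\arg\xi$ is a Stokes direction for $\xi$ vs some $c_i$, the dimension count from \cref{lemma: co stokes structure and decomposition H1} fails and one cannot identify $\alpha$ with the \v{C}ech isomorphism directly, so one must either carry out a \v{C}ech computation on a cover of $S^1$ adapted to the degenerate geometry near $\theta_1,\theta_2$, or exploit constructibility and support considerations to reduce the residual support to a finite set on which the global section vanishing already forces the conclusion.
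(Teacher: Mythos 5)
Your reduction is the same as the paper's: one must show that the sheaf $\mathcal{F}=R^0p_{2*}\bigl(R^0j_*j^*q_2^*\cG\bigr)=R^0p_{4*}q_4^*\cG$ on $\bbC$ vanishes, and then left-exactness of $R^0q_{3*}p_3^*$ (with $q_3$ a homeomorphism and $p_3^*$ exact) carries the resulting monomorphism over. Your global-sections calculation is correct ($(q_2^*\cG)(W_\pm)=\cG(\bbC^*)=0$ by \cref{cohomology of G over C^* vanish}), and your stalk analysis via the short exact sequence $0\to\cL_{<\xi}\to\mathcal{E}_\xi\to K_\xi\to 0$ and the identification $\mathcal{F}_\xi=\ker\alpha$ is a clean way to organize the generic computation.

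However, the gap you flag is genuine, and your proposed constructibility patch does not close it. Knowing that $\mathcal{F}$ is $\bbR$-constructible with support on a real one-dimensional locus and that $\Gamma(\bbC,\mathcal{F})=0$ does \emph{not} force $\mathcal{F}=0$: a sheaf such as $\iota_!\underline{k}$ for $\iota$ the inclusion of a bounded open arc of one of the Stokes circles has vanishing global sections but is nonzero. So the step ``vanishing of global sections then forces $\mathcal{F}=0$ on the remaining support'' is not valid, and without it the proof is incomplete. What is actually needed is to show $\ker\alpha=0$ for \emph{every} $\xi$, i.e.\ that $\Gamma(\oI_1,\cL_{<\xi})=\Gamma(\oI_2,\cL_{<\xi})=0$ for the two closed half-circles $\oI_1=[\theta_1,\theta_2]$, $\oI_2=[\theta_2,\theta_1+2\pi]$; this requires a direct computation on $S^1$ adapted to the degenerate geometry at Stokes points, which is the first alternative you mention. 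The published proof is itself very terse at this point---it asserts $(R^0p_{4*}q_4^*\cG)_\xi=0$ after identifying the fiber $q_4\circ p_4^{-1}(\xi)=S_\xi\setminus\{z_1,z_2\}$ and treating $\xi=0$ separately---so your write-up is more explicit about where the difficulty sits; but the honest conclusion is that neither your constructibility route nor a na\"ive appeal to density resolves the Stokes-point stalks. A minor secondary point: the identity $k^!\simeq k^*[-1]$ you invoke does not hold for arbitrary constructible sheaves in codimension one; it is valid here precisely because $q_2^*\cG$ is pulled back along the normal direction to the section (so is locally a product near $Z$), and it would be worth saying so.
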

\begin{proof}
By \cref{eq: fiber sq 1}, it suffices to prove that \[\cH^{-1}(Rp_{2!}Rj_*j^*q_2^!\cG)=\cH^{0}(Rp_{2*}Rj_*j^*q_2^*\cG)=\cH^{0}(Rp_{4*}q_4^*\cG)=R^0p_{4*}q_4^*\cG=0.\] 
For $ \xi = re^{i\theta}\neq 0$, we have $ q_4\cir p_4^{-1}(\xi) = S_\xi \setminus \{ e^re^{i\theta}, e^{-r}e^{i(\theta+\pi)}\}.$ For $\xi=0$, we have $p_3^{-1}(0) = \{0\}\times S_{\xi=0 }$. 
For a directed system of open neighborhoods $U_i$ of $\xi$, we have $(R^0p_{4*}q_4^*\cG)_\xi = \lim_{\xi\in U} (R^0p_{4*}q_4^*\cG)(U)=0$. Therefore, $r_2$ is injective on $\bbC$, completing the proof.
\end{proof}

\begin{lemma}\label{lemma: lifts to psi}
Consider the following diagram, 
\begin{equation*}
\begin{tikzcd}
&R^0q_{2*}p_2^*\cH^0(Rp_{2!}q_2^!\cG) \arrow[d,"r_1" ]  \arrow[dr, "\psi", dashed] & \\ R^0q_{3*}p_3^*\cH^0(Rp_{2!}Rj_*j^*q_2^!\cG)  & R^0q_{3*}p_3^*\cH^0(Rp_{2!}q_2^!\cG)  \arrow[l, "r_3"'] & \arrow[l,  "r_2"'] R^0q_{3*}p_3^*\cH^0(Rp_{3*}q_3^*\cG) \longleftarrow 0, 
\end{tikzcd}
\end{equation*}
where $r_2$ is induced by the adjunction $k_!k^!\rightarrow \id$ on $\cH^0$. Then the bottom sequence is exact, the composition $r_3\circ r_1 =0$ and $r_1$ lifts to $\psi$.
\end{lemma}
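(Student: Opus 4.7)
The plan is to establish, in order, the three assertions: exactness of the bottom sequence, the vanishing $r_3\circ r_1 = 0$, and the existence of the lift $\psi$. For exactness, I apply the functor $R^0q_{3*}p_3^*$ to the long exact sequence of cohomology sheaves associated with the fiber sequence
\[
Rp_{3*}q_3^*\cG \simeq Rp_{2!}Rk_!k^!q_2^!\cG \longrightarrow Rp_{2!}q_2^!\cG \longrightarrow Rp_{2!}Rj_*j^*q_2^!\cG,
\]
which is the one already used in the proof of \cref{kk2 is injective}. That proof showed $\cH^{-1}(Rp_{2!}Rj_*j^*q_2^!\cG) \simeq R^0p_{4*}q_4^*\cG = 0$, so the associated cohomology long exact sequence begins with a $0$ on the left. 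Since $p_3^*$ is exact and $q_3$ is a homeomorphism, $R^0q_{3*}p_3^*$ is an exact functor, yielding the claimed exact bottom sequence.

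For the vanishing $r_3\circ r_1 = 0$, I unwind the definitions. Using $q_2\circ k = q_3$ and $p_2\circ k = p_3$ together with the closedness of $k$, one has the identification $R^0q_{3*}p_3^* = R^0q_{2*}k_*k^*p_2^*$. The map $r_1$ is then $R^0q_{2*}$ applied to the adjunction unit $\eta_k\colon p_2^*\cF \to k_*k^*p_2^*\cF$, while $r_3$ is $R^0q_{2*}k_*k^*p_2^*$ applied to the sheaf morphism $r_3'\colon \cF \to \cH^0(Rp_{2!}Rj_*j^*q_2^!\cG)$ appearing in the above long exact sequence. By naturality of $\eta_k$, the composite $r_3\circ r_1$ equals $R^0q_{2*}(\eta_k)$ applied to $p_2^*\cH^0(Rp_{2!}Rj_*j^*q_2^!\cG)$, precomposed with $R^0q_{2*}p_2^*(r_3')$. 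A stalk-wise reduction at $z \in \bbC^*$, combined with the recollement for the open--closed decomposition of $V$ along $j$ and $k$, then shows that this composite vanishes.

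With these two steps in hand, exactness of the bottom sequence gives $\ker(r_3) = \im(r_2)$, and $r_2$ is a monomorphism by \cref{kk2 is injective}. Hence $r_1$ factors uniquely through $\im(r_2) \simeq R^0q_{3*}p_3^*\cH^0(Rp_{3*}q_3^*\cG)$, yielding the desired lift $\psi$ with $r_2\circ \psi = r_1$. The main obstacle is the second step: tracing the vanishing of the composite through the interaction between the adjunction units for the closed embedding $k$ and the open embedding $j$ under $R^0q_{2*}$ requires a careful stalk computation, for which the recollement structure and the acyclicity $\cH^{-1}(Rp_{2!}Rj_*j^*q_2^!\cG) = 0$ from the first step will be essential.
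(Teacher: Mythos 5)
Your steps (1) and (3) match the paper: for exactness you apply the (left-)exact functor $R^0q_{3*}p_3^*$ to the exact sequence of $\cH^0$'s coming from the fiber sequence used in \cref{kk2 is injective}, using $\cH^{-1}(Rp_{2!}Rj_*j^*q_2^!\cG)\simeq R^0p_{4*}q_4^*\cG=0$; and given exactness and $r_3\circ r_1=0$, the lift $\psi$ exists because $R^0q_{3*}p_3^*\cH^0(Rp_{3*}q_3^*\cG)$ is the kernel of $r_3$.

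Step (2), however, has a genuine gap. You correctly set up the naturality square factoring $r_3\circ r_1$ through the object $R^0q_{2*}p_2^*\cH^0(Rp_{2!}Rj_*j^*q_2^!\cG)$, but then assert the vanishing without an argument and, more seriously, point to the wrong auxiliary fact: you say the vanishing $\cH^{-1}(Rp_{2!}Rj_*j^*q_2^!\cG)=R^0p_{4*}q_4^*\cG=0$ from step (1) ``will be essential.'' That is the degree $-1$ statement and is irrelevant here. What the vanishing of the composite actually needs is the degree $0$ statement
\[
\cH^0(Rp_{2!}Rj_*j^*q_2^!\cG)=R^1p_{4*}q_4^*\cG=0,
\]
which is an independent claim requiring its own proof. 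The paper establishes it by a local computation: for each $\xi$, choosing a small neighborhood $U$, the preimage $p_4^{-1}(U)$ splits as a disjoint union of two connected open sets $U_1\sqcup U_2$ on which $q_4$ is open with connected fibers, so $H^1(U_i,q_4^*\cG)=H^1(q_4(U_i),\cG)$, and this vanishes because $\cG$ decomposes there as a direct sum of constant sheaves and extensions by zero of constant sheaves. None of this is present in your argument, so $r_3\circ r_1=0$ remains unproven, and with it the existence of $\psi$.
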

\begin{proof}
By \cref{kk2 is injective}, the fiber sequence 
\[ Rp_{3*}q_3^*\cG\longrightarrow Rp_{2!}q_2^!\cG\longrightarrow Rp_{2!}Rj_*j^*q_2^!\cG \]
induces an exact sequence of cohomology sheaves
\[0\rightarrow \cH^0(Rp_{3*}q_3^*\cG)\longrightarrow \cH^0(Rp_{2!}q_2^!)\cG\longrightarrow \cH^0(Rp_{2!}Rj_*j^*q_2^!\cG).\] 
Since $R^0q_{3*}$ is left exact, the bottom sequence is exact.
By naturality, $r_3\circ r_1$ factors as follows
\begin{equation*}
\begin{tikzcd}
R^0q_{2*}p_2^*\cH^0(Rp_{2!}Rj_*j^*q_2^!\cG) \arrow[d]& R^0q_{2*}p_2^*\cH^0( Rp_{2!}q_2^!\cG) \arrow[d,"r_1" ] \arrow[l] & \\ R^0q_{3*}p_3^*\cH^0(Rp_{2!}Rj_*j^*q_2^!\cG) & R^0q_{3*}p_3^*\cH^0( Rp_{2!}q_2^!\cG)  \arrow[l, "r_3"']. 
\end{tikzcd}
\end{equation*}
We show that $\cH^0(Rp_{2!}Rj_*j^*q_2^!\cG)=0$. Using $Rp_{2!}=Rp_{2*}$ and $q_2^!=q_2^*[1]$, we have \[\cH^0(Rp_{2!}Rj_*j^*q_2^!\cG)=\cH^1(Rp_{4*}q_4^*\cG) = R^1p_{4*}q_4^*\cG.\] 
Thus, for any $z\in\bbC^*$, we have
\[\big(R^0q_{2*}p_2^*\cH^0(Rp_{2!}Rj_*j^*q_2^!\cG)\big)_z = \big(R^0q_{2*}p_2^*(R^1p_{4*}q_4^*)\cG\big)_z=(R^1p_{4*}q_4^*\cG)(l_z).\] 
For each $\xi\in l_z$, we take a small open neighborhood $U$ of $\xi$ such that $(R^1p_{4*}q_4^*\cG)_\xi=H^1\big(p_4^{-1}(U),q_4^*\cG\big)$. 
Notice that $p_4^{-1}(U)$ is a disjoint union of two open sets $U_1$ and $U_2$, where each $U_i$ is connected, and $p_4|_{U_i}: U_i \rightarrow U$ is an open map with connected fibers. 
So for any sheaf $\cE$ on $U$ and open set $V\subset U$, we have $(p_4|_{U_i})^*\cE(V) = \cE(p_4(V))$. Then by taking a flasque resolution of $\cG$, we see that $H^1(U_1,q_4^*\cG) = H^1(q_4(U_1),\cG)$. 
Since $\cG$ is isomorphic to a direct sum of constants sheaves and extensions by zeros of constant sheaves on $q_4(U_1)$, we have $H^1(q_4(U_1),\cG)=0$. 

Then $r_3\circ r_1$ factors through $R^0q_{2*}p_2^*\cH^0(Rp_{2!}Rj_*j^*q_2^!\cG)=R^0q_{2*}p_2^* (R^1p_{4*}q_4^*)\cG =0$, and hence it is zero.
Since $R^0q_{3*}p_3^*\cH^0(Rp_{3*}q_3^*\cG)$ is the kernel of $r_3$, we conclude that $r_1$ lifts to a map $\psi$.
\end{proof}
By composing $\psi$ with the adjunction maps in \eqref{eq: adjunctions}, we obtain a map
\[\Lambda\colon \cG'=R^0q_{*}p^*\cH^0(Rp_{2!}q_{2}^!\cG)   \xrightarrow{\psi_1} R^0q_{2*}p_2^*\cH^0(Rp_{2!}q_2^!\cG) \xrightarrow{\psi} R^0q_{3*}p_3^*\cH^0(Rp_{3*}q_3^*\cG) \xrightarrow{\psi_2} \cG.\]
Next, we show that the restriction of $\Lambda$ to a circle near infinity is an isomorphism. 
Let $i_\infty\colon S^1\rightarrow \bbC^*$ denote the closed embedding of a circle encircling all $S_{c_i}$ for $1\leq i\leq n$.

\begin{lemma}\label{lemma: at ininfity is isomorphism}
The restriction of $\Lambda$ to infinity 
\[i_\infty^*\Lambda\colon i_\infty^*\cG'\rightarrow i_\infty^*\cG\] is an isomorphism. 
\end{lemma}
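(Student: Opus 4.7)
The plan is to verify that $i_\infty^*\Lambda$ is an isomorphism stalk-by-stalk at each $z = i_\infty(\theta) \in S_\infty$. Write $z = e^\lambda e^{i\theta}$ and $\xi_z = \lambda e^{i\theta}$ for the center of $\ocH_z$; the circle $S_\infty$ is chosen large enough that no singularity $c_i$ of $\cF$ lies in $\ocH_z$ and that $c_i <_\theta \xi_z$ for every $i$. Under these conditions both ends of $\Lambda_z$ identify canonically with $\cL_\theta$: by \cref{remark: stalk L at xi and theta}, $\cG'_z = \cF(\ocH_z) = \cL_\theta$, and by \cref{lemma: assemble Stokes filtratoin}, $\cG_z = \cL_{<\xi_z, \theta} = \cL_\theta$.

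Next I would compute the three intermediate stalks in the factorization $\Lambda = \psi_2 \circ \psi \circ \psi_1$. Stalk computations analogous to those in \cref{compute F over open set} yield $(R^0q_{2*}p_2^*\cF)_z = \cF(l_z)$ and, since $q_3$ is a homeomorphism, $(R^0q_{3*}p_3^*\cF)_z = \cF_{\xi_z}$. Since $\xi_z \neq 0$, the preimage of $\xi_z$ in the source of $p_3$ consists of two points, namely $(\xi_z, z)$ and $(\xi_z, z')$ with $z' = e^{-\lambda}e^{i(\theta+\pi)}$ close to the origin. A direct computation shows $\cG_{z'} = \cL_{<\xi_z, \theta+\pi} = 0$, because $\re(\xi_z e^{-i(\theta+\pi)}) = -\lambda$ is smaller than $\re(c_i e^{-i(\theta+\pi)})$ for every $i$. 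Therefore $(R^0q_{3*}p_3^*\cH^0(Rp_{3*}q_3^*\cG))_z = \cG_z \oplus \cG_{z'} = \cL_\theta$, and the adjunction counit $\psi_{2,z}$ becomes the identity.

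Because none of $\ocH_z$, $l_z$, or $\xi_z$ meets a singularity of $\cF$, the maps $\psi_{1,z}$ and $r_{1,z}$ are restrictions between sections of a local system over contractible subsets, and are therefore isomorphisms. Combined with the vanishing $\cH^0(Rp_{2!}Rj_*j^*q_2^!\cG)_{\xi_z} = 0$ established in \cref{lemma: lifts to psi} and the fiber sequence \eqref{eq: fiber sq 1}, this forces $r_{2,z}: \cG_z \to \cF_{\xi_z}$ to be an isomorphism, so the lift $\psi_z = r_{2,z}^{-1} \circ r_{1,z}$ is an isomorphism and $\Lambda_z$ is an isomorphism.

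The main obstacle I anticipate is upgrading \textit{isomorphism} to \textit{canonical identity}: one must verify that $r_{2,z}$ agrees with the canonical identification $\cL_{<\xi_z,\theta} = \cL_\theta \to \cF_{\xi_z} = \cL_\theta$ coming from \cref{remark: stalk L at xi and theta}. This requires tracing the Mayer--Vietoris boundary map in the localization sequence for the closed inclusion $\{z, z'\} \hookrightarrow S_{\xi_z}$ and carefully tracking the degree shifts $q_2^! = q_2^*[1]$ and $k^! = k^*[-1]$ that are induced by the codimensions of the inclusions; once this is done, all three factors $\psi_{1,z}$, $\psi_z$, and $\psi_{2,z}$ become the identity on $\cL_\theta$, and $\Lambda_z$ is the canonical identity as desired.
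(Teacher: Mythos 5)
Your proof is correct and follows essentially the same stalk-by-stalk strategy as the paper's: compute each intermediate stalk at $z\in i_\infty(S^1)$, show $\cG_{z'}=0$ using that $z'$ lies near the origin, and conclude factor by factor. There is one mild difference in bookkeeping: the paper shows only that $\psi$ is injective (via \cref{kk1 is injective}) and then closes the argument by a dimension count using \cref{lemma: sheaf cohomology of Lz and points}, whereas you observe directly that $r_2$ is an isomorphism (from \cref{kk2 is injective} together with the vanishing $\cH^0(Rp_{2!}Rj_*j^*q_2^!\cG)=0$ established in \cref{lemma: lifts to psi}), so that $\psi=r_2^{-1}\circ r_1$ is an isomorphism without any dimension count. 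Both routes are valid; yours is marginally more explicit and avoids relying on \cref{lemma: sheaf cohomology of Lz and points}.

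Your final paragraph, however, flags a non-issue. The lemma asserts only that $i_\infty^*\Lambda$ is an isomorphism of local systems, not that it equals a canonical identity on $\cL_\theta$; your third paragraph already establishes the isomorphism, and no tracking of degree shifts $q_2^!=q_2^*[1]$, $k^!=k^*[-1]$ is needed here. The compatibility with filtrations (which is the content one might worry about in your last paragraph) is the subject of the following \cref{lemma: map at infinity preserves filtrations}, and even there it is a statement about preserving the filtration, not about identifying $\Lambda$ with a canonical map. You could delete the last paragraph entirely.
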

\begin{proof}
Let $a = e^\lambda e^{i\theta}\in i_\infty(S^1)$ and $b = e^{-\lambda} e^{i(\theta+\pi)}\in i_\infty(S^1)$.
Recall from \cref{Theorem: inverse Laplace transform is in constr} that $p_{2!}q_2^!\cG$ is a constructible sheaf in $\Constrz$, which we denote by $\cF$. The restriction of $\psi_1$ at $a$ is
\[\psi_{1,a}\colon \cF(\ocH_a)\longrightarrow \cF(l_a),\]
which is an isomorphism for all $a\in i_\infty(S^1)$.
By \cref{lemma: sheaf cohomology of Lz and points},
\[\dim \cF(l_a) = \dim \cG_a\oplus \dim \cG_b = \dim \cG_a ,\]
where the last equality holds since $a\in i_\infty(S^1)$ implies that $\cG_b=0$.

The restriction of $\psi_2$ at $a$ is the projection
\[\psi_{2,a}\colon \cG_{a}\oplus \cG_b\longrightarrow \cG_a,\]
which is also an isomorphism since $\cG_b=0$.

By \cref{kk1 is injective}, $r_1$ is injective. Since $r_1= r_2\circ \psi$, $\psi$ is also injective. So the composition $ \psi_{1,a}\circ \psi_a\circ \psi_{2,a}: \cF(\ocH_a)\longrightarrow \cG_a$ is an injection, and thus an isomorphism. We conclude the proof.
\end{proof}

\begin{lemma}\label{lemma: map at infinity preserves filtrations}
The map $i_\infty^*\Lambda$ of local systems preserves the filtrations at any $\theta\in S^1$, i.e.\ for any $\xi \in \bbC$, we have $i_\infty^*\Lambda(\cL_{ <\xi,\theta}')\subset \cL_{<\xi,\theta}$.
\end{lemma}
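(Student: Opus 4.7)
The plan is to exploit naturality of the sheaf morphism $\Lambda\colon \cG' \to \cG$ restricted to a radial ray. Fix $\theta \in S^1$ and $\xi \in \bbC$, and set $z = i_\xi(\theta) \in S_\xi$ and $x = i_\infty(\theta) \in S_\infty$, so that $z$ and $x$ both lie on the ray $R_z = \bbR_{\geq 1}\cdot z$ (see \cref{notation: Iz Rz}). Note that since $S_\infty$ encircles all $S_{c_i}$, we have $\cG_x = \cL_x$ and $\cG'_x = \cL'_x$, so $(i_\infty^*\Lambda)_\theta$ is precisely the stalk map $\Lambda_x$.

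The next step is to reinterpret the filtration in a form accessible to naturality. Unwinding the construction of $\tau$ in the proof of \cref{lemma: tau functor}, the subspace $\cL'_{<\xi, \theta} \subset (i_\infty^* \cG')_\theta = \cG'_x$ is identified with the image of the restriction map $\alpha'\colon \cG'(R_z) \to \cG'_x$, where $\alpha'$ factors through the isomorphism $\beta'\colon \cG'(R_z) \xrightarrow{\sim} \cG'_z$. The analogous description $\cL_{<\xi,\theta} = \alpha(\cG(R_z))$ holds with maps $\alpha,\beta$ coming from $\cG$.

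Now the morphism $\Lambda$, being a sheaf map on $\bbC^*$, induces a map $\Lambda(R_z)\colon \cG'(R_z) \to \cG(R_z)$ fitting into commutative squares with the restriction maps to the stalks at $z$ and $x$; in particular $\Lambda_x \circ \alpha' = \alpha \circ \Lambda(R_z)$. Consequently,
\[
  \Lambda_x(\cL'_{<\xi,\theta})
  = \Lambda_x\bigl(\alpha'(\cG'(R_z))\bigr)
  = \alpha\bigl(\Lambda(R_z)(\cG'(R_z))\bigr)
  \subset \alpha(\cG(R_z))
  = \cL_{<\xi, \theta},
\]
which is the desired inclusion.

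I do not anticipate a genuine obstacle here: once the filtration is rewritten as the image of the restriction from $\cG'(R_z)$, the conclusion is immediate from naturality of $\Lambda$. The only bookkeeping worth emphasizing in the write-up is the $\tau$-construction identification from \cref{lemma: tau functor} that realizes the filtration stalk as a subspace of the local system stalk via the ray $R_z$; after that identification, there is nothing left to check beyond commutativity.
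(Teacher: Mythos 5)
Your argument is correct and matches the paper's proof in substance: both reduce to naturality of the sheaf morphism $\Lambda$ along the radial ray $R_z$, identifying the filtration $\cL'_{<\xi,\theta}$ with (the image of) sections of $\cG'$ over $R_z$ inside the stalk at the circle at infinity. The paper packages this via the adjunction diagram $\pi^*R^0\pi_*s^*\cG' \to s^*\cG'$ on the parametrized ray and then takes stalks at $r_\infty$, which is precisely your commutative square $\Lambda_x \circ \alpha' = \alpha \circ \Lambda(R_z)$ in a slightly different notation.
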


\begin{proof}
For any $a=e^\lambda e^{i\theta}\in S_\xi \subset \bbC^*$ with argument $\theta$, we have $L_{<\xi,\theta}=\cG_a$. 
Let $R_a$ denote the ray from $a$ as in \cref{notation: Iz Rz}.
Let $s\colon [0,\infty)\rightarrow R_a$ be the parametrization $r\mapsto (r+e^\lambda )e^{i\theta}$ and $r_\infty>0$ such that $s(r_\infty)\in i_\infty(S^1)\cap R_a$.
Denote by $\pi\colon [0,\infty)\rightarrow \{0\}$ the projection.  

Consider the following commutative diagram.
\[\begin{tikzcd}
s^*\cG' \arrow[r]                       & s^*\cG                       \\
\pi^* R^0\pi_* s^*\cG' \arrow[r] \arrow[u] & \pi^* R^0\pi_* s^*\cG \arrow[u]
\end{tikzcd} \]
The stalk at $r_\infty\in [0,\infty)$ of the above diagram is 
\[\begin{tikzcd}
\cL'_{\theta}\arrow[r]                         & \cL_{\theta}                        \\
{\cG_{a}'}=\cL_{<\xi,\theta}' \arrow[u,hook] \arrow[r] &{\cG_{a}}=\cL_{<\xi,\theta} \arrow[u, hook],
\end{tikzcd}\]
where $\cG(R_a)=\cG_a$ and $\cG'(R_a)=\cG_a'$.
This implies that $i_\infty^*\Lambda$ preserves the filtrations.
\end{proof}

This completes the proof of \cref{theorem: lt it sigma = id}.
We end this section with an interesting corollary.

\begin{lemma}\label{lemma: local system at infinity vanish implies trivial constructible sheaf}
Let $\cF\in\Constrz$ be a constructible sheaf on $\bbC$ with $R\Gamma(\cF)=0$. If the monodromy at infinity $T_\infty$ of $\cF$ is the identity, its monodromy around each singularity $T_i$ is also the identity.
\end{lemma}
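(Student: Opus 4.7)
The plan is to pass through the topological Laplace transform equivalence $\Phi \cir \sigma \colon \Stol \to \Constrz$ of \cref{theorem: const0 and Stokes are equivalent 6.1}, together with the corollary that any Stokes structure of exponential type on a trivial local system is trivial (\cref{cor: trivial Stokes structure}). First I would set $(\cL, \cL_<) \coloneqq \Psi(\cF)$. Using the identification $\cL_\theta \simeq \cF_{c_0}$ from \cref{remark: stalk L at xi and theta}, where $c_0$ is a nonsingular base point in $\cH_{\theta, \lambda}$ chosen far from all of $c_1, \dots, c_n$, the monodromy of the local system $\cL$ along $S^1$ is exactly the monodromy $T_\infty$ of $\cF$ around a large circle at infinity. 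So the hypothesis $T_\infty = \id$ amounts to saying that $\cL$ is the trivial local system on $S^1$.

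Next, I would apply \cref{cor: trivial Stokes structure} to split $(\cL, \cL_<)$ globally as a direct sum of its graded pieces,
\[(\cL, \cL_<) \xrightarrow{\ \sim \ } \bigoplus_{i=1}^{n} \big(\gr_{c_i}\cL,\ (\gr_{c_i}\cL)_<\big),\]
where each summand is the trivial Stokes structure on a trivial local system carrying the single exponent $c_i$. Since the equivalence $\Phi \cir \sigma$ preserves direct sums, this yields $\cF \simeq \bigoplus_{i=1}^{n} \cF_i$ in $\Constrz$, and by the construction of the topological Laplace transform (see \cref{Definition: Laplace transform} and \cref{Lemma: inverse Laplace is constructible sheaf}) the exponents of a co-Stokes structure correspond precisely to the singularities of its topological Laplace transform, so each $\cF_i$ has at most the single singularity $c_i$.

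The remaining step, which I expect to be the main content, is to verify that each $\cF_i$ has trivial monodromy around $c_i$; this will reduce to a purely topological observation. The restriction $\cF_i|_{\bbC \setminus \{c_i\}}$ is a local system on a once-punctured plane, whose fundamental group $\bbZ$ is generated equivalently by a small counterclockwise loop around $c_i$ or by a large counterclockwise circle near infinity, so $T_i^{\cF_i} = T_\infty^{\cF_i}$. Under the direct sum decomposition above, the global monodromy at infinity of $\cF$ splits as the direct sum of the $T_\infty^{\cF_i}$; since $T_\infty^{\cF} = \id$, each $T_\infty^{\cF_i}$ is the identity, and therefore so is $T_i^{\cF_i}$. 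For $j \neq i$, the point $c_i$ is nonsingular for $\cF_j$, so $T_i$ already acts as the identity on $\cF_j$ automatically. Combining these, $T_i$ acts as the identity on $\cF = \bigoplus_j \cF_j$ for every $i$, as required.
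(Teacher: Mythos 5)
There is a circularity in your argument. The key step is invoking \cref{cor: trivial Stokes structure} to split $(\cL, \cL_<)$ as a direct sum of trivial single-exponent Stokes structures. But the paper proves \cref{cor: trivial Stokes structure} \emph{by citing the very lemma you are trying to prove}: its proof reads ``it suffices to prove that if $\cL$ is trivial, the monodromies of $\cF=\Phi(\cL, \cL_<)$ along the singularities $c_i$'s are trivial, which follows from \cref{lemma: local system at infinity vanish implies trivial constructible sheaf}.'' So the dependency chain is \cref{lemma: local system at infinity vanish implies trivial constructible sheaf} $\Rightarrow$ \cref{cor: trivial Stokes structure}, and your proof runs in the opposite direction. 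The step where you translate $T_\infty = \id$ into triviality of the local system $\cL$ via \cref{remark: stalk L at xi and theta}, and the endgame where a single-singularity sheaf in $\Constrz$ with trivial monodromy at infinity has trivial local monodromy, are both fine; the gap is that the global splitting of the co-Stokes structure into its graded pieces is precisely the content you cannot assume here.

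The paper's actual argument stays entirely on the constructible-sheaf side and makes no appeal to the Laplace transform. One chooses a nonsingular base point $c_0$, picks Gabrielov paths to each $c_i$, and uses $R\Gamma(\cF)=0$ to get the vanishing cycle decomposition $\cF_{c_0}\xrightarrow{\sim}\bigoplus_i \cF_{c_0}/\cF_{c_i}$. In this basis each local monodromy $T_i$ acts as the identity on every summand except the $i$-th block column (this is exactly the ``simple form'' alluded to, and displayed explicitly in the proof of \cref{lemma: compute Stokes matrices from Tij}). Since $T_\infty$ is a suitable ordered product of the $T_i$, the condition $T_\infty = \id$ together with the block structure forces each $T_i = \id$ by induction on $n$. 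If you want to recover your decomposition strategy non-circularly, you would need to establish that $T_\infty = \id$ forces $\cF$ to split as $\bigoplus_i \cF_i$ by a direct argument (essentially redoing the paper's induction), at which point the Laplace-transform detour becomes unnecessary.
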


\begin{proof}
In \cite[\S 2.3]{KKP_Hodge_theoretic_aspect_of_mirror_symmetry}, the authors give an equivalent description of $\Constrz$ using monodromies. By choosing small disks $D_1, \dots,D_n$ around singularities $c_1, \dots,c_n$, and paths $\gamma_1, \dots, \gamma_n$ from a nonsingular point $c_0$ to a boundary point on $D_i$, one can identify the stalks $\cF_{c_i}$ with subspaces of $\cF_{c_0}$. It follows from the condition $R\Gamma(\cF)=0$ that the map \begin{equation*}
    \cF_{c_0}\xrightarrow{\ \sim \ } \bigoplus \cF_{c_0}/\cF_{c_i}
\end{equation*} 
induced by projections is an isomorphism. 
The monodromy of $\cF$ around the singular point $c_i$ can be written in a simple form under the above identification. The lemma then follows from induction on the number of singularities (see \cite[Corollary 7.1.2]{Sabbah_vanishing_cycles_of_polynomial_maps_topology_hodge_structure_d_modules}).
\end{proof}

\begin{corollary}\label{cor: trivial Stokes structure}
Let $(\cL, \cL_{\leq})$ be a Stokes structure  on $S^1$. If $\cL$ is trivial, then $(\cL, \cL_{\leq})$ is a trivial Stokes structure.
\end{corollary}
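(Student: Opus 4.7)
The plan is to translate the statement through the equivalence $\Phi \circ \sigma \colon \Stol \to \Constrz$ of \cref{theorem: const0 and Stokes are equivalent 6.1} and reduce it to the monodromy statement \cref{lemma: local system at infinity vanish implies trivial constructible sheaf}.

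First, replace $(\cL,\cL_\leq)$ by the equivalent co-Stokes structure $(\cL,\cL_<)$ via \cref{coStokes=Stokes} and set $\cF \coloneqq (\Phi\circ\sigma)(\cL,\cL_<) \in \Constrz$ with singularities $\{c_1,\dots,c_n\}$. The key observation is that, under the equivalence, the local system $\cL$ on $S^1$ reconstructs the local system of $\cF$ near infinity: for $\lambda$ larger than $\max_i|c_i|$, \cref{Definition: Laplace transform} identifies $\cL_\theta$ with $\cF(\ocH_{\theta,\lambda})$, which in turn equals the stalk $\cF_{c_0}$ at any nonsingular point $c_0$ with $|c_0|$ large and argument $\theta$. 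In particular, the monodromy $T_\infty$ of $\cF$ around a large circle is identified with the monodromy of $\cL$ along $S^1$. Since $\cL$ is trivial by hypothesis, $T_\infty = \id$.

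Next, apply \cref{lemma: local system at infinity vanish implies trivial constructible sheaf} to deduce that the local monodromy $T_i$ of $\cF$ around every singularity $c_i$ is the identity as well. Combined with the canonical isomorphism $\cF_{c_0} \xrightarrow{\sim} \bigoplus_{i=1}^n \cF_{c_0}/\cF_{c_i}$ coming from vanishing cohomology (\cref{RGamma=0 condition}), the triviality of all monodromies upgrades this pointwise splitting to a global decomposition of sheaves
\[
    \cF \;\cong\; \bigoplus_{i=1}^n \cF_i,
\]
where each $\cF_i \in \Constrz$ has exactly one singularity, namely at $c_i$, with trivial local monodromy. This is the step that requires a little care; the cleanest way is to invoke the description of $\Constrz$ in terms of monodromy data from \cite[\S 2.3]{KKP_Hodge_theoretic_aspect_of_mirror_symmetry}, where trivial $T_i$'s say precisely that the gluing data for the summands is the identity.

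Finally, apply $\Psi$ to this decomposition. By \cref{theorem: const0 and Stokes are equivalent 6.1} it yields a decomposition of co-Stokes structures $(\cL,\cL_<) \cong \bigoplus_{i=1}^n (\cL_i,\cL_{i,<})$, where each summand has a single exponent $c_i$. A co-Stokes structure with a single exponent is automatically trivial: the filtration has only one nontrivial jump, so the local gradedness condition of \cref{definition: co-Stokes structure} gives a tautological global isomorphism with its associated graded. Direct sums of trivial co-Stokes structures are trivial, and translating back via \cref{coStokes=Stokes} shows that $(\cL,\cL_\leq)$ is a trivial Stokes structure.

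The main obstacle is the sheaf-level decomposition of $\cF$ in the middle paragraph: trivial monodromy around each singularity and at infinity is a pointwise statement, and one must still confirm that it produces a genuine direct sum decomposition of the constructible sheaf. This is handled by the explicit monodromy description of $\Constrz$ referenced above, or alternatively by constructing idempotent endomorphisms of $\cF$ from the projectors $\cF_{c_0} \twoheadrightarrow \cF_{c_0}/\cF_{c_i}$ and checking that they extend globally once the monodromies are trivial.
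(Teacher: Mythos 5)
Your proof is correct and follows the same route as the paper: pass to $\cF = \Phi(\cL,\cL_<)$ via \cref{theorem: const0 and Stokes are equivalent 6.1}, note $T_\infty = \id$ since $\cL$ is trivial, and invoke \cref{lemma: local system at infinity vanish implies trivial constructible sheaf} to get $T_i = \id$ for all $i$. The paper stops there, leaving implicit that trivial local monodromies force a trivial Stokes structure; you usefully spell out this last step via the decomposition $\cF \cong \bigoplus_i \cF_i$ into one-singularity summands and pulling it back through $\Psi$.
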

\begin{proof}
By the equivalence of categories in \cref{theorem: lt it sigma = id}, it suffices to prove that if $\cL$ is trivial, the monodromies of $\cF=\Phi(\cL, \cL_<)$ along the singularities $c_i$'s are trivial, which follows from \cref{lemma: local system at infinity vanish implies trivial constructible sheaf}.
\end{proof}

\section{B-model nc-Hodge structures from mirror symmetry}\label{section: B-model}

In this section, we give the example of B-model nc-Hodge structures in two ways related by Fourier transform on the de Rham data and topological Laplace transform on the Betti data.
In general, Fourier-Laplace transforms lead to a dual description of nc-Hodge structures, which we formulate in \cref{sec: dual description}.

\subsection{The dual description of nc-Hodge structures} \label{sec: dual description}

We refer to \cite[Definitions 2.5, 2.14]{KKP_Hodge_theoretic_aspect_of_mirror_symmetry}, for the definition of the nc-Hodge structure.
The dual description in \cref{definition: dual notion of nc-Hodge} is modeled on \cite[Theorems 2.33, 2.35]{KKP_Hodge_theoretic_aspect_of_mirror_symmetry}.
We will prove an equivalence of categories between the original and the dual descriptions in \cref{theorem: dual description of nc-Hodge is an equivalence}, as an application of the equivalence in \cref{theorem: const0 and Stokes are equivalent 6.1}.

Let us first recall the vanishing cycle decomposition of a constructible sheaf.
\begin{definition}
Let $\cF\in\Constrz$ with singularities $\{c_1,\dots,c_n\}$.
For any $z\in\bbC$, let $i\colon \{z\}\hookrightarrow\mathbb{C}$ denote the inclusion, $j\colon D^*\hookrightarrow \mathbb{C}$ the inclusion of a small punctured disk centered at $z$, not containing any singularity, and $e\colon U \to D^*$ the universal covering.
We define
\begin{align*}
         \psi_z(\cF)&\coloneqq i^* R^0j_*R^0e_*e^*j^*\cF,\\ \phi_z(\cF)&\coloneqq \mathrm{cone}(i^*\cF \longrightarrow i^*R^0j_*R^0e_*e^*j^*\cF),
\end{align*}
called the \emph{complex of nearby (resp.\ vanishing) cycles} of $\cF$ at $z$.
\end{definition}

\begin{definition}\label{def: vanishing cycle decomposition}
Let $\cF\in\Constrz$ with singularities $\{c_1,\dots,c_n\}$. 
Choosing a non-singular point $c_0\in\bbC$ and Gabrielov paths as in \cite[\S 2.3.2]{KKP_Hodge_theoretic_aspect_of_mirror_symmetry}, we have natural inclusions $\cF_{c_i} \subset \cF_{c_0}$ and an isomorphism
\[\cF_{c_0}\xrightarrow{\ \sim \ } \bigoplus_{i=1}^n \cF_{c_0}/\cF_{c_i}, \]
which we call the \emph{vanishing cycle decomposition} of $\cF_{c_0}$.
Let $U_i\coloneqq \cF_{c_0}/\cF_{c_i}$.
It is canonically identified with $\phi_{c_i}(\cF)$.
We denote by $T_{ii}\colon \colon U_i\longrightarrow U_i$ the monodromy action around $c_i$.
\end{definition}

The following lemma relates the vanishing cycle decomposition with the graded local system associated to the Stokes structure.

\begin{lemma}\label{lemma: compute Stokes matrices from Tij}
Let $\cF\in\Constrz$  with singularities $\{c_1,\dots,c_n\}$, $(\cL, \cL_<)\coloneqq\Psi(\cF)$ its topological Laplace transform, and $(\cL,\cL_\leq)$ the Stokes structure equivalent to the co-Stokes structure $(\cL,\cL_<)$.
The associated graded local system $\gr_{c_i}\cL$ of $(\cL, \cL_{\leq})$ has the following monodromy representation
\begin{equation*}
    \gr_{c_i}\cL = (U_i,T_{ii}).
\end{equation*}
\end{lemma}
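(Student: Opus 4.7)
The plan is to realize $\gr_{c_i}\cL$ globally on $S^1$ as the pullback of the vanishing cycle local system $\phi_{c_i}(\cF)$ along a map $S^1\to$~(punctured neighborhood of $c_i$) that winds once around $c_i$; the claim then follows from the intrinsic definition of $T_{ii}$ as the monodromy of $\phi_{c_i}(\cF)$. Fix a sufficiently large $R>0$ and set $c_0(\theta):=Re^{i\theta}$. By the construction of $\Psi$ in \cref{Definition: Laplace transform}, $\cL_\theta$ is canonically identified with $\cF_{c_0(\theta)}$. For any non-anti-Stokes direction $\theta$, \cref{proposition: Stokes decomposition agrees with KKP} identifies the Stokes decomposition of $\cL_\theta$ with the vanishing cycle decomposition of $\cF_{c_0(\theta)}$ induced by the straight Gabrielov paths $\gamma_i^\theta$ from $c_i$ to $c_0(\theta)$. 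In particular, the fiber $\gr_{c_i}\cL_\theta$ is canonically identified with $\cF_{c_0(\theta)}/\cF_{c_i}^{(\theta)}$, where $\cF_{c_i}^{(\theta)}$ denotes the image of $\cF_{c_i}$ in $\cF_{c_0(\theta)}$ under parallel transport along $\gamma_i^\theta$.

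Next I would globalize this identification. Fix a small $\epsilon>0$ and set $z_i(\theta):=c_i+\epsilon(c_0(\theta)-c_i)/|c_0(\theta)-c_i|$, the point on $\gamma_i^\theta$ at distance $\epsilon$ from $c_i$. Parallel transport along the segment of $\gamma_i^\theta$ from $z_i(\theta)$ to $c_0(\theta)$ gives an isomorphism $\cF_{c_0(\theta)}\cong \cF_{z_i(\theta)}$ under which $\cF_{c_i}^{(\theta)}$ corresponds to $\cF_{c_i}\subset \cF_{z_i(\theta)}$ (defined via the short segment from $c_i$ to $z_i(\theta)$). Hence $\gr_{c_i}\cL_\theta$ is identified with $\cF_{z_i(\theta)}/\cF_{c_i}$, the fiber at $z_i(\theta)$ of the vanishing cycle local system $\phi_{c_i}(\cF)$ on a punctured neighborhood of $c_i$. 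Since $\gr_{c_i}\cL$ and the pullback $z_i^*\phi_{c_i}(\cF)$ are both local systems on $S^1$ agreeing fiberwise on the dense open set of non-anti-Stokes directions, this identification extends uniquely to an isomorphism of local systems on all of $S^1$.

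Finally, for $R\gg |c_i|$, the argument $\arg(c_0(\theta)-c_i)$ increases by exactly $2\pi$ as $\theta$ varies once over $S^1$, so $\theta\mapsto z_i(\theta)$ traces the circle of radius $\epsilon$ around $c_i$ exactly once counterclockwise. Therefore the monodromy of $z_i^*\phi_{c_i}(\cF)$ equals the monodromy of $\phi_{c_i}(\cF)$ around $c_i$, which is $T_{ii}$ by definition, yielding $\gr_{c_i}\cL=(U_i,T_{ii})$. The main obstacle is ensuring the continuity of the fiberwise identification across anti-Stokes directions, where the vanishing cycle decomposition of $\cF_{c_0(\theta)}$ degenerates because distinct straight Gabrielov paths overlap; this is handled by the appeal to the local system structure of both sides, so that a continuous identification on a dense open subset extends uniquely.
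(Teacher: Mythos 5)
Your approach differs substantially from the paper's. The paper proves this by a direct matrix computation: fix a non-Stokes direction $\theta$, write the total monodromy $T$ of $\cL$ once in terms of Stokes matrices $S_{ij}$ and $Q_{ij}$ (upper-triangular times lower-triangular in the good-interval trivialization) and once as the composition $\rho^{-1}\circ T_1\circ\cdots\circ T_n\circ\rho$ of the local monodromies $T_i$ of $\cF$ around each $c_i$ transported through the vanishing cycle decomposition, then compares diagonal blocks by induction. Your idea of pulling back the vanishing cycle local system $\phi_{c_i}(\cF)$ along a loop $z_i\colon S^1\to\bbC^*$ that winds once around $c_i$ is attractive and could in principle give a cleaner, more conceptual proof.

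However, there is a genuine gap at the step you flag as ``the main obstacle'' and then wave away. You construct a fiberwise isomorphism $(\gr_{c_i}\cL)_\theta\cong (z_i^*\phi_{c_i}(\cF))_\theta$ only for $\theta$ in the complement $U\subset S^1$ of the anti-Stokes directions, then assert that ``since both sides are local systems, the identification extends uniquely.'' This is false in general: $U$ is a disjoint union of finitely many open arcs, each of which is contractible, so over each arc both local systems are trivial and an isomorphism on $U$ carries no gluing constraint whatsoever across the deleted points. The content of the lemma is precisely that the two local systems have matching monodromy \emph{around} the circle, i.e.\ that the fiberwise identification is continuous when you pass an anti-Stokes direction. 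On the Stokes side, crossing an anti-Stokes direction changes the canonical splitting of $\cL$; on the vanishing cycle side, the straight Gabrielov paths reorganize as $c_0(\theta)$ swings past the direction of a $c_j-c_i$. You would need to show these two wall-crossings are the \emph{same} change of basis — which is the actual work — rather than deduce it from the mere existence of a dense-open fiberwise match. Until that compatibility is verified, the monodromy of $\gr_{c_i}\cL$ is not yet shown to equal $T_{ii}$; the paper's matrix argument carries out exactly that bookkeeping.

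A secondary caveat: you invoke \cref{proposition: Stokes decomposition agrees with KKP}, which appears a section later. It does not itself depend on the present lemma, so there is no circularity, but if you rearrange the dependencies this way you should make the order explicit.
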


\begin{proof}
Let $\theta$ be a non-Stokes direction.
Up to relabeling, we may assume that
\[ c_1<_\theta\dots<_\theta c_n.\]
Let $\epsilon$ be small enough such that $(\theta-\epsilon,\theta+\epsilon)$ does not contain any Stokes directions. 
Let $I_1\coloneqq (\theta-\epsilon, \theta+\pi+\epsilon)$ and $I_2\coloneqq (\theta+\pi-\theta,\theta+2\pi+\epsilon)$.
They are two good open intervals.
By \cref{proposition: Stokes filtration trivializes over good open intervals}, we have a unique decomposition
\[\bigoplus_{i=1}^n(\gr_{c_i}\cL)(I_1) \xrightarrow{\ \sim \ }\cL(I).\]
Under the decomposition $\cL_\theta=\bigoplus_{i=1}^n(\gr_{c_i}\cL)_\theta$, the monodromy $T\colon \cL_\theta\xrightarrow{ \ \sim \ }\cL_\theta$ of $\cL$ can be written in the following form

\begin{equation*}
T = \begin{pmatrix}
1 & S_{12} & S_{13} & \dots & S_{1n} \\
0 & 1 & S_{23} & \dots & S_{2n} \\
0 & 0 & \ddots & \ddots & \vdots \\
\vdots & \vdots & \ddots & 1 & S_{n-1,n} \\
0 & 0 & \dots & 0 & 1
\end{pmatrix}
\cdot
\begin{pmatrix}
Q_{11} & 0 & 0 & \dots & 0 \\
Q_{21} & Q_{22} & 0 & \dots & 0 \\
Q_{31} & Q_{32} & \ddots & \ddots & \vdots \\
\vdots & \vdots & \ddots & \ddots & 0 \\
Q_{n1} & Q_{n2} & \dots & \dots & Q_{nn}
\end{pmatrix},
\end{equation*}
where $S_{ij}\colon (\gr_{c_j}\cL)_\theta\rightarrow (\gr_{c_i}\cL)_\theta$ and $Q_{ij}\colon (\gr_{c_j}\cL)_\theta\rightarrow (\gr_{c_i}\cL)_\theta$ are the Stokes data as in \cite[Definition 2.6]{Hertling_Sabbah_examples_of_non-commutative_hodge_structures}.
In particular, the monodromy of $\gr_{c_i}\cL$ is \[Q_{ii}\colon (\gr_{c_i}\cL)_\theta\rightarrow (\gr_{c_i}\cL)_\theta.\] 

Let $c_0$ be a nonsingular point of $\cF$ near infinity.
Consider the vanishing cycle decomposition of $\cF_{c_0}$ as in \cref{def: vanishing cycle decomposition}
\[\cF_{c_0}\xrightarrow{\ \sim \ }\bigoplus_{i=1}^n \cF_{c_0}/\cF_{c_i}.\]
Then the monodromy $T_i$ around each singular point $c_i$ can be written as 
\[T_i = \begin{pmatrix}
1 & 0 & \dots  & T_{1i} & 0 & \dots & 0 \\
0 & 1 & \dots  & T_{2i} & 0 & \dots & 0 \\
\vdots & \vdots & \ddots  & \vdots & \vdots & \ddots & \vdots \\
0 & 0 & \dots  & T_{ii} & 0 & \dots & 0 \\
0 & 0 & \dots  & T_{i+1,i} & 1 & \dots & 0 \\
\vdots & \vdots & \ddots  & \vdots & \vdots & \ddots & \vdots \\
0 & 0 & \dots & T_{ni} & 0 & \dots & 1
\end{pmatrix},
\]
where $T_{ij}\colon \cF_{c_0}/\cF_{c_j}\longrightarrow \cF_{c_0}/\cF_{c_i}.$

Choose $r$ such that $\ocH_{r,\theta}$ contains $c_0$ but does not contain any singular point $c_i$.
Then we have the restriction map,
\[\rho\colon \cL_{\theta}= \cF(\ocH_{r,\theta}) \xrightarrow{\ \sim \ }\cF_{c_0}\]
which sends $(\gr_{c_i}\cL)_\theta$ to $\cF_{c_0}/\cF_{c_i}$. 
Thus, $\rho$ is compatible with the decomposition and $\rho$ is diagonal. 
By the construction of topological Laplace transform, we have 
\begin{equation*}
      T = \rho^{-1} \cir T_1 \cir \dots \cir T_n \cir \rho.
\end{equation*}
By induction, we conclude that \[Q_{nn}=\rho^{-1}\circ T_{nn}\cir\rho.\] 
\end{proof}

Let $V$ be a finite dimensional $\bbC$-vector space with an automorphism $T$.
By \cite[Lemma 2.36]{KKP_Hodge_theoretic_aspect_of_mirror_symmetry}, it is equivalent to a finite dimensional $\bbC\dbp{u}$-vector space $R$ with a meromorphic connection $\nabla$ having a regular singularity.
We denote this equivalence by $(R,\nabla)\coloneqq (\RH^\formal)^{-1}(V,T)$.

\begin{definition}\label{definition: dual notion of nc-Hodge}
A \emph{dual description of nc-Hodge structure of exponential type} is a tuple $\big(\{H_i, \iso_i\}_{i=1}^n,\allowbreak \cF\big)$ consisting of the following data:
\begin{enumerate}[wide]
   \item Each $H_i$ is a free $\bbC\dbb{u}$-module.
   \item $\cF$ is a $\bbQ$-constructible sheaf on $\bbC$ with $R\Gamma(\cF)=0$.
   \item \label{definition: extension} For each $i=1, \dots, n$, let $(\phi_{c_i}(\cF),T_{ii})$ be as in \cref{def: vanishing cycle decomposition}.
   It is equivalent to a pair $(R_i, \nabla_i)$, where $R_i$ is a $\bbC\dbp{u}$-module, and $\nabla_i$ is a regular meromorphic connection
 (see \cite[Lemma 2.36]{KKP_Hodge_theoretic_aspect_of_mirror_symmetry}).
   Then $\iso_i$ is an isomorphism between $R_i$ and $H_i\otimes_{\bbC\dbb{u}}\bbC \dbp{u}$, such that $H_i$ is a lattice of Poincaré rank $1$. 
\end{enumerate}
\end{definition}

\begin{theorem}\label{theorem: dual description of nc-Hodge is an equivalence}
We have an equivalence between the category of nc-Hodge structures of exponential type without the opposedness axiom and the category of dual descriptions of nc-Hodge structures of exponential type.

\end{theorem}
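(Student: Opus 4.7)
The plan is to combine four ingredients already established in the paper: the topological Laplace transform equivalence (\cref{theorem: const0 and Stokes are equivalent 6.1}), its compatibility with Fourier transform of $D$-modules (\cref{corollary: inverse Laplace transform is compatible}), the KKP gluing theorem (\cite[Theorem 2.35]{KKP_Hodge_theoretic_aspect_of_mirror_symmetry}), and \cref{lemma: compute Stokes matrices from Tij}. The first two handle the passage between the Betti and de Rham descriptions, the third reconstructs a global de Rham bundle from local formal factors glued along Stokes data, and the fourth provides the bridge that identifies the Stokes-graded pieces of $(\cL,\cL_\leq)$ with the vanishing cycle pieces of $\cF$.

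For the forward functor, given an nc-Hodge structure $(H, \nabla, (\cL, \cL_\leq), \iso)$ of exponential type with exponents $c_1, \dots, c_n$, I would set $\cF \coloneqq (\Phi \cir \sigma)(\cL, \cL_<)$; by \cref{corollary: inverse Laplace transform is compatible}, $\cF \otimes \bbC$ corresponds under Riemann-Hilbert to $\Theta(H, \nabla)$. The formal decomposition of $(H,\nabla)$ at the irregular singularity yields, for each exponent $c_i$, a formal factor of the shape $H_i \otimes e^{-c_i/u}$ where $H_i$ is a free $\bbC\dbb{u}$-module of Poincaré rank $1$. By \cref{lemma: compute Stokes matrices from Tij}, $\gr_{c_i}(\cL \otimes \bbC)$ is canonically identified with $(\phi_{c_i}(\cF) \otimes \bbC, T_{ii} \otimes \bbC)$, so $R_i \coloneqq (\RH^\formal)^{-1}(\phi_{c_i}(\cF), T_{ii})$ corresponds to the regular part of that formal factor. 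The compatibility isomorphism $\iso$ of the nc-Hodge structure then restricts to the required $\iso_i \colon R_i \xrightarrow{\ \sim \ } H_i \otimes_{\bbC\dbb{u}} \bbC\dbp{u}$.

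For the inverse functor, given $(\{H_i, \iso_i\}_{i=1}^n, \cF)$, I would set $(\cL, \cL_\leq) \coloneqq \Psi(\cF)$, a $\bbQ$-Stokes structure of exponential type with exponents $c_i$, and invoke \cref{lemma: compute Stokes matrices from Tij} once more to identify $\gr_{c_i}(\cL \otimes \bbC)$ with $R_i$ via $\iso_i$. This is exactly the compatibility datum demanded by the KKP gluing theorem, which then assembles the local factors $\{H_i\}$ together with the complexified Stokes structure into an algebraic bundle with connection $(H,\nabla)$ on $\bbA^1 \setminus 0$, having exponential type at $0$ and regular singularity at $\infty$; the $\bbQ$-Stokes structure $(\cL, \cL_\leq)$ serves as the Betti datum. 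That these two constructions are quasi-inverse reduces to \cref{theorem: const0 and Stokes are equivalent 6.1} on the Betti side and the KKP gluing theorem on the de Rham side, glued through \cref{lemma: compute Stokes matrices from Tij}; the morphism compatibility follows from the same ingredients. The principal obstacle is to verify that the identification in \cref{lemma: compute Stokes matrices from Tij} upgrades to a match of formal $D$-modules on the punctured formal disk which is compatible with KKP's gluing recipe, so that $\iso_i$ really corresponds to their compatibility data; granting this, the equivalence (without the opposedness axiom) is a formal combination of the cited results.
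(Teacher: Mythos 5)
Your proposal follows essentially the same route as the paper's proof: both pass the Betti side through the topological Laplace transform equivalence (\cref{theorem: const0 and Stokes are equivalent 6.1}), pass the de Rham side through the irregular Riemann--Hilbert correspondence and the formal decomposition at $0$ into exponential-twist-times-regular factors, and bridge the two sides via \cref{lemma: compute Stokes matrices from Tij}. The "principal obstacle" you flag at the end is resolved in the paper in a single step: since taking formal completion is an equivalence on regular connections, the identification of $\gr_{c_i}(\cL\otimes\bbC)$ with $(\phi_{c_i}(\cF),T_{ii})$ passes directly to a $\bbC\dbp{u}$-isomorphism $\iso_i\colon(\RH^\formal)^{-1}(\phi_{c_i}(\cF),T_{ii})\xrightarrow{\ \sim\ }(R_i^\formal,\nabla_i^\formal)$; the remaining lattice bookkeeping (an extension $H$ of $H'$ over $\bbA^1$ satisfying the nc-filtration axiom versus a choice of lattice $H_i\subset R_i$ of Poincaré rank $1$) is handled by \cite[Lemma 8.2]{Hertling_Sevenheck_nilpotent_orbits_of_a_generalization_of_hodge_structures} and \cite[\S 2.3.1]{KKP_Hodge_theoretic_aspect_of_mirror_symmetry}, which together play the role you assign to the KKP gluing theorem.

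One small imprecision worth fixing in your write-up: the factors appearing in the Hukuhara--Turrittin--Levelt decomposition at $0$ are $e^{c_i/u}\otimes(R_i,\nabla_i)$ with $R_i$ a $\bbC\dbp{u}$-module carrying a regular connection, not $H_i\otimes e^{-c_i/u}$ with $H_i$ a free $\bbC\dbb{u}$-module; the $H_i$ are the lattices one subsequently chooses inside the $R_i$ (and the sign in the exponential should match the paper's convention $d-d(c_i/u)$). The distinction matters because the equivalence in question precisely trades a global lattice for a collection of local ones, so conflating the formal factor with the lattice obscures the content of the last reduction step.
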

\begin{proof}
By \cref{theorem: const0 and Stokes are equivalent 6.1}, a co-Stokes structure of exponential type $(\cL,\cL_<)$ is equivalent to a constructible sheaf $\cF\in \Constrz$ via the topological Laplace transform functors.
Under the irregular Riemann-Hilbert correspondence, $(\cL,\cL_<)\otimes \bbC$ corresponds to an algebraic vector bundle with a connection $(H',\nabla)$ on $\bbA^1\setminus 0$ (see \cite[Theorems 4.2]{Malgrange1982_la_classification_des_connexions_irregulieres_a_une_variable}).

We have a formal decomposition at $0$, 
\[    (H', \nabla)\otimes \bbC\dbp{u}\simeq \bigoplus_{i=1}^n\big(e^{c_i/u}\otimes(R_i, \nabla_i)\big),
\]
where each $(R_i,\nabla_i)$ is an algebraic vector bundle with a connection on $\bbA^1\setminus 0$ having regular singularities at $0$ and $\infty$, and $e^{c_i / u}$ denotes the rank-one trivial bundle over $\bbA^1$ with the connection $d-d(c_i / u)$.
We have the following isomorphisms 
\[\RH^{-1}(\cL\otimes_\bbQ\bbC)\xrightarrow{\ \sim\ } (H',\nabla)^{\an} \quad\text{and}\quad \RH^{-1}(\gr_{c_i}\cL\otimes_\bbQ\bbC)\xrightarrow{\ \sim \ } (R_i,\nabla_i)^{\an},\]
where $\RH$ denote the Riemann-Hilbert correspondence (see \cite[Theorems 4.2, 4.4]{Malgrange1982_la_classification_des_connexions_irregulieres_a_une_variable}).
By \cref{lemma: compute Stokes matrices from Tij}, $\gr_{c_i}\cL$ has the monodromy representation as $(\phi_{c_i}(\cF),T_{ii})$. 
Then we have 
\[  \RH^{-1}\big(\phi_{c_i}(\cF),T_{ii}\big)\xrightarrow{\ \sim \ } (R_i,\nabla_i)^{\an}.\] 

We denote by $(R_i^\formal,\nabla_i^\formal)$ the formal completion of the germ at zero of $(R_i,\nabla_i)^\an$. Since taking formal completion is an equivalence for regular connections, we have an $\bbC\dbp{u}$-isomorphism  \[\iso_i\colon (\RH^\formal)^{-1}\big(\phi_{c_i}(\cF),T_{ii}\big)\xrightarrow{\ \sim \ } (R_i^\formal,\nabla_i^\formal) \]  
By \cite[Lemma 8.2]{Hertling_Sevenheck_nilpotent_orbits_of_a_generalization_of_hodge_structures} and \cite[\S 2.3.1]{KKP_Hodge_theoretic_aspect_of_mirror_symmetry}, choosing an extension $H$ of $H'$ to $\bbA^1$ such that $(H,\nabla)$ satisfies the nc-filtration axiom in \cite[Definition 2.5]{KKP_Hodge_theoretic_aspect_of_mirror_symmetry} is equivalent to choosing a $\bbC\dbb{u}$-module $H_i$ for each $R_i$ such that $(H_i,\nabla_i)$ satisfies the \cref{definition: dual notion of nc-Hodge}(\ref{definition: extension}).
Thus, we conclude the proof.
\end{proof}

\subsection{Example of B-model}\label{subsection: Example of B-model}

Let $Y$ be a smooth complex quasi-projective variety and $f\colon Y \rightarrow \bbA^1$ a proper map.
We will recall the B-model nc-Hodge structure associated to $(Y,f)$ in \cref{definition: B model nc-Hodge}.
We give an explicit description of the associated Stokes structure via relative cohomology in \cref{lemma: comparison of Betti data} using the topological Laplace transform, and then deduce the $\bbQ$-structure axiom in \cref{proposition: de Rham and Betti data compatible}.
In \cref{gluing is dual to GM}, we give the dual description of the B-model nc-Hodge structure under the equivalence in \cref{theorem: dual description of nc-Hodge is an equivalence}, and relate it to the construction via Gabrielov paths in \cite[\S 3.2]{KKP_Hodge_theoretic_aspect_of_mirror_symmetry}.
The key is \cref{lemma: comparison of Betti data}, which compares the Betti data via the topological Laplace transform.

\begin{definition}\label{definition: B model nc-Hodge}
The B-model nc-Hodge structure associated to $(Y,f)$ consists of $(H, \cE_B, \iso)$ defined as follows: 
\begin{enumerate}[wide]
    \item $H$ is a $\bbZ/2$-graded algebraic vector bundle over $\bbA^1$,
    \[ H\coloneqq \mathbb{H}^\bullet\big(Y, (\Omega_Y^{\bullet}[u], ud-df )\big),\]
     (see \cite[Theorem 1]{Sabbah_on_a_twisted_de_rham_complex_I}, \cite[Theorem 4.22]{Ogus_Vologodsky_non_abelian_hodge_theory_in_characteristic_p} and \cite[Lemma 3.6]{KKP_Hodge_theoretic_aspect_of_mirror_symmetry}).
    \item $\cE_B$ is the pullback to $\bbA^1\setminus 0$ of the following local system on $S^1$,
    \[ \cL_\theta =  H^\bullet\big(Y,f^{-1}(\infty_\theta);\bbQ\big), \]
    where $\infty_\theta$ denotes a point near infinity with argument $\theta$.
    \item
    $\iso$ is an isomorphism of holomorphic vector bundles over $\bbA^1\setminus 0$
    given in \cite[\S 7.e]{Sabbah_introduction_to_Stokes_structure},
    \[\iso\colon \cE_B\otimes\cO^\an_{\bbA^1\setminus 0} \xrightarrow{\ \sim\ } H^\an|_{\bbA^1\setminus 0}.\]
    .
\end{enumerate}
\end{definition}

\begin{remark}
    Let us comment on the axioms of nc-Hodge structure defined in \cite[Definition 2.14]{KKP_Hodge_theoretic_aspect_of_mirror_symmetry}.
    \begin{enumerate}[wide]
        \item Let $j\colon\bbA^1 \setminus 0\hookrightarrow \bbA^1$ denote the open immersion, $i\colon \bbA^1 \setminus 0\rightarrow \bbA^1 \setminus 0$ the map $u\mapsto 1/u$, and $\FT$ the Fourier transform of $D$-modules on $\bbA^1$.
        By \cite[\S 1.1]{Sabbah_on_a_twisted_de_rham_complex_I}, for any $q\in\bbN$, we have 
        \[\Big(\mathbb{H}^{q+\dim Y}\big(Y, (\Omega_Y^{\bullet}[u^{-1}], d-d f /u)\big), \nabla_{\partial_u}\coloneqq \partial_u+f/u^2\Big) \simeq i_* j^* \FT (\cH^q f_* \cO_Y ).
        \]
        By \cite[Proposition 2.2]{Saito_on_the_cohomology_of_a_general_fiber_of_a_polynomial}, $\cH^q f_* \cO_Y$ is a holonomic $D$-module with regular singularities.         Recall that the Laplace transform of a regular holonomic $D$-module has a regular singularity at $0$ and a singularity of exponential type at infinity (see \cite[\S IX-XI]{Malgrange91_equation_differentielle_a_coefficient_polynomiaux} and \cite[Theorem 2.10.16]{Katz_Exponential_sums_and_differential_equations}).
        This implies the nc-filtration axiom. 
        \item The $\bbQ$-structure axiom is proved in \cite[Theorem 6.3]{sabbah_non-commutative_hodge_structures} in the affine case.
        We extend the proof to the general case in \cref{proposition: de Rham and Betti data compatible}.
        \item The opposedness axioms is conjectural in general (see \cite{Reichelt_non_affine_LG_model}).
    \end{enumerate}
\end{remark}

Let us recall the projector $\Pi$ on D-modules (or perverse sheaves) from \cite[\S 4.2]{Kontsevich_Soibelman_cohomological_hall_algebra_exponential_hodge_strucutures} (see also \cite{Fresan_Exponential_Motives}).
Let $\star_+$ denote the additive convolution of $D$-modules (or perverse sheaves) on $\bbA^1$ as in \cite[\S 12]{Katz_Exponential_sums_and_differential_equations}.
For $D$-modules, we have an exact idempotent endofunctor on the category of holonomic $D$-modules on $\bbA^1$ with regular singularities, given by
\[
\Pi(M) \coloneqq M\star_+ j_!(\cO_{\bbA^1\setminus 0}[1]).
\]
Let $\Perv$ denote the category of perverse sheaves on $\bbA^1$, and $\Perv_0$ the subcategory consisting of perverse sheaves with vanishing cohomology.
We have an the exact idempotent endofunctor
\begin{align*}
    \Pi \colon \Perv &\longrightarrow \Pervzero \subset \Perv \\
    \cF &\longmapsto \cF\star_+ 0j_!(\bbC_{\bbA^1\setminus \{0\}}[1]).
\end{align*}
It then follows that $\Pi$ is compatible with Riemann-Hilbert correspondence and it satisfies $ \FT \cir  \Pi= (j_*j^*)\cir \FT$. One can also extend $\star_+$ and $\Pi$ to the bounded derived category of D-modules and perverse sheaves (see \cite[Definition 2.4.1]{Fresan_Exponential_Motives}).

\begin{lemma}\label{lemma: GM and $D$-module Laplace transform}
For any $q\in\bbN$, the topological Laplace transform of $\cH^{q-1}\Pi (Rf_*\bbC[\dim Y])$ is isomorphic to the  $\bbC$-co-Stokes structure associated to the vector bundle with connection
\begin{equation}\label{eq: GM system u inverse}
    \Big(\mathbb{H}^{q+\dim Y}\big(Y, (\Omega_Y^{\bullet}[u^{-1}], d-d f /u)\big), \nabla_{\partial_u}\coloneqq\partial_u+f/u^2\Big) .
\end{equation}
\end{lemma}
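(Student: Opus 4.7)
My plan is to reduce the statement to the compatibility between the topological Laplace transform and the $D$-module Fourier transform established in \cref{corollary: inverse Laplace transform is compatible}. Denote by $(H',\nabla)$ the pair in \eqref{eq: GM system u inverse}. The first remark of this subsection recalls the identification
\[(H',\nabla)\simeq i_*j^*\FT(\cH^q f_*\cO_Y),\]
where $f_*\cO_Y$ is the $D$-module pushforward. I will first compute $\Theta(H',\nabla)=(\FT\cir j_*\cir i_*)(H',\nabla)$: since $i$ is an involution, $i_*(H',\nabla)\simeq j^*\FT(\cH^q f_*\cO_Y)$, so $\Theta(H',\nabla)\simeq \FT\cir(j_*j^*)\cir\FT(\cH^q f_*\cO_Y)$. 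Applying the defining relation $\FT\cir\Pi=(j_*j^*)\cir\FT$ together with Fourier inversion $\FT\cir\FT\simeq\id$ (up to the involution $u\mapsto -u$, which I absorb into the isomorphism), I obtain
\[\Theta(H',\nabla)\simeq \Pi(\cH^q f_*\cO_Y).\]

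Next I transport this across Riemann-Hilbert. Since the de Rham functor $\DR$ is t-exact for the perverse t-structure and is compatible with the projector $\Pi$,
\[\RH(\Theta(H',\nabla))\simeq \Pi\bigl({}^p\cH^q(Rf_*\bbC[\dim Y])\bigr)\in\Pervzero.\]
By \cref{corollary: inverse Laplace transform is compatible}, the right hand side equals the topological Laplace transform of the co-Stokes structure of $(H',\nabla)$, viewed in $\Pervzero\simeq\Constrz$ via the equivalence of \cite[Theorem 2.29]{KKP_Hodge_theoretic_aspect_of_mirror_symmetry}.

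Finally, I will identify $\cH^{-1}\Pi\bigl({}^p\cH^q(Rf_*\bbC[\dim Y])\bigr)$ with $\cH^{q-1}\Pi(Rf_*\bbC[\dim Y])$. T-exactness of $\Pi$ on the perverse t-structure yields ${}^p\cH^k\Pi=\Pi\cir{}^p\cH^k$, so every perverse cohomology sheaf of $\Pi(Rf_*\bbC[\dim Y])$ lies in $\Pervzero$ and is concentrated in ordinary sheaf degree $-1$. The perverse spectral sequence therefore degenerates at $E_2$, giving $\cH^{q-1}\Pi(Rf_*\bbC[\dim Y])\simeq \cH^{-1}\Pi\bigl({}^p\cH^q(Rf_*\bbC[\dim Y])\bigr)$; the shift $q\mapsto q-1$ is precisely the fact that $\Pervzero$ sits in cohomological degree $-1$.

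The main obstacle I expect is justifying the three properties of $\Pi$ used above: its t-exactness on the perverse t-structure, the intertwining $\FT\cir\Pi=(j_*j^*)\cir\FT$, and its compatibility with $\DR$. These are essentially contained in \cite[\S 12]{Katz_Exponential_sums_and_differential_equations} and the references cited in this section, but they require carefully matching normalization conventions with the Fourier transform appearing in \cref{corollary: inverse Laplace transform is compatible}. A secondary bookkeeping issue is tracking the Fourier inversion involution $u\mapsto -u$, which does not affect the isomorphism class of the resulting co-Stokes structure.
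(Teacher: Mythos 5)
Your proof is essentially correct and arrives at the same conclusion via a slightly different arrangement of the same ingredients. The paper works backwards: it rewrites $(H',\nabla) = i_*j^*\FT(\cH^q Rf_*\cO_Y) = i_*j^*\FT(\Pi(\cH^q Rf_*\cO_Y))$ using $j^* = j^*j_*j^*$ and the intertwining $\FT\cir\Pi = (j_*j^*)\cir\FT$, recognizes this expression as $\Theta^{-1}(\Pi(\cH^q Rf_*\cO_Y))$, and then invokes the compatibility $\Psi\cir\RH = \RH\cir\Theta^{-1}$ of \cite[\S 7]{Sabbah_introduction_to_Stokes_structure}. This sidesteps Fourier inversion entirely. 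You instead push forward through $\Theta$ and must contend with $\FT\cir\FT = a^*$ ($a\colon u\mapsto -u$), which your proposal flags as a bookkeeping concern but dismisses as absorbable; strictly speaking $a^*$ negates the singular locus, so it cannot simply be absorbed — the claim that it disappears ultimately rests on the same convention-matching that the paper itself leaves implicit (the Sabbah reference's $\FT$ versus the $\FT$ in the definition of $\Theta$), so you are no worse off, but this is worth making explicit rather than waving away. For the final identification $\cH^{q-1}\Pi(Rf_*\bbC[\dim Y]) \simeq \cH^{-1}\Pi\bigl({}^p\cH^q(Rf_*\bbC[\dim Y])\bigr)$, the paper simply cites \cite[Proposition 2.4.3(2), Corollary 3.2.3]{Fresan_Exponential_Motives}, whereas you reprove it via the perverse spectral sequence degenerating because every ${}^p\cH^t$ of $\Pi(Rf_*\bbC[\dim Y])$ lies in $\Pervzero$ and hence is concentrated in ordinary degree $-1$; your argument is correct and is essentially what the citation supplies. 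The remaining de Rham manipulations ($\DR$ perverse t-exactness, $\DR\cir\Pi = \Pi\cir\DR$, $\DR Rf_* = Rf_*\DR$) match the paper exactly.
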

\begin{proof}
The proof builds upon the idea of \cite[Theorem 6.3]{sabbah_non-commutative_hodge_structures}, particularly the analysis of cohomologically tame functions on affine varieties. 
By \cite[\S 1.1]{Sabbah_on_a_twisted_de_rham_complex_I}, we have \begin{align*}
\big(\mathbb{H}^{q+\dim Y}(Y, (\Omega_Y^{\bullet}[u^{-1}], d-d f /u)), \nabla_{\partial_u}\big) &= i_* j^* \FT(\cH^q Rf_* \cO_Y )  \\ &=i_* j^* j_*j^*\FT(\cH^q f_* \cO_Y )     \\ &= i_*j^* \FT\big(\Pi(\cH^q Rf_*\cO_Y)\big). 
\end{align*}
Then, by the compatibility between the topological Laplace transform $\Psi$ and the $D$-module Fourier transform (see \cite[\S 7]{Sabbah_introduction_to_Stokes_structure}), the $\bbC$-co-Stokes-structure associated to \eqref{eq: GM system u inverse} is given by 
\begin{equation}\label{eq: functorial construction of F}
    \Psi\big(\cH^{-1}\DR(\Pi(\cH^qRf_*\cO_Y))\big),
\end{equation}
where $\DR$ denotes the de Rham functor.
Recall that the projector $\Pi$ is compatible with $\DR$, so we have 
\[\DR\big(\Pi(\cH^q Rf_*\cO_Y)\big)=\Pi\big(\DR(\cH^q Rf_*\cO_Y)\big).\]
Since the de Rham functor is t-exact with respect to the perverse t-structure on the bounded derived category of constructible sheaves,  we have 
\[\DR(\cH^q Rf_*\cO_Y)= {}^p{\cH^q}(\DR(Rf_*\cO_Y)).\]
By \cite[Theorem 7.1.1]{HTT_d_module_perverse_sheaves_representation_theory}, for regular holonomic $D$-modules, the de Rham functor is compatible with pushforward,
\[\DR(Rf_*\cO_Y)=Rf_*(\DR\cO_Y).\]
Note that $\DR(\cO_Y)=\bbC[\dim Y]$, we conclude that \[\DR\big(\Pi(\cH^q(Rf_*\cO_Y))\big) = \Pi \big({}^p{\cH^q}  (Rf_*\bbC[\dim Y])\big),\]
which is a constructible sheaf placed in degree $-1$.
By \cite[Proposition 2.4.3 (2), Corollary 3.2.3]{Fresan_Exponential_Motives}, we have
\begin{equation}\label{eq: this sheaf lies in constrz}
\Pi (\prescript{p}{}{{\cH}}^q  Rf_*\bbC)= \cH^{q-1} \Pi Rf_*\bbC[1].
\end{equation}
So we conclude from \eqref{eq: functorial construction of F} that
\[\Psi\big(\cH^{q-1}\Pi (Rf_*\bbC[\dim Y])\big)\] is isomorphic to the $\bbC$-co-Stokes structure associated to \[\Big(\mathbb{H}^{q+\dim Y}\big(Y, (\Omega_Y^{\bullet}[u^{-1}], d-d f /u)\big), \nabla_{\partial_u}\Big).\]
\end{proof}

Next, we prove that the co-Stokes structure in \cref{lemma: GM and $D$-module Laplace transform} can also be described via rapid decay cohomology.

\begin{lemma}\label{rapid decay cohomology and graph}
Let $A$ be a contractible subspace of $\bbA^1$ and $\Gamma\subset Y\times \bbA^1$ the graph of $f \colon Y\rightarrow \bbA^1$.
The projection map
\[p\colon\big(Y\times A, \Gamma\cap (Y\times A)\big)\longrightarrow \big(Y,f^{-1}(A)\big)\]
induces an isomorphism of relative cohomology groups
\[
H^q\big(Y\times A, \Gamma\cap (Y\times A);\bbQ\big)\xrightarrow{\ \sim \ } H^q\big(Y,f^{-1}(A);\bbQ\big).
\]\end{lemma}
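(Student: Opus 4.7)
The plan is to reduce the statement to two ordinary (absolute) cohomology isomorphisms by applying the five lemma to the long exact sequences of the pairs. Consider the commutative diagram of long exact sequences
\[\begin{tikzcd}[column sep=small]
\cdots \rar & H^{q-1}(\Gamma \cap (Y\times A)) \rar \dar & H^{q}(Y\times A, \Gamma\cap (Y\times A)) \rar \dar & H^{q}(Y\times A) \rar \dar & H^{q}(\Gamma \cap (Y\times A)) \rar \dar & \cdots \\
\cdots \rar & H^{q-1}(f^{-1}(A)) \rar & H^{q}(Y, f^{-1}(A)) \rar & H^{q}(Y) \rar & H^{q}(f^{-1}(A)) \rar & \cdots
\end{tikzcd}\]
where all vertical maps are induced by the projection $p$ (or its restriction). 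If I can show the outer four vertical maps are isomorphisms, the five lemma gives the middle one.

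For the absolute cohomology of the ambient space, the projection $Y\times A \to Y$ is a homotopy equivalence since $A$ is contractible, so $H^q(Y\times A;\bbQ)\xrightarrow{\ \sim\ } H^q(Y;\bbQ)$. For the subspace, note that
\[ \Gamma\cap (Y\times A) = \{(y, f(y)) : y\in Y,\ f(y)\in A\}, \]
and the restriction of $p$ to this set sends $(y, f(y))\mapsto y$, which is a homeomorphism onto $f^{-1}(A)$ with inverse $y\mapsto (y,f(y))$ (continuous since $f$ is continuous). Hence $H^q(\Gamma\cap (Y\times A);\bbQ)\xrightarrow{\ \sim\ } H^q(f^{-1}(A);\bbQ)$.

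Applying the five lemma yields the desired isomorphism of relative cohomology groups. I do not expect a serious obstacle here; the only small point to verify is the commutativity of the diagram, which follows from the naturality of the long exact sequence of a pair applied to the continuous map of pairs $p\colon (Y\times A, \Gamma\cap (Y\times A)) \to (Y, f^{-1}(A))$.
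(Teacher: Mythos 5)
Your proof is correct and follows essentially the same approach as the paper: both use the five lemma applied to the map of long exact sequences of the pairs, noting that $Y\times A\to Y$ is a homotopy equivalence (since $A$ is contractible) and that $p$ restricts to a homeomorphism $\Gamma\cap(Y\times A)\to f^{-1}(A)$ with inverse $y\mapsto(y,f(y))$.
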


\begin{proof}
The map $p$ induces the following map of long exact sequences of relative cohomology.
\begin{equation*}
    \begin{tikzcd}[column sep=small]
{} \arrow[r] &   H^q\big(Y\times A, \Gamma\cap (Y\times A);\bbQ\big) \arrow[r] \arrow[d] &   H^q\big(Y\times A;\bbQ\big)\arrow[r] \arrow[d] & H^q\big(\Gamma\cap (Y\times A);\bbQ\big)  \arrow[r] \arrow[d] & {} \\
{} \arrow[r] & {H^q\big(Y,f^{-1}(A);\bbQ\big)} \arrow[r]                   & {H^q(Y;\bbQ)} \arrow[r]       & {H^q\big(f^{-1}(A);\bbQ\big)} \arrow[r]                                   & {}
\end{tikzcd}
\end{equation*}
Since $A$ is contractible, the middle vertical arrow is an isomorphism. 
Since $p\colon\Gamma\cap (Y\times A) \longrightarrow f^{-1}(A)$ is a homeomorphism with an inverse given by $s(x)=(x,f(x))$, the right vertical arrow is an isomorphism. 
Thus, the left vertical arrow is also an isomorphism by the five lemma.
\end{proof}
For any closed subset $Z\subset X$, let $\alpha\colon X\setminus Z\hookrightarrow X$ be the inclusion of the complement, and denote $k_{[X,Z]}\coloneqq \alpha_!\alpha^*k_X$.
\begin{lemma}\label{lemma: describe stalk}
For any $q\in\bbN$, the stalk of $\cH^{q-1}\Pi (Rf_*k_Y)$ at $\xi$ is isomorphic to $H^q(Y,f^{-1}(\xi);k)$.
\end{lemma}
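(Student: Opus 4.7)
My plan is to compute the stalk at $\xi$ directly by unpacking the definition $\Pi(Rf_*k_Y)=Rf_*k_Y\star_+ j_!k_{\bbA^1\setminus 0}[1]$ via the open--closed cofiber sequence that defines $j_!k_{\bbA^1\setminus 0}$. Concretely, I would start from the distinguished triangle
\[
j_!k_{\bbA^1\setminus 0}\longrightarrow k_{\bbA^1}\longrightarrow i_*k_{\{0\}}\xrightarrow{+1},
\]
where $i\colon\{0\}\hookrightarrow\bbA^1$, shift by $[1]$, and convolve with $Rf_*k_Y$, obtaining
\[
\Pi(Rf_*k_Y)\longrightarrow Rf_*k_Y\star_+ k_{\bbA^1}[1]\longrightarrow Rf_*k_Y\star_+ i_*k_{\{0\}}[1]\xrightarrow{+1}.
\]

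Next I would identify the two right-hand terms. The skyscraper $i_*k_{\{0\}}$ is the unit for additive convolution (it is the ``delta at $0$''), so $Rf_*k_Y\star_+ i_*k_{\{0\}}\cong Rf_*k_Y$. For the middle term, using the linear change of coordinates $(x,y)\mapsto(u,v)=(x+y,y)$ on $\bbA^1\times\bbA^1$, the sum map $s$ becomes the projection onto $u$ and $p_1$ becomes $(u,v)\mapsto u-v$, whose fibers (over $\bbA^1$) are isomorphic to $\bbA^1$ and hence contractible. A Leray computation then identifies $Rf_*k_Y\star_+k_{\bbA^1}$ with the constant complex of value $R\Gamma(\bbA^1,Rf_*k_Y)=R\Gamma(Y,k)$, where the last equality uses the properness of $f$.

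Taking stalks at $\xi\in\bbC$ and invoking proper base change (again using properness of $f$) yields $(Rf_*k_Y)_\xi\cong R\Gamma(f^{-1}(\xi),k)$. The stalk triangle becomes
\[
\Pi(Rf_*k_Y)_\xi\longrightarrow R\Gamma(Y,k)[1]\longrightarrow R\Gamma(f^{-1}(\xi),k)[1]\xrightarrow{+1},
\]
so $\Pi(Rf_*k_Y)_\xi$ is the shift by $[1]$ of the fiber of the restriction $R\Gamma(Y,k)\to R\Gamma(f^{-1}(\xi),k)$. By the long exact sequence of the pair $(Y,f^{-1}(\xi))$, this fiber is exactly the relative cohomology complex $R\Gamma(Y,f^{-1}(\xi);k)$. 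Taking $\cH^{q-1}$ of $R\Gamma(Y,f^{-1}(\xi);k)[1]$ then yields $H^q(Y,f^{-1}(\xi);k)$, as required.

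The main delicate point is the identification of the middle term $Rf_*k_Y\star_+ k_{\bbA^1}$: one must use the convention for $\star_+$ consistent with the definition of $\Pi$ following Fresan and Kontsevich--Soibelman (topologically, the naive $*$-convolution $Rs_*$) in order to land on ordinary cohomology $R\Gamma(Y,k)$ rather than compactly supported cohomology $R\Gamma_c(Y,k)$, which would be incorrect for noncompact $Y$. Properness of $f$ is essential both here (to identify $R\Gamma(\bbA^1,Rf_*k_Y)$ with $R\Gamma(Y,k)$) and for the proper base change computing $(Rf_*k_Y)_\xi$.
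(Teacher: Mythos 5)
Your argument is correct and takes a genuinely different route from the paper. The paper does not unravel the convolution: it cites Fres\'an's Proposition 3.2.2 to rewrite $\cH^{q-1}\Pi(Rf_*k_Y)$ as $R^q\gamma_*k_{[Y\times\bbA^1,\Gamma]}$, with $\gamma$ the projection $Y\times\bbA^1\to\bbA^1$ and $k_{[Y\times\bbA^1,\Gamma]}$ the extension by zero off the graph $\Gamma$ of $f$, and then base changes along $\{\xi\}\hookrightarrow\bbA^1$ to land on the pair $(Y,f^{-1}(\xi))$. You instead return to the definition $\Pi(M)=M\star_+j_!k_{\bbA^1\setminus 0}[1]$, pass through the open--closed triangle for $j_!$, and evaluate the two remaining convolution terms by hand; in effect you reprove the instance of Fres\'an's identity that the paper invokes as a black box. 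The paper's route buys concision; yours buys self-containedness and a transparent appearance of the long exact sequence of the pair at the end. Two small precisions: $R\Gamma(\bbA^1,Rf_*k_Y)=R\Gamma(Y,k)$ is pure functoriality of global sections and needs no properness, while properness of $f$ enters only via proper base change for $(Rf_*k_Y)_\xi\cong R\Gamma(f^{-1}(\xi),k)$; and the constancy of $Rf_*k_Y\star_+k_{\bbA^1}$ is cleanest after a second shear $(u,v)\mapsto(u,u-v)=(u,w)$, under which the convolution becomes $R\mathrm{pr}_{u*}\mathrm{pr}_w^*(Rf_*k_Y)$ on $\bbA^1_u\times\bbA^1_w$, visibly constant over the contractible $\bbA^1_u$ with stalk $R\Gamma(Y,k)$. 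Your caveat about $\star_*$ versus $\star_!$, equivalently $R\Gamma$ versus $R\Gamma_c$ on the noncompact $Y$, is exactly the pitfall to flag.
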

\begin{proof}
Let $\beta\colon Y\times \bbA^1\setminus \Gamma \rightarrow  Y\times \bbA^1$ denote the open embedding and $\gamma\colon Y\times \bbA^1\rightarrow \bbA^1$ the projection. By \cite[Proposition 3.2.2]{Fresan_Exponential_Motives}, we  have 
\begin{equation}\label{eq: (1) change pushforward to bracket}
    \cH^{q-1}\Pi (Rf_*k_Y) = R^q\gamma_*k_{[Y\times \bbA^1, \Gamma]}.
\end{equation}
Consider the following commutative diagram.
\[ \begin{tikzcd}
{Y\times  \{\xi\}} \arrow[r, "i'"] \arrow[d, "\gamma'"'] & Y\times \bbA^1 \arrow[d, "\gamma"] \\
{\{\xi\}} \arrow[r, "i"]                                     & \bbA^1                             
\end{tikzcd}
\]
By the proper base change theorem, we have
\begin{equation}\label{eq: (2) computation of f over closed planes}
i^*R\gamma_*k_{[Y\times \bbA^1, \Gamma]} = R\gamma'_*i'^*k_{[Y\times \bbA^1, \Gamma]} = R\gamma'_*i'^*\beta_!\beta^*k_{Y\times \bbA^1}.
\end{equation}
Consider the following diagram.
\begin{equation*}
    \begin{tikzcd}
{Y\times  \{\xi\} \setminus \Gamma \cap \big(Y\times  \{\xi\}}\big) \arrow[r, "\beta'"] \arrow[d, "k"'] & {Y\times \{\xi\} } \arrow[d, "i'"] \\
Y\times\bbA^1 \setminus \Gamma \arrow[r, "\beta"]                                                                             & Y\times \bbA^1                                      
\end{tikzcd}
\end{equation*}
By \cite[Theorem 2.3.26]{Dimca_Sheaves_in_Topology}, the right hand side of \eqref{eq: (2) computation of f over closed planes} is
\begin{equation}\label{eq: computatoin (3)}
   R\gamma'_*i'^*\beta_!\beta^*k_{Y\times \bbA^1} = R\gamma'_* \beta'_!k^*\beta^*k_{Y\times \bbA^1}  = R\gamma'_*k_{[Y\times  \{\xi\} , \Gamma\cap Y\times  \{\xi\} ]}.
\end{equation}
Then \eqref{eq: (1) change pushforward to bracket} implies that 
\[\big(\cH^{q-1}\Pi (Rf_*k_Y)\big)_\xi=H^q(Y,f^{-1}(\xi);k).\]
\end{proof}

Next we describe the sections of $\cH^{q-1}\Pi (Rf_*k_Y)$ over a half-plane $\cH_{r,\theta}$ for some $r>0$ and $\theta\in S^1$.
Fix a nonsingular point $c_0 \in \cH_{r,\theta}$ with argument $\theta$.
For each singular point $c_i$, we choose a small open disk $D_i$ around $c_i$, and a path $a_i$ from $c_0$ to a boundary point on $D_i$.
Let $\Gamma_i$ be a small open neighborhood of $\oD_i\cup a_i$. See \cite[Figure 4]{KKP_Hodge_theoretic_aspect_of_mirror_symmetry}.

\begin{lemma}\label{lemma: vanishing cycle at ci}
The complex of vanishing cycles of $\cH^{q-1}\Pi (Rf_*k_Y)$ at $c_i$ satisfies 
\[ 
\phi_{c_i}(\cH^{q-1}\Pi (Rf_*k_Y))\simeq H^q(f^{-1}(\Gamma_i),f^{-1}(c_0);k).
\]
\end{lemma}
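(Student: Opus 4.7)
The plan is to compute $\phi_{c_i}(\cF)$ as a cokernel of a natural map on stalks, and to identify that cokernel with the relative cohomology group $H^q(f^{-1}(\Gamma_i), f^{-1}(c_0); k)$ via the long exact sequence of a topological triple.

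First I would reduce to a cokernel computation on stalks. Since $\cF = \cH^{q-1}\Pi(Rf_*k_Y)$ lies in $\Constrz$ by \eqref{eq: this sheaf lies in constrz}, the adjunction map $\cF \to R^0j_*j^*\cF$ is injective by \cref{lemma: the adjunction map is injective}, and the Gabrielov specialization map $\cF_{c_i} \to \cF_{c_0}$ associated to $a_i$ is injective. The nearby-cycle complex $\psi_{c_i}(\cF)$ is concentrated in degree $0$ with underlying vector space $\cF_{c_0}$, so $\phi_{c_i}(\cF)$ is concentrated in degree $0$ and equals $\cF_{c_0}/\cF_{c_i}$. Here we realize $\cF_{c_i} = \Gamma(\Gamma_i, \cF)$ as a subspace of $\cF_{c_0}$ via restriction to the point $c_0 \in \Gamma_i$, which matches the Gabrielov inclusion determined by $a_i$.

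Next I would use \cref{lemma: describe stalk} to identify $\cF_{c_0} = H^q(Y, f^{-1}(c_0); k)$ and $\cF_{c_i} = H^q(Y, f^{-1}(c_i); k)$, and then invoke the long exact sequence of the triple $f^{-1}(c_0) \subset f^{-1}(\Gamma_i) \subset Y$:
\[
H^q(Y, f^{-1}(\Gamma_i)) \xrightarrow{\alpha} H^q(Y, f^{-1}(c_0)) \to H^q(f^{-1}(\Gamma_i), f^{-1}(c_0)) \to H^{q+1}(Y, f^{-1}(\Gamma_i)) \xrightarrow{\beta} H^{q+1}(Y, f^{-1}(c_0)).
\]
A standard deformation retraction argument, combining Ehresmann's fibration theorem applied to the tube around the path $a_i$ (which sits in the smooth locus of $f$) with a conical neighborhood of $f^{-1}(c_i)$ given by the properness of $f$, shows that $f^{-1}(\Gamma_i)$ retracts onto $f^{-1}(c_i)$, so $H^p(Y, f^{-1}(\Gamma_i)) \cong H^p(Y, f^{-1}(c_i))$ for every $p$. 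To conclude $H^q(f^{-1}(\Gamma_i), f^{-1}(c_0); k) \cong \cF_{c_0}/\cF_{c_i}$ from the long exact sequence, I need $\alpha$ to be the Gabrielov inclusion and $\beta$ to be injective. The injectivity of $\beta$ follows from the fact that $\cG := \cH^q\Pi(Rf_*k_Y)$ also lies in $\Constrz$ by the shifted form of \eqref{eq: this sheaf lies in constrz}, so \cref{lemma: the adjunction map is injective} applied to $\cG$ gives $\cG_{c_i} \hookrightarrow \cG_{c_0}$, which identifies with $\beta$ after the retraction.

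The main obstacle is the compatibility verification at $\alpha$: under the retraction-plus-\cref{lemma: describe stalk} identification, one must check that the restriction map $H^q(Y, f^{-1}(\Gamma_i)) \to H^q(Y, f^{-1}(c_0))$ coming from $f^{-1}(c_0) \hookrightarrow f^{-1}(\Gamma_i)$ is literally the Gabrielov inclusion $\cF_{c_i} \hookrightarrow \cF_{c_0}$ associated to $a_i$. I would handle this by observing that both constructions factor through $\Gamma(\Gamma_i, \cF)$ and are induced by the same adjunction applied to the pushforward $R\gamma_* k_{[Y \times \bbA^1, \Gamma]}$ used in the proof of \cref{lemma: describe stalk}, with \cref{rapid decay cohomology and graph} providing the translation between sections over $\Gamma_i$ and cohomology of $(Y, f^{-1}(\Gamma_i))$.
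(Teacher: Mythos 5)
Your proof is correct and follows the same skeleton as the paper's argument: reduce $\phi_{c_i}(\cF)$ to the quotient $\cF_{c_0}/\cF_{c_i}$, run the long exact sequence of the triple $f^{-1}(c_0)\subset f^{-1}(\Gamma_i)\subset Y$, identify $\alpha$ with the Gabrielov inclusion, and show the connecting map out of $H^q(f^{-1}(\Gamma_i),f^{-1}(c_0))$ vanishes. Where you genuinely diverge from the paper is in how you kill that connecting map. The paper passes to homology, observes that $Y$ deformation retracts onto $\bigcup_j f^{-1}(\Gamma_j)$ (citing the Morse-theoretic reference), deduces from the resulting direct-sum decomposition that $\iota\colon H_q(f^{-1}(\Gamma_i),f^{-1}(c_0))\to H_q(Y,f^{-1}(c_0))$ is injective, and dualizes to get surjectivity of $\kappa$. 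You instead prove injectivity of $\beta\colon H^{q+1}(Y,f^{-1}(\Gamma_i))\to H^{q+1}(Y,f^{-1}(c_0))$ directly, by recognizing $\beta$ as the Gabrielov inclusion for the degree-shifted sheaf $\cG=\cH^{q}\Pi(Rf_*k_Y)$, which also lies in $\Constrz$, and invoking the no-sections-supported-at-singularities property. Your route is more local (it never needs the global retraction of $Y$ onto the union of all the tubes) and puts $\alpha$ and $\beta$ on the same footing, at the cost of introducing $\cG$ and requiring the same naturality check for $\beta$ that you flag for $\alpha$ — but as you note, that check is uniform in the cohomological degree.

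Two small points of care. First, \cref{lemma: the adjunction map is injective} is stated for $\cF=\Phi(\cL,\cG)$; for a general object of $\Constrz$ such as your $\cG$ you should instead cite condition (1) of \cref{RGamma=0 condition}, which isolates exactly the injectivity you need as a consequence of $R\Gamma=0$. Second, the "Ehresmann plus conical neighborhood" retraction of $f^{-1}(\Gamma_i)$ onto $f^{-1}(c_i)$ is the same input the paper tacitly uses to obtain the isomorphism $H^q(Y,f^{-1}(c_i))\xleftarrow{\sim}H^q(Y,f^{-1}(\Gamma_i))$, so no discrepancy there; just be aware that Ehresmann requires properness and absence of critical points over the tube around $a_i$, both of which hold because $a_i$ avoids the critical values.
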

\begin{proof}
Consider the exact sequence of homology groups of the triple 
$f^{-1}(c_0)\subset f^{-1}(\Gamma_i)\subset Y$,
\[\xrightarrow{ \ \ \ } H_q(f^{-1}(\Gamma_i), f^{-1}(c_0);k)\xrightarrow{\ \iota \ } H_q(Y,f^{-1}(c_0);k) \xrightarrow{\ \ \ } H_q(Y,f^{-1}(\Gamma_i);k) \xrightarrow{ \ \ \ } .\]
Since $\bigcup f^{-1}(\Gamma_i)$ is a deformation retract of $Y$ (see \cite[\S 5.4]{Liviu_an_invitation_to_morse_theory}), the map $\iota$ is injective.
It induces the following exact sequence of cohomology groups,
\begin{equation}\label{eq: long exact sequence of cohomology}
    \xrightarrow{ \ \ \ } H^q(Y,f^{-1}(\Gamma_i);k)\xrightarrow{ \ \ \ } H^q(Y,f^{-1}(c_0);k)\xrightarrow{\ \kappa \ } H^q(f^{-1}(\Gamma_i), f^{-1}(c_0);k)\xrightarrow{\ \ \ },
\end{equation}
where $\kappa$ is surjective.
A path $a_i$ identifies the stalk of $\cH^{q-1}\Pi (Rf_*k_Y)$ at $c_i$ as a subspace of that at $c_0$ via the composition
\begin{equation}\label{eq: Fc0 quotient Fci}
    H^q(Y,f^{-1}(c_i);k)\xleftarrow{\ \sim \ }H^q(Y,f^{-1}(\Gamma_i);k)\xrightarrow{\ \ \ } H^q(Y,f^{-1}(c_0);k).
\end{equation}
Since $\kappa$ is surjective,  \eqref{eq: long exact sequence of cohomology} and \eqref{eq: Fc0 quotient Fci} imply that
\[\phi_{c_i}(\cH^{q-1}\Pi (Rf_*k_Y))\simeq H^q(f^{-1}(\Gamma_i),f^{-1}(c_0);k).
\]
\end{proof}

\begin{lemma}\label{lemma: comparison of Betti data} 
For any $q\in\bbN$, the Stokes structure associated to the topological Laplace transform of $\cH^{q-1}\Pi (Rf_*k_Y)$ is 
\begin{align*}
    \cL_{k,\theta} & = H^q(Y,f^{-1}\big(\cH_{\infty, \theta});k\big),\\
    \cL_{k,\leq r, \theta} & = H^q\big(Y,f^{-1}(\cH_{r,\theta});k\big).
\end{align*}
\end{lemma}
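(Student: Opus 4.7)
The plan is to reduce both formulas to the single sheaf-theoretic identity
\[
\cF(\ocH) \;\simeq\; H^q\bigl(Y,\, f^{-1}(\ocH);\, k\bigr) \qquad \text{for every closed half-plane } \ocH\subset \bbC,
\]
where $\cF\coloneqq \cH^{q-1}\Pi(Rf_*k_Y)$. Once this is established, \cref{remark: stalk L at xi and theta} gives $\cL_{<\xi,\theta}=\cF(\ocH_{\theta,\re(\xi e^{-i\theta})})=H^q(Y,f^{-1}(\ocH_{\theta,\re(\xi e^{-i\theta})});k)$ for the co-Stokes filtration, and \cref{coStokes=Stokes} translates this into the Stokes formula $\cL_{\leq r,\theta}=H^q(Y,f^{-1}(\cH_{r,\theta});k)$ of the lemma (the passage from closed to open half-planes on the geometric side matches the passage from ``$<$'' to ``$\leq$'' on the Stokes side along Stokes lines). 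The local-system formula then follows because, for $\lambda$ sufficiently large, $\ocH_{\theta,\lambda}$ avoids every singularity of $\cF$, so $\cF|_{\ocH_{\theta,\lambda}}$ is constant with stalk $\cF_\xi=H^q(Y,f^{-1}(\xi);k)$ by \cref{lemma: describe stalk}, and Ehresmann's theorem applied to the proper map $f$ over the contractible $\ocH_{\theta,\lambda}$ gives a homotopy equivalence of pairs $(Y,f^{-1}(\ocH_{\theta,\lambda}))\simeq (Y,f^{-1}(\xi))$.

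For the core identity, I would adapt the proof of \cref{lemma: describe stalk} from the point $\{\xi\}$ to the closed half-plane $\ocH$. Using $\cF=R^q\gamma_* k_{[Y\times\bbA^1,\Gamma]}$ from that proof and the base-change isomorphism $i'^*\beta_!\simeq \beta'_! k^*\beta^*$ of \cite[Theorem~2.3.26]{Dimca_Sheaves_in_Topology} for the closed embedding $i'\colon Y\times\ocH\hookrightarrow Y\times\bbA^1$, one obtains $i'^* k_{[Y\times\bbA^1,\Gamma]}\simeq k_{[Y\times\ocH,\,\Gamma\cap(Y\times\ocH)]}$. Pushing down by $\gamma'\colon Y\times\ocH\to\ocH$ and taking $R\Gamma(\ocH,-)$, together with contractibility of $\ocH$ (so the middle term of the resulting distinguished triangle computes to $R\Gamma(Y;k)$) and properness of $f$ (so proper base change on the right term yields $R\Gamma(f^{-1}(\ocH);k)$), one deduces the derived identity
\[
R\Gamma\bigl(\ocH,\, R\gamma'_* k_{[Y\times\ocH,\,\Gamma\cap(Y\times\ocH)]}\bigr)\;\simeq\; R\Gamma\bigl(Y,\,f^{-1}(\ocH);\,k\bigr),
\]
which is also the content of \cref{rapid decay cohomology and graph} applied with $A=\ocH$.

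The main obstacle is then to upgrade this derived-category identity to the ordinary vector-space identity $\cF(\ocH)\simeq H^q(Y,f^{-1}(\ocH);k)$, i.e.\ to show that in the Leray spectral sequence
\[
E_2^{p,q'} = H^p\bigl(\ocH,\, R^{q'}\gamma'_* k_{[Y\times\ocH,\,\Gamma\cap(Y\times\ocH)]}\bigr) \Longrightarrow H^{p+q'}\bigl(Y,\,f^{-1}(\ocH);\,k\bigr),
\]
the only contribution to $H^q$ comes from $E_2^{0,q}=\cF(\ocH)$. I would establish the required vanishings $H^p(\ocH,\,R^{q-p}\gamma'_* k_{[\ldots]})=0$ for $p\geq 1$ by choosing a Čech cover of $\ocH$ by open convex strips cut out by the Stokes lines through the finitely many singularities of $\cF$ contained in $\ocH$. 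On each such strip every cohomology sheaf $R^{q'}\gamma'_*k_{[\ldots]}$ restricts to a constant sheaf (its value computed by \cref{lemma: describe stalk}), and a Čech-complex computation in the spirit of \cref{lemma: small open cover are Leray cover} yields the required vanishings.
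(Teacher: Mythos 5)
Your approach is a genuine alternative to the paper's, and the two are worth contrasting. The paper identifies $\cF(\cH_{r,\theta})$ as a subspace of $\cF_{c_0}$ via the vanishing cycle decomposition (using the injectivity result of \cite[p.~41]{KKP_Hodge_theoretic_aspect_of_mirror_symmetry}), computes each summand $\cF_{c_0}/\cF_{c_i}$ as relative cohomology through \cref{lemma: vanishing cycle at ci}, and assembles the result by deformation-retracting $Y$ and $f^{-1}(\cH_{r,\theta})$ onto unions of the $f^{-1}(\Gamma_i)$. You instead compute $\cF(\ocH)$ directly as $H^q\bigl(Y,f^{-1}(\ocH);k\bigr)$ via the Leray spectral sequence of $\gamma'\colon Y\times\ocH\to\ocH$. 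Your reduction to the core identity, the derived-level step via proper base change and \cref{rapid decay cohomology and graph}, and the Ehresmann argument for $\cL_\theta$ are all sound.

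The genuine gap is the spectral-sequence degeneracy. The proposed \v{C}ech cover of $\ocH$ by ``open convex strips cut out by the Stokes lines through the singularities'' does not yield the needed vanishing: the singular points of $\cF$ lie \emph{on} those Stokes lines, so either (a) the strictly open strips omit them and hence fail to cover $\ocH$, or (b) once you fatten the strips so that they overlap, the overlaps along the Stokes lines contain the singular points and the cohomology sheaves $R^{q'}\gamma'_*k_{[\ldots]}$ are \emph{not} constant there. Acyclicity of $\cF$ on such overlapping strips is essentially the very vanishing you are trying to prove, not a hypothesis you may assume, so the \v{C}ech computation ``in the spirit of \cref{lemma: small open cover are Leray cover}'' is not available without further work. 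The underlying claim — that $H^p(\ocH,\cF|_{\ocH})=0$ for $p\geq 1$ whenever $\cF\in\Constrz$ — is in fact true, but it needs a real argument: for instance, one can extract it from $R\Gamma(\bbC,\cF)=0$ together with an explicit computation of $R\Gamma(l,\cF|_l)$ on the boundary line $l$ of $\ocH$, or one can fall back on the structural description of $\Constrz$ from \cite[\S 2.3]{KKP_Hodge_theoretic_aspect_of_mirror_symmetry} — but the latter route brings you back to the paper's own proof. As written, the proposal does not supply this step, and without it the Leray argument is incomplete.
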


\begin{proof}
Denote $\cF \coloneqq \cH^{q-1}\Pi (Rf_*k_Y)\in \Constrz$. The $k$-Stokes structure $(\cL_{k}, \cL_{k, \leq})$ associated to the topological Laplace transform $\Psi(\cF)$ is given by
\begin{align*}
    \cL_{k,\theta} &= \cF(\cH_{\infty, \theta}), \\
    \cL_{k,\leq r, \theta} &= \cF(\cH_{r,\theta}).
\end{align*}
Consider the vanishing cycle decomposition of $\cF$ at $c_0$,
\[\cF_{c_0}\xrightarrow{\ \sim \ } \bigoplus_{i=1}^n \cF_{c_0}/\cF_{c_i}.\]
Then by \cref{lemma: vanishing cycle at ci}, the image of $\cF({\cH_{r,\theta}})$ in $\cF_{c_0}$ is 
\begin{equation}\label{eq: F over halfplane}
\bigoplus_{c_i\notin\cH_{r,\theta}}\cF_{c_0}/\cF_{c_i}= H^q(f^{-1}(\cup_{c_i\notin \cH_{r,\theta}} \Gamma_i),f^{-1}(c_0);k).
 \end{equation}
Since $\bigcup_{i=1}^n f^{-1}(\Gamma_i)$ is a deformation retract of $Y$ and $\bigcup_{c_i\in\cH_{r,\theta}} f^{-1}(\Gamma_i)$ is a deformation retract of $f^{-1}(\cH_{r,\theta})$, the long exact sequence of relative cohomology of the triple $f^{-1}(c_0)\subset f^{-1}(\cH_{r,\theta})\subset Y$ implies that the right hand side of \eqref{eq: F over halfplane} satisfies 
\[H^q\big(f^{-1}(\cup_{c_i\notin \cH_{r,\theta}} \Gamma_i),f^{-1}(c_0);k\big) = H^q\big(Y,f^{-1}(\cH_{r,\theta});k\big),\]
completing the proof.
\end{proof}

\begin{proposition}\label{proposition: de Rham and Betti data compatible}
The B-model nc-Hodge structure in \cref{definition: B model nc-Hodge} satisfies the $\bbQ$-structure axiom, i.e.\ let $(\cL_\bbC,\cL_{\bbC,<})$ denote the $\bbC$-co-Stokes structure associated to
\[\big(\mathbb{H}^q(Y, (\Omega_Y^{\bullet}[u^{-1}], d-d f /u)), \nabla_{\partial_u}\big),\] then we have \[(\cL_{\bbC,<}\cap \cL)\otimes_\bbQ \bbC=\cL_{\bbC,<}.
\]
\end{proposition}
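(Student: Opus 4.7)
The plan is to apply \cref{lemma: comparison of Betti data} with both $k = \bbQ$ and $k = \bbC$, combined with \cref{lemma: GM and $D$-module Laplace transform}, which identifies the $\bbC$-co-Stokes structure coming from the twisted de Rham cohomology with the topological Laplace transform of the constructible sheaf $\cH^{q-1}\Pi(Rf_*\bbC[\dim Y])$. Once the filtrations on both sides are realized as rapid decay cohomology of the pair $(Y, f^{-1}(\cH_{r,\theta}))$ with coefficients in $\bbQ$ and $\bbC$ respectively, the $\bbQ$-structure axiom reduces to the universal coefficient theorem for singular cohomology together with an elementary linear algebra fact.

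First, I would apply \cref{lemma: comparison of Betti data} with $k = \bbQ$ to define a candidate $\bbQ$-Stokes (equivalently, co-Stokes) filtration on the $\bbQ$-local system $\cL$ by setting
\[\cL_{<r,\theta} \coloneqq H^q\big(Y, f^{-1}(\cH_{r,\theta}); \bbQ\big),\]
which embeds as a $\bbQ$-subspace of $\cL_\theta = H^q(Y, f^{-1}(\infty_\theta); \bbQ)$ via the restriction map associated to the inclusion $\infty_\theta \in \cH_{r,\theta}$. Next, by \cref{lemma: GM and $D$-module Laplace transform} the $\bbC$-co-Stokes structure $(\cL_\bbC, \cL_{\bbC,<})$ associated to the Gauss–Manin connection is isomorphic to the topological Laplace transform $\Psi(\cH^{q-1}\Pi(Rf_*\bbC[\dim Y]))$, so applying \cref{lemma: comparison of Betti data} once more with $k = \bbC$ yields the explicit description
\[\cL_{\bbC, <r,\theta} = H^q\big(Y, f^{-1}(\cH_{r,\theta}); \bbC\big).\]

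The universal coefficient theorem for singular cohomology of pairs with field coefficients provides a natural isomorphism
\[H^q\big(Y, f^{-1}(\cH_{r,\theta}); \bbC\big) \simeq H^q\big(Y, f^{-1}(\cH_{r,\theta}); \bbQ\big) \otimes_\bbQ \bbC,\]
compatible with the restriction maps to $H^q(Y, f^{-1}(\infty_\theta); -)$. Consequently $\cL_{\bbC,<r,\theta} = \cL_{<r,\theta} \otimes_\bbQ \bbC$ as $\bbC$-subspaces of $\cL_{\bbC, \theta} = \cL_\theta \otimes_\bbQ \bbC$.

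Finally, the elementary linear algebra identity that for any $\bbQ$-subspace $W$ of a finite-dimensional $\bbQ$-vector space $V$, one has $(W \otimes_\bbQ \bbC) \cap V = W$ inside $V \otimes_\bbQ \bbC$, gives $\cL_{\bbC,<r,\theta} \cap \cL_\theta = \cL_{<r,\theta}$, and tensoring back with $\bbC$ yields $(\cL_{\bbC,<} \cap \cL) \otimes_\bbQ \bbC = \cL_{\bbC,<}$. I do not foresee a serious obstacle: the topological Laplace transform and Lemma~\ref{lemma: comparison of Betti data} are manifestly field-independent, and the only delicate point is to keep track of the chain of identifications $(\cL_\bbC, \cL_{\bbC,<}) \simeq \Psi\big(\cH^{q-1}\Pi(Rf_*\bbC[\dim Y])\big)$ from Lemmas~\ref{lemma: GM and $D$-module Laplace transform} and~\ref{lemma: comparison of Betti data} so that the $\bbQ$-subspace $\cL_{<r,\theta}$ actually sits inside $\cL_{\bbC,<r,\theta}$ and not merely inside some abstractly isomorphic copy.
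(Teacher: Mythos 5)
Your proof is correct and takes essentially the same approach as the paper: the paper's proof is a one-line citation of \cref{lemma: GM and $D$-module Laplace transform} and \cref{lemma: comparison of Betti data}, and you have correctly unwound it into applying the Betti-data lemma with $k=\bbQ$ and $k=\bbC$, identifying the de Rham side via the Laplace-transform lemma, and closing with the universal coefficient theorem plus the elementary fact that $(W\otimes_\bbQ\bbC)\cap V = W$. The ``delicate point'' you flag—that the $\bbQ$-lattice $\cL_{<r,\theta}$ sits inside $\cL_{\bbC,<r,\theta}$ via the actual comparison isomorphism $\iso$ rather than an abstract one—is indeed the only subtlety, and it is handled by the functoriality of the identifications in Lemma~\ref{lemma: comparison of Betti data} (the same geometric relative cohomology group appears for both coefficient fields), so your argument is complete.
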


\begin{proof}
This follows from \cref{lemma: GM and $D$-module Laplace transform} and \cref{lemma: comparison of Betti data}.
\end{proof}

Let $\Crit(f)$ denote the set of critical points of $f\colon Y\to\bbA^1$, and $\{c_1,\dots,c_n\}\subset\bbC$ the set of critical values.
For $i=1,\dots,n$, let $Z_i \coloneqq f^{-1}(c_i)\cap \mathrm{Crit}(f)$ and let $U^\formal(Z_i)$ be a formal neighborhood of $Z_i$.

\begin{proposition} \label{gluing is dual to GM}
The dual description of the B-model nc-Hodge structure in \cref{definition: B model nc-Hodge} under the equivalence in \cref{theorem: dual description of nc-Hodge is an equivalence} consists of data $\big(\{H_i,\iso_i\},\cF\big)$ defined as follows:
\begin{enumerate}[wide]
    \item $H_i\coloneqq \mathbb{H}^\bullet
    \big(U^\formal(Z_i), (\Omega_X^\bullet\dbb{u},ud-df)\big).$
    \item $\cF\coloneqq \cH^\bullet\Pi (Rf_*\bbQ[\dim Y])$. 
    \item $\iso_i$ is given by the local comparison isomorphism as in \cite[Lemma 3.11]{KKP_Hodge_theoretic_aspect_of_mirror_symmetry}.
\end{enumerate}
\end{proposition}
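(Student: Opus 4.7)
The plan is to trace through the equivalence in \cref{theorem: dual description of nc-Hodge is an equivalence} applied to the B-model nc-Hodge structure of \cref{definition: B model nc-Hodge}, and identify each of the three pieces concretely.

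For the constructible sheaf $\cF$: by construction, the equivalence takes the Betti co-Stokes structure $(\cL, \cL_<)$, which is a $\bbQ$-co-Stokes structure by \cref{proposition: de Rham and Betti data compatible}, and applies the topological Laplace transform $\Phi \circ \sigma$. By \cref{lemma: comparison of Betti data}, the B-model Betti co-Stokes structure coincides with $\Psi(\cH^\bullet \Pi (Rf_*\bbQ[\dim Y]))$. Since $\Phi \circ \sigma$ is a quasi-inverse of $\Psi$ by \cref{theorem: const0 and Stokes are equivalent 6.1}, we conclude $\cF \simeq \cH^\bullet \Pi(Rf_*\bbQ[\dim Y])$, which establishes item (2).

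For items (1) and (3), I would first establish a formal decomposition of the de Rham data at $u = 0$:
\[ H \otimes_{\bbC[u]} \bbC\dbp{u} \simeq \bigoplus_{i=1}^n \big( e^{c_i/u} \otimes R_i^\formal \big), \]
where $R_i^\formal$ denotes the formal twisted de Rham cohomology on $U^\formal(Z_i)$ after pulling out the exponential factor $e^{c_i/u}$. The key input is that, formally at $u=0$, the twisted differential $ud-df$ is acyclic on any open subset of $Y$ avoiding $\Crit(f)$, since $df$ is invertible there. Hence the global formal twisted de Rham cohomology decomposes into contributions supported formally on each critical fiber $Z_i$, yielding the decomposition above. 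Setting $H_i \coloneqq \mathbb{H}^\bullet(U^\formal(Z_i), (\Omega^\bullet_X\dbb{u}, ud-df))$ gives a Poincar\'e rank $1$ lattice inside the $i$-th summand, proving item (1).

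For item (3), the isomorphism $\iso_i$ furnished by \cref{theorem: dual description of nc-Hodge is an equivalence} must be matched with the local comparison isomorphism of \cite[Lemma 3.11]{KKP_Hodge_theoretic_aspect_of_mirror_symmetry}. Indeed, by \cref{lemma: vanishing cycle at ci} the vanishing cycles $\phi_{c_i}(\cF)$ with their monodromy $T_{ii}$ compute $H^\bullet(f^{-1}(\Gamma_i), f^{-1}(c_0); \bbC)$ together with the local monodromy around $c_i$. Both sides of $\iso_i$ -- the formal twisted de Rham cohomology of $U^\formal(Z_i)$ on one hand, and the formalization of the local Gauss-Manin system near $c_i$ on the other -- arise from the same local geometric data near the critical fiber, and their agreement under formal Riemann-Hilbert is exactly the content of the local comparison in loc.\ cit. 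The main obstacle will be rigorously establishing the formal decomposition of the global twisted de Rham complex in the previous paragraph; the affine cohomologically tame version appears in \cite{sabbah_non-commutative_hodge_structures}, but extending it to a proper $f$ on a smooth quasi-projective $Y$ requires a Mayer-Vietoris argument combined with the formal acyclicity of $(\Omega^\bullet[u^{-1}], ud-df)$ away from $\Crit(f)$.
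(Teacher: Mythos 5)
Your identification of $\cF$ via \cref{lemma: comparison of Betti data} and \cref{theorem: const0 and Stokes are equivalent 6.1} is correct and matches the paper's argument. However, the remaining two items have genuine gaps.

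For item (1), you correctly identify the needed ingredient (formal acyclicity of $ud - df$ away from $\Crit(f)$ and a patching argument), but you explicitly leave it as "the main obstacle" without resolving it. The paper fills this gap not by a from-scratch Mayer--Vietoris argument but by invoking the de Rham global-to-local isomorphism of Kontsevich--Soibelman \cite[Proposition 2.3.4(d)]{Kontsevich_Soibelman_holomorphic_floer_theory} together with \cite[Lemma 2.37]{KKP_Hodge_theoretic_aspect_of_mirror_symmetry}, which packages exactly the statement that $H \otimes \bbC\dbb{u}$ splits as $\bigoplus_i \mathbb{H}^\bullet(U^\formal(Z_i), (\Omega^\bullet_X\dbb{u}, ud - df))$. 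A self-contained proof along the lines you sketch would have to handle the non-affine, non-tame generality and is nontrivial; citing this result is the clean route.

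For item (3), your argument does not actually establish what is needed. The equivalence of \cref{theorem: dual description of nc-Hodge is an equivalence} produces isomorphisms $\iso_i$ by applying the spectral decomposition and formal Riemann--Hilbert to the \emph{global} comparison isomorphism $\iso$ of \cref{definition: B model nc-Hodge}. What must be shown is that this global $\iso$ splits into the \emph{local} comparison isomorphisms of \cite[Lemma 3.11]{KKP_Hodge_theoretic_aspect_of_mirror_symmetry}. Asserting that "both sides arise from the same local geometric data near the critical fiber" is a heuristic for why one expects this, not a proof: it is precisely the content of Kontsevich's conjecture on vanishing cycles, and the paper closes the gap by citing Sabbah \cite[Theorem 1]{Sabbah_kontsevich's_conjecture_on_an_algebraic_formula_for_vanishing_cycles_of_local_systems}, which proves that the global topological-to-analytic comparison decomposes into the local ones. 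Without invoking that theorem (or an equivalent), your argument for (3) does not close.
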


\begin{proof}
The comparison on Betti data follows from \cref{lemma: comparison of Betti data}.
The comparison between the lattices $H$ and $H_i$ follows from the de Rham global to local isomorphism \cite[Proposition 2.3.4(d)]{Kontsevich_Soibelman_holomorphic_floer_theory} and \cite[Lemma 2.37]{KKP_Hodge_theoretic_aspect_of_mirror_symmetry}. 
Finally, by \cite[Theorem 1]{Sabbah_kontsevich's_conjecture_on_an_algebraic_formula_for_vanishing_cycles_of_local_systems}, the isomorphism $\iso$ in \cref{definition: B model nc-Hodge} splits into the isomorphisms $\iso_i$, completing the proof.
\end{proof}

\begin{corollary} \label{cor:B-model nc-Hodge gluging description}
    If we decompose the B-model nc-Hodge structure in \cref{definition: B model nc-Hodge} by the gluing theorem in \cite[Theorem 2.35]{KKP_Hodge_theoretic_aspect_of_mirror_symmetry}, we obtain the construction in Section 3.2 of loc.\ cit.
\end{corollary}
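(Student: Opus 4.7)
The plan is to derive this corollary essentially as a bookkeeping consequence of Proposition~\ref{gluing is dual to GM}. Recall that the gluing theorem [KKP Theorem~2.35] is precisely the inverse direction of the equivalence in our Theorem~\ref{theorem: dual description of nc-Hodge is an equivalence}: starting from a dual description $(\{H_i, \iso_i\}, \cF)$ it reconstructs an nc-Hodge structure by gluing the local formal lattices $H_i$ along the constructible sheaf $\cF$ via the isomorphisms $\iso_i$. So the content of the corollary is the identification of this gluing input for the B-model with the explicit data appearing in [KKP \S3.2].

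First, I would invoke Proposition~\ref{gluing is dual to GM} to unpack the dual description of the B-model nc-Hodge structure of Definition~\ref{definition: B model nc-Hodge}: its local pieces are the formal twisted de Rham hypercohomologies $H_i = \mathbb{H}^\bullet\bigl(U^\formal(Z_i), (\Omega_X^\bullet\dbb{u}, ud-df)\bigr)$, its Betti piece is $\cF = \cH^\bullet \Pi(Rf_*\bbQ[\dim Y])$, and the comparison $\iso_i$ is the local formal comparison isomorphism of [KKP Lemma~3.11]. I would then observe, term by term, that these coincide with the ingredients of the construction in [KKP \S3.2]: the local formal twisted de Rham complex at each critical fiber is their local datum, the vanishing cycle decomposition of $\cF$ (along straight Gabrielov paths from a point near infinity) matches their Betti datum thanks to \cref{lemma: comparison of Betti data} identifying $\cF_{c_0}$ with $H^\bullet\bigl(Y, f^{-1}(\infty_\theta); \bbQ\bigr)$, and the $\iso_i$ are the formal comparisons used in loc.\ cit.

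Finally, I would conclude by applying the gluing theorem [KKP Theorem~2.35]: since the equivalence in Theorem~\ref{theorem: dual description of nc-Hodge is an equivalence} is a two-sided inverse to the decomposition procedure, the reconstruction from $(\{H_i, \iso_i\}, \cF)$ via gluing recovers the original B-model nc-Hodge structure, and the formulas for this reconstruction are literally those written down in [KKP \S3.2]. The only substantive inputs are Proposition~\ref{gluing is dual to GM} (which itself rests on \cref{lemma: GM and $D$-module Laplace transform} and \cref{lemma: comparison of Betti data}, together with Sabbah's splitting result for $\iso$) and the formal nature of the KKP gluing construction; there is no genuine obstacle here, since the work was already done in identifying the dual description.
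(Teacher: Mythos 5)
Your overall strategy is right and matches the paper's: the corollary is indeed a bookkeeping consequence of Proposition~\ref{gluing is dual to GM} combined with Theorem~\ref{theorem: dual description of nc-Hodge is an equivalence}, since [KKP Theorem~2.35] is the inverse of that equivalence and the local lattices $H_i$ and comparisons $\iso_i$ in [KKP~\S3.2] are exactly the ones you list. The paper's own proof is essentially a terser version of this.

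However, the citation you give for the Betti comparison is not the one that does the work. Lemma~\ref{lemma: comparison of Betti data} identifies the Stokes filtration stalks $\cL_{k,\leq r,\theta}$ (equivalently $\cF(\ocH_{r,\theta})$) with the relative cohomology $H^q(Y,f^{-1}(\cH_{r,\theta});k)$; that lemma was already consumed in the proof of Proposition~\ref{gluing is dual to GM} to identify the constructible sheaf $\cF$ itself. What the corollary additionally requires is that the \emph{piecewise} linear-algebraic data extracted from $\cF$ via a choice of Gabrielov paths — namely the subspaces $\cF_{c_i}\subset\cF_{c_0}$ and the quotients $\phi_{c_i}(\cF)=\cF_{c_0}/\cF_{c_i}$ — coincide with the relative cohomology groups $H^q(f^{-1}(\Gamma_i),f^{-1}(c_0);k)$ that appear in [KKP~\S3.2]. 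That identification is exactly Lemma~\ref{lemma: vanishing cycle at ci}, which is what the paper cites here, and it is a genuinely separate statement from the stalk identification you invoke. Replace your reference to Lemma~\ref{lemma: comparison of Betti data} at that point with Lemma~\ref{lemma: vanishing cycle at ci} and the argument is complete.
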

\begin{proof}
The construction in \cite[\S 3.2]{KKP_Hodge_theoretic_aspect_of_mirror_symmetry} agrees with the dual description in \cref{gluing is dual to GM}, except for the Betti data.
It follows from \cref{lemma: vanishing cycle at ci} that the linear algebraic data of $\cF$ via a choice of Gabrielov paths agree with those of \cite[\S 3.2]{KKP_Hodge_theoretic_aspect_of_mirror_symmetry}. 
\end{proof}

\section{Stokes decomposition of a constructible sheaf}

In this section, we apply the topological Laplace transform functor to the spectral decomposition of nc-Hodge structures, and we relate the spectral decomposition to the vanishing cycle decomposition given by choices of Gabrielov paths.

As an intermediate step, we introduce the decomposition given by the Stokes structure over a good open interval (see \cref{definition: Stokes direction and small}) in \cref{sec:Stokes decomposition}.
We compare the vanishing cycle decomposition with the Stokes decomposition at a point near infinity in \cref{proposition: Stokes decomposition agrees with KKP}, and with the spectral decomposition in \cref{theorem: spectral decomposition and vanishing cycle decomposition}.

Let $(\cL, \cL_<)$ be a co-Stokes structure of exponential type with exponents $\{c_1,\dots, c_n\}$.
We denote by $\cF =\Phi (\cL, \cL_<)$ its topological Laplace transform. 
For $\theta \in S^1$, denote
\[I_\theta\coloneqq(\theta-\pi/2, \theta+\pi/2)\subset S^1.\] 

\subsection{The Stokes decomposition} \label{sec:Stokes decomposition}

\begin{lemma}\label{lemma: computation of H1(I jC) and H1(S1 jC)}
Let $a<b<c<d$ be real numbers, $i\colon (a,b)\hookrightarrow I\coloneqq (a,c)$ and $j\colon (b,c)\hookrightarrow J\coloneqq (a,d)$ open embeddings.
Denote by $k$ the open embedding of an arc in $S^1$.
Then we have the following.
\begin{enumerate}[wide]
    \item $H^*(I,i_!\bbC)=0$.
    \item $H^1(J,j_!\bbC)=\bbC$ and $H^q(J,j_!\bbC)=0$ for $q\neq 1$.
    \item $H^1(S^1, k_!\bbC) = \bbC$ and $H^q(S^1,k_!\bbC)=0$ for $q\neq 1$.
\end{enumerate}
\end{lemma}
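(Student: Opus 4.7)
The plan is to treat each of the three parts uniformly using the short exact sequence of sheaves associated to an open embedding and its closed complement, then read off the conclusion from the resulting long exact sequence of cohomology.

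For each embedding, let $l$ denote the inclusion of the closed complement; we have the exact sequence
\[ 0 \longrightarrow i_! \bbC \longrightarrow \bbC \longrightarrow l_* \bbC \longrightarrow 0 \]
on the ambient space (and analogously for $j$ and $k$). The strategy is to identify the closed complement up to homotopy, compute its ordinary cohomology together with that of the ambient space, and check that the connecting map is determined by the restriction map $H^0(\text{ambient}, \bbC) \to H^0(\text{complement}, \bbC)$, which is just the diagonal.

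For part (1), the closed complement $[b,c)$ is contractible and $I = (a,c)$ is contractible, so the restriction $H^0(I,\bbC) \to H^0([b,c),\bbC)$ is an isomorphism $\bbC \to \bbC$ and all higher groups vanish; the long exact sequence forces $H^*(I, i_!\bbC) = 0$. For part (2), the closed complement $(a,b] \cup [c,d)$ is a disjoint union of two contractible pieces, while $J$ is contractible. The restriction map $\bbC \to \bbC^2$ is the diagonal, so it is injective with cokernel $\bbC$; together with the vanishing of $H^{\ge 1}$ of $J$ and the complement, the long exact sequence gives $H^1(J, j_!\bbC) = \bbC$ and all other cohomology zero. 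For part (3), the closed complement of an arc in $S^1$ is again a closed arc, hence contractible, so $H^0(S^1,\bbC) \to H^0(\text{complement}, \bbC)$ is the identity $\bbC \to \bbC$ and $H^1$ of the complement vanishes; since $H^0(S^1,\bbC) = H^1(S^1, \bbC) = \bbC$, the long exact sequence yields $H^0(S^1, k_!\bbC) = 0$, $H^1(S^1, k_!\bbC) \cong H^1(S^1, \bbC) = \bbC$, and higher cohomology vanishes.

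No step should pose a genuine obstacle; the only thing to be careful about is identifying the restriction map $H^0(\text{ambient}) \to H^0(\text{complement})$ correctly in case (2), where the complement has two components and the map is diagonal rather than an isomorphism. All three computations are essentially instances of the same recipe, differing only in the topology of the ambient space and the number of connected components of the complement.
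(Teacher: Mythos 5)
Your proof is correct and uses exactly the same mechanism as the paper's (very terse) proof: the short exact sequence $0 \to i_!\bbC \to \bbC \to l_*\bbC \to 0$ for the open embedding and its closed complement, followed by the long exact sequence in cohomology. You supply the details the paper omits, in particular correctly identifying the restriction map as diagonal (and hence only injective, not surjective) when the complement is disconnected in case (2).
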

\begin{proof}
    Each computation follows from the fiber sequence associated to the open embedding and its complement.
\end{proof}

\begin{proposition}\label{proposition: Stokes decomposition}
Let $\theta\in S^1$ and assume that $I_\theta$ is a good open interval (see \cref{definition: Stokes direction and small}). Let $\beta \colon I_\theta\hookrightarrow S^1$ denote the inclusion.
For any $\xi\in\bbC$, let $\beta_{c_i<\xi} \colon\{x\in I_\theta| c_i<_x\xi\}\hookrightarrow I_\theta$ denote the open inclusion (see \cref{notation: xi1 smaller than xi2}), and $W_i\coloneqq H^1(S^1, \beta_!\beta_{c_i< \xi,!}\beta_{c_i< \xi}^*\beta^*\gr_{c_i}\cL)$.
Note $W_i=0$ if $\xi = c_i$. The unique trivialization of $(\cL, \cL_<)$ over $I_\theta$ induces a decomposition
\[ \cF_\xi = \bigoplus_{i=1}^n W_i,\]
which we call the \emph{Stokes decomposition} of $\cF_\xi$ in the $\theta$-direction.
\end{proposition}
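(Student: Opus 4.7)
The plan is to use the unique filtered trivialization over the good open interval $I_\theta$ from \cref{proposition: Stokes filtration trivializes over good open intervals}(2) to realize the subsheaf $\beta_!\beta^*\cL_{<\xi}\subset \cL_{<\xi}$ as $\bigoplus_{i=1}^n \cA_i$ on $S^1$, and then to extract the desired decomposition of $\cF_\xi$ from the long exact sequence attached to this inclusion.

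First I would observe that the unique isomorphism $\cL|_{I_\theta}\xrightarrow{\ \sim\ } \bigoplus_i \gr_{c_i}\cL|_{I_\theta}$ is compatible with each filtration step and in particular restricts to a direct-sum decomposition
\[\cL_{<\xi}|_{I_\theta}\xrightarrow{\ \sim\ } \bigoplus_i \beta_{c_i<\xi,!}\beta_{c_i<\xi}^*\beta^*\gr_{c_i}\cL.\]
Applying $\beta_!$ identifies the subsheaf $\beta_!\beta^*\cL_{<\xi}\subset \cL_{<\xi}$ with $\bigoplus_i \cA_i$, producing the short exact sequence of sheaves on $S^1$
\[0 \longrightarrow \bigoplus_{i=1}^n \cA_i \longrightarrow \cL_{<\xi} \longrightarrow \iota_*\iota^*\cL_{<\xi} \longrightarrow 0,\]
where $\iota\colon S^1\setminus I_\theta \hookrightarrow S^1$ denotes the closed complementary inclusion. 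Using $\cF_\xi=H^1(S^1,\cL_{<\xi})$ from \cref{lemma: inverse Laplace only  has H^0}, together with the vanishing of $H^0(S^1,\bigoplus_i\cA_i)$ (each $\cA_i$ is an extension by zero of a local system from an open subinterval of $S^1$), the associated long exact sequence becomes
\[H^0(S^1\setminus I_\theta,\iota^*\cL_{<\xi}) \longrightarrow \bigoplus_i W_i \longrightarrow \cF_\xi \longrightarrow H^1(S^1\setminus I_\theta,\iota^*\cL_{<\xi}).\]

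Next I would show that the two outer terms vanish. Since Stokes directions come in antipodal pairs, the hypothesis that $I_\theta$ is good forces the opposite interval $I_{\theta+\pi}$ to be good as well and ensures that $\theta\pm\pi/2$ are not Stokes directions. Applying \cref{proposition: Stokes filtration trivializes over good open intervals}(2) to a slight open enlargement of $S^1\setminus I_\theta$ and restricting, I would decompose $\iota^*\cL_{<\xi}$ as a direct sum of extensions by zero from the open subsets $B_i^\circ\coloneqq \{x\in S^1\setminus I_\theta\mid c_i<_x\xi\}$ of $\gr_{c_i}\cL$. Each $B_i^\circ$ is an open subinterval of $S^1\setminus I_\theta$ sharing a single endpoint with the boundary $\{\theta\pm\pi/2\}$ and with the other endpoint lying in the interior: this is exactly the configuration of \cref{lemma: computation of H1(I jC) and H1(S1 jC)}(1), so each summand contributes zero to $H^0$ and $H^1$.

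The main obstacle is this last geometric step: tracking the possible shapes of each $B_i^\circ$, which is the intersection of $S^1\setminus I_\theta$ with the open half-circle $\{x\in S^1\mid c_i<_x\xi\}$ centered at $\arg(\xi-c_i)$, and verifying that it is indeed a half-open subinterval sharing one endpoint with $\partial(S^1\setminus I_\theta)$ rather than the degenerate case $B_i^\circ=I_{\theta+\pi}$ (which would fall under \cref{lemma: computation of H1(I jC) and H1(S1 jC)}(2) and produce a spurious $H^1$). Once this case analysis is settled, the long exact sequence collapses to the asserted Stokes decomposition isomorphism $\bigoplus_i W_i \xrightarrow{\ \sim\ } \cF_\xi$.
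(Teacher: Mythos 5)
Your argument is essentially the paper's own proof: both use the unique filtered splitting of \cref{proposition: Stokes filtration trivializes over good open intervals}(2) over $I_\theta$ to identify $\beta_!\beta^*\cL_{<\xi}$ with $\bigoplus_i\cA_i$, then run the long exact sequence attached to $0 \to \beta_!\beta^*\cL_{<\xi} \to \cL_{<\xi} \to \alpha_*\alpha^*\cL_{<\xi} \to 0$ (your $\iota$ is the paper's $\alpha$), and finish by showing $H^0$ and $H^1$ of the pushforward from the closed complement $J_\theta = S^1\setminus I_\theta$ vanish, using the decomposition of $\alpha^*\cL_{<\xi}$ over a slightly larger good interval together with \cref{lemma: computation of H1(I jC) and H1(S1 jC)}. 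Up to that last vanishing you and the paper have the same proof.

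The ``main obstacle'' you flag at the end is not a routine case analysis to be tidied up: it is a genuine obstruction, and you were right to be uneasy. The degenerate configuration $B_i^\circ = I_{\theta+\pi}$ occurs exactly when $\arg(\xi - c_i) = \theta+\pi$, i.e.\ when $\xi$ lies on the ray from $c_i$ in the direction opposite $\theta$. In that case $\{x\in I_\theta \mid c_i<_x\xi\}=\emptyset$, so $W_i=0$, while the $i$-th summand of $\alpha^*\cL_{<\xi}$ contributes $\dim\gr_{c_i}\cL$ to $H^1(S^1,\alpha_*\alpha^*\cL_{<\xi})$; the long exact sequence then yields only a proper inclusion $\bigoplus_j W_j\hookrightarrow\cF_\xi$, and the stated isomorphism fails. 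A minimal instance: one exponent $c_1=0$, $\cL$ of rank one, $\theta=0$, $\xi=-1$; then $\cL_{<-1}$ is the extension by zero of $\cL$ from $(\pi/2,3\pi/2)$, so $\dim\cF_{-1}=1$, while $W_1=0$. Thus the proposition implicitly needs the extra hypothesis that $\arg(\xi-c_i)\neq\theta+\pi$ for every $i$ with $c_i\neq\xi$. This holds in every application in the paper --- in \cref{proposition: at theta  Fc contains Wi} the distinguished index has $\arg(\xi-c_i)=\theta$ and goodness of $I_\theta$ rules out the bad argument for the others, while in \cref{proposition: Stokes decomposition invariant near infinity} and \cref{theorem: spectral decomposition and vanishing cycle decomposition} one has $\xi>_\theta c_i$, forcing $\arg(\xi-c_i)\in I_\theta$ --- but the paper's own proof simply cites \cref{lemma: computation of H1(I jC) and H1(S1 jC)} without distinguishing the cases, so the gap you located is present there as well. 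To close your proof you should make that hypothesis on $\xi$ explicit (and verify it at each point of use), rather than hope the case analysis settles itself.
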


\begin{proof}
If $\xi = c_i$, then $\{x\in I_\theta| c_i<_x\xi\} = \emptyset$, hence $W_i=0$.
Let $\alpha \colon J_\theta\hookrightarrow S^1$ be the closed complement of $I_\theta$. By \cref{proposition: Stokes filtration trivializes over good open intervals}, there exists a unique isomorphism  \[ \bigoplus_{i=1}^n \beta_{c_i< \xi,!}\beta_{c_i< \xi}^*\beta^*\gr_{c_i}\cL\xrightarrow{\ \sim \ }\beta^*\cL_{<\xi}.\] 
This induces an isomorphism of cohomology groups
\begin{equation}\label{eq: decompose H1 usings stokes}
\bigoplus_{i=1}^n H^1(S^1, \beta_!\beta_{c_i< \xi,!}\beta_{c_i< \xi}^*\beta^*\gr_{c_i}\cL)\xrightarrow{\ \sim \ }H^1(S^1,\beta_!\beta^*\cL_{<\xi}).
\end{equation}
The exact sequence of sheaves \begin{equation*}
0\longrightarrow \beta_!\beta^*\cL_{<\xi}\longrightarrow\cL_{<\xi}\longrightarrow \alpha_*\alpha^*\cL_{<\xi} \longrightarrow 0,
\end{equation*}
induces the exact sequence of cohomology
\begin{equation*}
H^0(S^1, \alpha_*\alpha^*\cL_{<\xi}) \longrightarrow H^1(S^1, \beta_!\beta^*\cL_{<\xi})\longrightarrow H^1(S^1, \cL_{<\xi})\longrightarrow H^1(S^1,\alpha_*\alpha^*\cL_{<\xi}).
\end{equation*}
By \cref{lemma: computation of H1(I jC) and H1(S1 jC)}, we have $H^0(S^1, \alpha_*\alpha^*\cL_{<\xi})= H^1(S^1, \alpha_*\alpha^*\cL_{<\xi})=0$. This results in an isomorphism
\begin{equation}\label{eq: ismorphism of resctrition of Lxi and graded pieces}
    H^1(S^1, \beta_!\beta^*\cL_{<\xi})\xrightarrow{\ \sim \ } H^1(S^1, \cL_{<\xi}) = \cF_{\xi}.
\end{equation}
Combining  \eqref{eq: decompose H1 usings stokes} and \eqref{eq: ismorphism of resctrition of Lxi and graded pieces}, we obtain a decomposition of $\cF_{\xi}$ as follows 
\[     \bigoplus_{i=1}^n H^1(S^1, \beta_!\beta_{c_i< \xi,!}\beta_{c_i< \xi}^*\beta^*\gr_{c_i}\cL)\xrightarrow{\ \sim \ }H^1(S^1,\beta_!\beta^*\cL_{<\xi})\xrightarrow{\ \sim \ }\cF_\xi.
\]
\end{proof}

\subsection{Comparison with the vanishing cycle decomposition}

Fix $\theta\in S^1$ that is not an anti-Stokes direction for $(\cL,\cL_<)$.
Then $\theta+\pi/2$ and $\theta-\pi/2$ are not Stokes directions, and $I_\theta$ is a good open interval.
Let $c_0$ be a point with argument $\theta$ near infinity, and connect each $c_i$ with $c_0$ by a straight line segment.
Then we have a vanishing cycle decomposition as in \cref{def: vanishing cycle decomposition},
\[\cF_{c_0}\xrightarrow{\ \sim \ } \bigoplus_{i=1}^n \cF_{c_0}/\cF_{c_i}.\]

To compare the Stokes decomposition with the vanishing cycle decomposition of $\cF_{c_0}$, we first prove in \cref{proposition: at theta  Fc contains Wi} that the parallel transport along $\theta$ preserves the Stokes decomposition.

Let $S_\xi$ be as in \cref{notation: circles}, and $I_\xi\subset S_\xi$ the image of $I_\theta=(\theta-\pi/2,\theta+\pi/2)\subset S^1$ under the homeomorphism \begin{align*}
    S^1 &\longrightarrow S^1_\xi \\
    \theta &\longmapsto \re(\xi e^{-i\theta}).
\end{align*}
Let $S_{\gamma_i}\coloneqq \bigcup_{\xi\in{\gamma_i}} S_\xi\subset \bbC^*$ and $\beta\colon I=\bigcup_{\xi \in\gamma_i}I_\xi \hookrightarrow S_{\gamma_i}$.

\begin{proposition} \label{proposition: at theta  Fc contains Wi}
Let $d_i \in \bbC$ such that $d_i - c_i$ has argument $\theta$.
Consider the inclusion $\cF_{c_i}\subset\cF_{d_i}$ induced by the parallel transport along the line segment $\gamma_i \coloneqq [c_i, d_i] \subset \bbC$.
It is equal to the subspace $\bigoplus_{k\neq i}W_k\subset \cF_{d_i}$ from the Stokes decomposition of $\cF_{d_i}$ in the $\theta$-direction.
\end{proposition}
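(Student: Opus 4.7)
The plan is to first handle the case where $d_i$ is sufficiently close to $c_i$ on $\gamma_i$ via a short exact sequence of Stokes subsheaves on $S^1$, then extend to general $d_i$ by parallel transport in the local system $\cF|_{\gamma_i\setminus\{c_i\}}$.

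For $d_i$ close to $c_i$, I will establish a short exact sequence
\[ 0\to \cL_{<c_i}\to \cL_{<d_i}\to \beta_!\beta^*\gr_{c_i}\cL\to 0 \]
of sheaves on $S^1$, where $\beta\colon I_\theta\hookrightarrow S^1$. On $I_\theta$, the inclusion $\cL_{<c_i}\subset\cL_{<d_i}$ follows by transitivity from $c_i<_x d_i$, while on $I_{\theta+\pi}$ the two filtrations coincide because, for $d_i$ close enough, no exponent $c_k$ with $k\neq i$ has level $\re(c_k e^{-ix})$ in the tiny interval $[\re(d_i e^{-ix}), \re(c_i e^{-ix}))$; the same closeness identifies the quotient on $I_\theta$ with $\gr_{c_i}\cL$ alone. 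Applying cohomology, using $H^0(S^1, \cL_{<\xi}) = H^2(S^1, \cL_{<\xi}) = 0$ from \cref{lemma: only S1 survives on circle} together with $H^0(S^1, \beta_!\beta^*\gr_{c_i}\cL) = 0$, I obtain
\[ 0\to \cF_{c_i}\to \cF_{d_i}\to W_i\to 0. \]
The connecting map $\cF_{d_i}\to W_i$ coincides with the Stokes projection of \cref{proposition: Stokes decomposition}: both are induced by the projection $\cL_{<d_i}|_{I_\theta}\to \gr_{c_i}\cL|_{I_\theta}$ via the canonical splitting of \cref{proposition: Stokes filtration trivializes over good open intervals} followed by extension by zero. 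Hence the image of $\cF_{c_i}\to\cF_{d_i}$ equals the kernel of this projection, namely $\bigoplus_{k\neq i}W_k$.

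To verify that this injection coincides with the parallel transport along $\gamma_i$, I use \cref{compute F over open set}: for a thin tubular open neighborhood $U$ of $\gamma_i|_{[c_i, d_i]}$ avoiding the other singularities, $\cF(U) = H^1(U\times S^1, \cK)$ where $\cK$ denotes the pullback of $\cG$ along the parametrization $h$ of \cref{remark: alternative description of unions of S1}, with fibers $\cL_{<\xi, x}$ at $(\xi, x)$. Choosing $U$ thin enough that $\cL_{<\xi}\subset \cL_{<d_i}$ holds throughout $U$, we obtain the analogous short exact sequence
\[ 0\to \cK\to \pi_1^*\cL_{<d_i}\to i_{c_i,*}\beta_!\beta^*\gr_{c_i}\cL\to 0 \]
on $U\times S^1$, whose quotient is concentrated on the singular slice $\{c_i\}\times S^1$. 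The cohomology LES reproduces the one on $S^1$, with $\cF(U)\xrightarrow{\sim}\cF_{c_i}$ by restriction to $\{c_i\}\times S^1$, while restriction to $\{d_i\}\times S^1$ gives the parallel transport map by the sheaf structure of $\cF$.

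For general $d_i$ on $\gamma_i$, pick an intermediate $d_i'$ close to $c_i$ and apply the close case to obtain $\mathrm{Im}(\cF_{c_i}\to\cF_{d_i'}) = \bigoplus_{k\neq i}W_k(d_i')$. Parallel transport $\cF_{d_i'}\xrightarrow{\sim}\cF_{d_i}$ along $[d_i', d_i]\subset\gamma_i\setminus\{c_i\}$ is well-defined since $\theta$ is not anti-Stokes, so $\gamma_i$ meets no other exponent. The crucial point is that $W_i$ assembles into a sub-local system $\cW_i\subset \cF|_{\gamma_i\setminus\{c_i\}}$: throughout this locus $c_i<_x\xi$ holds for all $x\in I_\theta$, so the natural inclusions $\beta_!\beta^*\gr_{c_i}\cL\hookrightarrow \cL_{<\xi}$ glue into a sheaf map $W_i\otimes\bbC_{\gamma_i\setminus\{c_i\}}\to \cF|_{\gamma_i\setminus\{c_i\}}$ with injective image $\cW_i$; dually the Stokes projection to $W_i$ is a sheaf morphism whose kernel $\bigoplus_{k\neq i}W_k$ is also a sub-local system. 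Parallel transport preserves both, so the image of $\cF_{c_i}\to\cF_{d_i}$ is $\bigoplus_{k\neq i}W_k$ in $\cF_{d_i}$. The principal technical obstacle is the identification with parallel transport in the close case, which hinges on a judicious choice of $U$ guaranteeing the inclusion $\cK\subset \pi_1^*\cL_{<d_i}$ on all of $U\times S^1$.
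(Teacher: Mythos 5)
Your opening short exact sequence
\[0\to \cL_{<c_i}\to \cL_{<d_i}\to \beta_!\beta^*\gr_{c_i}\cL\to 0\]
does not exist, because the map $\cL_{<c_i}\hookrightarrow\cL_{<d_i}$ already fails as a sheaf map on $S^1$, no matter how close $d_i$ is taken to $c_i$. Write $h_k(x)=\re\big((c_k-c_i)e^{-ix}\big)$ and $g(x)=\re\big((d_i-c_i)e^{-ix}\big)=|d_i-c_i|\cos(\theta-x)$; then $\cL_{<c_i,x}$ contains $\gr_{c_k}\cL$ exactly when $h_k(x)<0$, while $\cL_{<d_i,x}$ contains it when $h_k(x)<g(x)$. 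Fix $k\neq i$ and let $x_0\in I_{\theta+\pi}$ be the Stokes direction for the pair $(c_i,c_k)$ lying there (of the two antipodal Stokes directions, exactly one lies in the open interval $I_{\theta+\pi}$, because $\theta$ is not anti-Stokes). At $x_0$ one has $h_k(x_0)=0$ and $g(x_0)<0$, so on the one-sided neighborhood of $x_0$ where $h_k<0$ there is a nonempty (albeit shrinking with $|d_i-c_i|$) open interval on which $g(x)\le h_k(x)<0$; there $\gr_{c_k}\cL$ lies in $\cL_{<c_i,x}$ but not in $\cL_{<d_i,x}$, so $\cL_{<c_i}\not\subset\cL_{<d_i}$. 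The dual phenomenon occurs on $I_\theta$: near the Stokes direction for $(c_i,c_k)$ inside $I_\theta$ one has $0\le h_k(x)<g(x)$ on a small interval, so even where the inclusion holds, the cokernel picks up a $\gr_{c_k}\cL$ summand and is not $\beta_!\beta^*\gr_{c_i}\cL$. Your relative version $\cK\subset\pi_1^*\cL_{<d_i}$ on $U\times S^1$ fails for the same reason at pairs $(\xi,x)$ with $\xi$ near $c_i$ and $x$ near Stokes directions in $I_{\theta+\pi}$; you flag this as a technical obstacle to be fixed by shrinking $U$, but no choice of $U$ helps — the inclusion is simply false near every such $x$.

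The paper avoids this trap by never comparing two filtration levels $\cL_{<c_i}$, $\cL_{<d_i}$ as subsheaves of one another on $S^1$. It instead passes to the co-Stokes sheaf $\cG$ on $\bbC^*$ and to the region $S_{\gamma_i}=\bigcup_{\xi\in\gamma_i}S_\xi$; parallel transport becomes the pair of restriction maps $H^1(S_{\gamma_i},\cG)\to H^1(S_{c_i},\cG)$ and $H^1(S_{\gamma_i},\cG)\to H^1(S_{d_i},\cG)$, and the decomposition is tracked via the genuine adjunction $\beta_!\beta^*\cG\to\cG$ on $S_{\gamma_i}$. No spurious inclusion between filtration levels is needed. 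If you want to rescue your strategy, replace the $S^1$-level sequences by sheaves on $S_{\gamma_i}$ (or equivalently on $\gamma_i\times S^1$ with the co-Stokes sheaf $\cG$ as the middle term), where the comparison maps exist for structural reasons.
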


\begin{proof}
Let $\cG\coloneqq \sigma(\mathcal{L, \cL_<})$ be the associated co-Stokes sheaf (see \cref{lemma: sigma functor}).
It follows from \cref{compute F over open set} that $\cF(\gamma_i)=H^1(\cup_{\xi\in{\gamma_i}} S_\xi, \cG)$.
Since $\theta$ is not an anti-Stokes direction, there is no $c_j$ other than $c_i$ on the segment $\gamma_i$. 
Thus, the inclusion $\cF_{c_i}\subset\cF_{d_i}$ is induced by the restrictions
\[\cF_{c_i}=H^1(S_{c_i}, \cG)\xleftarrow{\ \sim \ }{}\cF(\gamma_i)=H^1(\cup_{\xi\in{\gamma_i}} S_\xi, \cG)\xrightarrow{\ \ \ } \cF_{d_i}=H^1(S_{d_i}, \cG). \]

Since $\gamma_i$ has an angle $\theta$ with the positive real axis, all the circles $S_\xi$ with $\xi \in \gamma_i$ intersect at two points, with arguments $\theta+\pi/2$ and $\theta-\pi/2$ (see \cref{figure: S^1 along a straight line}).
\begin{figure}[!ht]
   \centering
    \includegraphics[scale=0.6]{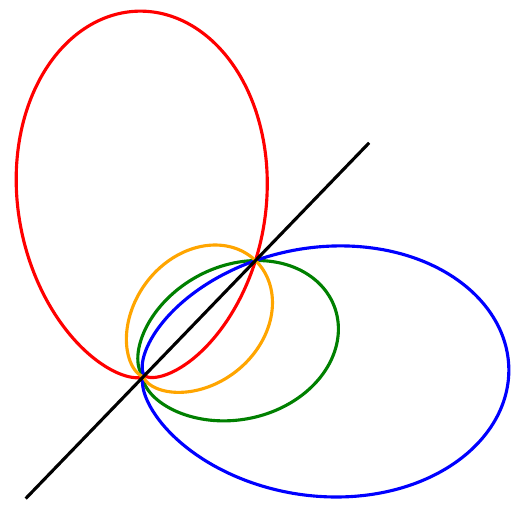}
    \caption{$S_\xi$ along $\xi \in \gamma_i$.}
    \label{figure: S^1 along a straight line}
\end{figure}

Denote by $\iota \colon S_{d_i}\hookrightarrow  S_{\gamma_i}$ the closed embedding.  
By the proper base change theorem, we have \begin{equation}\label{eq: some restriction}
    H^1(S_{\gamma_i}, \cG)= \cF(\gamma_i)\xrightarrow{\ \sim \ }\cF_{c_i} \quad\text{and}\quad H^1(S_{d_i}, \cG)=\cF_{d_i}.
\end{equation}
Consider the following commutative diagram of sheaves on $S_{\gamma_i}$.
\begin{equation*}
    \begin{tikzcd}
\beta_!\beta^*\cG \arrow[r] \arrow[d] & \cG \arrow[d] \\
\iota_*\iota^*\beta_!\beta^*\cG \arrow[r]     & \iota_*\iota^*\cG   
\end{tikzcd}
\end{equation*}
By \eqref{eq: some restriction}, this diagram induces a commutative diagram as follows.
\begin{equation*}
   \begin{tikzcd}
H^1(S_{\gamma_i},\beta_!\beta^*\cG )\arrow[r, "s"] \arrow[d, "t"'] & \cF (\gamma_i) \arrow[d] \\
H^1(S_{\gamma_i},\iota_*\iota^*\beta_!\beta^*\cG) \arrow[r,"r"']     & \cF_{d_i}          
\end{tikzcd}
\end{equation*}    

We now show that the top horizontal arrow $s$ is an isomorphism and the left vertical arrow $t$ is injective. 
Let $\alpha\colon \cup_{\xi\in{\gamma_i}} J_\xi \hookrightarrow S_{\gamma_i}$ denote the closed complement of $\beta$ and $j\colon  S_{\gamma_i} \setminus S_{c_i}\longrightarrow S_{\gamma_i}$ the open complement of $\iota$. 
By \cref{proposition: Stokes filtration trivializes over good open intervals} and \cref{lemma: calculation of cohomology}, $H^*(S_{\gamma_i}, \alpha_*\alpha^*\cG)=0$. We conclude by the exact sequence of cohomology that $s$ is an isomorphism. 
\cref{proposition: Stokes filtration trivializes over good open intervals} implies that $\beta_!\beta^*j_!j^*\cG$ is a direct sum of extensions by zero of constant sheaves in regions indicated in \cref{figure: region of G}, and zero otherwise. 
Then we compute that $H^1(S_{\gamma_i}, \beta_!\beta^*j_!j^*\cG)=0$ and conclude that $t$ is injective. 
By \cref{proposition: Stokes decomposition}, $r$ is identified with the inclusion
\[\bigoplus_{k\neq i} W_k\longrightarrow \bigoplus_{i=1}^n W_k=\cF_{c_0}.\] By the commutativity of the diagram and the comparison of dimension, we conclude that the image of $\cF_{c_i}$ in $\cF_{d_i}$ is $\bigoplus_{k\neq i} W_k\subset \cF_{d_i}$.

\begin{figure}[!ht]
   \centering
    \includegraphics[width=0.8\textwidth]{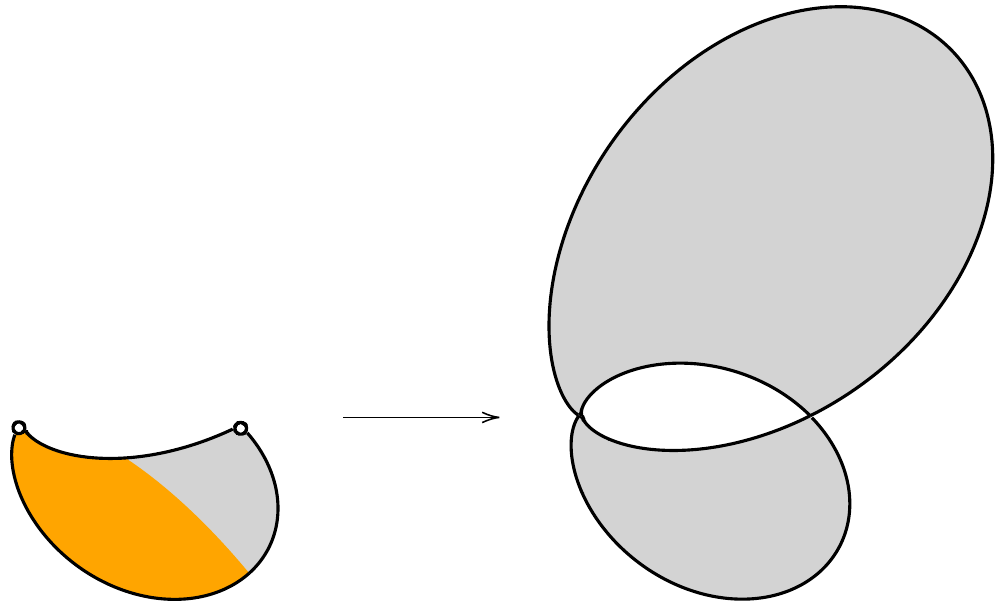}
    \caption{The orange region is the support of a direct summand of $\beta_!\beta^*j_!j^*\cG$.}
    \label{figure: region of G}
\end{figure}
\end{proof}

Let $p$ and $q$ be two non-singular points such that there is no singular point that is colinear with $p$ and $q$.
If $\arg (q-p) =\theta$, similar to the proof of \cref{proposition: at theta  Fc contains Wi}, we can show that the Stokes decompositions of $\cF_p$ and $\cF_q$ are preserved under the parallel transport along $[p,q]$.
Now we study the case where $\theta'\coloneqq\arg (q-p)\neq \theta$.
Let $\phi_i$ (resp.\ $\psi_i)$ denote the angle where $S_{c_i}$ intersect $S_p$ (resp.\ $S_q$). 

\begin{proposition}
Assume that the angle $\theta'+\pi/2$, at which $S_p$ and $S_q$ intersect, lies in $I_\theta$. If for each $i=1,\dots,n$, $\phi_i$ and $\psi_i$ lie in the same connected component of $(\theta,\theta'+\pi/2)\cup (\theta'+\pi/2,\theta+\pi)$, then the parallel transport along $[p,q]$ preserves the Stokes decompositions.
\end{proposition}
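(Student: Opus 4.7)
The plan is to extend the argument of \cref{proposition: at theta  Fc contains Wi} to the slanted case $\theta' := \arg(q-p) \neq \theta$. Let $\gamma := [p,q]$, $S_\gamma := \bigcup_{\xi \in \gamma} S_\xi$ and write $\cG := \sigma(\cL, \cL_<)$. Since by hypothesis no $c_j$ is collinear with $p$ and $q$, no $c_j$ lies on $\gamma$, so \cref{compute F over open set} yields $\cF(\gamma) = H^1(S_\gamma, \cG)$, and the two restrictions $\cF(\gamma) \to \cF_p$, $\cF(\gamma) \to \cF_q$ are isomorphisms realizing the parallel transport. The task is to show that this composite isomorphism matches the $\theta$-Stokes decompositions of \cref{proposition: Stokes decomposition} at the two endpoints.

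Next I introduce the band $B := \bigcup_{\xi \in \gamma} I_\xi \subset S_\gamma$ with inclusion $\beta\colon B \hookrightarrow S_\gamma$, where $I_\xi \subset S_\xi$ is the image of the good interval $I_\theta$ under $i_\xi$. A direct computation shows that every circle $S_\xi$ with $\xi \in \gamma$ passes through the two common points of $S_p \cap S_q$ at angles $\theta' \pm \pi/2$; since $\theta'+\pi/2 \in I_\theta$ by hypothesis, the band $B$ is pinched at this single common point, and removing the pinch splits $B$ into two open ``rectangles'', each homeomorphic to a product $\gamma \times J$ for an appropriate subinterval $J \subset I_\theta$. Arguing as in the proof of \cref{proposition: at theta  Fc contains Wi}, one obtains an isomorphism $H^1(S_\gamma, \beta_!\beta^*\cG) \xrightarrow{\ \sim\ } \cF(\gamma)$, and further restrictions to $I_p$ and $I_q$ produce commutative squares that, by \cref{proposition: Stokes decomposition}, identify each side with the $\theta$-Stokes decomposition of $\cF_p$ and $\cF_q$ respectively.

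It remains to show that the graded decomposition of $\beta^*\cG$ --- whose $c_i$-summand is the extension by zero of $\gr_{c_i}\cL$ from the subregion $\{z \in B : S_{c_i} < z\}$ --- restricts consistently to $I_p$ and $I_q$. The combinatorial hypothesis that $\phi_i$ and $\psi_i$ lie in the same connected component of $(\theta,\theta'+\pi/2)\cup (\theta'+\pi/2,\theta+\pi)$, together with the symmetric condition on the lower half of $I_\theta$, is precisely the statement that the Stokes curve $S_{c_i}$ does not cross the pinching ray at angle $\theta'+\pi/2$ as $\xi$ moves along $\gamma$; hence each $c_i$-support region is a topological product inside its rectangle, whose slices over $S_p$ and over $S_q$ correspond under parallel transport. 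The main technical point is this topological bookkeeping of $B$ and its Stokes subregions; once the non-crossing hypothesis is translated into a product structure, the matching of the summands follows from the naturality of the cohomology restriction maps and \cref{proposition: Stokes decomposition}, completing the proof.
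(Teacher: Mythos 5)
Your proposal takes essentially the same route as the paper — the band $B=\bigcup_{\xi\in\gamma} I_\xi$, the unique decomposition of $\beta^*\cG$, and restriction to $S_p$ and $S_q$ via an adjunction square — and your observation that every $S_\xi$ with $\xi\in[p,q]$ passes through the two common points at angles $\theta'\pm\pi/2$, with $B$ pinched at the one lying in $I_\theta$, is correct and matches the paper's geometry. But there is a real gap where you assert, ``arguing as in the proof of \cref{proposition: at theta  Fc contains Wi},'' that $H^1(S_\gamma,\beta_!\beta^*\cG)\xrightarrow{\ \sim\ }\cF(\gamma)$. That isomorphism is equivalent to the vanishing $H^*(S_\gamma,\alpha_*\alpha^*\cG)=0$, where $\alpha$ is the closed complement of $\beta$. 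In the straight case that vanishing falls out directly from \cref{lemma: calculation of cohomology} because $S_\gamma\setminus B$ is a genuine product; in the slanted case $S_\gamma\setminus B$ is itself pinched (at the other common point, at angle $\theta'-\pi/2$), and it is precisely the hypothesis on $\phi_i,\psi_i$ that guarantees each closed set $\{z\in S_\gamma\setminus B : z\in\oD_{c_i}\}$ is connected, which is what makes the vanishing computation go through and hence makes the map $l$ an isomorphism.

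You instead deploy the hypothesis for a later step, ``consistent restriction of the graded decomposition.'' That concern is vacuous: the decomposition of $\cG$ over the full sector $\pi^{-1}(I_\theta)$ is unique by \cref{proposition: Stokes filtration trivializes over good open intervals}(2) since $I_\theta$ is good, and $\beta^*\cG$ is just its restriction to $B$, so there is nothing to be made consistent across the pinch. Your geometric reading of the hypothesis — that $S_{c_i}$ does not cross the pinching ray as $\xi$ moves along $\gamma$ — is correct, but it must be fed into the vanishing of $H^*(S_\gamma,\alpha_*\alpha^*\cG)$, not into the well-definedness of the decomposition; as written the proposal neither performs nor even identifies that computation, which is the one place the hypothesis is actually used.
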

\begin{proof}
Let $\gamma_i\coloneqq [p,q]\subset\bbC$. Since there is no singular point between $p$ and $q$, the parallel transport along $\gamma_i$ is given by the restrictions
\begin{equation*}
    \cF_p\xleftarrow{\ \sim \ }\cF(\gamma_i) \xrightarrow{\ \sim \ }\cF_q.
\end{equation*}
Let $\beta_{c_i}:\{z\in I\colon z>S_{c_i}\}\hookrightarrow I$ (see \cref{notation: circles}) denote the open embedding and $\pi:U\rightarrow S^1$ the projection. 
Then we have a unique decomposition
\[\bigoplus_{i=1}^n   
\beta_{c_i,!}\beta_{c_i}^* \beta^*(\pi^*\gr_{c_i}\cL) \xrightarrow{\ \sim \ }\beta^*\cG,\]
which induces the following map
\begin{equation}\label{eq: decomposition of interval}
     \bigoplus_{i=1}^n  H^1\big(S_{\gamma_i},\beta_{!}\beta_{c_i,!}\beta_{c_i}^* \beta^*(\pi^*\gr_{c_i}\cL) \big)\xrightarrow{ \ \sim \ }H^1(S_{\gamma_i},\beta_{!}\beta^*\cG)\xrightarrow{\ l \ }H^1(S_{\gamma_i},\cG)=\cF(\gamma_i).
\end{equation}
Let $\alpha\colon S_{\gamma_i}\setminus I \hookrightarrow S_{\gamma_i}$ denote the closed complement of $\beta$ and $\alpha_{c_i}\colon \{z\in S_{\gamma_i}\setminus I\ |\  z>S_{c_i}\}\hookrightarrow S_{\gamma_i}\setminus I$. 
Then we have an isomorphism 
\begin{equation}\label{eq: decomposition over I}
    \bigoplus_{i=1}^n   
\alpha_{c_i,!}\alpha_{c_i}^* \alpha^*(\pi^*\gr_{c_i}\cL) \xrightarrow{\ \sim \ }\alpha^*\cG.
\end{equation}
Since $\phi_i$ and $\psi_i$ lie in the same connected component of $(\theta,\theta'+\pi/2)\cup (\theta'+\pi/2,\theta+\pi)$, the complement of $\alpha_{c_i}$ is connected. 
Using the isomorphism \eqref{eq: decomposition over I}, we can show that $H^*(S_{\gamma_i},\alpha_*\alpha^*\cG)=0$, which implies that the map $l$ is an isomorphism and it induces a decomposition of $\cF(\gamma_i)$.

Let $\iota: S_p\hookrightarrow S_{\gamma_i}$ denote the closed embedding. We consider the adjunction $\beta_!\beta^*\cG\rightarrow \iota_*\iota^*\beta_!\beta^*\cG$, which induces the following commutative diagram. 
\begin{equation*}
    \begin{tikzcd}
\bigoplus_{i=1}^n{\beta_!\beta_{c_i,!}\beta_{c_i}^*\beta^*(\pi^*\gr_{c_i}\cL)} \arrow[d] \arrow[r] & \beta_!\beta^*\cG \arrow[d]     \\
\bigoplus_{i=1}^n{\iota_*\iota^*\beta_!\beta_{c_i,!}\beta_{c_i}^*\beta^*(\pi^*\gr_{c_i}\cL)} \arrow[r]  & \iota_*\iota^*\beta_!\beta^*\cG
\end{tikzcd}
\end{equation*}
This induces a commutative diagram of $H^1$.
\begin{equation*}
    \begin{tikzcd}
\bigoplus_{i=1}^n H^1\big(S_{\gamma_i},{\beta_!\beta_{c_i,!}\beta_{c_i}^*\beta^*(\pi^*\gr_{c_i}\cL)}\big) \arrow[d, "t"] \arrow[r] & \cF(\gamma_i) \arrow[d]     \\
\bigoplus_{i=1}^nH^1\big(S_{\gamma_i},{\iota_*\iota^*\beta_!\beta_{c_i,!}\beta_{c_i}^*\beta^*(\pi^*\gr_{c_i}\cL)}\big) \arrow[r, "r"]  & \cF_p
\end{tikzcd}
\end{equation*}
By the exact sequence on cohomology induced by the closed embedding $\iota$ and its open complement, we can show that $t$ is an isomorphism preserving each direct summand. Furthermore, it follows from the base change theorem that $r$ gives the Stokes decomposition of $\cF_p$. This implies that the decomposition of $\cF(\gamma_i)$ in \eqref{eq: decomposition of interval} and the Stokes decomposition of $\cF_p$ agree.
The same argument works for $\cF_q$, and we conclude that the Stokes decompositions of $\cF_p$ and $\cF_q$ agree.
\end{proof}

\begin{proposition}\label{proposition: Stokes decomposition invariant near infinity}
Let $p$ and $q$ be two points in $\bbC$ such that for any $i=1,\dots,n$, we have $p>_\theta c_i$ and $q>_\theta c_i$. Then the Stokes decompositions at $\cF_{p}$ and $\cF_{q}$ agree.
\end{proposition}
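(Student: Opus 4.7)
The hypothesis $p, q >_\theta c_i$ for all $i$ places both points in the open half-plane
\[ U \coloneqq \{z \in \bbC : \re(z e^{-i\theta}) > \max_{1 \le i \le n} \re(c_i e^{-i\theta})\}. \]
This $U$ is convex, hence simply connected, and contains no singularity of $\cF$, so $\cF|_U$ is locally constant and the parallel transport $\cF_p \xrightarrow{\sim} \cF_q$ is canonical and independent of the chosen path in $U$. The task is therefore to exhibit \emph{some} path along which the two preceding propositions can be applied piece by piece.

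\textbf{Strategy.} I would connect $p$ and $q$ by a polyline in $U$, each of whose segments falls into one of the two situations covered by the preceding two propositions. A natural choice is the three-segment polyline $p \longrightarrow r_1 \longrightarrow r_2 \longrightarrow q$, where
\[ r_1 = p + s e^{i\theta}, \qquad r_2 = q + s e^{i\theta} \]
for $s > 0$ sufficiently large (and generic so that no singular point is colinear with any of the three segments).

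\begin{itemize}
\item On $[p, r_1]$ and $[r_2, q]$ the direction is $\theta$ (respectively $\theta + \pi$, which reduces to the $\theta$ case after swapping endpoints). The remark following \cref{proposition: at theta  Fc contains Wi} applies directly and shows that parallel transport on these two segments preserves the Stokes decomposition.
\item On the middle segment $[r_1, r_2]$ the direction $\theta' = \arg(r_2 - r_1) = \arg(q-p)$ does not depend on $s$. However, as $s \to \infty$ we have $\arg(r_j - c_i) \to \theta$, so the intersection angles $\phi_i$ of $S_{r_1}$ with $S_{c_i}$ and $\psi_i$ of $S_{r_2}$ with $S_{c_i}$ both converge to $\theta \pm \pi/2$. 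For $s$ large enough, the hypotheses of the preceding proposition (the condition that $\theta'+\pi/2 \in I_\theta$ and that $\phi_i, \psi_i$ lie in the same connected component of $(\theta,\theta'+\pi/2) \cup (\theta'+\pi/2,\theta+\pi)$) are satisfied, possibly after swapping the labels of $r_1, r_2$ to replace $\theta'$ by $\theta' + \pi$.
\end{itemize}

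Chaining the three applications yields the result.

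\textbf{Main obstacle.} The hardest part is verifying the connected-component condition on $[r_1, r_2]$ for \emph{every} singularity $c_i$ simultaneously. The intersection angles $\phi_i, \psi_i$ approach $\theta \pm \pi/2$ from above or below depending on the sign of $\re((c_i - p)\, i e^{-i\theta})$, and it is conceivable that, for a fixed direction $\theta' = \arg(q-p)$, some $c_i$ forces $\phi_i, \psi_i$ to straddle the separating angle $\theta'+\pi/2$. In such a case I would subdivide further, inserting additional intermediate points $p + s_k e^{i\theta}$ with increasing $s_k$ until each sub-segment is short enough that the pairs $(\phi_i, \psi_i)$ are so close that no separating angle falls between them; the existence of such a refinement is guaranteed by compactness of $[r_1, r_2]$ and continuity of the intersection angles in the endpoint.
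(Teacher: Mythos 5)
Your three-segment polyline $p \to r_1 \to r_2 \to q$ with $r_1 = p + se^{i\theta}$, $r_2 = q + se^{i\theta}$ and $s$ large is precisely the paper's argument (push $p,q$ along $\theta$ to $p',q'$, apply the $\theta$-direction remark for the end segments and the preceding proposition for the middle one). The subdivision fallback in your ``main obstacle'' paragraph is unnecessary: the relevant intersection angles $\phi_i$ and $\psi_i$ both tend to $\theta+\pi/2$, while the separating angle $\theta'+\pi/2$ lies in $I_\theta = (\theta-\pi/2, \theta+\pi/2)$ and is therefore strictly less than $\theta+\pi/2$, so for $s$ large enough every pair $(\phi_i,\psi_i)$ lands in the single component $(\theta'+\pi/2, \theta+\pi)$ and cannot straddle the separator.
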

\begin{proof}
We have shown that the Stokes decompositions at $\cF_{p}$ and $\cF_{q}$ agree in the following two cases:
\begin{enumerate}
    \item The difference $q - p$ has argument $\theta$.
    \item The difference $q - p$ has argument $\theta' \in (\theta-\pi,\theta)$, and the difference between the arguments of $p$ and $q$ is small enough such that for each $1\leq i\leq n$, $\phi_i$ and  $\psi_i$ lie in the same connected component of $(\theta,\theta'+\pi/2)\cup(\theta'+\pi/2,\theta+\pi)$.
\end{enumerate}
Using the first condition, we can push $p$ and $q$ along $\theta$ to $p'$ and $q'$, so that $p'$ and $q'$ will satisfy the second condition, completing the proof.
\end{proof}

\begin{proposition}\label{proposition: Stokes decomposition agrees with KKP}
Let $\cF\in \Constrz$ with singularities $\{c_1,\dots, c_n\}$ and $(\cL, \cL_<)\coloneqq \Psi(\cF)$.
Let $c_0$ be a point near infinity with argument $\theta$, which is not an anti-Stokes direction.
The following two decompositions of $\cF_{c_0}$ agree.
\begin{enumerate}[wide]
    \item The Stokes decomposition, via trivialization of $(\cL, \cL_<)$ over $I_\theta$ (see \cref{proposition: Stokes decomposition}).
    \item The vanishing cycle decomposition, by choosing Gabrielov paths $\gamma_i$ as straight line segments from $c_i$ to $c_0$ (see \cref{def: vanishing cycle decomposition}).    
\end{enumerate}
\end{proposition}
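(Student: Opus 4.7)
The plan is to reduce to \cref{proposition: at theta  Fc contains Wi} and \cref{proposition: Stokes decomposition invariant near infinity} by replacing the straight Gabrielov path $[c_i, c_0]$ with a homotopic two-segment path that first travels in the $\theta$-direction and then stays within the far half-plane. The two decompositions of $\cF_{c_0}$ agree precisely when $\psi_i(\cF_{c_i}) = \bigoplus_{k \neq i} W_k$ for every $i$, where the $W_k$ are the Stokes summands of \cref{proposition: Stokes decomposition}, so the whole task is to compute this image.

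For each $i$, I would introduce an auxiliary point $d_i \coloneqq c_i + t\, e^{i\theta}$ with $t$ chosen large enough that $d_i >_\theta c_j$ for all $j$. By \cref{proposition: at theta  Fc contains Wi}, the parallel transport along the segment $[c_i, d_i]$ identifies $\cF_{c_i}$ with the complementary Stokes sum $\bigoplus_{k \neq i} W_k^{(d_i)} \subset \cF_{d_i}$, where $W_k^{(d_i)}$ denotes the $k$-th Stokes summand of $\cF_{d_i}$ in the $\theta$-direction. Since $c_0$ is near infinity with argument $\theta$, it also satisfies $c_0 >_\theta c_j$ for all $j$; hence both $d_i$ and $c_0$ lie in the simply connected open half-plane $\{z \in \bbC : z >_\theta c_j \text{ for all } j\}$, on which $\cF$ restricts to a local system. \cref{proposition: Stokes decomposition invariant near infinity} then guarantees that parallel transport $\cF_{d_i} \to \cF_{c_0}$ within this half-plane carries each $W_k^{(d_i)}$ onto $W_k$, so the composite path $[c_i, d_i] \cup [d_i, c_0]$ sends $\cF_{c_i}$ isomorphically onto $\bigoplus_{k \neq i} W_k \subset \cF_{c_0}$.

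It then remains to check that this composite parallel transport agrees with the one along the straight Gabrielov path $[c_i, c_0]$, which reduces to showing that the two paths are homotopic in $\bbC \setminus \{c_j : j \neq i\}$. Because $\theta$ is not an anti-Stokes direction, no $c_j$ with $j \neq i$ lies on the ray from $c_i$ in direction $\theta$, so there is a positive lower bound $\delta > 0$ on $|\arg(c_j - c_i) - \theta|$ over all $j \neq i$. Taking $c_0$ sufficiently near infinity ensures $|\arg(c_0 - c_i) - \theta| < \delta$, so that the closed triangle with vertices $c_i, d_i, c_0$ is contained in the narrow cone at $c_i$ of half-angle $\delta$ around direction $\theta$, and this cone contains no other singularities. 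The triangle then provides the required homotopy. The main obstacle in this plan is precisely this final geometric triangle-avoidance argument — everything else is a direct application of the propositions already proved.
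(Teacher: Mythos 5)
Your proposal is correct and follows essentially the same route as the paper: reduce to \cref{proposition: at theta  Fc contains Wi} for the segment $[c_i, d_i]$ and then to \cref{proposition: Stokes decomposition invariant near infinity} for the leg $[d_i, c_0]$ inside the far half-plane, and conclude by observing that the two decompositions agree precisely when $\psi_i(\cF_{c_i})=\bigoplus_{k\neq i}W_k$ for every $i$.

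The one place where you add something the paper leaves implicit is the final homotopy step: the paper passes directly from ``$\cF_{c_j}=\bigoplus_{k\neq j}W_k^j$ as a subspace of $\cF_{d_j}$'' to ``$\cF_{c_j}=\bigoplus_{k\neq j}W_k^0$ as a subspace of $\cF_{c_0}$'' without remarking that the embedding appearing in the vanishing cycle decomposition is defined via the straight Gabrielov segment $[c_j,c_0]$ rather than via the composite $[c_j,d_j]\cup[d_j,c_0]$. Your thin-triangle argument — using that $\theta$ not anti-Stokes gives a uniform lower bound $\delta$ on $|\arg(c_j-c_i)-\theta|$ for $j\neq i$, and that $\arg(c_0-c_i)\to\theta$ as $|c_0|\to\infty$, so the closed triangle with vertices $c_i,d_i,c_0$ sits in a cone at $c_i$ containing no other singularities — is exactly the missing justification, and it is correct. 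So your write-up is a slightly more careful version of the paper's proof rather than a different approach.
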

\begin{proof}
For each point $c_i$, choose $d_i$ such $d_i-c_i$ has argument $\theta$.
For $j=1,\dots, n$, we have the Stokes decompositions
\[    \cF_{d_j} = \bigoplus_{k=1}^n W_k^j, \quad\text{and}\quad     \cF_{c_0} = \bigoplus_{k=1}^nW_k^0.
\]

By \cref{proposition: at theta  Fc contains Wi},  we have $\cF_{c_j} = \bigoplus_{k\neq i} W_k^j$ as a subspace of $\cF_{d_j}$. By \cref{proposition: Stokes decomposition invariant near infinity}, we have $\cF_{c_j}=\bigoplus_{k\neq j} W_k^0$ as a subspace of $\cF_{c_0}$. On the other hand, the vanishing cycle decomposition is
\begin{equation*}
    \psi\colon  \cF_{c_0}  \xrightarrow{\ \sim \ } \bigoplus_{i=1}^n \cF_{c_0}/\cF_{c_i}.
\end{equation*}
Therefore, the isomorphism \[\bigoplus_{i=1}^n W_i^0\xrightarrow{\ \sim \ } \cF_{c_0}\xrightarrow{\ \sim \ } 
\bigoplus_{i=1}^n \cF_{c_0}/\cF_{c_i}\]
maps $W_i^0 = \bigcap_{j \neq i} ( \bigoplus_{k\neq j} W_k^0 ) = \bigcap_{j \neq i}\cF_{c_j}$ to $\cF_{c_0}/\cF_{c_i}$, i.e.\ it preserves the two decompositions.
\end{proof}

Let $(H, \nabla)$ be an algebraic vector bundle with a connection on $\bbA^1$ having a regular singularity at $\infty$ and an irregular singularity of exponential type at $0$.

\begin{definition}\label{cor: spectral decomposition}
We have a formal isomorphism
\[(H,\nabla)\otimes_{\bbC[u]}\bbC\dbb{u} \simeq \bigoplus_{i=1}^n (\cM_{c_i},\nabla_{c_i}),\]
where $(\cM_{c_i},\nabla_{c_i})$ is a meromorphic $\bbC\dbb{u}$-connection with a pole of order 2, and the endomorphism $\nabla_{u^2\partial_u}|_{u=0}$ has only $c_i$ as its eigenvalue, (see \cite[Theorem 5.7, Exercise 5.9]{Sabbah_isomonodromic_deformations_and_frobenius_manifolds} or \cite[Theorem 11.1]{Wasow_asymptotic_expansion_for_ordinary_differential_equations}).
We call it the \emph{spectral decomposition} of $(H,\nabla)$.
\end{definition}

Next we relate the spectral decomposition and the vanishing cycle decomposition, via asymptotic lifts.

\begin{theorem}[Spectral decomposition and vanishing cycle decomposition]\label{theorem: spectral decomposition and vanishing cycle decomposition}
Let $(H, \nabla)$ be an algebraic vector bundle with a connection on $\bbA^1$ having a regular singularity at $\infty$ and an irregular singularity of exponential type at $0$. The asymptotic lift of the spectral decomposition along any non-anti-Stokes direction $\theta$ agrees with the vanishing cycle decomposition at any point $\xi$ near infinity with argument $\theta$.
\end{theorem}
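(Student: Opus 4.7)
The plan is to reduce the theorem to Proposition~\ref{proposition: Stokes decomposition agrees with KKP} via the topological Laplace transform, by identifying the asymptotic lift of the spectral decomposition with the Stokes decomposition on the Betti side.

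First, I would pass from $(H,\nabla)$ to the associated $\bbC$-co-Stokes structure $(\cL, \cL_<) \coloneqq \RH(H,\nabla)$ via the irregular Riemann--Hilbert correspondence, using the change of coordinate $z\mapsto z^{-1}$ to convert the irregular singularity at $0$ into exponential-type behavior at $\infty$ (as in Corollary~\ref{corollary: inverse Laplace transform is compatible}). Under this correspondence, the formal spectral decomposition $(H,\nabla)\otimes\bbC\dbb{u} \simeq \bigoplus_{i=1}^n (\cM_{c_i},\nabla_{c_i})$ from Definition~\ref{cor: spectral decomposition} corresponds, by the classification of formal meromorphic connections, to the canonical decomposition of $\cL$ into graded pieces $\bigoplus_{i=1}^n \gr_{c_i}\cL$ as abstract local systems on $S^1$.

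Next, I would identify the asymptotic lift along a non-anti-Stokes direction $\theta$ with the unique trivialization of $(\cL,\cL_<)$ over the good open interval $I_\theta=(\theta-\pi/2,\theta+\pi/2)$. Since $\theta$ is not an anti-Stokes direction, $I_\theta$ contains exactly one Stokes direction for each pair $c_i\neq c_j$, so $I_\theta$ is good in the sense of Definition~\ref{definition: Stokes direction and small}. Then Proposition~\ref{proposition: Stokes filtration trivializes over good open intervals}(\ref{over good is unique}) provides a unique filtered isomorphism $\cL|_{I_\theta}\xrightarrow{\sim}\bigoplus_{c_i}\gr_{c_i}\cL|_{I_\theta}$, which matches the flat multi-summation in the direction $\theta$ and hence coincides with the asymptotic lift of the spectral decomposition on this sector. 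This is the step where the most care is needed: one must verify that the filtered splitting characterizing Malgrange's asymptotic lift is the same as the unique splitting over $I_\theta$ used in the definition of Stokes decomposition; this follows because both are characterized by compatibility with $\cL_<$ and with the grading.

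Finally, I would apply the topological Laplace transform. Let $\cF \coloneqq \Phi(\sigma(\cL,\cL_<))\in \Constrz$, so that by Theorem~\ref{corollary: inverse Laplace transform is compatible} the constructible sheaf $\cF$ corresponds under Fourier transform to the $D$-module associated to $(H,\nabla)$. For a point $\xi$ near infinity with argument $\theta$, the Stokes decomposition of $\cF_\xi$ in the $\theta$-direction from Proposition~\ref{proposition: Stokes decomposition} is by construction induced by the trivialization over $I_\theta$, hence agrees with the image of the asymptotic lift of the spectral decomposition under $\Phi\cir\sigma$. On the other hand, Proposition~\ref{proposition: Stokes decomposition agrees with KKP} identifies this Stokes decomposition with the vanishing cycle decomposition given by straight Gabrielov paths from each $c_i$ to $\xi$. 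Combining these two identifications yields the theorem. The main obstacle, as indicated, will be the rigorous identification in the middle step; the outer steps are formal consequences of the equivalence of categories established in Theorem~\ref{theorem: const0 and Stokes are equivalent 6.1} together with Proposition~\ref{proposition: Stokes decomposition agrees with KKP}.
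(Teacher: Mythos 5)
Your overall strategy matches the paper's: pass to the co-Stokes structure via Riemann--Hilbert, identify the asymptotic lift of the spectral decomposition with the unique filtered trivialization over the good interval $I_\theta$, translate via the topological Laplace transform into the Stokes decomposition of $\cF_\xi$, and conclude by \cref{proposition: Stokes decomposition agrees with KKP}. You also correctly single out the middle identification as the step requiring care.

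The gap you flag is real, and your justification for it is not yet sufficient. There are actually two issues hiding there. First, the spectral decomposition in \cref{cor: spectral decomposition} groups summands by eigenvalues of $\nabla_{u^2\partial_u}|_{u=0}$, whereas the decomposition into graded pieces $\gr_{c_i}\cL$ of the Stokes structure comes from the Hukuhara--Turrittin--Levelt decomposition by exponential factors $e^{c_i/u}$. These two formal decompositions are not \emph{a priori} the same, so ``by the classification of formal meromorphic connections'' is not enough; the paper invokes \cite[Corollary 3.3]{Sanda_Shamoto_an_analogue_of_} precisely to establish their compatibility. Second, once you have the formal decomposition, you need that it lifts \emph{uniquely} to an analytic decomposition over the open sector of opening $\pi$ from $\theta-\pi/2$ to $\theta+\pi/2$; the paper appeals to \cite[Lemma 8.3]{Hertling_Sevenheck_nilpotent_orbits_of_a_generalization_of_hodge_structures} for this, and then matches the resulting splitting against the unique filtered splitting from \cref{proposition: Stokes filtration trivializes over good open intervals}\,(\ref{over good is unique}). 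Saying ``both are characterized by compatibility with $\cL_<$ and with the grading'' presupposes the asymptotic lift \emph{is} filtered, which is what needs to be shown. With those two references supplied, your argument closes exactly as the paper's does.
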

\begin{proof}
By \cite[Corollary 3.3]{Sanda_Shamoto_an_analogue_of_},
the spectral decomposition of $(H, \nabla)$ is compatible with  Hukuhara-Turrittin-Levelt decomposition, i.e.\ there exists $\bbC\{u\}[u^{-1}]$-modules $R_i$ with regular connections $\nabla_i$, such that \[(\mathcal{M}_{c_i},\nabla_{c_i})\otimes \bbC \dbp{u}=\big(e^{c_i/u}\otimes_{\bbC\{u\}[u^{-1}]}(\mathcal{R}_i, \nabla_i)\big)\otimes \bbC\dbp{u}.\] By \cite[Lemma 8.3]{Hertling_Sevenheck_nilpotent_orbits_of_a_generalization_of_hodge_structures}, this formal decomposition lifts uniquely to an analytic decomposition on an open sector in $\bbC^*$ from angle $\theta-\pi/2$ to $\theta+\pi/2$.
Let $(\cL,\cL_<)$ be the co-Stokes structure associated to $(H,\nabla)$.
Then the above decomposition induces a unique trivialization of $(\cL,\cL_<)$ over $I_\theta\subset S^1$ as \begin{align}
    \label{eq: spectral decomand vanishing 1}\beta^*\cL &= \bigoplus_{i=1}^n \beta^* \gr_{c_i}\cL, \\ \label{eq: spectral decomand vanishing 2}\beta^*\cL_{<\xi}&=\bigoplus_{i=1}^n \beta_{c_i< \xi,!}\beta_{c_i< \xi}^*\beta^*\gr_{c_i}\cL.
\end{align}
Since $\xi$ is near infinity, we have an isomorphism $(\beta^*\cL_{<\xi})_\theta\xrightarrow{\sim}(\beta^*\cL)_\theta$, which is compatible with the decompositions \eqref{eq: spectral decomand vanishing 1} and \eqref{eq: spectral decomand vanishing 2} above.
Recall that the Stokes decomposition is given by
\begin{equation}\label{eq: spectral decomand vanishing 3}
    \cF_\xi = H^1(S^1,\cL_{<\xi}) =H^1(S^1, \beta_!\beta^*\cL_{<\xi})  = \bigoplus_{i=1}^n H^1(S^1, \beta_!\beta_{c_i< \xi,!}\beta_{c_i< \xi}^*\beta^*\gr_{c_i}\cL).
\end{equation}
By a Čech cohomology computation, we obtain that $H^1(S^1,\cL_{<\xi}) = (\beta^*\cL_{<\xi})_\theta$, which is compatible with decompositions \eqref{eq: spectral decomand vanishing 2} and \eqref{eq: spectral decomand vanishing 3}. By \cref{proposition: Stokes decomposition agrees with KKP}, the Stokes decomposition agrees with the vanishing cycle decomposition via a choice of straight line segments as Gabrielov paths. We conclude the proof.
\end{proof}

\bibliographystyle{plain}
\bibliography{reference}

\end{document}